\documentclass[10pt, reqno]{amsart}
\usepackage{graphicx, amssymb, amsmath, amsthm}
\numberwithin{equation}{section}
\usepackage{xcolor}
\usepackage{amscd}
\usepackage{amssymb}
\usepackage{latexsym}

\usepackage{amsmath}
\usepackage{amscd}
\usepackage{cite}
\usepackage{color}
\usepackage{enumerate}
\usepackage{amsfonts}
\usepackage{graphicx}
\usepackage{mathrsfs}
\usepackage{setspace}
\usepackage{hyperref}
\def\pa{\partial}

\let\Re=\undefined\DeclareMathOperator*{\Re}{Re}
\let\Im=\undefined\DeclareMathOperator*{\Im}{Im}

\newcommand{\qtq}[1]{\quad\text{#1}\quad}
\newcommand{\R}{\mathbb{R}}
\newcommand{\C}{\mathbb{C}}

\newcommand{\eps}{\varepsilon}

\newtheorem{theorem}{Theorem}[section]

\newtheorem{lemma}[theorem]{Lemma}

\newtheorem{corollary}[theorem]{Corollary}

\newtheorem{proposition}[theorem]{Proposition}

\theoremstyle{definition}
\newtheorem{definition}[theorem]{Definition}
\newtheorem{remark}[theorem]{Remark}

\begin{document}

\title[Rigidity result for the mass-critical Hartree equation]{Classification of minimal-mass blowup solutions to the mass-critical Hartree equation at the $L^2$-critical level}

\author[Z. Ma]{Zuyu Ma}
\address{Zuyu Ma, Graduate School of China Academy of Engineering Physics, Beijing 100088, China}
\email{mazuyu23@gscaep.ac.cn}

\author[C. Miao]{Changxing Miao}
\address{Changxing Miao, School of Mathematics and Physics, University of Science and Technology Beijing, Beijing 100083, China}
\email{miao\_changxing@ustb.edu.cn, miao\_changxing@iapcm.ac.cn}

\author[J. Zheng]{Jiqiang Zheng}
\address{Jiqiang Zheng, Institute of Applied Physics and Computational Mathematics and National Key Laboratory of Computational Physics, Beijing 100088, China}
\email{zheng\_jiqiang@iapcm.ac.cn, zhengjiqiang@gmail.com}

\subjclass[2020]{35Q55}
\keywords{Hartree equations, mass-critical,  long-time Strichartz estimate, rigidity, blow up, scattering}
\begin{abstract}
In this paper, we study the focusing mass-critical Hartree equation
\[
iu_t+\Delta u=-\bigl(|x|^{-2}*|u|^2\bigr)u
\]
at the ground-state mass $M(Q)$. We prove that, up to the symmetries of the equation, every global minimal-mass non-scattering solution is a solitary wave, while every finite-time minimal-mass blowup solution is a pseudo-conformal image of a solitary wave. This extends the results of \cite{LZ28,Miao-Xu-Zhao3,Wang-Gao} to the scaling-critical space $L^2$ and removes the radial assumption in the global case.
\end{abstract}
\maketitle

\section{Introduction}

\subsection{Setting of the problem}\label{intro:setting}

We consider the Cauchy problem for the focusing mass-critical Hartree
equation
\begin{equation*}\tag{Hartree}\label{1.1}
\begin{cases}
 iu_t+\Delta u=-\bigl(|x|^{-2}*|u|^2\bigr)u=:-F(u),
 \quad (t,x)\in\R\times\R^d,\\
 u|_{t=0}=u_0\in L^2(\R^d),
\end{cases}
\end{equation*}
where $d\geq3$ and $u$ is complex-valued. The goal of this paper is
to characterize the non-scattering solutions of \eqref{1.1} with
ground-state mass at the scaling-critical regularity $L^2(\R^d)$.

Hartree equations and local nonlinear Schr\"odinger equations (NLS) both
arise as effective descriptions of interacting bosons. In the mean-field
regime, the condensate evolves under a self-consistent potential generated
by its density. In the absence of external fields, this leads to the
Hartree equation
\[
 i\partial_t\psi+\Delta\psi=(w*|\psi|^2)\psi,
\]
where $w$ is the two-body interaction potential and $|\psi|^2$ is the
condensate density; see \cite{Frohlich-Lenzmann}. For short-range
interactions, a contact approximation $w\approx g\delta_0$ formally gives
the local cubic NLS
\[
 i\partial_t\psi+\Delta\psi=g|\psi|^2\psi.
\]
In the Gross--Pitaevskii regime for dilute three-dimensional Bose gases
with repulsive interactions, this local cubic equation can be derived
rigorously, with a coupling constant determined by the scattering length
of the pair potential \cite{Erdos-Schlein-Yau}.

The local model describes a nonlinear response determined by the density
at the same point, while the Hartree convolution retains the spatial
structure of the pair interaction. This is essential for Coulomb and
Newtonian interactions, whose kernel in three dimensions is proportional
to $|x|^{-1}$. Such algebraically decaying potentials couple spatially
separated wave packets even when their overlap is negligible; see
\cite{Gomez-Schmid-Wu}. Related Hartree models with semi-relativistic dispersion
describe self-gravitating bosonic matter \cite{Elgart-Schlein}.

In \eqref{1.1}, we choose the attractive inverse-square potential
$w(x)=-|x|^{-2}$. The Newtonian kernel in dimension $d\geq3$ is
proportional to $|x|^{2-d}$, so \eqref{1.1} has a
Schr\"odinger--Poisson interpretation when $d=4$. We study the
inverse-square interaction in every dimension $d\geq3$, where its
homogeneity yields the mass-critical scaling described below.

Equation \eqref{1.1} is invariant under the scaling
\[
 u(t,x)\mapsto u_\lambda(t,x)
 :=\lambda^{d/2}u(\lambda^2t,\lambda x),\qquad \lambda>0.
\]
Solutions conserve the mass
\[
 M(u(t)):=\int_{\R^d}|u(t,x)|^2\,dx,
\]
and $H^1$ solutions also conserve the energy
\[
 E(u(t)):=\frac12\int_{\R^d}|\nabla u(t,x)|^2\,dx
 -\frac14\iint_{\R^d\times\R^d}
 \frac{|u(t,x)|^2|u(t,y)|^2}{|x-y|^2}\,dx\,dy.
\]
Since the scaling preserves the $L^2$ norm, \eqref{1.1} is called
mass-critical.

Within the local focusing power-type NLS family, the same mass-preserving
scaling selects the mass-critical equation
\begin{equation*}\tag{NLS}\label{NLS}
 iu_t+\Delta u=-|u|^{4/d}u.
\end{equation*}
The cubic contact model is mass-critical in dimension two, whereas the
inverse-square Hartree interaction has this property in every dimension
$d\geq3$. The mass-critical NLS thus provides a natural local counterpart
to \eqref{1.1} for the study of threshold dynamics.
Both equations are invariant under phase rotations, space-time
translations, time reversal, and Galilean transformations
\[
 u(t,x)\mapsto e^{ix\cdot\xi-it|\xi|^2}u(t,x-2t\xi).
\]
They also admit the pseudo-conformal transformation
\[
 (\mathcal Cu)(t,x)
 :=|t|^{-d/2}e^{i|x|^2/(4t)}
 \overline{u\left(\frac1t,\frac{x}{t}\right)}.
\]

We work with strong $L^2$ solutions. A function $u:I\times\R^d\to\C$
is a strong solution if
\[
 u\in C_tL_x^2(J\times\R^d)\cap
 L_{t,x}^{\frac{2(d+2)}d}(J\times\R^d)
\]
for every compact interval $J\subset I$, and
\[
 u(t)=e^{i(t-t_0)\Delta}u(t_0)
 +i\int_{t_0}^t e^{i(t-s)\Delta}F(u(s))\,ds,
 \qquad t,t_0\in I.
\]
For each $u_0\in L^2$, the local theory gives a unique solution on a
maximal open interval $I$ containing zero; see
\cite{Miao-Xu-Zhao2,MMRZ}. We write
\[
 S_I(u):=\|u\|_{L_{t,x}^{\frac{2(d+2)}d}(I\times\R^d)}.
\]
If this norm is finite in the forward time direction, then
$\sup I=\infty$ and there exists $u_+\in L^2$ such that
\[
 \lim_{t\to\infty}\|u(t)-e^{it\Delta}u_+\|_{L^2}=0.
\]
In this case, we say that $u$ scatters forward in time. The analogous
statement holds backward in time. We say that $u$ blows up in a time
direction if the corresponding spacetime norm is infinite. This
includes finite-time blowup and global solutions which do not scatter.
Small $L^2$ initial data give global solutions which scatter in both
time directions.

The ground state determines the mass threshold for scattering. Let
$Q$ be a positive radial ground state satisfying
\begin{equation}\label{grounstate}
 \Delta Q-Q+\bigl(|x|^{-2}*|Q|^2\bigr)Q=0.
\end{equation}
The variational characterization of $Q$ gives the sharp
Gagliardo--Nirenberg inequality
\begin{equation}\label{sharp-HGN-intro}
 \int_{\R^d}\bigl(|x|^{-2}*|f|^2\bigr)|f|^2\,dx
 \leq\frac{2M(f)}{M(Q)}\|\nabla f\|_{L^2}^2,
 \qquad f\in H^1(\R^d);
\end{equation}
see \cite{Miao-Xu-Zhao3,Miao-Xu-Zhao2}. It follows that
\[
 E(f)\geq\frac12\left(1-\frac{M(f)}{M(Q)}\right)
 \|\nabla f\|_{L^2}^2.
\]
Hence conservation of mass and energy gives global well-posedness
for $H^1$ data with mass below $M(Q)$. At mass $M(Q)$, the standing
wave $e^{it}Q$ is global and does not scatter. Its pseudo-conformal
transform has the same mass and blows up in finite time.

Miao, Xu, and Zhao \cite{Miao-Xu-Zhao2} proved global well-posedness
and scattering below the ground-state mass for radial $L^2$ data.
In our recent work with Rosenzweig \cite{MMRZ}, we removed the
radial assumption and proved the following result.

\begin{theorem}[Scattering below the ground-state mass,
\cite{Miao-Xu-Zhao2,MMRZ}]\label{MMRZthm}
Let $u_0\in L^2(\R^d)$ satisfy $M(u_0)<M(Q)$. Then the solution of
\eqref{1.1} is global and scatters in both time directions.
\end{theorem}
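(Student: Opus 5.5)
Theorem \ref{MMRZthm} is cited from \cite{Miao-Xu-Zhao2,MMRZ}. Since it is not proved here, the natural plan is the concentration-compactness/rigidity roadmap of Kenig--Merle, in the form Dodson used for mass-critical NLS, with the Hartree nonlinearity in place of $|u|^{4/d}u$.

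First, I would establish the profile machinery. This consists of a linear profile decomposition for bounded sequences in $L^2$ adapted to the full symmetry group (space-time translations, scaling, Galilean boosts, phase), together with a stability theory for \eqref{1.1} in $L^{2(d+2)/d}_{t,x}$. The Hartree term is handled by Hardy--Littlewood--Sobolev: $|x|^{-2}*$ maps $L^{p}$ to $L^{q}$ with the right exponents to close Strichartz estimates, and the decoupling of nonlinear profiles follows from asymptotic orthogonality of parameters. Arguing by contradiction, let $m_0<M(Q)$ be the infimum of masses of non-scattering solutions. The decomposition then produces a minimal blowup solution $u$ that is almost periodic modulo symmetries, with parameters $x(t),\xi(t),N(t)$. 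After the usual rescaling reductions, one may assume $N(t)\le1$ and $|N'|\lesssim N^3$, leaving two scenarios: the rapid frequency cascade $\int_0^\infty N(t)^3\,dt<\infty$ and the quasi-soliton $\int_0^\infty N(t)^3\,dt=\infty$.

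Second, I would prove a long-time Strichartz estimate of Dodson type on intervals $J$ with $\int_J N(t)^3\,dt=K$:
\[
\|P_{|\xi-\xi(t)|\ge N}u\|_{U^2_\Delta(J)}\lesssim \Bigl(\frac KN\Bigr)^{1/2}+1 .
\]
The proof is an induction on frequency scales using bilinear Strichartz estimates. The nonlocal nonlinearity enters through a decomposition $F(u)=(|x|^{-2}*|u_{lo}|^2)u_{hi}+\dots$. The convolution potential of low frequencies is itself low frequency, so the bilinear gains survive.

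Third comes rigidity. In the rapid cascade case, the long-time estimate gives additional regularity, namely $u\in \dot H^s$ for all $s$, and forces $\|\nabla u\|_{L^2}\lesssim \inf N(t)\to0$ along a sequence. Energy conservation and \eqref{sharp-HGN-intro} then give $E(u)=0$ with $\nabla u=0$, so $u\equiv0$, a contradiction. In the quasi-soliton case, I would use a frequency-localized interaction Morawetz estimate, or a virial-type estimate adapted to the Hartree convolution. Its lower bound is $\gtrsim K$, from the coercivity given by \eqref{sharp-HGN-intro} since $M(u)<M(Q)$, together with Galilean-normalized gradient control. Its upper bound is $o(K)$, from the long-time Strichartz errors. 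Taking $K\to\infty$ yields a contradiction.

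I expect the main obstacle to be the error terms in the frequency-localized Morawetz estimate. The nonlocal commutator $[P_{\le CK},F]$ involves $|x|^{-2}*$ of products of high and low frequencies, and these must be shown to be $o(K)$ using only the long-time Strichartz bounds. I would first try splitting the kernel into $|x|^{-2}\chi_{|x|\le R}$ and its complement. The singular local piece is then treated as in the local NLS case, and the smooth tail is treated by its decay.
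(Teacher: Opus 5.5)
Your architecture, namely the Kenig--Merle reduction to an almost periodic minimal blowup solution with $N(t)\leq 1$, a Dodson-type long-time Strichartz estimate, and the rapid-cascade/quasi-soliton dichotomy closed by the coercivity in \eqref{sharp-HGN-intro}, is the one the paper attributes to the non-radial proof in \cite{MMRZ}. The paper itself only cites the result. The gap is in your second step, which is exactly where the nonlocal nonlinearity bites. The sentence ``the convolution potential of low frequencies is itself low frequency, so the bilinear gains survive'' is not an argument. Dodson's frequency induction closes because bilinear Strichartz estimates bound same-point products $u_{hi}\overline{u_{lo}}$ in $L^2_{t,x}$ with a gain in the frequency ratio. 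The Fourier support of the potential only tells you where the output lives, not how large the interaction is. In $(|x|^{-2}*|u_{lo}|^2)u_{hi}$ the factor $u_{hi}$ never multiplies $u_{lo}$ at the same point. You could write the potential as $\int|z|^{-2}|u_{lo}(x-z)|^2\,dz$ and apply translated bilinear bounds, but this fails because $\int_{\R^d}|z|^{-2}\,dz=\infty$. H\"older's inequality and weak Young give at best $\|(|x|^{-2}*|u_{lo}|^2)u_{hi}\|_{L^2_tL^{2d/(d+2),2}_x}\lesssim M(u)\|u_{hi}\|_{L^2_tL^{2d/(d-2),2}_x}$. That coefficient is $O(1)$ rather than small, so an induction of the form $\|P_{\geq i}u\|_{U^2_\Delta}\lesssim \dots + A\|P_{\geq i-4}u\|_{U^2_\Delta}$ with $A\ll1$ cannot close. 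With non-endpoint exponents one even gets growth in $|J|$, as in \eqref{mmty}. For terms with the high frequency inside the kernel, such as $(|x|^{-2}*(u_{hi}\overline{u_{lo}}))u_{lo}$, the $L^2_{t,x}$ bilinear bound cannot be fed into Hardy--Littlewood--Sobolev when $d\geq4$. Convolution with $|x|^{-2}$ requires input in $L^p$ with $p<d/(d-2)\leq2$.

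What is missing is a bilinear theory adapted to the kernel, together with Lorentz-space Strichartz and Hardy--Littlewood--Sobolev bounds at the endpoint exponents (Lemma~\ref{P:SZ}). This is precisely the ingredient the paper credits to \cite{MMRZ}. For the inside-the-kernel terms you can dualize against $\overline{v_{hi}}$ and use $\widehat{|x|^{-2}}=c_d|\xi|^{2-d}$. This turns the form into a pairing of the two high--low products $u_{hi}\overline{u_{lo}}$ and $v_{hi}\overline{u_{lo}}$, both at frequency $\sim N_{hi}$, so bilinear Strichartz applies to each. The term $(|x|^{-2}*|u_{lo}|^2)u_{hi}$ admits no such reduction. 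It needs a genuinely nonlocal estimate for $\int_J\iint|x-y|^{-2}|u_{lo}(t,y)|^2|u_{hi}(t,x)|^2\,dx\,dy\,dt$ with a gain in the frequency ratio. The cheaper local-smoothing argument of Section~\ref{sec-long} is not available here. It uses the decay of $Q$ and of $|x|^{-2}*Q^2$ and the modulation bound on $x(t)$, which a general almost periodic solution below $M(Q)$ does not provide. So the main obstacle sits in the long-time Strichartz estimate, not in the Morawetz commutator you single out. Once that estimate is available, those errors are handled as in \eqref{f7.24}--\eqref{10.10}: terms with two high-frequency factors by the long-time bound, and the rest by the smoothness of the Morawetz weight. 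Finally, in the quasi-soliton case use the interaction Morawetz functional rather than a one-particle virial, since without radial symmetry you have no control of $x(t)$.
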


The standing wave and its pseudo-conformal transform show that the
mass condition in Theorem~\ref{MMRZthm} is sharp. It is natural to ask whether these
are the only minimal-mass blowup solutions, up to the symmetries of
the equation.

For finite-time blowup, Miao, Xu, and Zhao \cite{Miao-Xu-Zhao3}
proved that every minimal-mass blowup solution in $H^1(\R^d)$ is a
pseudo-conformal transform of the ground state, up to symmetries.

Global minimal-mass non-scattering solutions with radial $H^1$ data
were classified by Li and Zhang \cite{LZ28} for $d\geq5$, and by Wang
and Gao \cite{Wang-Gao} for $d=3,4$.

For $d\ne4$, the preceding classification results
\cite{LZ28,Wang-Gao} were proved under the uniqueness assumption on
the ground state, and \cite{Wang-Gao} also assumes nondegeneracy for
$d=3$. The uniqueness assumption is also used in
\cite[Appendix A]{MMRZ}. We assume throughout this paper that $Q$ is
unique and nondegenerate. These properties hold in dimension four
\cite{Krieger,Chen}; the spectral properties used below are recalled
in Appendix~\ref{sec-mod}.

\subsection{Main result}\label{intro:main-result}

Our main result is the following.

\begin{theorem}[Classification at the ground-state mass]\label{main1}
Let $d\geq3$ and let $u:I\times\R^d\to\C$ be a maximal-lifespan
strong $L^2$ solution of
\eqref{1.1} satisfying
\[
M(u)=M(Q),\qquad S_I(u)=\infty.
\]
Then exactly one of the following alternatives occurs:
\begin{enumerate}
\item $I=\R$ and $u$ is a solitary wave. More precisely, there exist
$\lambda>0$, $x_0,\xi\in\R^d$, and $\gamma\in[0,2\pi]$ such that
\begin{align}\label{soliton}
u(t,x)=e^{i\gamma-it|\xi|^2}e^{i\lambda^2t}e^{ix\cdot\xi}
\lambda^{d/2}Q\bigl(\lambda(x-2\xi t)-x_0\bigr).
\end{align}
\item $I=(-\infty,T)$ or $I=(T,\infty)$ for some $T\in\R$, and $u$ is
a pseudo-conformal transform of a solitary wave. More precisely, there
exist $\lambda>0$, $x_0,\xi\in\R^d$, and $\gamma\in[0,2\pi]$ such that
\begin{align}\label{pseudosoliton}
u(t,x)=\frac{\lambda^{d/2}}{|T-t|^{d/2}}e^{i\gamma}
e^{\frac{i|x-\xi|^2}{4(t-T)}}e^{-i\frac{\lambda^2}{t-T}}
Q\left(\frac{\lambda(x-\xi)-(T-t)x_0}{T-t}\right).
\end{align}
\end{enumerate}
\end{theorem}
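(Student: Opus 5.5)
The plan follows Dodson's scheme for the mass-critical NLS at the ground-state mass, adapted to the Hartree nonlinearity.

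\textbf{Step 1: almost periodicity.} Since $M(u)=M(Q)$ is the minimal blowup mass, Theorem~\ref{MMRZthm} together with the profile decomposition used in \cite{MMRZ} shows that a solution with $S_I(u)=\infty$ (say forward in time) is almost periodic modulo symmetries. Concretely, there are $N(t)>0$, $x(t),\xi(t)\in\R^d$ and $\gamma(t)$ such that
\[
\{N(t)^{-d/2}e^{-ix\cdot\xi(t)}u(t,x/N(t)+x(t))\}
\]
is precompact in $L^2$. Any subsequential limit then has mass $M(Q)$ and also blows up.

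\textbf{Step 2: reduction to $H^1$ and a compactness limit.} We split into the usual scenarios according to whether $\int_0^{\sup I}N(t)^3\,dt$ is finite or infinite, and then further according to the behaviour of $N(t)$. In the finite case we would show that $\sup I<\infty$ and that $u$ has regularity and a virial-type identity near the blowup time. In the infinite case we would prove a long-time Strichartz estimate adapted to the Hartree nonlinearity. We expect to handle the convolution term $|x|^{-2}*|u|^2$ by bilinear and Hardy--Littlewood--Sobolev arguments, as in the scattering proof of \cite{MMRZ}. This estimate would then feed a frequency-localized interaction Morawetz inequality whose error terms are $o(K)$, where $K=\int N(t)^3\,dt$.

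\textbf{Step 3: control of the distance to the ground-state manifold.} The Morawetz identity, combined with the sharp inequality \eqref{sharp-HGN-intro} at mass $M(Q)$, controls a quantity comparable to
\[
\int \delta(u(t))\,N(t)^2\,dt,\qquad \delta(u)=\inf_{\text{symmetries}}\|u-Q_{\text{sym}}\|^2\text{-type distance}.
\]
On intervals where $\delta$ is small we perform the modulation decomposition $u=e^{i\gamma}\lambda^{d/2}(Q+\epsilon)(\lambda(\cdot-x))$. This uses uniqueness and nondegeneracy of $Q$ and the coercivity of the linearized operator recalled in Appendix~\ref{sec-mod}. It gives modulation equations for $\lambda,x,\xi,\gamma$ with errors quadratic in $\epsilon$. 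Integrating the virial and Morawetz quantities, with the modulation parameters as weights, shows that the averaged distance $\delta$ tends to zero along the flow, and that $N(t)$ and $\xi(t)$ are essentially constant on long intervals. Passing to the limit via almost periodicity, we conclude that $\delta\equiv0$ in the global case, so $u$ is the solitary wave \eqref{soliton}, with the Galilean parameter coming from $\xi(t)$.

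\textbf{Step 4: the finite-time case.} If $\sup I<\infty$, we apply the pseudo-conformal transform $\mathcal C$ (after translating $T$ to $0$). This produces a global solution, non-scattering in one direction, to which the global classification applies. This yields \eqref{pseudosoliton}. The two alternatives are exclusive because solitary waves are global while pseudo-conformal images blow up at $T$. Blowup in both directions in finite time is ruled out by the same argument applied in both directions.

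\textbf{Main obstacle.} We expect the main difficulty to be the long-time Strichartz estimate and the Morawetz error control in the non-radial setting. The nonlocal term does not localize in frequency as cleanly as $|u|^{4/d}u$, and the Galilean drift $\xi(t)$ must be controlled simultaneously. We would first attempt to bound $\xi'(t)$ by $\delta$-dependent quantities, using the modulation equations and the fact that the momentum vanishes after a Galilean normalization.
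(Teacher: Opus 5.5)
Step~1, the pseudo-conformal reduction in Step~4, and the first half of Step~3 (Morawetz plus \eqref{sharp-HGN-intro} at mass $M(Q)$) are fine in outline. The latter is in the spirit of the argument behind the sequential convergence $\delta(u(t_n))\to0$, which the paper imports from \cite{MMRZ} as Proposition~\ref{sequ}.

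\textbf{The conclusion of Step~3 is the genuine gap.} Almost periodicity cannot upgrade ``$\delta$ (or its average) tends to zero along the flow'' to $\delta\equiv0$. The renormalized limit is just $Q$ and says nothing about $u$ at finite times. The pseudo-conformal soliton shows the inference is false: it is almost periodic on $[0,T)$ with $\delta(u(t))\lesssim|T-t|\to0$, yet $\delta(u(t))>0$ for $t<T$. Whatever singles out the global case must therefore enter quantitatively, and your plan has no mechanism for this. The paper supplies it in two stages.
\begin{enumerate}
\item Section~\ref{reductionsec} reduces to $\sup_{t\ge0}\delta(u(t))\le\eta_\ast$. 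If $\delta$ keeps exceeding $\eta_\ast$, one takes the last exit times $t_n^+$ before the times of Proposition~\ref{sequ}. Perturbing around an exact solitary wave shows $S_{[t_n^+,t_n]}(u)\to\infty$. One then extracts a limit $w$ that blows up in both directions, has $\delta(w(0))=\eta_\ast$, and stays $\eta_\ast$-close to the orbit for $t\ge0$. Theorem~\ref{t2.3} forces $w$ to be a pseudo-conformal soliton, which scatters backward, a contradiction.
\item Near the orbit, the decisive object is an almost conserved, coercive quantity: the frequency-truncated energy $E(P_{\le k_0+4}u)$.
\begin{itemize}
\item Its lower bound $\gtrsim\lambda^{-2}\|\epsilon\|_{L^2}^2$ (Theorem~\ref{t4.1}) uses the orthogonality conditions, coercivity of $L_\pm$, and the identity $\|\epsilon\|_{L^2}^2=-2(\epsilon,Q)_{L^2}$ coming from $M(u)=M(Q)$.
\item Its drift is controlled via a long-time Strichartz estimate (Theorem~\ref{t4.2}).
\item The modulated virial inequality \eqref{f7.2}, with boundary terms $(\epsilon_2,\frac d2Q+x\cdot\nabla Q)_{L^2}$, gives $\|\epsilon(s)\|_{L^2}\in L^p_s$ for all $p>1$ and almost monotonicity of $\lambda$.
\end{itemize}
Comparing the truncated energy at time $0$ with its size at late times, where $\epsilon$ is tiny, is what forces $\epsilon(0)=0$.
\end{enumerate}

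\textbf{Several intermediate claims would also fail as stated.}
\begin{itemize}
\item The modulation errors are not all quadratic. $\lambda_s/\lambda$ and $x_s/\lambda+2\xi$ are controlled only linearly by $\|\epsilon\|_{L^2}$ (see \eqref{f4.45}, \eqref{f4.55}), so you cannot conclude that $N(t)$ is essentially constant. One only gets $\lambda(s)<e\inf_{[0,s]}\lambda$ (Theorem~\ref{t9.16}). The scenario $\lambda(s)\to0$ as $s\to\infty$ must then be excluded by the energy comparison with explicit bookkeeping of scales. This is Case~1 of Theorem~\ref{t9.17}, where the lower bound $2^{-2k_n}\|\epsilon(0)\|_{L^2}^2$ is played against an upper bound $\ll2^{-21k_n/10}$.
\item The long-time Strichartz estimate you propose, with $o(K)$ errors as in the scattering argument, is too weak for this energy control. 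One needs \eqref{f5.66}, which bounds the high frequencies by the time average of $\lambda^{-2}\|\epsilon\|_{L^2}^2$ plus $T^{-10}$. Proving it requires the modulation decomposition. It also requires a separate treatment of $(|x|^{-2}*|\widetilde Q|^2)h$ and $(|x|^{-2}*(\widetilde Q\bar h))\widetilde Q$ via local smoothing and the decay of $Q$ and $|x|^{-2}*Q^2$.
\item The ``reduction to $H^1$'' in Step~2 is neither justified nor sufficient. No extra regularity is available for a general $L^2$ threshold solution. Moreover, the known $H^1$ classifications of global solutions \cite{LZ28,Wang-Gao} are radial, so they could not settle the non-radial global case anyway.
\end{itemize}
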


Theorem~\ref{main1} extends the classification results of
\cite{LZ28,Miao-Xu-Zhao3,Wang-Gao} from $H^1$ to $L^2$ and removes
the radial assumption in the global case. Together with
Theorem~\ref{MMRZthm}, it classifies all blowup solutions with mass
at most $M(Q)$.

This result contributes to the study of long-time and blowup dynamics
for dispersive equations with nonlocal nonlinearities, an active area
of research; see, for instance,
\cite{Gomez-Schmid-Wu,Li-Liu-CSS,Dodson-CS}.
We mention in particular the closely related rigidity results for the
equivariant Chern--Simons--Schr\"odinger equation, whose reduced form
contains nonlocal gauge terms. Li and Liu
\cite{Li-Liu-CSS} classified threshold solutions in $H^1$ in the
self-dual case and proved finite-time blowup rigidity in the
non-self-dual case. Dodson \cite{Dodson-CS} subsequently established
$L^2$ rigidity at the soliton threshold in the non-self-dual equivariant
setting.

Our proof follows the concentration-compactness and rigidity strategy
of Kenig and Merle \cite{kenig-merle1}, as adapted to the mass-critical
setting by Tao, Vi\c{s}an, and Zhang \cite{TVZ2}.
The concentration-compactness argument shows that a minimal-mass blowup
solution has a precompact $L^2$ orbit modulo symmetries in its blowup
direction. In our previous work \cite{MMRZ}, the rigidity argument
ruled out minimal counterexamples to scattering below $M(Q)$.
At the threshold, we instead identify the minimal-mass blowup solutions
with this compactness property as solitary waves or their
pseudo-conformal transforms.

For the threshold classification problem for \eqref{NLS}, Killip, Li,
Vi\c{s}an, and Zhang \cite{KLVZ} used this framework to classify global
non-scattering solutions at the ground-state mass for radial $H^1$
data in dimensions $d\geq4$. Together with Merle's earlier
classification of finite-time minimal-mass blowup solutions in $H^1$
\cite{M}, this gave the complete threshold classification in that
setting; see also Hmidi and Keraani \cite{MS} for a simpler proof of
the finite-time result. Li and Zhang subsequently extended the
rigidity theory in \cite{LZ0,LZ1,LZ2,LZ3}.
Dodson \cite{D2,D1} completed the classification for general $L^2$
data in dimensions $1\leq d\leq15$, removing both the additional
regularity and the radial symmetry assumptions. For \eqref{1.1},
carrying out this program requires controlling the nonlocal
interactions at the same critical regularity, as we explain below.

\subsection{Outline of the proof}\label{intro:outline}

The proof of Theorem~\ref{main1} is based on Dodson's work on the
mass-critical NLS \cite{D2,D1} and on our previous work with
Rosenzweig \cite{MMRZ}. The convergence result at mass $M(Q)$ in
\cite{MMRZ} is used to reduce the classification to a rigidity
theorem near the ground state.

The nonlocal nature of the Hartree nonlinearity creates several
difficulties in proving Theorem~\ref{main1} at $L^2$ regularity. In
particular, the spatial and frequency localizations required by the energy,
virial, and long-time estimates generate nonlocal interaction terms that do
not arise in the same form for the local NLS. To control these errors, we
exploit the structure of the Hartree equation near the ground state. More
precisely, we combine the decay of $Q$ and its induced potential with
local smoothing in the long-time estimates, and use the symmetry of
the interaction kernel in the localized virial argument.

To apply the localized energy and virial estimates at $L^2$
regularity, we work with a frequency truncation of $u$. Controlling
the resulting errors requires spacetime bounds for the
high-frequency part of $u$ over long intervals. We obtain these
bounds from the long-time Strichartz estimate in
Theorem~\ref{longtime}.

A key difficulty in establishing this estimate lies in the
interaction between the high-frequency component and the ground
state. More precisely, we encounter terms of the form
\begin{equation}\label{intro:nonlocal-interactions}
 (K*Q^2)h,
 \qquad
 \bigl(K*(Q\overline h)\bigr)Q,
 \qquad K(x)=|x|^{-2},
\end{equation}
where $h$ is a high-frequency function. A direct application of
H\"older's inequality and the Hardy--Littlewood--Sobolev inequality
gives bounds that grow with the length of the time interval.
The potential $K*Q^2$ in the first term has only inverse-square
decay, whereas $Q$ decays rapidly. In the second term, the
high-frequency factor lies inside the convolution. The key point is
to distinguish these two ways in which the high-frequency factor
enters the interaction. Keeping track of the ground-state factors
inside and outside the convolution, we combine their spatial decay
with local smoothing \cite{CS88,D2} to obtain the frequency gain
needed to close the induction. In the non-radial case, the estimates
must also keep track of the motion of the spatial center; see
Section~\ref{sec-long}.

A further difficulty comes from spatial localization in the virial
argument. Let $0\leq\chi_R\leq1$ be a smooth cutoff equal to one on
$B_R$ and vanishing outside $B_{2R}$. Unlike the corresponding error
for a local power nonlinearity, $\chi_RF(u)-F(\chi_Ru)$ is not supported
in the transition annulus. Indeed,
\[
 \chi_RF(u)-F(\chi_Ru)
 =\chi_Ru\bigl(K*((1-\chi_R^2)|u|^2)\bigr).
\]
The localized virial identity consequently contains terms coupling
the ground state in the interior to the remainder in the exterior.
These terms are quadratic in the remainder of the modulation
decomposition, as is the positive term in the virial inequality,
and hence must be estimated with a small coefficient. To obtain
this smallness, we use the symmetry of the interaction kernel to
write the nonlinear virial contribution as a symmetric double
integral. This form allows us to separate the interaction between
the interior ground state and the exterior remainder, and to combine
spatial separation with the decay of $Q$. Together with the
truncated energy estimates, these bounds control the quadratic
coupling with the small coefficient required for absorption; see
Section~\ref{Sec:viria}.

We now describe the main steps of the proof in more detail.

\subsubsection*{Reduction to solutions near the ground state.}

By time reversal, we may assume that $u$ blows up forward in time.
Theorem~\ref{MMRZthm}, linear profile decomposition, and stability
imply that a minimal-mass blowup solution is almost periodic modulo
symmetries in its blowup direction. Using
\cite[Appendix A]{MMRZ} and a compactness argument, we obtain times
$t_n\to\sup I$ and symmetries $g_n$ such that
\begin{equation}\label{equ:gnutnQ}
 g_nu(t_n)\longrightarrow Q
 \qquad\text{strongly in }L^2(\R^d).
\end{equation}
The argument of \cite{D2,D1} then reduces Theorem~\ref{main1} to
Theorem~\ref{t2.3}, which classifies solutions that stay in a small
$L^2$ neighborhood of the ground-state orbit in one time direction.
We give this reduction in Section~\ref{reductionsec}.

For such a solution, we write
\begin{equation}\label{intro:modulation}
 u(t,x)=\frac{e^{-i\gamma(t)}e^{-iy\cdot\xi(t)}}{\lambda(t)^{d/2}}
 \bigl(Q(y)+\epsilon(t,y)\bigr),
 \qquad y=\frac{x-x(t)}{\lambda(t)}.
\end{equation}
The modulation parameters are chosen by the $L^2$ orthogonality
conditions in Appendix~\ref{sec-mod}. We denote the two terms in
\eqref{intro:modulation} by $\widetilde Q$ and
$\widetilde\epsilon$, respectively.

\subsubsection*{Long-time Strichartz estimate.}

We next prove a long-time Strichartz estimate for solutions close to
the ground-state orbit. Let $J=[a,b]$ satisfy the hypotheses of
Theorem~\ref{longtime}, and set
\[
 T=\int_a^b\lambda(t)^{-2}\,dt=2^{3k_0}.
\]
Then
\begin{equation}\label{intro-LTS}
 \|P_{\geq k_0}u\|_{U_\Delta^2(J)}
 \lesssim
 \left(\frac1T\int_a^b\lambda(t)^{-2}
 \|\epsilon(t)\|_{L^2}^2\,dt\right)^{1/2}+T^{-10},
\end{equation}
with an implicit constant independent of $T$. Here $P_{\geq k_0}$
is the Littlewood--Paley projection to frequencies
$|\xi|\gtrsim2^{k_0}$, and $U_\Delta^2$ is the atomic space recalled
in Section~\ref{sec-long}; see also \cite{HHK}.

The proof is by induction on frequency. After isolating a
high-frequency factor, we decompose the remaining factors into
$\widetilde Q$ and $\widetilde\epsilon$. The terms involving
$\widetilde\epsilon$ are controlled by its smallness, while the terms
with two ground-state factors are estimated using local smoothing
and the decay of $Q$ and $|x|^{-2}*Q^2$.

\subsubsection*{Truncated energy and rigidity.}\label{intro:cutoffs}

Let $v=P_{\leq k_0+4}u$. The equation for $v$ differs from
\eqref{1.1} by the term
\[
 P_{\leq k_0+4}F(u)-F(v).
\]
Using \eqref{intro-LTS} and the Lorentz-space estimate in
Remark~\ref{lorentzlongtime}, we control the increment of $E(v)$.
Under the hypotheses of Theorem~\ref{t4.2}, this gives
\begin{equation}\label{intro:energy-increment}
 |E(v(t))-E(v(t_0))|
 \lesssim\frac{2^{2k_0}}{T}
 \int_a^b\lambda(\tau)^{-2}\|\epsilon(\tau)\|_{L^2}^2\,d\tau
 +2^{2k_0}T^{-10},\qquad t,t_0\in J.
\end{equation}

Combining \eqref{intro:energy-increment} with the energy expansion
around the modulated ground state gives the upper bound for $E(v)$
in Theorem~\ref{t4.2}. For the lower bound, the ground-state equation
and the exact mass constraint allow us to organize the expansion at
quadratic order. The modulation orthogonality conditions then give
access to the coercivity of the linearized operators, with the
frequency truncation errors controlled by the estimates above.
Comparing the two bounds gives the estimates for $\epsilon$ and its
low-frequency part in Corollary~\ref{c4.3}.

We then use these estimates in the modulated virial inequality of
Section~\ref{Sec:viria} to estimate
$\int_a^b\lambda(t)^{-2}\|\epsilon(t)\|_{L^2}^2\,dt$.
In the non-radial setting, the modulation equations control the
motion of the spatial and frequency centers. This allows the
localization estimates to be applied on the long intervals required
by the rigidity argument.
With $ds=\lambda(t)^{-2}dt$, the virial inequality implies
$\|\epsilon(s)\|_{L^2}\in L_s^p$ for every $p>1$. It also gives
almost monotonicity of $\lambda(s)$. For a global solution, we use
these estimates to find times at which $\epsilon$ decays sufficiently
rapidly that the energy estimates in Section~\ref{sec-en} force $\epsilon\equiv0$.
For finite-time blowup, we apply the pseudo-conformal transformation
and reduce to the global case.

\subsection{Organization of the paper}

In Section~\ref{sec-seq}, we  prove sequential convergence to the ground-state
orbit. Section~\ref{reductionsec} reduces Theorem~\ref{main1} to the
classification of solutions that remain in a small $L^2$ neighborhood
of this orbit. In Section~\ref{sec-long}, we  establish the long-time Strichartz estimate in $U_\Delta^2$, and we show  the
lower and upper bounds for the frequency-truncated energy in Section~\ref{sec-en}.
Section~\ref{Sec:viria} derives the modulated virial inequality.
In Section~\ref{Sec:Thm2.3}, we  utilize these estimates to prove rigidity for
global solutions and then treat finite-time blowup by the
pseudo-conformal transformation. Appendix~\ref{sec-mod} contains the
non-radial modulation analysis and the coercivity estimates for the
linearized operators.

\subsection*{Acknowledgements} Part of this work was done when Z. Ma was visiting Southeast University.  C. Miao was supported by the National Key R\&D program
of China: No.2021YFA1002500 and NSFC Grants 12371095 and 12531005. J. Zheng was supported by  National Key R\&D program of China: 2021YFA1002500 and  NSFC Grant 12671284.

\subsection*{Declaration of AI Use}
The authors used ChatGPT for language polishing, checking typographical errors, and reviewing some calculations and proof details. The authors reviewed all revisions and take full responsibility for the mathematical content and conclusions of the paper.

\section{Notation and useful lemmas}

\subsection{Notation} We begin by setting up some notation. For nonnegative quantities $X$ and $Y$, we write $X\lesssim Y$ to denote the estimate $X\leq C Y$ for some $C>0$. If $X\lesssim Y\lesssim X$, we will write $X\sim Y$. Dependence on various parameters will be indicated by subscripts, e.g. $X\lesssim_u Y$ indicates $X\leq CY$ for some $C=C(u)$. 

We will use the expression $O(X)$ to denote a finite linear combination of terms that resemble $X$ up to Littlewood--Paley projections, complex conjugation, and/or maximal functions. We will use the expression $X\pm$ to denote $X\pm\eps$ for some small $\eps>0$. We will make use of the $L^2$ and $\dot H^1$ inner products given by $(f,g)_{L^2}=\Re\int f\bar g$ and $(f,g)_{\dot H^1}=(\nabla f,\nabla g)_{L^2}$. We will use $A^\perp$ to denote the orthogonal complement of a set $A$. 

For a spacetime slab $I\times\R^d$, we write $L_t^q L_x^r(I\times\R^d)$ for the Banach space of functions $u:I\times\R^d\to\C$ equipped with the norm
\[
\|u\|_{L_t^{q}L_x^r(I\times\R^d)}:=\bigg(\int_I \|u(t)\|_{L_x^r(\R^d)}^{q}\,dt\bigg)^{1/q},
\]
with the usual adjustments if $q$ or $r$ is infinity. When $q=r$, we abbreviate $L_t^qL_x^q=L_{t,x}^q$. We also abbreviate $\|f\|_{L_x^r(\R^d)}$ to $\|f\|_{L_x^r}.$ For $1\leq r\leq\infty$, we use $r'$ to denote the dual exponent to $r$, i.e. the solution to $\frac{1}{r}+\frac{1}{r'}=1.$

The Fourier transform on $\mathbb{R}^d$ is given by
\begin{equation*}
\aligned \widehat{f}(\xi)= \mathcal{F} f(\xi)= \tfrac{1}{(2\pi)^{\frac{d}{2}}}\int_{\mathbb{R}^d}e^{- ix\cdot \xi}f(x)\,dx.
\endaligned
\end{equation*}
We then define the fractional differentiation operator $|\nabla|^s=\mathcal{F}^{-1}|\xi|^s\mathcal{F}$, with the corresponding homogeneous Sobolev norm 
\[
\|f\|_{\dot{H}_x^s(\R^d)}:=\||\nabla|^s f\|_{L_x^2(\R^d)}.
\]

Let $\varphi\in C_0^\infty(\R^d)$ be a real-valued, nonnegative, radial, and radially decreasing function such that
\begin{align*}
\varphi(\xi) =
\begin{cases}
1, \ |\xi| \leq 1, \\
0, \  |\xi| \ge 2,
\end{cases}
\end{align*}
and define $\chi(\xi)=\varphi(\xi)-\varphi(2\xi)$. For a real number $N$ (in particular, for integer dyadic indices), we define the Littlewood-Paley projectors $P_{\leq N}, P_{N}, P_{\geq N}$ via
\begin{align*}
\widehat{P_Nf}(\xi):=\chi\big(\tfrac{\xi}{2^N} \big)\widehat{f}(\xi),  \quad \widehat{P_{\leq N}f}(\xi):=\varphi\big(\tfrac{\xi}{2^N} \big)\widehat{f}(\xi),\quad \widehat{P_{\geq N}f}(\xi):=\Big(1-\varphi\big(\tfrac{\xi}{2^N} \big)\Big)\widehat{f}(\xi).
\end{align*}

\subsection{Some analysis tools} We denote the free Schr\"odinger propagator by $e^{it\Delta}$.  It is most naturally defined as the Fourier multiplier operator with symbol $e^{-it|\xi|^2}$, but also has a physical-space representation given by convolution with a complex Gaussian. This operator satisfies well-known estimates known as Strichartz estimates, which we state as follows.

First, for $d\geq3$, we call a pair of exponents $(q,r)$ \emph{Schr\"odinger admissible}  if $2\leq q,r\leq\infty$ and $\tfrac{2}{q}+\tfrac{d}{r}=\frac{d}{2}$, denoted by $(q,r)\in\Lambda_0$. For a spacetime slab $I\times\R^d$, we define the Strichartz norm 
\[
\|u\|_{S^0(I)}:=\sup\big\{\|u\|_{L_t^{q}L_x^r(I\times\R^d)}:(q,r)\in\Lambda_0\big\}.
\]
We denote $S^0(I)$ to be the closure of all test functions under this norm and write $N^0(I)$ for the dual of
$S^0(I)$. We then have the following:

\begin{proposition}[Strichartz estimates, \cite{GinibreVelo, KeelTao, Strichartz}]\label{prop1}
Let $d\geq3$, $s\geq 0$ and suppose $u:I\times\R^d\to\C$ is a solution to $(i\partial_t+\Delta)u=F$. For any $t_0\in I$, 
\[
\||\nabla|^s u\|_{S^0(I)}\lesssim\||\nabla|^s u(t_0)\|_{L_x^2(\R^d)}+\||\nabla|^s F\|_{N^0(I)}.
\]
\end{proposition}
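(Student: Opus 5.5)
The argument for Proposition~\ref{prop1} is the standard one: reduce to $s=0$, prove the homogeneous estimate by the $TT^*$ method, and obtain the inhomogeneous estimate by duality and the Christ--Kiselev lemma, with the endpoint handled by Keel--Tao.

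\textbf{Reduction to $s=0$.} Since $|\nabla|^s$ is a Fourier multiplier, it commutes with $e^{it\Delta}$ and with $i\partial_t+\Delta$. Hence $|\nabla|^s u$ solves $(i\partial_t+\Delta)(|\nabla|^su)=|\nabla|^sF$, and it suffices to treat $s=0$.

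\textbf{Duhamel and dispersive bounds.} By Duhamel's formula,
\[
u(t)=e^{i(t-t_0)\Delta}u(t_0)-i\int_{t_0}^t e^{i(t-\tau)\Delta}F(\tau)\,d\tau.
\]
The propagator satisfies two basic estimates. The energy estimate is $\|e^{it\Delta}f\|_{L^2}=\|f\|_{L^2}$. The dispersive estimate $\|e^{it\Delta}f\|_{L^\infty}\lesssim|t|^{-d/2}\|f\|_{L^1}$ follows from the explicit Gaussian kernel. Interpolating the two gives
\[
\|e^{it\Delta}f\|_{L^r}\lesssim|t|^{-d(\frac12-\frac1r)}\|f\|_{L^{r'}}.
\]

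\textbf{Homogeneous estimate.} Set $Tf=e^{it\Delta}f$. Then $TT^*G=\int e^{i(t-\tau)\Delta}G(\tau)\,d\tau$. For an admissible pair with $q>2$, the decay rate $d(\frac12-\frac1r)=\frac2q$ lies in $(0,1)$. The Hardy--Littlewood--Sobolev inequality in time therefore gives $TT^*:L_t^{q'}L_x^{r'}\to L_t^qL_x^r$. It follows that $T:L^2\to L_t^qL_x^r$, which is the homogeneous bound.

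\textbf{Inhomogeneous estimate.} The untruncated inhomogeneous estimate for $\int e^{i(t-\tau)\Delta}F(\tau)\,d\tau$ follows from $T(T^*)$ with two different admissible pairs. The retarded integral over $\tau<t$ is then handled by the Christ--Kiselev lemma whenever $q>\tilde q'$, which is automatic when the two time exponents are not both equal to $2$.

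\textbf{Main obstacle: the endpoint.} The endpoint $(2,\frac{2d}{d-2})$, available since $d\ge3$, is the main difficulty. There the HLS step fails, and neither the $TT^*$ argument nor Christ--Kiselev applies. For this case I would invoke the Keel--Tao argument directly. It decomposes the bilinear form $\iint\langle e^{-i\tau\Delta}G(\tau),e^{-it\Delta}F(t)\rangle\,dt\,d\tau$ dyadically in $|t-\tau|\sim2^j$. The dyadic pieces are estimated via interpolation between nearby exponents using bilinear real interpolation, and the resulting bounds are summed.

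\textbf{Conclusion.} Taking the supremum over all admissible pairs controls $\|u\|_{S^0(I)}$ by $\|u(t_0)\|_{L^2}+\|F\|_{N^0(I)}$. Here the bound by the dual norm $N^0$ follows by duality from the estimates above, using the fact that $N^0$ is the dual of $S^0$.
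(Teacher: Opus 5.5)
Your outline is correct and is the standard proof behind the results the paper cites ($TT^*$ with Hardy--Littlewood--Sobolev in time for non-endpoint pairs, the Christ--Kiselev lemma or a direct argument for the retarded term, and Keel--Tao for the endpoint $(2,\tfrac{2d}{d-2})$); the paper gives no proof of its own beyond the citation. The only slip is minor: for the pair $(\infty,2)$ the decay rate $2/q$ equals $0$, so it is not in $(0,1)$ and the Hardy--Littlewood--Sobolev step does not apply, but that case follows at once from unitarity of $e^{it\Delta}$ and the dual homogeneous estimate.
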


We write $\phi_f:=|x|^{-2}*f$ and next record some basic properties of the ground state $Q$: 

\begin{proposition}[Existence of ground state,\cite{Miao-Xu-Zhao3,Moroz}]
     There exists a positive, radial function $Q \in H^1(\R^d)$  which solves the ground state equation \eqref{grounstate}. Moreover, $Q$ has the variational characterization
\[
J(Q) = \min_{u \in H^1(\R^d)\setminus\{0\}} J(u), \quad J(u) := \frac{\|u\|_{L^2(\R^d)}^2 \|\nabla u\|_{L^2(\R^d)}^2}{\int_{\R^d} |u(x)|^2 (|x|^{-2}\ast |u|^2)(x) dx}.
\]
Moreover, $J(Q) = \frac{\|Q\|_{L^2(\R^d)}^2}{2}$, so that
\[
\int_{\R^d} |u(x)|^2 (|x|^{-2} * |u|^2)(x) dx \leq \frac{2 \|u\|_{L^2(\R^d)}^2 \|\nabla u\|_{L^2(\R^d)}^2}{\|Q\|_{L^2(\R^d)}^2}, \quad \forall u \in H^1(\R^d).
\]
\end{proposition}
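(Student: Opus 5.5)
The plan is the standard variational route: existence of an optimizer for $J$ via concentration compactness, then identification of the optimizer with a solution of \eqref{grounstate}, and finally the computation of $J(Q)$ through Pohozaev-type identities. First note that $J$ is finite and bounded below by a positive constant. Hardy--Littlewood--Sobolev gives
\[
\int |u|^2(|x|^{-2}*|u|^2)\lesssim \|u\|_{L^{2d/(d-1)}}^4,
\]
and Gagliardo--Nirenberg gives $\|u\|_{L^{2d/(d-1)}}^2\lesssim\|u\|_{L^2}\|\nabla u\|_{L^2}$. The quotient $J$ is invariant under $u\mapsto a u(b\,\cdot)$ and under translations. Take a minimizing sequence normalized by $\|u_n\|_{L^2}=\|\nabla u_n\|_{L^2}=1$. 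We may assume $u_n\ge0$ radial: replacing $u_n$ by $|u_n|$ does not increase the gradient, and replacing it by its symmetric decreasing rearrangement does not increase the gradient (Pólya--Szegő) while not decreasing the nonlocal term (Riesz rearrangement inequality).

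Next, extract a minimizer. Radial $H^1$ bounded sequences are compact in $L^{p}$ for $2<p<2d/(d-2)$ (Strauss). Hence $u_n\to Q_0$ in $L^{2d/(d-1)}$, so the nonlocal term converges by HLS and stays bounded away from zero (it tends to $1/\inf J$), so $Q_0\ne0$. Weak lower semicontinuity of $\|u\|_{L^2}$ and $\|\nabla u\|_{L^2}$ then gives $J(Q_0)\le\inf J$. Thus $Q_0$ is a nonnegative radial minimizer.

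The Euler--Lagrange equation for $J$ at $Q_0$ reads
\[
\|\nabla Q_0\|^2 Q_0-\|Q_0\|^2\Delta Q_0-\frac{2\|Q_0\|^2\|\nabla Q_0\|^2}{N(Q_0)}\phi_{Q_0^2}Q_0=0,
\]
where $N(u):=\int|u|^2\phi_{|u|^2}$. Rescaling $Q=aQ_0(b\cdot)$, with $a,b$ chosen to normalize the coefficients, gives a solution of \eqref{grounstate}. Positivity follows from the strong maximum principle applied to $-\Delta Q+Q=\phi_{Q^2}Q\ge0$, and regularity by bootstrapping.

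For the value of $J(Q)$, pair \eqref{grounstate} with $Q$ to get
\[
\|\nabla Q\|^2+\|Q\|^2=N(Q).
\]
Pair it with $x\cdot\nabla Q$ (Pohozaev), using the degree $-2$ homogeneity of the kernel, to get
\[
\tfrac{d-2}{2}\|\nabla Q\|^2+\tfrac d2\|Q\|^2=\tfrac{d-2}{4}N(Q)\cdot\ldots
\]
I would check the constant carefully. Equivalently, use scaling: $\tfrac{d}{d\lambda}$ of the action of $\lambda^{d/2}Q(\lambda\cdot)$ at $\lambda=1$ gives $\|\nabla Q\|^2=\tfrac12 N(Q)$. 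Combining the two identities yields $\|\nabla Q\|^2=\|Q\|^2$ and $N(Q)=2\|Q\|^2$. Therefore $J(Q)=\|Q\|^4/(2\|Q\|^2)=\|Q\|^2/2$, and the inequality follows since $J(u)\ge J(Q)$.

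The main obstacle is justifying the Pohozaev/scaling identity rigorously. This requires enough decay and regularity of $Q$ to integrate by parts against $x\cdot\nabla Q$. I would handle it by cutting off at radius $R$ and letting $R\to\infty$, using $Q\in H^1$, elliptic regularity, and the bound $\phi_{Q^2}\lesssim\langle x\rangle^{-2}$. Alternatively, one can avoid this step entirely: the scaling identity follows directly from minimality, since $\lambda\mapsto J(\lambda^{d/2}Q(\lambda\cdot))$ is constant, combined with minimizing the action along $L^2$-preserving dilations.
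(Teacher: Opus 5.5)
Your proposal is the standard Weinstein-type variational argument: rearrangement plus Strauss compactness give a radial minimizer of $J$, then the Euler--Lagrange equation and a rescaling. That is essentially what the references cited for this proposition carry out; the paper itself gives no proof beyond the citation.

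The one loose end is the final step. The Pohozaev right-hand side should be $\frac{d-1}{2}N(Q)$. Your closing ``avoid this step'' shortcut does not work as stated: constancy of $J$ along dilations carries no information, and criticality of the action at $\lambda=1$ is exactly what must be shown. In fact no Pohozaev identity is needed. Your normalization $b^2=\|Q_0\|_{L^2}^2/\|\nabla Q_0\|_{L^2}^2$, $a^2b^{2-d}=2\|Q_0\|_{L^2}^2/N(Q_0)$ already gives $\|\nabla Q\|_{L^2}^2=\|Q\|_{L^2}^2$ and $N(Q)=2\|Q\|_{L^2}^2$, hence $J(Q)=\|Q\|_{L^2}^2/2$.
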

\begin{proposition}[\cite{Chen,Li,Moroz}]\label{grondstate1}
The following two statements hold:
\begin{enumerate}

\item (Decay of the ground state) There exists a constant $\delta>0$ such that
    \[
   |Q(x)|+|\nabla Q(x)|\lesssim e^{-\delta|x|},
    \]
\item (Decay of the non-local term)  We have
\[
\big[|x|^{-2}\ast |Q|^2\big](x)\lesssim \frac{1}{1+|x|^2}.
\]
\end{enumerate}
\end{proposition}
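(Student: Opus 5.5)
Both parts rest on the ground-state equation \eqref{grounstate} together with positivity and radial symmetry of $Q$; the plan is to use a comparison argument for (1) and a Hardy--Littlewood--Sobolev splitting for (2).

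For the exponential decay (1), I first show that $Q$ is bounded and decays at infinity. From $Q\in H^1$ and Hardy--Littlewood--Sobolev, the potential $V:=|x|^{-2}*Q^2$ lies in $L^{d/2}+L^\infty$. Moser iteration, or a bootstrap of elliptic regularity on $(1-\Delta)Q=VQ$, then gives $Q\in L^\infty$ and $Q$ continuous. Since $Q$ is radial and in $L^2$, the Strauss radial lemma gives $Q(x)\lesssim |x|^{-(d-1)/2}$ for large $|x|$, so $Q(x)\to0$ as $|x|\to\infty$.

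Next I show that $V(x)\to 0$ at infinity; this is the step that drives the rest. Split the convolution into the regions $|y|<|x|/2$ and $|y|\ge |x|/2$. The first region contributes at most $4|x|^{-2}\|Q\|_2^2$, since there $|x-y|\ge|x|/2$. The second contributes at most $\sup_{|y|\ge|x|/2}Q(y)^{2}\cdot$ (a local integral of $|x-y|^{-2}$ against the remaining factor), controlled using the radial decay of $Q$. Hence there is $R$ with $V\le 1/2$ for $|x|>R$. On that region $-\Delta Q+\tfrac12 Q\le 0$, because $Q>0$. I then compare with the supersolution $w=Ce^{-\delta|x|}$, where $\delta<1/\sqrt2$, so that $-\Delta w+\tfrac12 w\ge0$ for $|x|>R$. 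Choosing $C$ so that $w\ge Q$ on $|x|=R$ and using $Q\to0$, the maximum principle gives $Q\le Ce^{-\delta|x|}$. The gradient bound follows from interior elliptic (Schauder/$W^{2,p}$) estimates on unit balls $B(x,1)$, applied to $\Delta Q=Q-VQ$, whose right side is $O(e^{-\delta|x|})$ after slightly shrinking $\delta$.

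For (2), once (1) is known, $Q^2$ is an integrable, exponentially decaying bounded function. Boundedness of $V$ comes from splitting into $|x-y|<1$, where $Q^2\in L^\infty$ and $|z|^{-2}$ is locally integrable because $d\ge3$, and $|x-y|\ge1$, which is bounded by $\|Q\|_2^2$. For $|x|\ge2$, the near region $|y|<|x|/2$ gives $\lesssim |x|^{-2}$. The far region $|y|\ge|x|/2$ gives $\lesssim e^{-\delta|x|/2}\int_{|x-y|\le 3|y|} |x-y|^{-2}e^{-\delta|y|}dy$, which is exponentially small. Together these give $V\lesssim(1+|x|^2)^{-1}$.

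I expect the main obstacle to be the preliminary decay of $V$ needed in (1) before exponential decay is available. The fallback is to use only $Q\in L^{2d/(d-2)}\cap L^2$ and $Q\to 0$, which already make the far-region contribution $o(1)$ via Hölder with $|z|^{-2}\in L^{d/2,\infty}$ restricted to a ball.
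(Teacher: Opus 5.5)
The paper gives no proof of this proposition. It is quoted from \cite{Chen,Li,Moroz} and used as a black box, so there is no in-text argument to compare yours against. Your proof is correct and self-contained, and it follows the standard route for Choquard-type equations. You first show that $V=|x|^{-2}*Q^2$ tends to zero at infinity. You then compare $Q$ with $Ce^{-\delta|x|}$ on an exterior domain where $-\Delta Q+\tfrac12 Q\le 0$, and recover $\nabla Q$ from interior $W^{2,p}$ estimates on unit balls. Finally you split the convolution at $|y|=|x|/2$ to get the $|x|^{-2}$ rate in part (2). The gain over the citation is that your argument uses only properties of $Q$ already recorded in the paper: positivity, radial symmetry and $Q\in H^1$.

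Three steps should be stated more carefully.
\begin{enumerate}
\item $V\in L^{d/2}+L^\infty$ is the borderline case and does not by itself give $Q\in L^\infty$. Use the full Hardy--Littlewood--Sobolev range instead: since $Q^2\in L^p$ for $1<p<\frac{d}{d-2}$, you get $V\in L^q$ for every $q\in(\frac d2,\infty)$. Alternatively, first apply Brezis--Kato to get $Q\in L^s$ for all $s<\infty$, after which $V\in L^\infty$.
\item The Strauss lemma needs $Q$ radial in $H^1$, not merely in $L^2$. You do have this.
\item When showing $V(x)\to0$, you cannot pull $\sup_{|y|\ge|x|/2}Q(y)^2$ out of the whole region $\{|y|\ge|x|/2\}$, because $\int|x-y|^{-2}\,dy$ diverges there. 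Split further. On $|x-y|<1$ the integral is bounded by $\sup_{|y|\ge|x|/2}Q(y)^2\int_{|z|<1}|z|^{-2}\,dz$. On $|x-y|\ge1$ it is bounded by the tail $\int_{|y|\ge|x|/2}Q^2\,dy$, which tends to zero.
\end{enumerate}
With these adjustments the comparison argument, the gradient bound and the splitting for part (2) all go through as you describe.
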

As a consequence, we have
\begin{proposition}[$W^{k,p}$ estimate of $Q$]\label{grondstate2}
    For any  $p\in(1,\infty)$ and integer $k\geq0$, we have
    \[
    \||\nabla|^k Q\|_{L^p}\lesssim_{k,p}1 
    \]
\end{proposition}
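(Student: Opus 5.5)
The estimate will follow from the equation \eqref{grounstate} by elliptic regularity, bootstrapped with the decay bounds of Proposition~\ref{grondstate1}. Write the ground state equation as
\[
(1-\Delta)Q=\phi_{Q^2}\,Q,\qquad \phi_{Q^2}=|x|^{-2}*Q^2 .
\]
The case $k=0$ is immediate from Proposition~\ref{grondstate1}(1): $Q\lesssim e^{-\delta|x|}$ and $Q\in H^1$ give $Q\in L^p$ for every $p\in(1,\infty)$. Similarly $\nabla Q\in L^p$ for all such $p$, since $|\nabla Q|\lesssim e^{-\delta|x|}$ is bounded and integrable. Note also that $\phi_{Q^2}\in L^\infty$ by Proposition~\ref{grondstate1}(2).

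For the induction, assume that $\|\nabla^j Q\|_{L^p}\lesssim 1$ for all $j\le k$ and all $p\in(1,\infty)$. The Bessel potential $(1-\Delta)^{-1}$ maps $W^{k,p}$ to $W^{k+2,p}$ for $1<p<\infty$ (Mikhlin multiplier theorem). It therefore suffices to show $\phi_{Q^2}Q\in W^{k,p}$. By Leibniz,
\[
\nabla^k(\phi_{Q^2}Q)=\sum_{j\le k}c_j\,\phi_{\nabla^{j_1}Q\,\nabla^{j_2}Q}\;\nabla^{k-j}Q ,
\]
where we distribute derivatives through the convolution: $\nabla^j\phi_{Q^2}=|x|^{-2}*\nabla^j(Q^2)$, and $\nabla^j(Q^2)$ is a sum of products $\nabla^{j_1}Q\nabla^{j_2}Q$ with $j_1+j_2=j$. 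Each convolution factor we bound in $L^\infty$ by splitting $|x|^{-2}=|x|^{-2}1_{|x|<1}+|x|^{-2}1_{|x|\ge1}$. The first piece lies in $L^{r}$ for $r<d/2$ and the second in $L^{r}$ for $r>d/2$. Hölder then reduces matters to $\nabla^{j_1}Q\nabla^{j_2}Q\in L^{a}\cap L^{b}$ for suitable $a,b$, which follows from the inductive hypothesis when $j_1,j_2\le k$. Combining $\|\nabla^j\phi_{Q^2}\|_{L^\infty}\lesssim1$ with $\nabla^{k-j}Q\in L^p$ gives $\nabla^k(\phi_{Q^2}Q)\in L^p$. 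Hence $Q\in W^{k+2,p}$ for all $p$, and by interpolation the statement holds for $|\nabla|^k$ (note $\||\nabla|^kQ\|_{L^p}\sim\|\nabla^kQ\|_{L^p}$ by Riesz transforms).

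The main obstacle is the endpoint term $j=k$, where all $k$ derivatives land on $\phi_{Q^2}$. This term needs $\nabla^{j_1}Q\nabla^{j_2}Q$ with possibly $j_1=k$, which the induction provides, since it assumes the bounds up to order $k$ for every $p$. The only care is ensuring that the Hölder exponents around $d/2$ are admissible, i.e.\ strictly inside $(1,\infty)$. We avoid $p=1$ and $p=\infty$ in the inductive step by using the full open range of $p$ at each stage.
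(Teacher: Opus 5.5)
Your proof is correct and follows essentially the same route as the paper. Both bootstrap elliptic regularity for $(1-\Delta)Q=\phi_{Q^2}Q$, gaining two derivatives per step through Calder\'on--Zygmund (your Bessel-potential/Mikhlin bound). The nonlocal factor $|x|^{-2}*\nabla^{j}(Q^2)$ is controlled by your near/far kernel splitting plus H\"older, where the paper invokes Hardy--Littlewood--Sobolev. A minor point: at the end, the Riesz-transform remark alone passes from $\nabla^k$ to $|\nabla|^k$, and no interpolation is needed since $k$ is an integer.
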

\begin{proof} From Proposition \ref{grondstate1} we see that
\[
Q\in L^p\qtq{for all } p\in[1,\infty]\qtq{ and} |x|^{-2}\ast |Q|^2\in L^q\qtq{for all} q\in(\frac{d}{2},\infty].
\]
Recalling $Q$ satisfies the elliptic equation \eqref{grounstate}, therefore, by the Calderón-Zygmund estimate, we deduce that
\begin{equation}\label{firstq}
  Q\in W^{2,p}\qtq{for all}p\in (1,\infty).  
\end{equation}
Noticing that for any $u,v\in W^{2k,p},\forall p>1$, we have
\[
\Delta^k \left[(|x|^{-2}\ast |u|^2)v\right]\in L^p\qtq{for all} p\in(1,\infty),
\]
by the Hardy-Littlewood-Sobolev inequality and H\"older's inequality, then the desired estimate comes from Calderón-Zygmund estimate by iteration, starting with \eqref{firstq} and chain rule.
\end{proof}

\subsection{Almost periodic solutions}
\begin{definition}[Almost periodic modulo symmetries (APMS)]
We say that $u \in C_{t, \text { loc }}^0 L_x^2\left(I \times \R^d\right)$ is almost periodic  if there exists a spatial center function $x: I \rightarrow \R^d$, a frequency center function $\xi: I \rightarrow \R^d$, a frequency scale function $N: I \rightarrow(0, \infty)$, and a compactness modulus function $C:(0, \infty) \rightarrow[0, \infty)$, such that for every $\eta>0$, we have the spatial and frequency localization estimates
\[\int_{|x-x(t)| \geq C(\eta) / N(t)}|u(t, x)|^2 d x+\int_{|\xi-\xi(t)|\geq C(\eta) N(t)}|\hat{u}(t, \xi)|^2 d \xi \leq \eta, \quad \forall t \in I.\]
\end{definition}
\begin{lemma}\label{alcompact}
Suppose $u:I\times\R^d\to\C$ is a maximal-lifespan solution to \eqref{1.1} satisfying $M(u)=M(Q)$.
If $u$ blows up forward in time, then $u$ is almost periodic on $[0,\sup I)$.  Similarly, if $u$ blows up backward in time, then $u$ is almost periodic on $(\inf I,0]$.
\end{lemma}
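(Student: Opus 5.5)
The plan is the standard Kenig--Merle/Tao--Vi\c{s}an--Zhang minimal-blowup argument, with Theorem~\ref{MMRZthm} as the subcritical input. It is enough to treat forward blowup; the backward case follows by time reversal $u(t,x)\mapsto\overline{u(-t,x)}$. We argue by contradiction. Suppose $u$ is not almost periodic on $[0,\sup I)$. Then there exist $\eta>0$ and times $t_n\in[0,\sup I)$ such that, for every choice of centers $x_n,\xi_n$ and scales $N_n$, the sequence $u(t_n)$ fails to be tight at level $\eta$ after renormalizing by these symmetries. In particular, no subsequence of $g_nu(t_n)$ converges strongly in $L^2$ for any symmetries $g_n$. (The translation from precompactness modulo symmetries to the localization estimates in the definition is routine.)

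First, I apply the linear profile decomposition for $L^2$-bounded sequences adapted to $e^{it\Delta}$. This is the Merle--Vega/Carles--Keraani/B\'egout--Vargas decomposition in dimension $d\ge3$. It gives profiles $\phi^j$, symmetry parameters $(\lambda_n^j,t_n^j,x_n^j,\xi_n^j)$ and remainders $w_n^J$ with
\[
u(t_n)=\sum_{j=1}^J g_n^j e^{it_n^j\Delta}\phi^j+w_n^J,\qquad \lim_{J\to\infty}\limsup_{n\to\infty}\|e^{it\Delta}w_n^J\|_{L^{\frac{2(d+2)}{d}}_{t,x}}=0.
\]
It also gives asymptotic mass decoupling, $M(u)=\sum_j M(\phi^j)+M(w_n^J)+o(1)$. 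Since the decomposition is not a single strongly converging profile, there are two possibilities. Either there are at least two nonzero profiles, or there is exactly one profile and $M(w_n^J)$ does not tend to zero. In either case every profile satisfies $M(\phi^j)<M(Q)$. If some profile carried the full mass $M(Q)$ with $w_n\to0$ in $L^2$, then $u(t_n)$ would be compact modulo symmetries, contradicting the choice of $t_n$.

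Next, I build a nonlinear profile $v^j$ for each $\phi^j$, namely the maximal solution of \eqref{1.1} matching $g_n^je^{it_n^j\Delta}\phi^j$ at time $t_n$. When $t_n^j\to\pm\infty$ it is taken as a wave operator, using the local theory of \cite{Miao-Xu-Zhao2,MMRZ}. Since $M(\phi^j)<M(Q)$, Theorem~\ref{MMRZthm} shows each $v^j$ is global with finite $L^{\frac{2(d+2)}d}_{t,x}$ norm. I then form the approximate solution $\sum_{j\le J}v_n^j+e^{i(t-t_n)\Delta}w_n^J$ and apply the $L^2$ stability (long-time perturbation) theory. This yields $S_{[t_n,\sup I)}(u)\lesssim1$ uniformly for large $n$, contradicting forward blowup. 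This also requires $\sup I-t_n$ to be handled, but stability gives the bound on the whole forward interval.

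The main obstacle is the decoupling of the nonlinearity. One must show that $F\bigl(\sum_j v_n^j\bigr)-\sum_jF(v_n^j)$ and the cross terms with the remainder are small in the dual Strichartz norm. For the Hartree nonlinearity this is nonlocal: the terms $(|x|^{-2}*(v_n^j\overline{v_n^k}))v_n^l$ with $j\ne k$ must be controlled using the orthogonality of parameters. I would first approximate the $v^j$ by compactly supported smooth spacetime functions. Then I would use the Hardy--Littlewood--Sobolev inequality to reduce to the vanishing of $\|v_n^jv_n^k\|_{L^{\frac{d+2}{d}}_{t,x}}$ for orthogonal profiles, which follows from the divergence of the parameter separation, exactly as in the local case. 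The remainder interaction $F(u_n)-F(u_n-e^{it\Delta}w_n^J)$ is handled by the usual trilinear estimate together with the smallness of $e^{it\Delta}w_n^J$ in the critical norm. These are the same estimates used in \cite{MMRZ}, which I would invoke.
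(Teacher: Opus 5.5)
There is a genuine gap in your case analysis. Your overall route matches the paper's sketch: profile decomposition of $u(t_n)$, Theorem~\ref{MMRZthm} for sub-threshold nonlinear profiles, then stability.

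The gap is your claim that a single profile carrying the full mass $M(Q)$ with $w_n\to0$ in $L^2$ makes $u(t_n)$ compact modulo symmetries. This holds only when the profile's time parameter $t_n^1$ stays bounded. If $t_n^1\to\pm\infty$, then $u(t_n)=g_n^1e^{it_n^1\Delta}\phi^1+o_{L^2}(1)$ disperses. It is not precompact modulo scaling, translation, Galilean boost and phase. Yet $M(\phi^1)=M(Q)$, so Theorem~\ref{MMRZthm} says nothing about the nonlinear profile $v^1$. Indeed $v^1$ may blow up forward: the pseudo-conformal solitary wave \eqref{pseudosoliton} with $I=(-\infty,T)$ scatters backward. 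So no stability argument can bound $S_{[t_n,\sup I)}(u)$ in this case.

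A sanity check shows the argument as written proves too much. It never uses $t_n\geq0$, and the only contradiction it invokes is with forward blowup. It would therefore give almost periodicity on all of $I$ for every forward-blowup solution. But \eqref{pseudosoliton} with $I=(-\infty,T)$ is not almost periodic as $t\to-\infty$.

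The missing step is the standard one in \cite{TVZ2,Visan}.

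First, if a subsequence of $t_n$ converges to some $t^\ast<\sup I$, compactness follows from $u\in C_tL^2_x$. So you may assume $t_n\to\sup I$. This gives $S_{[0,t_n]}(u)\to\infty$ in addition to $S_{[t_n,\sup I)}(u)=\infty$.

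Then, in the one-profile, full-mass case with your convention, both directions of dispersal are excluded by small-data theory:
\begin{enumerate}
\item If $t_n^1\to+\infty$, then $\|e^{it\Delta}u(t_n)\|_{L_{t,x}^{2(d+2)/d}([0,\infty)\times\R^d)}\to0$. Small-data theory bounds $S_{[t_n,\sup I)}(u)$, contradicting forward blowup.
\item If $t_n^1\to-\infty$, the same norm on $(-\infty,0]$ vanishes. Small-data theory bounds $S_{(\inf I,t_n]}(u)$, contradicting $S_{[0,t_n]}(u)\to\infty$.
\end{enumerate}
Only after both are excluded can you normalize $t_n^1\equiv0$ and conclude compactness.

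Your treatment of the sub-threshold cases is fine. These are the cases of several profiles, or of one profile with non-vanishing remainder, handled via Theorem~\ref{MMRZthm}, decoupling of the Hartree nonlinearity, and stability.
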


\begin{proof}[Sketch of the proof] The argument to obtain this lemma follows along fairly standard lines (see e.g. \cite{TVZ2,Visan}), so we only outline a brief sketch here. The idea is to take an arbitrary sequence of times $t_n$ and apply the linear $L^2$ profile decomposition (see e.g. \cite{MV}) to $u(t_n)$. Using Theorem~\ref{MMRZthm} and stability theory, we can use the construction of a \emph{nonlinear} profile decomposition to argue that there must be exactly one profile in the profile decomposition (since in all other cases we obtain scattering for the solution $u$). This yields the desired compactness.
\end{proof}

\subsection{Lorentz spaces and  nonlinear estimates}\label{S:2.2}
 In this subsection, we recall the well-known Lorentz space and some properties of this space for our purpose. 
Let $f$ be a measurable function on $\mathbb{R}^d$. The distribution function of $f$ is defined by
\begin{equation*}
d_f(\lambda):= |\{x\in \mathbb{R}^d : |f(x)|>\lambda\}|, \quad \lambda>0,
\end{equation*}
where $|A|$ is the Lebesgue measure of a set $A$ in $\mathbb{R}^d$. The decreasing rearrangement of $f$ is defined by
\begin{equation*}
f^*(s):= \inf \left\{ \lambda>0 : d_f(\lambda)\leq s\right\}, \quad s>0.
\end{equation*}

For $1\leq r<\infty$ and $1\leq \rho\leq \infty$, define the Lorentz quasi-norm 
\[\|f\|_{L^{r,\rho}}:= \left\{
\begin{array}{cl}
( \frac{\rho}{r} \int_0^\infty (s^{1/r} f^*(s))^\rho \frac{1}{s}ds)^{1/\rho} &\text{ if } \rho <\infty, \\
\sup_{s>0} s^{1/r} f^*(s) &\text{ if } \rho=\infty.
\end{array}\right.\]
The Lorentz space  $L^{r,\rho}$ denotes the space of complex-valued measurable functions $f$ on $\R^d$ such that its quasi-norm $\|f\|_{L^{r,\rho}(\R^d)}$ is finite.
From this characterization, $L^{r,\infty}(\R^d)$ is the usual weak $L^r$ space, $L^{r,r}=L^r$ and $L^{r,\rho}\subset L^{r,\tilde{\rho}}$ with $\rho<\tilde{\rho}$.

We collect the following basic properties of $L^{r,\rho}(\mathbb{R}^d)$ in the following lemmas.
\begin{lemma}[Properties of Lorentz spaces, \cite{ONeil}] \label{basicp} We have

\begin{itemize}
	
\item[$(i)$] For $1<r<\infty$, $0<\rho \leq \infty$, and $\theta>0$, $\||f|^\theta\|_{L^{r,\rho}} = \|f\|^\theta_{L^{\theta r, \theta \rho}}$.

\item[$(ii)$]  $|x|^{-2} \in L^{\frac{d}{2},\infty}(\mathbb{R}^d)$.
\end{itemize}
\end{lemma}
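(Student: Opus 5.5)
Both items are classical facts, so the plan is to verify them directly from the definition of the decreasing rearrangement and the Lorentz quasi-norm given above, rather than invoke any deeper interpolation theory.

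For $(i)$, the key observation is that rearrangement commutes with monotone powers. Since $t\mapsto t^\theta$ is increasing on $[0,\infty)$, we have
\[
d_{|f|^\theta}(\lambda)=\bigl|\{|f|>\lambda^{1/\theta}\}\bigr|=d_f(\lambda^{1/\theta}),
\]
and taking the infimum in the definition gives $(|f|^\theta)^*(s)=(f^*(s))^\theta$ for every $s>0$. I then substitute into the definition of the quasi-norm.

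In the case $\rho<\infty$, this gives
\[
\||f|^\theta\|_{L^{r,\rho}}^\rho=\frac{\rho}{r}\int_0^\infty \bigl(s^{1/r}f^*(s)^\theta\bigr)^\rho\frac{ds}{s}
=\frac{\theta\rho}{\theta r}\int_0^\infty \bigl(s^{1/(\theta r)}f^*(s)\bigr)^{\theta\rho}\frac{ds}{s}.
\]
The right-hand side is exactly $\|f\|_{L^{\theta r,\theta\rho}}^{\theta\rho}$. Taking $\rho$-th roots yields $\||f|^\theta\|_{L^{r,\rho}}=\|f\|_{L^{\theta r,\theta\rho}}^\theta$. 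The normalizing constant is consistent because $\rho/r=(\theta\rho)/(\theta r)$.

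In the case $\rho=\infty$, the identity
\[
\sup_{s>0} s^{1/r}f^*(s)^\theta=\Bigl(\sup_{s>0} s^{1/(\theta r)}f^*(s)\Bigr)^\theta
\]
holds for the same reason. The convention $\theta\cdot\infty=\infty$ then gives the claim.

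For $(ii)$, let $g(x)=|x|^{-2}$. Then
\[
d_g(\lambda)=\bigl|\{|x|<\lambda^{-1/2}\}\bigr|=\omega_d\lambda^{-d/2},
\]
where $\omega_d$ is the volume of the unit ball. Solving $d_g(\lambda)\le s$ gives $g^*(s)=(\omega_d/s)^{2/d}$. Hence $s^{2/d}g^*(s)=\omega_d^{2/d}$ is constant, so its supremum is finite and $\|g\|_{L^{d/2,\infty}}=\omega_d^{2/d}$. Note that $d\ge3$ ensures $d/2>1$, so we are in the stated range of exponents.

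I do not expect any genuine obstacle here. The only point needing care is the rearrangement identity $(|f|^\theta)^*=(f^*)^\theta$, which requires $\theta>0$ so that the power map is strictly increasing. One should also allow quasi-norms in the ranges where $\theta\rho<1$, which is consistent with the statement permitting $0<\rho\le\infty$.
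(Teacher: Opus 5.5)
Your proposal is correct. The rearrangement identity $(|f|^\theta)^*=(f^*)^\theta$ for $\theta>0$ gives $(i)$ by direct substitution, with the normalizing constants matching since $\rho/r=\theta\rho/(\theta r)$. The explicit computation $(|x|^{-2})^*(s)=(\omega_d/s)^{2/d}$ gives $(ii)$. The paper gives no proof of its own and simply cites O'Neil; your direct verification from the definitions is the standard argument behind that citation.
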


\begin{lemma}[H\"older's inequality, \cite{ONeil}]  There holds
\begin{itemize}
\item[$(i)$]  Let $1<r, r_1, r_2<\infty$ and $1\leq \rho, \rho_1, \rho_2 \leq \infty$ be such that
\[\frac{1}{r}=\frac{1}{r_1}+\frac{1}{r_2}, \quad \frac{1}{\rho} \leq \frac{1}{\rho_1}+\frac{1}{\rho_2}.
		\]
Then for any $f \in L^{r_1, \rho_1}(\mathbb{R}^d)$ and $g\in L^{r_2, \rho_2}(\mathbb{R}^d)$
\[\|fg\|_{L^{r,\rho}} \lesssim \|f\|_{L^{r_1, \rho_1}} \|g\|_{L^{r_2,\rho_2}}.\]
\item[$(ii)$] Let  $1<r_1,r_2<\infty $ and  $1\le \rho_1,\rho_2\le \infty $  be such that 
\begin{equation*}
1=\frac{1}{r_1}+\frac{1}{r_2},\quad 1\le \frac{1}{\rho_1}+\frac{1}{\rho_2}.
\end{equation*}
Then  for any $f \in L^{r_1, \rho_1}(\mathbb{R}^d)$ and $g\in L^{r_2, \rho_2}(\mathbb{R}^d)$
\begin{equation*}
\|fg\|_{L^1}\lesssim  \|f\|_{L^{r_1, \rho_1}} \|g\|_{L^{r_2,\rho_2}}.
\end{equation*}
\end{itemize}
\end{lemma}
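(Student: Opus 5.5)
The statement is a Hölder inequality in Lorentz spaces, due to O'Neil. The plan is to reduce everything to an inequality on decreasing rearrangements. The key input is the elementary bound
\[
(fg)^*(s_1+s_2)\le f^*(s_1)\,g^*(s_2),\qquad s_1,s_2>0,
\]
which follows from the inclusion
\[
\{|fg|>f^*(s_1)g^*(s_2)\}\subset\{|f|>f^*(s_1)\}\cup\{|g|>g^*(s_2)\}.
\]
The two sets on the right have measures at most $s_1$ and $s_2$, by right-continuity of the distribution function. Taking $s_1=s_2=s/2$ gives
\[
(fg)^*(s)\le f^*(s/2)\,g^*(s/2).
\]

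\textbf{Part (i).} I would insert this bound into the definition of $\|fg\|_{L^{r,\rho}}$ and change variables $s\mapsto 2s$. This costs only a constant depending on $r$ and $\rho$. Using $1/r=1/r_1+1/r_2$, write
\[
s^{1/r}f^*(s)g^*(s)=\bigl(s^{1/r_1}f^*(s)\bigr)\bigl(s^{1/r_2}g^*(s)\bigr).
\]
It then remains to show
\[
\|ab\|_{L^\rho(ds/s)}\lesssim\|a\|_{L^{\rho_1}(ds/s)}\|b\|_{L^{\rho_2}(ds/s)}
\]
for $a(s)=s^{1/r_1}f^*(s)$ and $b(s)=s^{1/r_2}g^*(s)$. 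When $1/\rho=1/\rho_1+1/\rho_2$, this is ordinary Hölder on the measure space $(0,\infty)$ with measure $ds/s$. When $1/\rho<1/\rho_1+1/\rho_2$, choose $\tilde\rho\le\rho$ with $1/\tilde\rho=1/\rho_1+1/\rho_2$ (truncating at $1$ if needed). Then apply Hölder to get the bound in $L^{r,\tilde\rho}$, and use the embedding $L^{r,\tilde\rho}\subset L^{r,\rho}$ recorded before Lemma~\ref{basicp}. That embedding is itself standard: $s^{1/r}h^*(s)$ is controlled pointwise by the $L^{r,\tilde\rho}$ norm because $h^*$ is monotone, and one interpolates between this $L^\infty$ bound and the $L^{\tilde\rho}$ bound.

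\textbf{Part (ii).} Here I would use the Hardy–Littlewood rearrangement inequality
\[
\int|fg|\le\int_0^\infty f^*(s)g^*(s)\,ds .
\]
Since $1=1/r_1+1/r_2$, this equals
\[
\int_0^\infty \bigl(s^{1/r_1}f^*\bigr)\bigl(s^{1/r_2}g^*\bigr)\,\frac{ds}{s}.
\]
Hölder in $L^{\rho_1}(ds/s)\times L^{\rho_1'}(ds/s)$ then gives the bound, after the embedding $\|g\|_{L^{r_2,\rho_1'}}\lesssim\|g\|_{L^{r_2,\rho_2}}$. This embedding is valid because $1/\rho_2\ge1-1/\rho_1=1/\rho_1'$, that is, $\rho_2\le\rho_1'$.

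The only genuinely delicate point is the rearrangement inequality $(fg)^*(s_1+s_2)\le f^*(s_1)g^*(s_2)$ together with the monotone embedding between second indices. Both are standard, and since the result is cited from \cite{ONeil}, in the paper it suffices to refer there.
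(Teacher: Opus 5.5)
Your argument is correct. The paper gives no proof of this lemma and simply cites O'Neil, so your proposal is a self-contained replacement for that citation rather than a variant of an argument in the text. The bound $(fg)^*(s_1+s_2)\le f^*(s_1)g^*(s_2)$ reduces (i) to H\"older's inequality on $(0,\infty)$ with the measure $ds/s$, together with the monotonicity of $L^{r,\rho}$ in the second index. The Hardy--Littlewood inequality $\int|fg|\le\int_0^\infty f^*g^*\,ds$ then handles (ii), which (i) does not cover because $r=1$ is excluded there. This is the standard route, and it costs only constants depending on the exponents.

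One detail in (i) needs repair. When $1/\rho_1+1/\rho_2>1$, you cannot ``truncate $\tilde\rho$ at $1$'' and then apply H\"older. On $((0,\infty),ds/s)$ there is no H\"older inequality $\|ab\|_{L^{\tilde\rho}}\le\|a\|_{L^{\rho_1}}\|b\|_{L^{\rho_2}}$ with $1/\tilde\rho<1/\rho_1+1/\rho_2$ for general functions. There are two easy fixes.
\begin{itemize}
\item Keep $\tilde\rho<1$. H\"older's inequality holds for all positive exponents, and your pointwise-plus-interpolation proof of $L^{r,\tilde\rho}\subset L^{r,\rho}$ works verbatim for the quasi-norm with $\tilde\rho<1$.
\item More cleanly, first pass to larger input indices $\tilde\rho_i\ge\rho_i$ with $1/\tilde\rho_1+1/\tilde\rho_2=1/\rho$; this is possible because $1/\rho\le1/\rho_1+1/\rho_2$. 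Then use $\|f\|_{L^{r_1,\tilde\rho_1}}\lesssim\|f\|_{L^{r_1,\rho_1}}$ and $\|g\|_{L^{r_2,\tilde\rho_2}}\lesssim\|g\|_{L^{r_2,\rho_2}}$, and apply H\"older with exact exponents. This is exactly what you already do in (ii) via $\rho_2\le\rho_1'$.
\end{itemize}
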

\begin{lemma}[{Interpolation in Lorentz spaces}, \cite{Dao}]\label{CZ}
Let  $1<p_0<p<p_1<\infty $,  $r>0$ and $0<\theta<1$ satisfy  $\frac{1}{p}=\frac{1-\theta}{p_0}+\frac{\theta}{p_1}$. For every  $f\in L^{p_0,\infty }(\mathbb{R}^d)\cap L^{p_1,\infty }(\mathbb{R}^d)$, we have
	\begin{equation*}
		\|f\|_{L^{p,r}(\mathbb{R} ^d)}\lesssim  \|f\|_{L^{p_0,\infty }(\mathbb{R} ^d)}^{1-\theta} \|f\|_{L^{p_1,\infty }(\mathbb{R} ^d)}^{\theta}.
	\end{equation*}
\end{lemma}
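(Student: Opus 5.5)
This interpolation inequality is proved by working directly with the decreasing rearrangement $f^*$, using the characterization of the Lorentz quasi-norms recalled above. The weak-type hypotheses give two pointwise bounds on $f^*$:
\[
f^*(s)\le A_0 s^{-1/p_0},\qquad f^*(s)\le A_1 s^{-1/p_1},\qquad A_j:=\|f\|_{L^{p_j,\infty}},
\]
valid for every $s>0$. Hence $f^*(s)\le \min\{A_0s^{-1/p_0},A_1s^{-1/p_1}\}$. Since $p_0<p_1$, the first bound is better for large $s$ and the second for small $s$. The two bounds cross at
\[
s_0=(A_0/A_1)^{\frac{p_0p_1}{p_1-p_0}}.
\]

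First take $r<\infty$ and split the integral defining $\|f\|_{L^{p,r}}^r$ at $s_0$:
\[
\frac rp\int_0^{s_0}(s^{1/p}A_1s^{-1/p_1})^r\frac{ds}{s}+\frac rp\int_{s_0}^\infty(s^{1/p}A_0s^{-1/p_0})^r\frac{ds}{s}.
\]
Because $1/p-1/p_1>0$, the first integral converges at $0$. Because $1/p-1/p_0<0$, the second converges at $\infty$. Their sum is therefore bounded by
\[
C_{p,p_0,p_1,r}\Big(A_1^r s_0^{r(1/p-1/p_1)}+A_0^r s_0^{r(1/p-1/p_0)}\Big).
\]
It remains to substitute $s_0$ and use $\frac1p=\frac{1-\theta}{p_0}+\frac{\theta}{p_1}$. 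One checks that
\[
\frac1p-\frac1{p_1}=(1-\theta)\Big(\frac1{p_0}-\frac1{p_1}\Big)\qtq{and}\frac1p-\frac1{p_0}=-\theta\Big(\frac1{p_0}-\frac1{p_1}\Big).
\]
Since $s_0^{\frac1{p_0}-\frac1{p_1}}=A_0/A_1$, this gives
\[
A_1s_0^{1/p-1/p_1}=A_1(A_0/A_1)^{1-\theta}=A_0^{1-\theta}A_1^{\theta},
\]
and in the same way $A_0s_0^{1/p-1/p_0}=A_0(A_0/A_1)^{-\theta}=A_0^{1-\theta}A_1^\theta$. Taking $r$-th roots yields the claim for $r<\infty$. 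The case $r=\infty$ is the same computation with a supremum in place of the integral, and it also follows from the embedding $L^{p,r}\subset L^{p,\infty}$. When $r<1$ the quantity is only a quasi-norm, but the computation above does not use the triangle inequality, so nothing changes.

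The degenerate cases are trivial. If $A_0=0$ or $A_1=0$ then $f=0$ a.e. If either $A_j=\infty$ the right-hand side is infinite; this cannot happen here, since $f$ is assumed to lie in both spaces.

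I expect no real obstacle. The only point needing care is the exponent bookkeeping, namely checking that the weighted integrals converge on the correct sides of $s_0$ and that the powers of $A_0/A_1$ combine exactly to $A_0^{1-\theta}A_1^{\theta}$. Alternatively, one could cite the real interpolation identity $(L^{p_0,\infty},L^{p_1,\infty})_{\theta,r}=L^{p,r}$ together with the standard interpolation inequality $\|f\|_{(X_0,X_1)_{\theta,r}}\lesssim\|f\|_{X_0}^{1-\theta}\|f\|_{X_1}^\theta$. The direct rearrangement argument above is self-contained and is the one I would present.
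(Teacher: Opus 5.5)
Your argument is correct. The weak-type bounds $f^*(s)\le A_js^{-1/p_j}$, the split of the $L^{p,r}$ integral at the crossing point $s_0$, the convergence on the correct sides of $s_0$, and the identities $\frac1p-\frac1{p_1}=(1-\theta)(\frac1{p_0}-\frac1{p_1})$ and $\frac1p-\frac1{p_0}=-\theta(\frac1{p_0}-\frac1{p_1})$ all check out. Together they give the bound with the explicit constant $\bigl(p\,\theta(1-\theta)(\frac1{p_0}-\frac1{p_1})\bigr)^{-1/r}$, valid also for $0<r<1$.

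The paper gives no proof of this lemma and simply quotes it from \cite{Dao}. Your rearrangement computation (equivalently, the same splitting done on the distribution function $d_f$) is the standard self-contained proof of this estimate, so it fills in the citation rather than taking a different route.
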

\begin{lemma}[Young's inequality for weak type spaces, \cite{Grafakos}]\label{lem:weakYoung}
Let $1 \leq p < \infty$ and $1 < q, r < \infty$ satisfy
\[\frac{1}{q} + 1 = \frac{1}{p} + \frac{1}{r}.\]
Then there exists a constant $C_{p,q,r} > 0$  such that for all $f \in L^p(\R^d)$  and $g \in L^{r,\infty}(\R^d)$  we have
\[\|f*g\|_{L^{q,\infty}(\R^d)} \leq C_{p,q,r}\|g\|_{L^{r,\infty}(\R^d)}\|f\|_{L^p(\R^d)}.
\]
\end{lemma}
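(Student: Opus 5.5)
The plan is to derive the weak Young inequality from the Marcinkiewicz interpolation theorem applied to the linear operator $T_g f:=f*g$, with $g\in L^{r,\infty}$ fixed. By the homogeneity of both sides we may normalize $\|g\|_{L^{r,\infty}}=1$. The goal is two restricted or weak-type endpoint bounds for $T_g$ with exponents bracketing $p$, after which interpolation yields the strong bound $L^p\to L^{q}$. Since $L^q\subset L^{q,\infty}$, this bound is in fact stronger than what the lemma asserts.

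The first step handles the case $p=1$ directly, since Marcinkiewicz needs $p>1$. Here $q=r$, and we want $\|f*g\|_{L^{r,\infty}}\lesssim\|f\|_{L^1}$. Because $r>1$, $L^{r,\infty}$ admits an equivalent genuine norm, for instance $\sup_{E}|E|^{-1+1/r}\int_E|h|$. Minkowski's integral inequality in this norm, applied to $f*g=\int f(y)g(\cdot-y)\,dy$, together with the translation invariance of the norm, gives the claim at once.

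For $1<p<\infty$ with $q<\infty$ (this forces $p<r'$), the standard route is the splitting argument. For $\lambda>0$ and a threshold $M>0$ to be chosen, write $g=g_1+g_2$, where $g_1=g\,1_{|g|>M}$ and $g_2=g\,1_{|g|\le M}$. From $d_g(s)\le s^{-r}$ we get:
\begin{itemize}
\item $\|g_1\|_{L^1}\lesssim M^{1-r}$;
\item $\|g_2\|_{L^{p'}}\lesssim M^{1-r/p'}$, which is finite because $p'>r$.
\end{itemize}
The second bound gives $\|f*g_2\|_{L^\infty}\le\|f\|_{L^p}\|g_2\|_{L^{p'}}$. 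Normalizing also $\|f\|_{L^p}=1$, we choose $M$ so that $\|f*g_2\|_\infty\le\lambda/2$, that is, $M\sim\lambda^{p'/(p'-r)}$. Then
$$d_{f*g}(\lambda)\le d_{f*g_1}(\lambda/2)\le(2/\lambda)^p\|f*g_1\|_{L^p}^p\le(2/\lambda)^pM^{(1-r)p}.$$
Inserting the choice of $M$ and using the exponent relation $\frac1q+1=\frac1p+\frac1r$, a direct computation gives $d_{f*g}(\lambda)\lesssim\lambda^{-q}$. This is exactly the desired weak-type bound $\|f*g\|_{L^{q,\infty}}\lesssim1$, and it covers all $1<p<r'$ directly, so interpolation is not even needed.

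The main point to check carefully is this exponent bookkeeping. We need $p'>r$ so that $g_2\in L^{p'}$, and this is equivalent to $q<\infty$, which is guaranteed by the hypothesis $q<\infty$. We also need the resulting power of $\lambda$ to be exactly $-q$. Setting $\alpha=p'/(p'-r)$, this amounts to verifying that $p+\alpha(r-1)p=q$, which follows from the stated relation after simplification.
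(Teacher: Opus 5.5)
Your proof is correct, and it is the standard argument: the paper gives no proof of its own and simply cites Grafakos, where the lemma is proved by the same level-set splitting $g=g\mathbf{1}_{\{|g|>M\}}+g\mathbf{1}_{\{|g|\le M\}}$, with $M\sim\lambda^{p'/(p'-r)}$ chosen so that $\|f*g_2\|_{L^\infty}\le\lambda/2$, and with Chebyshev plus the $L^p*L^1$ Young inequality handling $g_1$; your exponent check $p+\alpha(r-1)p=pr/(p+r-pr)=q$ is right. The Marcinkiewicz framing is unnecessary. So is the separate (valid) normability argument for $p=1$: the same splitting works there with $p'=\infty$, $\|g_2\|_{L^\infty}\le M$ and $M=\lambda/2$, which gives $d_{f*g}(\lambda)\lesssim\lambda^{-1}M^{1-r}\sim\lambda^{-r}$.
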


Finally, we record here the Strichartz estimates in Lorentz space.
\begin{lemma}[Strichartz estimates in Lorentz space, \cite{KeelTao}]\label{P:SZ} We have
\begin{itemize}
\item[$(i)$] Let $(q,r)$ be an admissible pair, then for any $f\in L^2(\mathbb{R}^d)$
\begin{align}  
\|e^{it\Delta }f\|_{L^q_t L^{r,2}_x(\mathbb{R}\times \mathbb{R}^d)} \lesssim \|f\|_{L^2_x(\mathbb{R}^d)}.\notag
\end{align}

\item[$(ii)$] Let $(q_1, r_1), (q_2,r_2)$ be two admissible pairs,  and $I\subset \mathbb{R}$ be an interval containing $t_0$. Then for any $F\in L_t^{q_2'}L^{r_2',2}_x(I\times \mathbb{R}^d)$ 
\begin{align} 
\left\|\int_{t_0}^t e^{i(t-\tau)\Delta } F(\tau) d\tau\right\|_{L^{q_1}_tL^{r_1,2}_x(I\times \mathbb{R}^d)} \lesssim \|F\|_{L^{q_2'}_tL^{r_2',2}_x(I\times \mathbb{R}^d)}.\notag
\end{align}
\end{itemize}
\end{lemma}

\section{A sequential convergence result}\label{sec-seq}
In this section, we prove a sequential convergence result, which plays an important role in the next section.

Let $u(t)$ be a  blowup solution to \eqref{1.1} with maximal lifespan $I$ and
$M(u) =  M(Q)$. For any time $t\in I$, we define the distance to the $2d+2$-dimensional manifold of symmetries acting on  $Q$ by
\begin{equation}\label{ppp}
   \delta\big(u(t)\big):=     \inf_{\substack{\lambda>0, \gamma\in [0,2\pi]\\\tilde{x},\xi\in \R^d}}
\left\|  e^{i \gamma}e^{ix\cdot\xi} \lambda^{d/2}  u \big(t, \lambda(x+\tilde{x})\big) - Q (x)  \right\|_{L_x^{2}}.
\end{equation}
In fact, we can show that  this infimum can be achieved:
\begin{lemma}\label{att}
There exist parameters $\lambda_{0} > 0, \gamma_0\in[0,2\pi], \xi_0\in\R^d$ and $x_0\in\R^d$ such that
    \begin{align*}
       \delta\big(u(t)\big)
&=\left\|  e^{i \gamma_0}e^{ix\cdot\xi_0} \lambda_0^{d/2}  u \big(t, \lambda_0 (x+x_0) \big) - Q (x)  \right\|_{L_x^{2}}.
    \end{align*}
\end{lemma}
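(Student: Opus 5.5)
The plan is to show that the infimum in \eqref{ppp} is attained by the direct method. We minimize over the symmetry parameters $(\lambda,\gamma,\tilde x,\xi)\in(0,\infty)\times[0,2\pi]\times\R^d\times\R^d$ and rule out loss of compactness of a minimizing sequence. Fix $t$ and write $f=u(t)$. For parameters $p=(\lambda,\gamma,\tilde x,\xi)$ set
\[
g_pf(x):=e^{i\gamma}e^{ix\cdot\xi}\lambda^{d/2}f\bigl(\lambda(x+\tilde x)\bigr).
\]
Each $g_p$ is unitary on $L^2$, so $\|g_pf\|_{L^2}=\|f\|_{L^2}=\|Q\|_{L^2}$. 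Expanding the square gives
\[
\|g_pf-Q\|_{L^2}^2=2M(Q)-2\Re\int g_pf\,\overline Q\,dx.
\]
Minimizing the distance is therefore equivalent to maximizing $\Phi(p):=\Re\langle g_pf,Q\rangle$. Note that $0\le\delta(u(t))\le\sqrt{2M(Q)}$. If $\delta(u(t))^2=2M(Q)$, then $\sup_p\Phi(p)=0$, and every choice of parameters attains the infimum; there is nothing to prove. We may therefore assume $\sup_p\Phi(p)=:\sigma>0$.

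Take a maximizing sequence $p_n=(\lambda_n,\gamma_n,\tilde x_n,\xi_n)$. Since $[0,2\pi]$ is compact, we may pass to a subsequence with $\gamma_n\to\gamma_0$. The key step is to show that $\lambda_n$ stays in a compact subset of $(0,\infty)$ and that $\tilde x_n,\xi_n$ stay bounded. Suppose this fails along a subsequence, through $\lambda_n\to0$, $\lambda_n\to\infty$, $|\tilde x_n|\to\infty$ or $|\xi_n|\to\infty$. We claim that then $g_{p_n}f\rightharpoonup0$ weakly in $L^2$. This is the standard fact that the noncompact part of the symmetry group acts by sequences converging weakly to zero. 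To check it, approximate $f$ in $L^2$ by $f_\varepsilon\in C_c^\infty$ with $\widehat{f_\varepsilon}$ nice (say a Schwartz function with compactly supported Fourier transform) and pair with a test function.
\begin{itemize}
\item If $\lambda_n\to0$ or $\lambda_n\to\infty$, the pairing vanishes by dilation: the mass of $g_{p_n}f_\varepsilon$ spreads out or concentrates, and in either case the pairing with a fixed $L^2$ function tends to $0$. If $\lambda_n$ stays in a compact set, fix it.
\item If $|\xi_n|\to\infty$, the pairing vanishes by the Riemann--Lebesgue lemma, or by Fourier-support separation.
\item If $|\tilde x_n|\to\infty$ with the other parameters bounded, the pairing vanishes by spatial separation of supports.
\end{itemize}
A diagonal subsequence argument handles mixed cases. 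Weak convergence to zero gives $\Phi(p_n)\to0$, which contradicts $\sigma>0$.

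So, along a subsequence, $p_n\to p_0=(\lambda_0,\gamma_0,x_0,\xi_0)$ with $\lambda_0>0$. The map $p\mapsto g_pf$ is continuous from the parameter space into $L^2$. This follows from strong continuity of dilations, translations and modulations on $L^2$, checked first on a dense class and then extended by unitarity. Hence $\Phi(p_n)\to\Phi(p_0)$, so $\Phi(p_0)=\sigma$, and $p_0$ attains the infimum in \eqref{ppp}.

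The main obstacle is the weak-convergence step when several parameters degenerate simultaneously; for example, $\lambda_n\to\infty$ together with $|\xi_n|\to\infty$ at comparable rates. The plan is to handle this cleanly by computing the pairing with $Q$ on the Fourier side. For this we use that $\widehat Q$ is a Schwartz-type function, which follows from Proposition \ref{grondstate2}. The modulation $\xi_n$ translates $\widehat{g_{p_n}f}$, and the dilation $\lambda_n$ rescales it. Unless both the physical and the frequency parameters remain bounded, the overlap with a fixed pair of compactly supported approximants goes to zero.
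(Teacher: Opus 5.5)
Your proposal is correct and is essentially the standard argument the paper imports from \cite{D2,D1}: since $\|g_pf\|_{L^2}=\|Q\|_{L^2}$, minimizing $\|g_pf-Q\|_{L^2}$ amounts to maximizing the continuous function $p\mapsto\Re\int g_pf\,\overline{Q}\,dx$, which tends to zero as the parameters leave compact sets, so a maximizing sequence has a convergent subsequence whose limit attains $\delta(u(t))$. The simultaneous degeneration you flag is not a real obstacle: the bounds $\lambda^{d/2}\|f_\varepsilon\|_{L^\infty}\|Q\|_{L^1}$ and $\lambda^{-d/2}\|f_\varepsilon\|_{L^1}\|Q\|_{L^\infty}$ for $\bigl|\int g_pf_\varepsilon\,\overline{Q}\,dx\bigr|$ are uniform in $(\tilde x,\xi)$, and once $\lambda$ lies in a compact subset of $(0,\infty)$ the spatial-separation bound is uniform in $\xi$ and the Fourier-separation bound is uniform in $\tilde x$; you only need different dense classes ($C_c^\infty$ for the physical-side cases, compact Fourier support for $|\xi_n|\to\infty$), not a single approximant that is compactly supported in both $x$ and the frequency variable, which is impossible.
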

\begin{proof}
    The proof of \cite{D2,D1} carries over mutatis mutandis. We therefore omit the details.
\end{proof}

\begin{lemma}[Upper semicontinuity of the distance function]\label{conti}
    The distance function \eqref{ppp}
   is upper semicontinuous  for any $t \in I$. Moreover, it is continuous  when it is small.
\end{lemma}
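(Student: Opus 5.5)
The plan is to derive both assertions from two facts. First, the symmetry group acts isometrically on $L^2$. Second, $t\mapsto u(t)$ is continuous in $L^2$, since $u\in C_tL^2_x(J\times\R^d)$ for every compact $J\subset I$. Write $g=(\lambda,\gamma,\tilde x,\xi)$ and
\[
(T_g f)(x):=e^{i\gamma}e^{ix\cdot\xi}\lambda^{d/2}f\bigl(\lambda(x+\tilde x)\bigr),
\]
so that $\|T_gf-T_gh\|_{L^2}=\|f-h\|_{L^2}$ for every $g$.

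For upper semicontinuity, fix $t\in I$ and $\eta>0$. By Lemma~\ref{att} (or directly from the definition of the infimum) choose $g_0$ with $\|T_{g_0}u(t)-Q\|_{L^2}\le\delta(u(t))+\eta$. For $s\in I$ the isometry gives
\[
\delta(u(s))\le\|T_{g_0}u(s)-Q\|_{L^2}\le\|u(s)-u(t)\|_{L^2}+\delta(u(t))+\eta .
\]
Letting $s\to t$ and then $\eta\to0$ yields $\limsup_{s\to t}\delta(u(s))\le\delta(u(t))$. The same argument with the roles of $s$ and $t$ exchanged gives
\[
|\delta(u(s))-\delta(u(t))|\le\|u(s)-u(t)\|_{L^2}.
\]
This is $1$-Lipschitz with respect to the $L^2$ distance, so $\delta$ is in fact continuous in $t$ wherever it is defined through an infimum over the full group. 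Each step uses only the isometry and the continuity of $u$ in $L^2$.

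The "continuous when small" clause needs care because of how the authors presumably intend it. The natural reading is that $\delta$ is taken with $\gamma\in[0,2\pi]$ and with the parameters restricted to a compact or otherwise constrained set, or, more likely, that the minimizing parameters of Lemma~\ref{att} depend continuously on $t$ when $\delta$ is small. Under that reading, I would prove continuity of the minimizer as follows. When $\delta(u(t))<\alpha$ with $\alpha$ small, the implicit function theorem applies to the orthogonality conditions in Appendix~\ref{sec-mod}. The nondegeneracy of $Q$ makes the relevant Jacobian invertible near $Q$. This shows that the minimizing parameters are locally unique and vary continuously with $u(t)\in L^2$, and composing with $t\mapsto u(t)$ gives continuity.

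The main obstacle is this local uniqueness of minimizers near the orbit. For it I would verify that the $(2d+2)\times(2d+2)$ Gram matrix of the generators
\[
iQ,\qquad \nabla Q,\qquad xQ,\qquad \tfrac d2Q+x\cdot\nabla Q
\]
is nondegenerate. Its entries pair $iQ$ with $\tfrac d2Q+x\cdot\nabla Q$, and $\nabla Q$ with $ixQ$, so it is invertible. If only the upper semicontinuity and Lipschitz bound are needed, the first two paragraphs already suffice, and I would present those as the core of the proof.
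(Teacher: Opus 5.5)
Your proof is correct, and your first two paragraphs already prove the full statement, by a more direct route than the one the paper relies on. The paper gives no argument of its own and defers to Dodson. The two-part phrasing of the lemma suggests the usual route: $\delta(u(t))$ is an infimum of functions each continuous in $t$, which by itself only gives upper semicontinuity, and continuity near the orbit is then recovered by a modulation (implicit function theorem) argument producing minimizing parameters that depend continuously on $u(t)$.

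You instead observe that every $T_g$ is an $L^2$ isometry. Hence all the functions $t\mapsto\|T_gu(t)-Q\|_{L^2}$ share the modulus of continuity of $t\mapsto u(t)$ in $L^2$. This gives $|\delta(f)-\delta(h)|\le\|f-h\|_{L^2}$, so $t\mapsto\delta(u(t))$ is continuous on all of $I$. That is shorter and strictly stronger than the statement. It uses neither Lemma~\ref{att}, nor the nondegeneracy of $Q$, nor any modulation theory.

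It also simplifies Case~II of Section~\ref{reductionsec}. There, $\delta(u(t_n^+))=\eta_\ast$ follows from continuity alone, without needing $\eta_\ast$ small for that step. The modulation route yields more, namely continuity of the parameters themselves, which the paper uses elsewhere, but that is not part of this lemma.

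You should drop your last two paragraphs. The lemma does not assert continuity of minimizing parameters, and the justification you sketch for it is wrong as written. With the paper's inner product $(f,g)_{L^2}=\Re\int f\bar g$, both pairings you cite vanish: $(iQ,\tfrac d2Q+x\cdot\nabla Q)_{L^2}=0$ and $(\partial_jQ,ix_kQ)_{L^2}=0$, because in each one factor is real and the other purely imaginary. Note also that the Galilean generator is $ixQ$, not $xQ$. The Gram matrix of $iQ$, $\partial_jQ$, $ix_jQ$, $\tfrac d2Q+x\cdot\nabla Q$ is diagonal with positive entries by radiality and parity. That, not the pairings you cite, is why it is invertible, and the nondegeneracy of $Q$ plays no role there.
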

\begin{proof}
  The proof of \cite{D2,D1} carries over mutatis mutandis. We therefore omit the details.
\end{proof}
\begin{proposition}\label{sequ}
    Let $u$ be a solution to \eqref{1.1} with the mass $M(u)=M(Q)$ that blows up forward in time. Let $I$ be the maximal lifespan of $u$, then there exists a sequence $t_n\to \sup I$ such that 
    \begin{equation}\label{f2.31}
        \lim_{n\to\infty}\delta\big(u(t_n)\big)=0.
    \end{equation}
\end{proposition}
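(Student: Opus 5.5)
We follow the strategy of Dodson \cite{D2,D1}, adapted to the Hartree setting as in \cite[Appendix A]{MMRZ}. The argument runs by contradiction: suppose that $\delta(u(t))\geq\eta_0>0$ for all $t\in[t_1,\sup I)$. By Lemma~\ref{alcompact}, $u$ is almost periodic on $[0,\sup I)$ with parameters $x(t),\xi(t),N(t)$. After a preliminary renormalization, the orbit $\{g(t)u(t)\}$ is precompact in $L^2$, where $g(t)$ removes the symmetries, so every sequence of times has a subsequence along which $g(t_n)u(t_n)\to u_\infty$ strongly in $L^2$. The limit satisfies $M(u_\infty)=M(Q)$, and the maximal solution it generates is again almost periodic and blows up in both time directions (by stability, since $u$ does). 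The goal is to show that such a limit object, or $u$ itself along suitable times, must be $Q$ up to symmetries. Since $\delta$ is upper semicontinuous (Lemma~\ref{conti}) and invariant under symmetries, this contradicts $\delta\geq\eta_0$ on a tail.

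The key step is to produce times at which the renormalized solution has small \emph{energy} relative to its kinetic energy. Following Dodson, we use an interaction Morawetz / frequency-localized virial estimate on long intervals. Specifically, we use the long-time Strichartz estimate from \cite{MMRZ}, valid at mass $M(Q)$ for almost periodic solutions, to control the high-frequency truncation errors. We apply the estimate to $Iu=P_{\leq K}u$ (suitably Galilean-modulated to $\xi(t)=0$ on average) over an interval $J$ with $\int_J N(t)^3\,dt\sim K$. The resulting bound has the form
\[
\int_J N(t)^{2}\cdot\Big(\|\nabla Iu\|_{L^2}^2-\tfrac12\!\int(|x|^{-2}*|Iu|^2)|Iu|^2\Big)N(t)^{-2}\,dt\lesssim o(K)
\]
after rescaling. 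Consequently, there are times $t_n$ at which, after rescaling $N(t_n)=1$ and Galilean normalization, $E(P_{\leq C}u(t_n))\to0$ while $\|\nabla P_{\leq C}u(t_n)\|_{L^2}\gtrsim1$ by almost periodicity. The nonlocal error terms from the cutoff, namely $\chi_RF(u)-F(\chi_Ru)$ and the commutator $P_{\leq K}F(u)-F(P_{\leq K}u)$, are controlled as in \cite{MMRZ} by Hardy--Littlewood--Sobolev, the Lorentz-space estimates of Section~\ref{S:2.2}, and the long-time Strichartz bounds.

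To conclude, we pass to the limit along $t_n$. We have $g_nu(t_n)\to u_\infty$ in $L^2$, and the frequency truncations converge in $H^1$ locally in frequency. Letting the truncation parameter go to infinity, we obtain $u_\infty\in H^1$ with $M(u_\infty)=M(Q)$ and $E(u_\infty)\leq0$. We must also exclude a loss of kinetic energy to high frequencies; this uses the lower semicontinuity of the kinetic energy together with the fact that the potential part converges by HLS and strong $L^2$ convergence combined with $H^1$ bounds. By the sharp Gagliardo--Nirenberg inequality \eqref{sharp-HGN-intro}, we get $E(u_\infty)\geq0$, so equality holds in the inequality. By the variational characterization and the assumed uniqueness of $Q$, $u_\infty$ equals $Q$ up to phase, scaling, translation and Galilean boost (the Galilean parameter is fixed by the requirement $E=0$ for the minimizer, after removing momentum). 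Hence $\delta(u(t_n))\to0$, contradicting the assumption $\delta\geq\eta_0$.

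The main obstacle is the virial step that produces times with $E(P_{\leq C}u)\to0$ at $L^2$ regularity. It requires the long-time Strichartz estimate at the threshold mass, and control of the nonlocal localization errors, which are not supported in the transition annulus. We expect to handle these by invoking the estimates of \cite[Appendix A]{MMRZ} essentially verbatim, the Galilean normalization of $\xi(t)$ being the main additional bookkeeping.
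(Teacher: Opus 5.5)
Your outline sketches the argument behind \cite[Appendix A]{MMRZ} (frequency-localized virial with long-time Strichartz control, then the sharp Gagliardo--Nirenberg characterization), but you apply it directly to $u$, and that is where the gap lies. Those estimates are set up for, and Lemma~\ref{sequencelemma} (which packages their conclusion) is stated for, an almost periodic solution at mass $M(Q)$ that blows up in \emph{both} time directions and has $N(t)\leq1$ on $[0,\infty)$. Your $u$ blows up only forward, and $N(t)$ may be unbounded on $[0,\sup I)$. For instance, take a self-similar-type finite-time blowup with $N(t)\sim(\sup I-t)^{-1/2}$. Any rescaling that makes $N\leq1$ on an interval $J$ leaves $\int_J N(t)^3\,dt=O(1)$, so the intervals with $\int_J N(t)^3\,dt\sim K\to\infty$ on which your virial step lives do not exist. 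Even for a normalized solution, you only treat the quasi-soliton regime and not the case $\int_0^\infty N(t)^3\,dt<\infty$.

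The missing idea, which is the whole content of the paper's proof, is the compactness reduction to that setting. You do mention a two-sided blowup limit, but you neither normalize it nor run the argument on it. The paper proceeds as follows. It takes $\tau_n\nearrow\sup I$ and extracts $g_nu(\tau_n)\to\phi$ in $L^2$. It lets $v$ solve \eqref{1.1} with $v(0)=\phi$; $v$ blows up in both directions by stability, hence is almost periodic on its whole lifespan. It then normalizes $v$ as in \cite{TVZ2,Visan} to a solution $w$ with $\sup_{t\geq0}N(t)\leq1$. For $w$ one simply invokes Lemma~\ref{sequencelemma}, with no need to re-derive it.

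Second, the transfer back to $u$ does not follow from upper semicontinuity of $\delta$, which only compares $\delta$ at nearby times of a single solution. What is needed is the stability theory on compact time intervals, together with the fact that $\delta$ is $1$-Lipschitz on $L^2$. For fixed $m$, the rescaled and boosted $u(\tau_n+\lambda_n^2s_m)$ converges to $v(s_m)$ as $n\to\infty$. A diagonal choice $n=n(m)$ then gives $\delta\big(u(\tau_{n(m)}+\lambda_{n(m)}^2s_m)\big)\lesssim2^{-m}$, with these times tending to $\sup I$. The same argument passes from $w$ to $v$.

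Finally, your limiting step is not justified as written. At mass $M(Q)$ the energy does not bound $\|\nabla P_{\leq C}u\|_{L^2}$ uniformly in $C$, and you only record a lower bound on the kinetic energy, so $u_\infty\in H^1$ does not follow. The standard remedy is to normalize the kinetic energy of the truncated function to one and use compactness of optimizing sequences for \eqref{sharp-HGN-intro}. You then compare the resulting scale with $N(t_n)$ via almost periodicity.
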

The proof of Proposition \ref{sequ} relies on the characterization of minimal mass blow up solutions with a bounded frequency function, which we have proved in our former work \cite{MMRZ}:
\begin{lemma}[\cite{MMRZ}]\label{sequencelemma}
    Let $u$ be an almost-periodic solution with mass $M(u)=M(Q)$. Suppose $u$ blows up in both time directions and the frequency function $N(t)\leq1$ on $[0,\infty)$, then there exists a sequence of times $t_n\geq0$  and parameters $N_n>0$, $\gamma_n\in[0,2\pi]$, $\xi_n\in\R^d$ and $x_n\in\R^d$ such that
    \begin{equation}\label{sequence}
       \lim_{n\to\infty} \|e^{i\gamma_n}e^{ix\cdot\xi_n}N_n^{\frac{d}{2}}u(t_n,N_n(x-x_n))-Q\|_{L^2}=0.
    \end{equation}
\end{lemma}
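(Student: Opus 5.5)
The statement to prove is Lemma~\ref{sequencelemma}, quoted from \cite{MMRZ}. The strategy is the Dodson-type argument at the threshold. One shows that the averaged distance to the ground-state orbit must vanish along some sequence. Otherwise the interaction Morawetz/virial quantity yields a contradiction with almost periodicity and the bound $N(t)\le1$.

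\textbf{Setup.} Since $u$ is almost periodic with $N(t)\le 1$ on $[0,\infty)$ and blows up forward, $\sup I=\infty$. By the standard argument, $\int_0^\infty N(t)^2\,dt=\infty$, since otherwise Strichartz plus compactness would give scattering. I would partition $[0,\infty)$ into intervals $J_k$ with $\int_{J_k}N(t)^2dt\sim 1$ and $\|u\|_{L^{2(d+2)/d}_{t,x}(J_k)}\sim1$.

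\textbf{Frequency-localized Morawetz.} Next, truncate to $Iu=P_{\le K}u$ on $[0,T]$ with $\int_0^TN^2=K$. Apply a long-time Strichartz estimate on $[0,T]$ (the version below threshold from \cite{MMRZ}, valid since it uses only almost periodicity) to control the truncation errors by $o(K)$. Then use a frequency-localized interaction Morawetz estimate adapted to the Hartree term, with Galilean normalization $\xi(t)$. In its mass-critical form this gives
\[
\int_0^T N(t)^2\Big(\|\nabla (e^{-ix\cdot\xi(t)}Iu)\|_{L^2}^2N(t)^{-2}-\tfrac12\textstyle\iint\frac{|Iu(x)|^2|Iu(y)|^2}{|x-y|^2}N(t)^{-2}\Big)\,dt\lesssim o(K)\cdot K .
\]
Rescaled to unit frequency, the integrand is the energy-type gap $\mathcal E(v)=\frac12\|\nabla v\|^2-\frac14\int\phi_{|v|^2}|v|^2\ge0$. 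Nonnegativity follows from \eqref{sharp-HGN-intro} at mass $M(Q)$.

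\textbf{Pigeonhole and the limit.} The averaged gap $K^{-1}\int_0^T N^2\,\mathcal E(\cdot)\,dt\to0$, so there exist times $t_n$ at which the rescaled, Galilean-adjusted, truncated profile $v_n$ satisfies $\mathcal E(v_n)\to0$ with $\|\nabla v_n\|\sim1$. Here the lower bound $\|\nabla v_n\|\gtrsim 1$ comes from compactness at scale $N(t)$. Passing to a limit along the rescalings $N_n=N(t_n)^{-1}$, $x_n=x(t_n)$, and $\xi_n=\xi(t_n)$, almost periodicity gives strong $L^2$ convergence to some $v$ with $M(v)=M(Q)$, and the truncation vanishes in the limit. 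Lower semicontinuity of the gradient, together with convergence of the Hartree term (by HLS and $L^2\cap$ bounded $\dot H^1$ compactness), then yields that $v$ is an optimizer of \eqref{sharp-HGN-intro}. Uniqueness of $Q$ up to symmetry gives $v=e^{i\gamma}\lambda^{d/2}Q(\lambda(\cdot-x_0))$. Absorbing these symmetry parameters into $N_n,\gamma_n,x_n$ gives \eqref{sequence}.

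\textbf{Main obstacle.} The hard step is the frequency-localized interaction Morawetz error bound for the nonlocal term. The commutator $P_{\le K}F(u)-F(P_{\le K}u)$ is not localized, so its high-low interactions must be bounded by $o(K)$ through the long-time Strichartz estimate and the Lorentz-space HLS estimates (Lemmas~\ref{lem:weakYoung}, \ref{P:SZ}). I would first try to treat this term by splitting $F$ trilinearly and placing the high-frequency factor in $U^2_\Delta$, as in \cite{MMRZ}.
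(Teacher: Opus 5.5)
Your toolkit is the right one: long-time Strichartz, a frequency-localized interaction Morawetz estimate for the Hartree term, the sharp inequality \eqref{sharp-HGN-intro}, and uniqueness of $Q$. This is the Dodson-type argument the paper imports from \cite{MMRZ} without proof. However, the way you close it fails at exactly $M(u)=M(Q)$. The Morawetz identity does not give your displayed bound on the \emph{global} gap. The truncated weight, and the nonlocal kernel coupling interior and exterior (the analogue of the $B_R$ terms in Section~\ref{Sec:viria}), leave errors bounded only by a small constant (tail mass, or $R^{-1}$) times $\|\nabla(e^{-ix\cdot\xi(t)}Iu)\|_{L^2}^2$. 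Below threshold these errors are absorbed by $\mathcal{E}(f)\geq\frac12(1-M(f)/M(Q))\|\nabla f\|_{L^2}^2$. At threshold \eqref{sharp-HGN-intro} only gives $\mathcal{E}\geq0$, so nothing absorbs them, and the averaged gap need not be $o(1)$. (Your right-hand side should in any case read $o(1)\cdot K$.) The same degeneracy breaks your limiting step. The condition $\mathcal{E}(v_n)\to0$ gives no upper bound on $\|\nabla v_n\|_{L^2}$; indeed $\mathcal{E}(\lambda^{d/2}Q(\lambda\,\cdot))=0$ for every $\lambda$, and $Iu$ may carry kinetic energy up to $K^2$. So the claim $\|\nabla v_n\|_{L^2}\sim1$ is unjustified, and your HLS convergence of the Hartree term rests on it.

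The missing idea is to argue by contradiction and extract coercivity from the distance to the orbit. Assume $\delta(u(t))\geq\eta>0$ for all $t\geq0$. Then there is $c(\eta)>0$ with $\mathcal{E}(f)\geq c(\eta)\|\nabla f\|_{L^2}^2$ whenever $M(f)\leq M(Q)$ and $\delta(f)\geq\eta/2$. Otherwise, rescaling to $\|\nabla f_n\|_{L^2}=1$ gives an optimizing sequence for \eqref{sharp-HGN-intro} with $M(f_n)\to M(Q)$. By concentration compactness it converges in $H^1$, up to translation, to an optimizer, hence by uniqueness of $Q$ to a point of the $Q$-orbit. This contradicts $\delta(f_n)\geq\eta/2$. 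Now take any localized, truncated, Galilean-normalized profile $f$ to which the sharp inequality is applied. Either $M(f)\leq(1-\epsilon)M(Q)$, or $f$ is $L^2$-close to a symmetry image of $u(t)$ (recall $\delta$ is $1$-Lipschitz in $L^2$). In both cases $\mathcal{E}(f)\geq\min(c(\eta),\epsilon/2)\|\nabla f\|_{L^2}^2$. This replaces the factor $1-M(u)/M(Q)$ in the below-threshold argument, so the error terms are absorbed. The bound $\|\nabla(e^{-ix\cdot\xi(t)}Iu)\|_{L^2}\gtrsim N(t)$ then yields $c(\eta)K\lesssim o(K)$, a contradiction. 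Because this coercivity is scale invariant, no upper bound on the kinetic energy is ever needed.
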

With the above lemma in hand, we are ready to prove Proposition \ref{sequ}:
\begin{proof}[Proof of Proposition \ref{sequ}]
 Our first step will be to argue for the existence of a solution that blows up in both time directions (and hence is almost periodic on the entire lifespan).

We let $\tau_n\to\sup I$ be an increasing sequence.  Applying Lemma~\ref{alcompact}, after passing to a subsequence, we may write 
\[
u(\tau_n)=e^{-ix\cdot\xi_n}\lambda_n^{-d/2}\phi\big(\tfrac{x-x_n}{\lambda_n}\big)+w_n=: g_n \phi + w_n
\]
for some group elements $g_n$ (corresponding to scales $\lambda_n$, Galilean parameters $\xi_n$ and translation parameters $x_n$) and functions $w_n$ satisfying $\|w_n\|_{L^2}\to 0$.

We let $v$ be the solution to \eqref{1.1} with $v|_{t=0}=\phi$.  By the $L^2$ convergence, we have $M(v)=M(Q)$.  Moreover, as $u$ blows up forward in time,
\[
\|u\|_{L_{t,x}^{\frac{2(d+2)}{d}}((\inf I,\tau_n]\times\R^d)} \to \infty \qtq{and} \|u\|_{L_{t,x}^{\frac{2(d+2)}{d}}([\tau_n,\sup I)\times\R^d)} \to \infty.
\]
Therefore, by the standard stability argument, we know that $v$ blows up in both time directions, and  thus by Lemma~\ref{alcompact}, $v$ is almost periodic on the entire lifespan $\tilde{I}$.  

We now assert the following: there exist $\{s_m\}\subset\tilde{I}$ and group elements $\{g_m\}$ such that 
\begin{align}\label{th5v26}
\delta\big(v(s_m)\big)=\delta\big(g_mv(s_m)\big)\to 0\quad\text{as}\quad m\to \infty.
\end{align}
 Assuming \eqref{th5v26} for the moment, let us complete the proof of Proposition~\ref{sequ}:

First, passing to a subsequence, we may assume that
\begin{equation}\label{equ:dis2-m}
\delta\big(g_mv(s_m)\big)\leq 2^{-m }.
\end{equation}
Set
\[
\begin{aligned}
\tilde u_{m,n}(x)
={}&\lambda_n^{d/2}
e^{i(\lambda_nx+x_n)\cdot\xi_n-i\lambda_n^2s_m|\xi_n|^2}\\
&\times u(\tau_n+\lambda_n^2s_m,
             \lambda_nx+x_n-2\lambda_n^2s_m\xi_n).
\end{aligned}
\]
By construction,
\[
\lambda_n^{d/2}e^{i(\lambda_nx+x_n)\cdot\xi_n}u(\tau_n, \lambda_n x + x_n) \to \phi(x) \qtq{in} L^2.
\]
Thus, for fixed $m$ and all sufficiently large $n$, the standard stability argument gives
\begin{align}\nonumber
\|\tilde u_{m,n}(x) - v(s_m,x)\|_{L^2} &\leq C(s_m) \|\lambda_n^{d/2}e^{i(\lambda_nx+x_n)\cdot\xi_n} u(\tau_n,\lambda_n x + x_n)-\phi(x)\|_{L^2} \\\label{equ:difuvest}
& \leq C(s_m) o_n(1) \qtq{as}n\to\infty. 
\end{align}
Moreover, as $\tau_n\to\sup I$, we can also choose  sufficiently large $n(m)$ such that
\[
\tau_{n(m)}+\lambda_{n(m)}^2 s_m\to\sup I \qtq{as} m\to\infty.
\]
By \eqref{ppp}, \eqref{equ:dis2-m}, and \eqref{equ:difuvest},
\begin{align*}
\delta\bigl(u(\tau_n+\lambda_n^2s_m)\bigr)
&=\delta(\widetilde u_{m,n})\\
&=\inf_g\|g\widetilde u_{m,n}-Q\|_{L_x^2}\\
&\leq\inf_g\|gv(s_m)-Q\|_{L_x^2}
       +\|\widetilde u_{m,n}-v(s_m)\|_{L_x^2}\\
&\lesssim2^{-m}+C(s_m)o_n(1),
\end{align*}
where the infimum is over the symmetries in \eqref{ppp}.
Thus for each $m$, there exists $n=n(m)$ such that 
\[
\delta\big(u(\tau_{n(m)}+\lambda_{n(m)}^2s_m)\big)
\lesssim 2^{-m},
\]
so that Proposition~\ref{sequ} holds with $t_m:=\tau_{n(m)}+\lambda_{n(m)}^2 s_m$.

It therefore remains to prove \eqref{th5v26}.  

To this end, we will first argue for the existence of another almost periodic solution, which has bounded frequency scale.  The construction, which utilizes the notion of the normalization of an almost periodic solution, is standard by now (see e.g. \cite{TVZ2,Visan}).  Thus we will omit the details and instead refer the reader to \cite{TVZ2,Visan} for a clear and thorough exposition.

In particular, we may construct a solution $w:\tilde{\tilde{I}}\times\R^d\to\C$, $[0,\infty)\subset \tilde{\tilde{I}}$ to \eqref{1.1} such that
\[
w(0) = \lim_{n\to\infty} \tilde g_n v(r_n)
\]
for some group elements $\tilde g_n$ and some sequence $\{r_n\}\subset\R$.  This solution satisfies $M(w)=M(Q)$ and is almost periodic on the entire lifespan, with some spatial center $x(t)$, Galilean boost $\xi(t)$ and frequency scale $N(t)$ satisfying $\sup_{t\geq0}N(t)\leq 1$.  
 
Arguing as we did above, we now observe that to prove \eqref{th5v26} it suffices to show that
\begin{equation}\label{th5v262}
\qtq{there exists}\{s_m^{\prime}\}\subset\R\qtq{such that}
\lim_{m\to\infty}\delta\big(w(s_m')\big)= 0
\end{equation}
In view of Lemma \ref{sequencelemma}, \eqref{th5v262} holds naturally and we can conclude the proof.
\end{proof}



\section{Reduction of the proof}\label{reductionsec}
In this section, we show that  Theorem \ref{main1} follows  by a rigidity result, which states as
\begin{theorem}\label{t2.3}
Let $0 < \eta_{\ast} \ll 1$ be a small, fixed constant to be defined later. Suppose  $u$ is a  solution to \eqref{1.1} on the maximal interval of existence $I \subset \mathbb{R}$, with $ M(u) =  M(Q)$. If $u$ blows up forward in time, and satisfies 
\begin{equation}\label{f2.32}
\sup_{t \in [0, \sup(I))}\delta\big(u(t)\big) \leq \eta_{\ast},
\end{equation}
then $u$ is either a soliton solution of the form \eqref{soliton} or a pseudo-conformal transformation of a soliton of the form \eqref{pseudosoliton}.
\end{theorem}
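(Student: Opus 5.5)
Since $u$ stays within $\eta_\ast$ of the ground-state orbit on $[0,\sup I)$, the modulation analysis of Appendix~\ref{sec-mod} gives a decomposition of the form \eqref{intro:modulation}, with $C^1$ parameters $\lambda(t),x(t),\xi(t),\gamma(t)$ fixed by the orthogonality conditions and $\|\epsilon(t)\|_{L^2}\lesssim\delta(u(t))\le\eta_\ast$. Because $M(u)=M(Q)$ exactly, the mass constraint reads $2(Q,\epsilon)_{L^2}=-\|\epsilon\|_{L^2}^2$. I would work in the rescaled time $s$, $ds=\lambda(t)^{-2}dt$. The first step is to check that $s$ ranges over $[0,\infty)$: if $\int_0^{\sup I}\lambda^{-2}dt<\infty$, then $u$ would have finite scattering norm near $\sup I$, since the $L^{2(d+2)/d}_{t,x}$ norm on a unit $s$-interval is $O(1)$. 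In addition, $|\xi'(s)|+|x'(s)/\lambda-2\xi|+|\lambda'/\lambda|\lesssim\|\epsilon(s)\|_{L^2}$ from the modulation equations. By Galilean invariance, after a normalisation we may assume $\xi(0)=0$.

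The core of the argument is an integrated bound
\[
\int_{a}^{b}\|\epsilon(s)\|_{L^2}^2\,ds\lesssim \sup_{s\in[a,b]}\|\epsilon(s)\|_{L^2}+\text{small error},
\]
valid on long intervals with $T=b-a=2^{3k_0}$. I would obtain it in three steps. First, prove the long-time Strichartz estimate \eqref{intro-LTS} for $P_{\ge k_0}u$ by induction on frequency, splitting every nonlinear term into $\widetilde Q$ and $\widetilde\epsilon$ pieces. The dangerous terms are \eqref{intro:nonlocal-interactions}, and I would handle them with local smoothing plus the decay of $Q$ and of $|x|^{-2}*Q^2$ from Proposition~\ref{grondstate1}. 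Second, set $v=P_{\le k_0+4}u$ and control the energy increment \eqref{intro:energy-increment}. Expanding $E(v)$ around the modulated $Q$, using the mass constraint and the coercivity of the linearized operator under the orthogonality conditions, gives $\lambda^{-2}\|\epsilon_{\rm low}\|_{\dot H^1}^2\lesssim E(v)$ up to controlled errors. Third, apply a localized virial identity with a weight adapted to $x(t)$, using the symmetric double-integral form of the Hartree term so that the interior-$Q$/exterior-$\epsilon$ coupling comes with a small coefficient. This yields the displayed integrated bound, from which $\|\epsilon(s)\|_{L^2}\in L^p_s$ for $p>1$ and $\lambda(s)$ is almost monotone.

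To conclude, I would split into two cases. If $\sup I=\infty$, the $L^p_s$ bound lets me choose $s_n\to\infty$ with $\|\epsilon(s_n)\|_{L^2}\to0$ fast, while $\lambda$ stays comparable to its initial value. Iterating the energy and virial estimates on dyadic intervals then gives $\int_0^\infty\|\epsilon\|^2ds\lesssim\|\epsilon(s_n)\|\to 0$, so $\epsilon\equiv0$. Hence $u$ is a modulated $Q$ with $\xi,\lambda$ constant (by the modulation equations), which is \eqref{soliton}. If $\sup I<\infty$, almost monotonicity forces $\lambda\to0$. Applying the pseudo-conformal transform $\mathcal C$, after a time translation, produces a solution that is still $\eta_\ast$-close to the orbit and now global forward in time. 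By the first case it is a soliton, so $u$ has the form \eqref{pseudosoliton}. One also has to exclude the possibility of a global solution with $\lambda(s)\to0$ or $\infty$. For this I would use the same transform together with the rigidity already shown: Theorem~\ref{MMRZthm} forces blowup in the pseudo-conformal picture, and the energy bound then gives $\epsilon=0$.

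The main obstacle I expect is the long-time Strichartz estimate with constants independent of $T$. The term $\bigl(K*(Q\overline h)\bigr)Q$ puts the high-frequency factor inside the convolution, and the inverse-square tail of $K*Q^2$ has only weak decay, so gaining a power of the frequency over a long time interval requires a careful local-smoothing argument along the moving center $x(t)$. I would first try to write $K*(Q\bar h)$ using a Fourier-side commutator to move derivatives onto $Q$, then apply Kato smoothing weighted by $\langle x-x(t)\rangle^{-1-}$.
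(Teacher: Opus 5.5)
There is a genuine gap in the global case. You assert that when $\sup I=\infty$ the scale ``stays comparable to its initial value'', but nothing gives this. Almost monotonicity only yields $\lambda(s)\sim\inf_{[0,s]}\lambda$, and $\sup I=\infty$ only means $\int_0^\infty\lambda(s)^2\,ds=\infty$, which allows $\lambda(s)\to0$.

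Your fix for that subcase is circular. The renormalized time is invariant under $\mathcal C$: the new scale is $|t|\lambda(1/t)$ and $ds$ is unchanged. So $\mathcal Cu$ is a finite-time blowup solution of exactly the type you send back to the global case by applying $\mathcal C$ again. Theorem~\ref{MMRZthm} says nothing at mass $M(Q)$, and at $L^2$ regularity there is no conserved energy that could ``give $\epsilon=0$''. The subcase also cannot be sidestepped, because your finite-time reduction produces it. A priori nothing excludes, say, $\lambda(t)\sim|T-t|^{5/4}$, for which the transformed scale $\lambda(t)/|T-t|$ tends to zero.

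The paper treats this case head-on (Case~1 in Theorem~\ref{t9.17}), after normalizing $\|\epsilon(0)\|_{L^2}=\sup_s\|\epsilon(s)\|_{L^2}=\eta_\ast$:
\begin{enumerate}
\item It cuts $[0,\infty)$ into blocks $I(k)$ on which $\inf_{[0,s]}\lambda\sim2^{-k}$.
\item Divergence of $\sum_k|I(k)|2^{-2k}$ lets it pick $k_n\to\infty$ with $|I(k_n)|2^{-2k_n}\geq k_n^{-2}$.
\item The virial inequality gives $s_n\in I(k_n)$ with $\|\epsilon(s_n)\|_{L^2}\lesssim|I(k_n)|^{-1}$.
\item It rescales by $2^{-k_n}$ so that $\lambda\geq1$ on $[0,t_n]$, and carries the frequency-truncated energy from $t_n$ back to $t=0$ using the long-time Strichartz estimate.
\item The coercive lower bound at $t=0$, where the rescaled scale is $O(2^{k_n})$, gives $\|\epsilon(0)\|_{L^2}^2\lesssim k_n^4\,2^{-k_n/6}\to0$, a contradiction.
\end{enumerate}
The block selection and the choice of truncation frequency are exactly what make these powers balance; nothing in your sketch plays that role. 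Even in your bounded-scale case, $\epsilon\equiv0$ comes from this energy transport to $t=0$. It does not come from a bound $\int_0^\infty\|\epsilon\|^2\,ds\lesssim\|\epsilon(s_n)\|$: the virial inequality on $[0,s_n]$ keeps the boundary term $(\epsilon_2(0),\tfrac d2Q+x\cdot\nabla Q)_{L^2}$, which is only $O(\eta_\ast)$.

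Several intermediate steps also need correcting.
\begin{itemize}
\item The integrated bound you display, with $\sup_{[a,b]}\|\epsilon\|_{L^2}$ on the right, implies neither $\|\epsilon\|_{L^2}\in L^p_s$ nor almost monotonicity. It does not say that smallness of $\epsilon$ at the endpoints propagates across the interval. Theorem~\ref{t7.13} gives the endpoint form, with signed boundary terms $C(\epsilon_2(a),\tfrac d2Q+x\cdot\nabla Q)_{L^2}-C(\epsilon_2(b),\tfrac d2Q+x\cdot\nabla Q)_{L^2}$ inherited from $M(b)-M(a)$.
\item Endpoint smallness drives the dyadic induction over the times $s_j$ with $\|\epsilon(s_j)\|_{L^2}=2^{-j}\eta_\ast$ in Theorem~\ref{t8.15}.
\item The sign is what proves Theorem~\ref{t9.16}. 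An identity for $(\epsilon,|x|^2Q)_{L^2}$, combined with the assumed growth $\lambda(s_+)/\lambda(s_-)=e$, produces a time $s'$ with $(\epsilon_2(s'),\tfrac d2Q+x\cdot\nabla Q)_{L^2}<0$. That boundary term can then be dropped from $s'$ onward, forcing $\epsilon\equiv0$ after $s'$.
\item The Galilean parameter obeys the quadratic law $|\xi_s-\tfrac{\lambda_s}{\lambda}\xi|\lesssim\|\epsilon\|_{L^2}^2$ (averaged over unit $s$-intervals), not the linear one you state. Only the quadratic law keeps $|\xi|/\lambda\leq\eta_0$ over intervals of length $T$ and makes the error $T^{1/10}\sup|\xi|^2/\lambda^2$ absorbable.
\item In the finite-time step, $\mathcal Cu$ is close to the ground-state orbit only once you show $\lambda(t)^2/|T-t|\to0$. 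This kills the quadratic phase error, and the paper deduces it from $|I(k)|\to\infty$.
\end{itemize}
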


\begin{proof}[Theorem \ref{t2.3} $\Rightarrow $ Theorem \ref{main1}]
The proof mainly follows Dodson \cite{D2,D1}.  For the sake of completeness, we present a detailed proof here.  
Let $u(t)$ be the solution to \eqref{1.1} satisfying the assumption of Theorem \ref{main1}. Without loss of generality, we assume that $u$ blows up forward in time.  We divide into two cases:

\textbf{Case I}. There exists some $t_{0} > 0$ such that
\begin{equation*}
\sup_{t \in [t_{0}, \sup(I))} \delta\big(u(t)\big)  \leq \eta_{\ast},
\end{equation*}
where $\eta_{\ast}$ is the small parameter in Theorem \ref{t2.3}. In this case, after translating in time so that $t_{0} = 0$, \eqref{f2.32} holds, thus we can directly  use Theorem \ref{t2.3} to prove Theorem \ref{main1}.

\textbf{Case II}.  There exists a sequence $t_{n}^{-} \nearrow \sup(I)$ such that
\begin{equation}\label{f2.35}
 \delta\big(u(t_n^-)\big)   > \eta_{\ast},
\end{equation}
for every $n$. In this case, after passing to a subsequence, we can suppose that for every $n$, $t_{n}^{-} < t_{n} < t_{n + 1}^{-}$, where $t_{n}$ is the sequence in \eqref{f2.31} and $t_{n}^{-}$ is the sequence in \eqref{f2.35}. The fact that
$\delta\big(u(t)\big)$
is upper semicontinuous as a function of $t$, and  continuous when it is small, guarantees that there exists a small, fixed $0 < \eta_{\ast} \ll 1$ such that the sequence $t_{n}^{+}$, defined by
\begin{equation*} 
t_{n}^{+} = \inf \bigg\{ t_0 \in I : \sup_{t \in [t_{0}, t_n)} \delta\big(u(t)\big) 
< \eta_{\ast}  \bigg\},
\end{equation*}
satisfies $t_{n}^{+} \nearrow \sup I$ as $n\to \infty $ and
\begin{equation}\label{f2.38}
\delta\big(u(t_n^+)\big) 
= \eta_{\ast}.
\end{equation}
Indeed, the fact that $\delta\big(u(t)\big) $ is upper semicontinuous as a function of $t$ implies that
\begin{equation*}
\bigg\{ 0 \leq t < t_{n} : \delta\big(u(t)\big)  \geq \eta_{\ast}  \bigg\}
\end{equation*}
is a closed set. Since this set is also bounded, it has a maximal element $t_{n}^{+}$, and $t_{n}^{+} \geq t_{n}^{-}$. The fact that $\delta\big(u(t)\big) $ is upper semicontinuous in time also implies that
\begin{equation}\label{edc0}
 \delta\big(u(t_n^+)\big)\geq \eta_\ast.
\end{equation}
On the other hand, since
\begin{equation}\label{edc}
\delta\big(u(t)\big) 
 < \eta_{\ast} \quad \text{for all} \quad t_{n}^{+} < t < t_{n},
\end{equation}
Lemma~\ref{conti} gives continuity near $t_n^+$ for sufficiently
small $\eta_\ast$. Letting $t\searrow t_n^+$ in \eqref{edc}, we obtain
\begin{equation}\label{f2.41}
 \delta\big(u(t_n^+)\big)   \leq \eta_{\ast}.
\end{equation}
By \eqref{edc0} and \eqref{f2.41}, we have
\begin{equation}\label{zzzx}
	\delta\big(u(t_n^+)\big) 
 = \eta_{\ast}.
\end{equation}

We then claim that 
\begin{equation}\label{scatteringnorm}
	\lim_{n \rightarrow \infty}\|u\|_{L_{t,x}^{\frac{2(d+2)}{d}}( \left[t_{n}^{+}, t_{n} \right] \times \mathbb{R}^d)} = \infty.
\end{equation}
In fact,  suppose that \eqref{scatteringnorm} fails, then, after passing to a subsequence, there exists a constant $M$ such that 
    \[
    \|u\|_{L_{t,x}^{\frac{2(d+2)}{d}}([t_{n}^{+}, t_{n}]\times \R^d)}\leq M \qtq{for all} n\geq0.
    \]
    By Lemma \ref{att} and Proposition \ref{sequ}, there exist parameters
$\lambda_n>0$, $\gamma_n$, $\xi_n$, and $x_n$ such that
\[
\varepsilon_{n,0}(x)
=e^{i\gamma_n}e^{ix\cdot\xi_n}\lambda_n^{d/2}
u\bigl(t_n,\lambda_n(x+x_n)\bigr)-Q(x),
\qquad
\|\varepsilon_{n,0}\|_{L_x^2}
=\delta\bigl(u(t_n)\bigr)\longrightarrow0.
\]
Set $u_n(t,x)=u(t_n+t,x)$ and define the exact solitary wave
\begin{align*}
\widetilde Q_n(t,x)
={}&\lambda_n^{-d/2}e^{-i\gamma_n}
e^{-i(x/\lambda_n-x_n)\cdot\xi_n}
e^{i(1-|\xi_n|^2)t/\lambda_n^2}\\
&\times Q\left(\frac{x}{\lambda_n}-x_n
                       +\frac{2t\xi_n}{\lambda_n^2}\right).
\end{align*}
Let $\varepsilon_n(t,x)=u_n(t,x)-\widetilde Q_n(t,x)$. Then
\[
\varepsilon_n(0,x)
=\lambda_n^{-d/2}e^{-i\gamma_n}
e^{-i(x/\lambda_n-x_n)\cdot\xi_n}
\varepsilon_{n,0}(x/\lambda_n-x_n),
\qquad
\|\varepsilon_n(0)\|_{L_x^2}=\|\varepsilon_{n,0}\|_{L_x^2},
\]
and
\[
i\partial_t\varepsilon_n+\Delta\varepsilon_n
=-F(u_n)+F(\widetilde Q_n).
\]
Since $\widetilde Q_n=u_n-\varepsilon_n$, we have
\begin{align*}
|F(u_n)-F(\widetilde Q_n)|
\lesssim{}&|\varepsilon_n|
\bigl[|x|^{-2}*|u_n|^2+|x|^{-2}*|\varepsilon_n|^2\bigr]\\
&+\bigl(|x|^{-2}*|\varepsilon_nu_n|\bigr)
\bigl(|u_n|+|\varepsilon_n|\bigr).
\end{align*}
For $J\subset[t_n^+-t_n,0]$, Strichartz estimates and the
Hardy--Littlewood--Sobolev inequality give
\begin{align}\label{reduction:perturbation}
&\|\varepsilon_n\|_{L_t^\infty L_x^2(J\times\R^d)}
+\|\varepsilon_n\|_{L_t^2L_x^{\frac{2d}{d-2}}(J\times\R^d)}\notag\\
&\quad\lesssim\inf_{t\in J}\|\varepsilon_n(t)\|_{L_x^2}
+\|\varepsilon_n\|_{L_t^6L_x^{\frac{6d}{3d-2}}(J\times\R^d)}^3\notag\\
&\qquad+\|u_n\|_{L_t^6L_x^{\frac{6d}{3d-2}}(J\times\R^d)}
\|\varepsilon_n\|_{L_t^6L_x^{\frac{6d}{3d-2}}(J\times\R^d)}^2\notag\\
&\qquad+\|u_n\|_{L_t^6L_x^{\frac{6d}{3d-2}}(J\times\R^d)}^2
\|\varepsilon_n\|_{L_t^6L_x^{\frac{6d}{3d-2}}(J\times\R^d)}.
\end{align}
By local theory and the assumed spacetime bound,
$\|u_n\|_{S^0([t_n^+-t_n,0])}\lesssim_M1$. Partitioning
$[t_n^+-t_n,0]$ into finitely many intervals on which
$\|u_n\|_{L_t^6L_x^{6d/(3d-2)}}$ is small, and applying
\eqref{reduction:perturbation} successively backward from $t=0$, we obtain
\[
\|\varepsilon_n\|_{L_t^\infty L_x^2([t_n^+-t_n,0]\times\R^d)}
\lesssim_M\|\varepsilon_n(0)\|_{L_x^2}\longrightarrow0.
\]
Since $\widetilde Q_n(t_n^+-t_n)$ lies on the ground-state orbit,
\[
\delta\bigl(u(t_n^+)\bigr)
\leq\|\varepsilon_n(t_n^+-t_n)\|_{L_x^2}\longrightarrow0,
\]
contradicting \eqref{zzzx}.

Using the fact that $u$ is a minimal-mass blowup solution, there exist $w_0\in L_x^2$ and parameters $\widehat{\lambda}_n > 0$, $\widehat{\gamma}_n\in[0,2\pi]$, $\widehat{x}_n\in\mathbb{R}^d$ and $\widehat{\xi}_n\in\mathbb{R}^d$, 
 such that
\begin{equation}\label{f2.43}
\left\|e^{i\widehat{\gamma}_{n }}e^{ix\cdot\widehat{ \xi}_n}\widehat{\lambda}_n^{d/2}  u \left(t_{n}^{+}, \widehat{\lambda}_n
\left(x+ \widehat{x}_n \right) \right)-w_{0} (x) \right\|^2_{L^2_x} \rightarrow 0, \text{ as } n \to \infty .
\end{equation}
Moreover,   $ \| w_0 \|_{L_x^{2}} = \| Q\|_{L_x^{2}}$, and by \eqref{f2.38},
\begin{equation}\label{f2.44}
\inf_{\substack{\lambda>0, \gamma\in [0,2\pi]\\\tilde{x},\xi\in \R^d}}
\left\|  e^{i \gamma}e^{ix\cdot\xi} \lambda^{d/2}  w_0 \left(\lambda  \left(x+\tilde{x} \right) \right) - Q (x)  \right\|_{L_x^{2}} =\eta_{\ast}.
\end{equation}
Let $w$ be the solution to \eqref{1.1} with initial data $w_{0}$. Then the fact that $u$ blows up forward in time and $t_n^+\nearrow \sup I$ as $n \to \infty$ implies that $w$ blows up both forward and backward in time. Denote the maximal interval of existence of $w$ as $\tilde{I}$. We claim that
\begin{equation}\label{f2.45}
\delta\big(w(t)\big)\leq \eta_{\ast} ,\quad \forall \,  t\in [0, \sup\tilde{I}).
\end{equation}
Indeed, for any compact interval $[0, M]\subseteq  \tilde{I}$, \eqref{f2.43} and the stability lemma guarantee that as $n \to \infty$, 
\begin{align}\label{ijz}
\|u'_n(t)-w(t) \|_{L_t^{\infty} L_x^2\cap L_{t, x}^{\frac{2(d+2)}{d}}  ([0,M]\times\mathbb{R}^d)}\to 0,
\end{align}
where
\[
\begin{aligned}
u'_n(t,x)={}&e^{i\widehat\gamma_n}
e^{ix\cdot\widehat\xi_n-it|\widehat\xi_n|^2}
\widehat\lambda_n^{d/2}\\
&\times u\left(t_n^++\widehat\lambda_n^2t,
\widehat\lambda_n(x+\widehat x_n-2t\widehat\xi_n)\right).
\end{aligned}
\]
Clearly, \eqref{scatteringnorm}, \eqref{f2.43}, \eqref{ijz}, and the arbitrariness in the choice of $M$ 
imply that \eqref{f2.45} holds.

However, Theorem \ref{t2.3}, \eqref{f2.44}, and \eqref{f2.45} imply that $w$ must be of the form \eqref{pseudosoliton}. Such a solution scatters backward in time, which contradicts the fact that $w$ blows up both forward and backward in time. Thus \eqref{f2.35} cannot happen, which precludes possibility of Case II. So we finish the proof.
\end{proof}

\section{Long-time Strichartz estimates}\label{sec-long}
This section aims to demonstrate that the high-frequency truncation of the solution exhibits excellent decay properties, meaning that, in a certain sense, the high-frequency part of the solution closely approximates the free solution. Such estimates, known as long-time Strichartz estimates, were first introduced by Dodson \cite{D7,D6,D66,D5}. In our previous work \cite{MMRZ}, we developed bilinear estimates that account for \emph{non-local} effect of the Hartree nonlinearity and employed Lorentz spaces to obtain long-time Strichartz estimates. In this paper, we improve the long-time Strichartz estimates for solutions near the ground state orbit by utilizing local smoothing estimates.
First, we introduce some useful norms.
\begin{definition}[$U_\Delta^p(I)$ space,\cite{HHK}] 
Let $1\leqslant p<\infty$. We define $U_\Delta^p(\R)$ as an atomic space with atoms $v^\lambda(t,x)$ given by 
\begin{align*}
v^\lambda(t,x)=\sum_{k=0}^{M}\chi_{[t_k,t_{k+1})}(t)e^{it\Delta} v_k^\lambda(x),\quad
\sum_{k=0}^M\left\| v_k^\lambda \right\|_{L^2(\R^d)}^p=1.
\end{align*}
In the summation above, $M$ can be finite or infinite. If $M$ is finite, we further assume $t_0=-\infty$ and $t_{M+1}=+\infty$. The norm of $U_\Delta^p (\R)$ is defined as 
\begin{align*}
\| v \|_{U_\Delta^p(\R)}
= \inf\left\{\sum_{\lambda\in\mathbb{Z}} |c_\lambda|: v =\sum_{\lambda\in\mathbb{Z}}c_\lambda v^\lambda, \hspace{1ex} v^\lambda\mbox{ is an atom}\right\},
\end{align*}
where the infimum is taken over all atom decompositions. 
For any interval $I\subseteq\R$, we can define the local version as
\begin{align*}
\| v\|_{U_\Delta^p \left(I \right)}=\| v \chi_I(t)\|_{U_\Delta^p \left(\R\right)},
\end{align*}
where $\chi_I$ denotes the characteristic function of the set $I$.
\end{definition}

Next, we define the space $V_\Delta^p \left(I \right)$, which is also useful for our analysis.

\begin{definition}[$V_\Delta^p \left(I \right)$ space,\cite{HHK}]
Let $1\leqslant p<\infty$. We define $V_\Delta^p(\R )$ as the space of right-continuous functions $v \in L_t^{\infty} L_x^2(\R\times\R^d)$ endowed with the norm
\begin{align*}
\| v \|_{V_\Delta^p \left(\R\right)}^p
= \| v \|_{L_t^\infty L_x^2(\R\times\R^d)}^p + \sup_{\{t_k\}\nearrow\infty}
\sum_{k} \big\|e^{-it_{k+1}\Delta} v(t_{k+1})-e^{- it_k\Delta} v(t_k)\big\|_{L^2(\R^d)}^p.
\end{align*}
For any interval $I\subseteq\R$, we can define the local version as
\begin{align*}
\| v \|_{V_\Delta^p \left(I \right)}=\| v \chi_I(t)\|_{V_\Delta^p \left(\R\right)}.
\end{align*}
\end{definition}
By definition, it's easy to check that for $0<\theta<1$ and $p, p_2>1$  satisfying $\frac{1}{p}=\theta+\frac{1-\theta}{p_2}$, we have
\begin{align}\label{interpo}
   \Big\| \int_{t_0}^t e^{i(t-\tau)\Delta} F(\tau) \, {d} \tau \Big\|_{V_{\Delta}^{p}}\leq \|F\|_{L_t^1L_x^2}^{\theta}\Big\|\int_{t_0}^t e^{i(t-\tau)\Delta} F(\tau) \, {d} \tau \Big\|^{1-\theta}_{V_{\Delta}^{p_2}}.
\end{align}

Next, we present some useful properties of the $U_\Delta^p \left(I \right)$ and $V_\Delta^p \left(I\right)$ spaces.

\begin{lemma}[\cite{HHK}]\label{property}
Suppose $1<p<q<\infty$ and $t_0\leqslant t_1\leqslant t_2$. For $I\subset \R$, we have
\begin{align}
(i)&\; U_\Delta^p \left(I \right)\subseteq V_\Delta^p \left(I \right)\subseteq U_\Delta^q \left(I \right),\label{embeddingvp} \\
(ii)&\;\| v \|_{U_\Delta^p \left([t_0,t_1] \right)}\leqslant\| v \|_{U_\Delta^p \left([t_0,t_2] \right)},  \notag \\
(iii)&\;\| v \|_{U_\Delta^2 \left([t_0,t_2] \right)}^2\leqslant\| v \|_{U_\Delta^2 \left([t_0,t_1] \right)}^2
+\| v \|_{U_\Delta^2 \left([t_1,t_2]\right)}^2,  \notag\\
(iv)&\;\| v \|_{L_t^p L_x^q \left(I\times\R^d \right)}\lesssim_p \| v \|_{U_\Delta^p \left(I \right)},\;(p,q)\in\Lambda_0 \label{u-pstrichartz}\\
(v)&\; \| v \|_{U_\Delta^2 \left(I \right)}\lesssim_p \inf_{\tilde{t}\in I} \left\| v(\tilde{t}) \right\|_{L^2(\R^d)}
+\|i\partial_t v +\Delta v \|_{L_t^{p'} L_x^{q'} \left(I\times\R^d \right)},\;(p,q)\in\Lambda_0. \label{dupstrichartz}
\end{align}
\end{lemma}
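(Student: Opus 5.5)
The five properties are consequences of the atomic structure of $U^p_\Delta$ and the variation structure of $V^p_\Delta$, and I would prove them in the order (ii), (iii), (iv), (i), (v). Throughout I reduce to the interaction-picture function $w(t)=e^{-it\Delta}v(t)$. Under this map, $U^p_\Delta$ and $V^p_\Delta$ become the classical spaces $U^p$ and $V^p$ of $L^2$-valued functions of $t$, as in Hadac--Herr--Koch \cite{HHK}.

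\textbf{Restriction and gluing, (ii) and (iii).} For (ii), restricting an atom to a subinterval $[t_0,t_1]$ gives another atom: intersect the partition with the interval and keep the same coefficients, which does not increase $\sum\|v_k\|^p$. Hence the norm decreases. For (iii), given near-optimal atomic decompositions on $[t_0,t_1]$ and on $[t_1,t_2]$, I would concatenate them. For $p=2$, an atom on $[t_0,t_1]$ scaled by $a$ and an atom on $[t_1,t_2]$ scaled by $b$ combine into $\sqrt{a^2+b^2}$ times a single atom on $[t_0,t_2]$. Optimizing the splitting of the two decompositions gives the quadratic subadditivity.

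\textbf{Strichartz bound, (iv).} It suffices to check (iv) on atoms, since the norm of a sum of atoms is at most the $\ell^1$ sum. For an atom,
\[
\|v\|_{L^p_tL^q_x}^p=\sum_k\|\chi_{[t_k,t_{k+1})}e^{it\Delta}v_k\|_{L^p_tL^q_x}^p\lesssim\sum_k\|v_k\|_{L^2}^p=1,
\]
by Strichartz (Proposition~\ref{prop1}). Here it matters that the exponent $p$ of the time Lebesgue norm matches the atomic exponent.

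\textbf{Embeddings, (i).} The inclusion $U^p\subset V^p$ is immediate on atoms, because the increments of $e^{-it\Delta}v$ across any partition are bounded by $\|v_{k+1}-v_k\|\le\|v_{k+1}\|+\|v_k\|$. The inclusion $V^p\subset U^q$ for $q>p$ is the main obstacle. I would follow the standard dyadic construction. Given $w\in V^p$ with norm $1$ and $n\ge0$, choose partitions $\tau^n$ with at most about $2^{np}$ points such that $w$ is approximated by the step function $w_n$, constant on the cells of $\tau^n$, with $\|w-w_n\|_{L^\infty}\le2^{-n}$. Such partitions are obtained greedily, stopping whenever $w$ moves by $2^{-n}$; the $V^p$ bound limits the number of stops. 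Then $w_{n+1}-w_n$ is a $U^q$ atom scaled by about $2^{-n}\,(2^{np})^{1/q}=2^{-n(1-p/q)}$. This is summable since $q>p$, and the telescoping series gives $w\in U^q$.

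\textbf{Energy-type bound, (v).} This follows by duality. Write
\[
v(t)=e^{i(t-\tilde t)\Delta}v(\tilde t)-i\int_{\tilde t}^t e^{i(t-s)\Delta}G(s)\,ds,\qquad G=i\partial_tv+\Delta v.
\]
The linear part is a single atom. For the Duhamel part, I would use the duality $(U^2)^*=V^2$ from \cite{HHK}, pairing against $V^2$ functions through $\int\langle G,u\rangle\,dt$. Then $V^2\subset U^{p}$ for $p>2$ by (i), and (iv) give
\[
\Big|\int\langle G,u\rangle\,dt\Big|\le\|G\|_{L^{p'}_tL^{q'}_x}\,\|u\|_{L^{p}_tL^{q}_x}\lesssim\|G\|_{L^{p'}_tL^{q'}_x}\,\|u\|_{V^2}.
\]
The endpoint $p=2$ requires the Keel--Tao endpoint in the form $V^2\to L^2_tL^{2d/(d-2)}_x$, which also holds for right-continuous $V^2$ functions by \cite{HHK}.
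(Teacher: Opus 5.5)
The paper gives no proof of this lemma; it quotes it from \cite{HHK}. Your treatment of (i)--(iv), and of (v) for non-endpoint pairs, is the standard argument from that source and is fine. In (iii), make the ``optimal splitting'' explicit. Let the two decompositions have masses $A=\sum_\lambda|c_\lambda|$ and $B=\sum_\mu|d_\mu|$. Pair every atom of the first with every atom of the second, using the weights $|c_\lambda||d_\mu|/(AB)$. Each pair is $\sqrt{A^2+B^2}$ times a single atom, so the total mass is exactly $\sqrt{A^2+B^2}$.

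The genuine gap is the endpoint $(p,q)=(2,\tfrac{2d}{d-2})$ of (v). The embedding $V^2_\Delta\subset L^2_tL^{2d/(d-2)}_x$ you invoke is not in \cite{HHK}, which gives only $V^2_\Delta\subset U^q_\Delta$ for $q>2$ and a logarithmic interpolation lemma. In fact the embedding is false. Take orthonormal $\psi_1,\dots,\psi_N$ with disjoint Fourier supports and large spatial separations, chosen so that $e^{it\Delta}\psi_k$ is a packet focusing at $t=k$. Then $\|e^{it\Delta}\psi_k\|_{L^2_tL^{2d/(d-2)}_x([k,k+1)\times\mathbb{R}^d)}=c_0>0$, and the packets stay essentially disjoint in space for $0\le t\le N+1$. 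For a unitary $N\times N$ matrix $W$, set $v(t)=e^{it\Delta}\sum_{j\le t}N^{-1/2}\sum_{k}W_{jk}\psi_k$. Its jumps are mutually orthogonal with squared norm $1/N$, so $\|v\|_{V^2_\Delta}\lesssim1$. On $[m,m+1)$ the coefficient of $\psi_m$ is $N^{-1/2}(AW)_{mm}$, where $A$ is the lower-triangular matrix of ones. Hence $\|v\|_{L^2_tL^{2d/(d-2)}_x}^2\gtrsim N^{-1}\sum_m|(AW)_{mm}|^2$. Taking $W$ from the polar decomposition of $A$ makes this at least $N^{-2}\|A\|_{S_1}^2\sim\log^2N$, where $\|A\|_{S_1}$ is the trace norm; this is the unboundedness of triangular truncation. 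By the duality $(U^2_\Delta)^*=V^2_\Delta$ underlying Lemma~\ref{up-dual}, the endpoint case of (v) is equivalent to this embedding, so no argument can prove it. Your proof establishes (v) exactly for the non-endpoint pairs; for $(\infty,2)$, use $\|v\|_{L^\infty_tL^2_x}\le\|v\|_{V^2_\Delta}$ directly. The statement should be restricted accordingly. This does not affect the paper: its Duhamel bounds in Section~\ref{sec-long} apply (v) only with the dual norm $L^1_tL^2_x$.
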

\begin{remark}\label{lorentzup}
    Furthermore, by using Lemma \ref{P:SZ}, one can easily verify that the stronger embedding
    \[
    \| v \|_{L_t^p L_x^{q,2} \left(I\times\R^d \right)}\lesssim_p \| v \|_{U_\Delta^p \left(I \right)},
    \]
still holds.
\end{remark}
Furthermore, we have the following estimates:

\begin{lemma}[\cite{HHK}]\label{up-dual}
Let $p>1$ and let $I$ be an interval with $t_0\in I$, we have
\begin{align*}
\left\|\int_{t_0}^t e^{i(t-\tau)\Delta} F(\tau) \, {d} \tau\right\|_{U_\Delta^p \left(I\right)}
\lesssim_p \sup_{\| G \|_{V_\Delta^{p'} \left(I \right)}\leq1} \Big|\int_I\int_{\R^d}F\bar{G}\;dx\;dt\Big|.	
\end{align*}
\end{lemma}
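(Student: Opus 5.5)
This is a duality statement, and I will prove it by testing the $U^p_\Delta$ norm against $V^{p'}_\Delta$, following the framework of \cite{HHK}. Set $w(t):=\int_{t_0}^t e^{i(t-\tau)\Delta}F(\tau)\,d\tau$ on $I$ and extend it by constants outside $I$ in the profile sense: $e^{-it\Delta}w(t)$ is frozen at its endpoint values. The basic object is the bilinear form
\[
B(v,G)=\sum_k\bigl\langle e^{-it_{k}\Delta}G(t_k)-e^{-it_{k-1}\Delta}G(t_{k-1}),\,e^{-it_k\Delta}v(t_k)\bigr\rangle,
\]
taken over partitions, in its limiting Stieltjes form. The key input is the theorem of \cite{HHK}: $(U^p_\Delta)^*\cong V^{p'}_\Delta$ via this pairing, and
\[
\|v\|_{U^p_\Delta}\sim\sup\{|B(v,G)|:\ \|G\|_{V^{p'}_\Delta}\le 1\}
\]
for $v$ such that $e^{-it\Delta}v(t)$ is absolutely continuous with values in $L^2$.

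The first step is to check that $w$ belongs to this class, so that the duality formula applies. I would do this first for $F$ smooth and compactly supported in $I\times\R^d$, and then pass to the general case by density. The closedness of the estimate handles the limit: if the right-hand side is finite, the approximating $w_n$ are Cauchy in $U^p_\Delta$.

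The second step identifies the pairing. For absolutely continuous $e^{-it\Delta}w$ we have
\[
\partial_t\bigl(e^{-it\Delta}w(t)\bigr)=-ie^{-it\Delta}F(t).
\]
Summation by parts in $B$ then turns it into a Riemann–Stieltjes integral of $\langle e^{-it\Delta}G,\ d(e^{-it\Delta}w)\rangle$. Since the integrator is absolutely continuous, this integral equals $\int_I\langle e^{-it\Delta}G(t),-ie^{-it\Delta}F(t)\rangle\,dt$. Unitarity of $e^{-it\Delta}$ rewrites this as
\[
-i\int_I\!\int_{\R^d}F\bar G\,dx\,dt,
\]
up to conjugation conventions. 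The boundary terms produced by summation by parts vanish because $w(t_0)=0$, and the endpoint contributions of $G$ are absorbed using $\|G\|_{L^\infty L^2}\le\|G\|_{V^{p'}}$. Taking the supremum over $\|G\|_{V^{p'}_\Delta(I)}\le1$ then gives the claimed bound. The implicit constant depends only on $p$, through the constant in the $U^p$–$V^{p'}$ duality.

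The main obstacle is the rigorous justification of the duality $\|v\|_{U^p}\lesssim\sup_G|B(v,G)|$ itself. Its proof is a Hahn–Banach argument: one computes the norm of $B(v,\cdot)$ on step functions, uses the fact that $V^{p'}$ functions can be approximated by right-continuous step functions, and relies on the atomic structure of $U^p$. I would quote \cite[Theorem 2.8, Proposition 2.10]{HHK} for this. The remaining work is the routine localization to $I$ via multiplication by $\chi_I$, which is compatible with the definitions of $U^p_\Delta(I)$ and $V^{p'}_\Delta(I)$.
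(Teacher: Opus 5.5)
Your strategy is the standard argument behind the citation: the $U^p_\Delta$–$V^{p'}_\Delta$ duality of \cite{HHK}, together with identifying the pairing with $\int_I\int F\bar G$ for absolutely continuous profiles. The paper itself gives no proof. However, the step where you dispose of the boundary terms is wrong.

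Write $N(F)$ for the right-hand side and $\Phi(t)=e^{-it\Delta}w(t)=\int_{t_0}^te^{-i\tau\Delta}F(\tau)\,d\tau$ for the profile. Summation by parts produces boundary terms at the ends of $I$ (at $\pm\infty$ after your extension by constants), not at $t_0$. There $\Phi$ equals $\Phi(\inf I)$ or $\Phi(\sup I)$, which is nonzero in general unless $t_0$ is that endpoint. This matters: the statement allows any $t_0\in I$, and interior $t_0$ is exactly what the proof of Lemma~\ref{integ1} uses (``for any $t_0\in[a,b]$'').

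Bounding the $G$-factor by $\|G\|_{L^\infty_tL^2_x}$ therefore only gives $\|w\|_{U^p_\Delta(I)}\lesssim N(F)$ plus the $L^2$ norms of $\Phi$ at the endpoints of $I$. You never control the latter. The missing step is short. For $c\in\overline I$ (possibly infinite) and $\psi\in L^2$,
\[
\int\Phi(c)\,\bar\psi\,dx=\pm\int_I\int F\,\overline{G}\,dx\,d\tau,\qquad G(\tau)=\chi_{[\min(c,t_0),\max(c,t_0))}(\tau)\,e^{i\tau\Delta}\psi .
\]
Moreover $\|G\|_{V^{p'}_\Delta(I)}\lesssim\|\psi\|_{L^2}$, since the sup norm is $\|\psi\|_{L^2}$ and the profile has at most two jumps. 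Hence $\|\Phi(c)\|_{L^2}\lesssim N(F)$. This amounts to splitting off the free part $e^{i(t-a)\Delta}w(a)$, $a=\inf I$, whose $U^p_\Delta(I)$ norm is $\|w(a)\|_{L^2}$, and running the duality argument only on $\int_a^te^{i(t-\tau)\Delta}F\,d\tau$.

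Your density step is also not sound as phrased. Finiteness of $N(F)$ does not make the approximants $w_n$ Cauchy in $U^p_\Delta$; that requires $N(F_n-F)\to0$, i.e.\ density of nice functions in the dual norm, which you have not shown.

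The step is also unnecessary. Take $F\in L^1_tL^2_x(I)$, the natural class and the one in which Lemma~\ref{atominterpolation} uses the statement. Then $\Phi$ is absolutely continuous with integrable derivative $\Phi'=e^{-it\Delta}F$ (there is no factor $-i$, though that is harmless). So $w$ has bounded variation in the profile sense, lies in $U^p_\Delta(I)$, and the identification of the pairing applies directly.

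If you want $F$ in a dual Strichartz space $L^{q'}_tL^{r'}_x$ with $(q,r)$ admissible and $q>p'$, density does work there. The embeddings $V^{p'}_\Delta\subset U^q_\Delta\subset L^q_tL^r_x$ give $N(F)\lesssim\|F\|_{L^{q'}_tL^{r'}_x}$, so approximating $F$ in that space forces $N(F_n-F)\to0$.
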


We next prove the following lemma, which plays a key role in the proof of long-time Strichartz estimate.
\begin{lemma}\label{atominterpolation}
For any $0<\delta<1$,  we have
 \begin{align}\nonumber
   & \Big \|\int_{a}^te^{i(t-s)\Delta}F(s)\; ds\Big\|_{U_{\Delta}^2([a,b])}\\ \label{rrrt}  \lesssim_{\delta}&\|F\|^{\delta}_{L_t^{1}L_{x}^{2}}\bigg\{\sup_{\|v\|_{U_{\Delta}^2([a,b])}\leq 1}\Big|\int_a^b\int_{\R^d}F\bar{v}\;dx\;dt\Big|\bigg\}^{1-\delta}.
 \end{align}
\end{lemma}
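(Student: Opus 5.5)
The plan is to combine the duality estimate of Lemma~\ref{up-dual} with the interpolation inequality \eqref{interpo} and the embedding \eqref{embeddingvp}. Set $w(t)=\int_a^t e^{i(t-s)\Delta}F(s)\,ds$. Given $\delta\in(0,1)$, choose $p>2$ close to $2$ so that $\tfrac12=\theta+\tfrac{1-\theta}{p}$ has solution $\theta=\delta$; explicitly $p=\tfrac{2(1-\delta)}{1-2\delta}$ when $\delta<1/2$. For $\delta\ge 1/2$ I would first prove the bound for a smaller exponent $\delta_0<1/2$. Since $\|w\|_{U^2_\Delta}\lesssim\|F\|_{L^1_tL^2_x}$ (take atoms $e^{it\Delta}\int_a^{t_k}e^{-is\Delta}F$, or note $\|w\|_{V^1}\le\|F\|_{L^1L^2}$ together with $V^1\subset U^2$), and since the supremum on the right of \eqref{rrrt} is itself $\lesssim \|F\|_{L^1_tL^2_x}$ (because $\|v\|_{L^\infty L^2}\lesssim\|v\|_{U^2}$), raising the exponent of $\|F\|_{L^1L^2}$ only weakens the statement. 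So it suffices to treat $\delta$ small.

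The key steps, in order, are as follows.
\begin{enumerate}
\item Use $U^2_\Delta\supset V^{q}_\Delta$ for $q<2$. Pick $q\in(1,2)$ and write $\tfrac1q=\theta+\tfrac{1-\theta}{p_2}$ with $p_2\in(q,2)$, so that \eqref{embeddingvp} gives $\|w\|_{U^2}\lesssim\|w\|_{V^q}$.
\item Apply \eqref{interpo} to obtain $\|w\|_{V^q}\le\|F\|_{L^1L^2}^{\theta}\|w\|_{V^{p_2}}^{1-\theta}$.
\item Bound $\|w\|_{V^{p_2}}\lesssim\|w\|_{U^{p_2}}$ by \eqref{embeddingvp}.
\item Apply Lemma~\ref{up-dual} with exponent $p_2$. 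This controls $\|w\|_{U^{p_2}}$ by the supremum of $|\int\!\!\int F\bar G|$ over $\|G\|_{V^{p_2'}}\le1$. Since $p_2<2$, we have $p_2'>2$.
\end{enumerate}
Step 4 exposes the main obstacle: the dual class is $V^{p_2'}$ with $p_2'>2$, whereas the statement requires the supremum over the unit ball of $U^2$. The embeddings run the wrong way, because $U^2\subset V^2\subset V^{p_2'}$, so the $V^{p_2'}$ ball is larger than the $U^2$ ball.

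To fix this, I would redo the interpolation on the dual side. For $G$ in the unit ball of $V^{p_2'}$, I would use the standard decomposition of $V^{r}$ functions into $U^2$ pieces (the Hadac--Herr--Koch lemma underlying $V^{r}\subset U^{s}$): for each $M\ge1$ one can write $G=G_1+G_2$ with
\[
\|G_1\|_{U^2}\lesssim M,\qquad \|G_2\|_{U^{\infty}}\lesssim e^{-cM}.
\]
A simpler polynomial version would also suffice. Then
\[
\Big|\int\!\!\int F\bar G\Big|\le M\,S+e^{-cM}\|F\|_{L^1L^2},
\]
where $S$ denotes the $U^2$-supremum appearing in \eqref{rrrt}, and where $\|G_2\|_{L^\infty L^2}\lesssim\|G_2\|_{U^\infty}$ is used for the second term. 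Optimizing in $M$ gives the bound $S\log(\|F\|_{L^1L^2}/S)+S$, which is at most $C_\epsilon\|F\|_{L^1L^2}^{\epsilon}S^{1-\epsilon}$ (recall $S\lesssim\|F\|_{L^1L^2}$).

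Combining everything yields
\[
\|w\|_{U^2}\lesssim\|F\|_{L^1L^2}^{\theta+\epsilon(1-\theta)}\,S^{(1-\epsilon)(1-\theta)}.
\]
Choosing $\theta$ and $\epsilon$ with $\theta+\epsilon(1-\theta)=\delta$ gives \eqref{rrrt}.

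Alternatively, one could bypass steps 1--3 entirely. Apply the dual decomposition directly to the $U^2$--$V^2$ duality, i.e.\ Lemma~\ref{up-dual} with $p=2$ and $G\in V^2$, splitting $G$ as above. This gives $\|w\|_{U^2}\lesssim S\,(1+\log(\|F\|_{L^1L^2}/S))$, which already implies the claim for every $\delta>0$. I would try this route first. The only nontrivial input, and the step to verify carefully, is the $V^2\to U^2+U^\infty$-small decomposition with logarithmic loss from \cite{HHK}.
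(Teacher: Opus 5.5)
The route you say you would try first is correct, but it is not the paper's. You apply Lemma~\ref{up-dual} with $p=2$ and split each $G$ in the $V^2_\Delta$ unit ball as $G_1+G_2$, with $\|G_1\|_{U^2_\Delta}\lesssim M$ and $\|G_2\|_{U^q_\Delta}\lesssim e^{-cM}$ for some $q>2$ (the Hadac--Herr--Koch lemma). This gives $\|w\|_{U^2_\Delta}\lesssim S\,(1+\log(\|F\|_{L^1_tL^2_x}/S))$, which is stronger than \eqref{rrrt}.

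The paper instead runs exactly your Steps 1--4, but with $p_1\in(1,2)$ and $p_2>2$ chosen so that $\frac1{p_1}=\delta+\frac{1-\delta}{p_2}$. This is possible for $p_2$ close to $2$, for every $\delta\in(0,1)$. Then $p_2'<2$, so \eqref{embeddingvp} gives $V^{p_2'}_\Delta\subset U^2_\Delta$. Hence the supremum over the $V^{p_2'}_\Delta$ unit ball produced by Lemma~\ref{up-dual} is at most a constant times $S$.

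The obstacle you hit in Step 4 is therefore an artifact of imposing $p_2<2$ in Step 1; \eqref{interpo} places no upper restriction on $p_2$. The paper's version uses nothing beyond \eqref{embeddingvp}, \eqref{interpo} and Lemma~\ref{up-dual}, and needs no reduction to small $\delta$. Yours buys a logarithmic rather than power loss, at the price of the decomposition lemma, which is not among the properties recorded in Lemma~\ref{property}.

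The repair you propose for your primary route is misstated. For $G$ in the unit ball of $V^r_\Delta$ with $r=p_2'>2$, there is no splitting with $\|G_1\|_{U^2_\Delta}\lesssim M$ and $\|G_2\|_{L^\infty_tL^2_x}\lesssim e^{-cM}$. The Hadac--Herr--Koch construction applied to a $V^r$ function yields $U^r$ pieces, not $U^2$ pieces.

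Here is a counterexample. Let $e^{-it\Delta}G(t)$ switch $N$ times between $0$ and $N^{-1/r}\phi$, with $\|\phi\|_{L^2}=1$. Then $\|G\|_{V^r_\Delta}\lesssim 1$. But any $G_1$ within $\frac14 N^{-1/r}$ of $G$ in $L^\infty_tL^2_x$ has $\|G_1\|_{U^2_\Delta}\gtrsim\|G_1\|_{V^2_\Delta}\gtrsim N^{\frac12-\frac1r}$. Taking $M=C\log N$ with $C$ large would then force $N^{\frac12-\frac1r}\lesssim\log N$, a contradiction.

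What is true is the polynomial tradeoff $\|G_1\|_{U^2_\Delta}\lesssim M$, $\|G_2\|_{L^\infty_tL^2_x}\lesssim M^{-2/(r-2)}$. As you suspected, this still closes the argument, giving $\sup_{\|G\|_{V^r_\Delta}\le 1}|\iint F\bar G|\lesssim\|F\|_{L^1_tL^2_x}^{1-2/r}S^{2/r}$, with $1-\frac2r$ small for $r$ near $2$. However, the optimization leading to $S\log(\|F\|_{L^1_tL^2_x}/S)$ is valid only at $r=2$, that is, only in your alternative route.
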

\begin{proof}
Fix $\delta\in(0,1)$, we can find $p_1\in(1,2)$ and $p_2>2$ such that
$$\frac{1}{p_1}=\delta+\frac{1-\delta}{p_2}.$$
Using \eqref{embeddingvp}, \eqref{interpo}, Lemma \ref{up-dual}, we obtain
\begin{align*}
\Big \|\int_{a}^te^{i(t-s)\Delta}F(s)\,ds\Big\|_{U_{\Delta}^2([a,b])}
\lesssim&\Big \|\int_{a}^te^{i(t-s)\Delta}F(s)\,ds\Big\|_{V_{\Delta}^{p_1}([a,b])}\\
\lesssim&\|F\|_{L_t^1L_x^2}^{\delta}\Big\|\int_a^t e^{i(t-\tau)\Delta} F(\tau) \, {d} \tau \Big\|^{1-\delta}_{V_{\Delta}^{p_2}}\\
\lesssim&\|F\|_{L_t^1L_x^2}^{\delta}\Big\|\int_a^t e^{i(t-\tau)\Delta} F(\tau) \, {d} \tau \Big\|^{1-\delta}_{U_{\Delta}^{p_2}}\\
\lesssim&\|F\|_{L_t^1L_x^2}^{\delta}\bigg\{\sup_{\| G \|_{V_\Delta^{p_2'} \left([a,b] \right)}=1} \Big|\int_a^b\int_{\R^d}F\bar{G}\;dx\;dt\Big|\bigg\}^{1-\delta}\\
\lesssim&\|F\|_{L_t^1L_x^2}^{\delta}\bigg\{\sup_{\|v\|_{U_{\Delta}^2([a,b])}\leq 1}\Big|\int_a^b\int_{\R^d}F\bar{v}\;dx\;dt\Big|\bigg\}^{1-\delta},
\end{align*}
and so \eqref{rrrt}   follows.

\end{proof}

With the above preparations, we now introduce the following long-time Strichartz estimate:
\begin{theorem}[Long-time Strichartz estimate]\label{longtime}
Let $u$ be the  solution as in Theorem \ref{t2.3}. Let $J = [a,b]$ be an interval for which
$\lambda(t)\geq1$ and
\begin{equation}\label{f5.7}
    \begin{cases}
     \int_{a}^b \lambda(t)^{-2} \, {d} t = T=2^{3k_0},\;   \int_{a}^b\frac{1}{\lambda(t)^2}\|\epsilon(t)\|_{L_x^2}^2\, {d} t\leq \eta_1,\\
     \sup\limits_{t \in J}\frac{|\xi(t)|}{\lambda(t)}\leq \eta_0,\ \sup\limits_{t\in J}|x(t)|\leq T^{\frac{1}{25}},
    \end{cases}
\end{equation}
for some $\eta_0,\eta_1\ll1$. Then the long-time Strichartz estimate,
\begin{align}\label{f5.66}
\|P_{\geq k_0}u\|_{U_{\Delta}^2([a,b])}\lesssim \Big(\frac{1}{T}\int_{a}^{b}\frac{1}{\lambda(t)^2}\|\epsilon(t)\|^2_{L_x^2}\, dt\Big)^{1/2}+\frac{1}{T^{10}},
\end{align}
holds (universally) for sufficiently small $\eta_{\ast}\ll1$ and large $T\gg1$ and the implicit constant is independent of $T$.
\end{theorem}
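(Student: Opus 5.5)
We argue by induction on frequency, following Dodson's scheme \cite{D2,D1} and our earlier Hartree bilinear machinery \cite{MMRZ}. For $k_0\geq j\geq 0$ we set
\[
A(j):=\sup_{J'\subset J}\|P_{\geq j}u\|_{U_\Delta^2(J')},
\]
where the supremum runs over subintervals $J'$ with $\int_{J'}\lambda(t)^{-2}\,dt\leq 2^{3j}$. These subintervals are the natural time scales at frequency $2^j$.

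\textbf{Base step.} For small $j$ the bound $A(j)\lesssim 1$ follows from Strichartz estimates and the local theory. Indeed, $\lambda(t)\geq1$, and an interval $J'$ with $\int_{J'}\lambda^{-2}\,dt\lesssim1$ is covered by $O(1)$ local-theory intervals.

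\textbf{Inductive step.} We write $P_{\geq j}u$ via Duhamel from the left endpoint of $J'$. By Lemma~\ref{property}(v) and Lemma~\ref{atominterpolation}, we must bound $P_{\geq j}F(u)$ paired against $U^2_\Delta$-normalised $v$. In the nonlinearity we split each factor as $u=P_{\geq j-5}u+P_{<j-5}u$, and the low part further as $P_{<j-5}(\widetilde Q+\widetilde\epsilon)$ using \eqref{intro:modulation}. This produces three kinds of terms.
\begin{enumerate}
\item Terms with at least two high-frequency factors, or with one high factor and $\widetilde\epsilon$-factors. These are handled by the bilinear Strichartz estimates for the Hartree nonlinearity from \cite{MMRZ}, in Lorentz form (Remark~\ref{lorentzup}). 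They yield contributions
\[
\lesssim A(j-5)^2+\eta\,A(j-5)+\Big(2^{-3j}\int_{J'}\lambda^{-2}\|\epsilon\|_{L^2}^2\,dt\Big)^{1/2}.
\]
The last piece comes from the quadratic-in-$\epsilon$ terms.
\item Terms with no high-frequency input. These force all inputs to be of frequency $\lesssim 2^{j}$ while the output is $\gtrsim 2^{j}$. For $\widetilde Q$ we use Proposition~\ref{grondstate2}: the conditions $\lambda\geq1$ and $|\xi(t)|/\lambda(t)\leq\eta_0$ give $\|P_{\geq j}\widetilde Q\|\lesssim 2^{-Nj}$. Summed over the interval, this produces the $T^{-10}$ term.
\item Terms of the form \eqref{intro:nonlocal-interactions}, namely $(K*\widetilde Q^2)P_{\geq j}u$ and $(K*(\widetilde Q\overline{P_{\geq j}u}))\widetilde Q$. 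Here Hölder and HLS alone lose a factor $\int\lambda^{-2}\,dt$.
\end{enumerate}

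\textbf{Main obstacle.} The main obstacle is the third kind of term. We plan to use local smoothing, in the $U^2_\Delta$-compatible form
\[
\sup_{x_0}\ \big\|\langle x-x_0\rangle^{-1/2-}|\nabla|^{1/2}P_{\geq j}v\big\|_{L^2_{t,x}}\lesssim\|v\|_{U^2_\Delta}.
\]
This gains a factor $2^{-j/2}$ for each localised high-frequency factor.

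For $(K*\widetilde Q^2)h$, we use the weight $(1+|y|^2)^{-1}$ with $y=(x-x(t))/\lambda(t)$. Since $\sup_J|x(t)|\leq T^{1/25}$, the spatial centre stays inside a ball of radius $T^{1/25}$, at a cost of a polynomially small power of $T$. The inverse-square decay is just enough to place one half-weight on $h$ and one on $v$. The result is
\[
\lesssim 2^{-j}\,\big(\text{polynomial loss}\big)\,A(j)\|v\|_{U^2_\Delta}.
\]
The polynomial loss is $T^{O(1/25)}$, but since $T=2^{3k_0}$ and $j\approx k_0$ at the top scales, the gain $2^{-j}$ dominates. We also need to pass from $U^2$ over $J'$ to $L^2$ in time without losing $\int\lambda^{-2}$, which is why local smoothing rather than Strichartz is used.

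For $(K*(\widetilde Q\overline h))\widetilde Q$, the rapid decay of $Q$ (Proposition~\ref{grondstate1}) localises both the outer and inner factors. After rescaling, the kernel $K$ acting on $Qh$ gives an $L^2$ bound, and local smoothing applies to the factor $Qh$ directly. Combining the two gives
\[
A(j)\leq C\big(2^{-j/2}A(j)+A(j-5)^2+\eta A(j-5)\big)+(\text{data term})+T^{-10}.
\]

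\textbf{Closing the induction.} Absorbing the $2^{-j/2}A(j)$ term for $j$ large and iterating $j\to k_0$ gives \eqref{f5.66} on $J$ itself. The data term is $\big(T^{-1}\int\lambda^{-2}\|\epsilon\|^2\big)^{1/2}$ after summing the intervals using Lemma~\ref{property}(iii). The smallness $\eta_1$, $\eta_\ast$ is what keeps the $A^2$ term perturbative.
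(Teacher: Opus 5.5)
\textbf{The gap: the induction cannot be closed through the intermediate frequencies.} Your scheme starts at $j=O(1)$, so you need the recursion at every level up to $k_0$. For the $\widetilde Q$--$\widetilde Q$ interactions $(K*|\widetilde Q|^2)h$ and $(K*(\widetilde Q\overline h))\widetilde Q$, your only gain is the factor $2^{-j}$ from local smoothing, and it comes with losses that do not improve with $j$.

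First, the spatial centre is only known to satisfy $\sup_J|x(t)|\le T^{1/25}$. It moves, while your $\sup_{x_0}$ form of local smoothing needs a fixed centre, and the hypotheses give no better control of $x(t)$ on subintervals $J'$. This costs a factor of order $T^{1/25}$. Second, pairing against $U^2_\Delta$-normalised $v$ only controls a $V^2$-type norm. So Lemma~\ref{atominterpolation} forces an extra factor $\|F\|_{L^1_tL^2_x}^{\delta}$, which grows with the interval length.

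The true coefficient of this term is therefore about $(2^{-j}T^{1/25})^{1-\delta}$ times a power of the interval length, not the $C2^{-j/2}$ in your closing inequality. It is $\gtrsim 1$ whenever $2^j\lesssim T^{1/25}$, i.e.\ for all $1\ll j\lesssim\frac{3}{25}k_0$. In that range H\"older/HLS/Strichartz is no better, since it loses a positive power of $\int_{J'}\lambda^{-2}\,dt$ (cf.\ \eqref{mmty}). ``Absorbing for $j$ large'' therefore cannot carry your base case through this intermediate range.

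You also cannot simply restart your scheme at a high level. There the only a priori bound is polynomially large (e.g.\ $\|u\|_{U^2_\Delta(J)}\lesssim T^{1/2}$), and the quadratic term $A(j-5)^2$ from your bilinear estimates is then not perturbative.

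\textbf{The missing idea, and how the paper proceeds.} Make the recursion \emph{linear} and run it only at frequencies $\gtrsim T^{1/4}$, on the whole interval $J$.
Write every factor other than the distinguished high-frequency one as $\widetilde Q+\widetilde\epsilon$, whatever its frequency. The $\widetilde\epsilon$-factors are controlled by $\|\widetilde\epsilon\|_{L^2_tL^{2d/(d-2),2}_x(J)}\lesssim(\int_J\lambda^{-2}\|\epsilon\|_{L^2_x}^2\,dt)^{1/2}\le\eta_1^{1/2}$. This bound comes from the perturbative estimate on the subintervals with $\int_{J_l}\lambda^{-2}\,dt=1$, summed in $\ell^2$. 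So neither bilinear estimates nor interval-adapted norms are needed.

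This yields Lemma~\ref{integ1}: for $i\ge\lfloor\frac14\log_2T\rfloor+5$,
$\|P_{\ge i}u\|_{U^2_\Delta(J)}\lesssim\inf_J\|\epsilon\|_{L^2_x}+T^{-20}+A\|P_{\ge i-4}u\|_{U^2_\Delta(J)}$
with $A\ll 1$. The smallness holds precisely because $2^{-i}T^{1/25}\le T^{-21/100}$ beats the interpolation loss $T^{O(\delta)}$.

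Start from $\|u\|_{U^2_\Delta(J)}\lesssim T^{1/2}$ at $i\approx\frac14\log_2T$ and iterate $m\approx\frac1{48}\log_2T$ times up to $k_0=\frac13\log_2T$. This gives $A^{m}T^{1/2}\le T^{-20}$. The remaining data term $\inf_J\|\epsilon\|_{L^2_x}\le(\frac1T\int_J\lambda^{-2}\|\epsilon\|^2_{L^2_x}\,dt)^{1/2}$ is the one in \eqref{f5.66}.

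Your local-smoothing treatment of the two nonlocal $\widetilde Q$-terms, using dyadic spatial localisation and the decay of $Q$ and $K*Q^2$, is the right tool and matches the paper. What has to change is the range of the induction and its quadratic structure.
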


This theorem plays a key role in our whole argument to prove  Theorem \ref{t2.3}. We will  prove it by  induction on the frequency, which is based on the following lemma: 
\begin{lemma}[Iterative procedure]\label{integ1} Under the assumptions of Theorem \ref{longtime}, for any $i\geq [\frac{\log_2(T)}{4}]+5$ and $0<\delta<1$, we have
\begin{align}\label{finalzz}
 &\|P_{\geq i}u\|_{U_{\Delta}^2([a,b])}\notag\\
 \lesssim_{\delta}& \inf_{t\in[a,b]}\|\epsilon(t)\|_{L_x^2(\R^d)}
+\frac{1}{T^{20}} \\\nonumber&+\big[{\eta_1^{\frac{1}{2}-}}+\eta_1^{1-}+(T^{\frac{d}{d+2}\delta}+T^{\frac{d+4}{2(d+2)}\delta})(2^{-i}T^{1/25})^{1-\delta}\big]\|P_{\geq i-4}u\|_{U_{\Delta}^2\left([a,b]\right)}
\end{align}
for sufficiently large $T.$
\end{lemma}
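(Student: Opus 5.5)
We apply Duhamel on $[a,b]$ starting from a time $t_*\in[a,b]$ where $\|\epsilon(t_*)\|_{L^2}$ is nearly minimal. Write
\[
P_{\geq i}u(t)=e^{i(t-t_*)\Delta}P_{\geq i}u(t_*)+i\int_{t_*}^te^{i(t-s)\Delta}P_{\geq i}F(u(s))\,ds .
\]
For the linear term, decompose $u(t_*)=\widetilde Q(t_*)+\widetilde\epsilon(t_*)$. Since $\lambda\geq1$ and $|\xi(t_*)|/\lambda(t_*)\leq\eta_0$, the modulated ground state has frequency support essentially in $|\xi|\lesssim\lambda^{-1}$. Its rapid Fourier decay (Proposition \ref{grondstate2}) then gives $\|P_{\geq i}\widetilde Q(t_*)\|_{L^2}\lesssim_N 2^{-iN}\leq T^{-20}$, because $2^i\gtrsim T^{1/4}$. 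The $\widetilde\epsilon$ piece contributes $\inf_t\|\epsilon(t)\|_{L^2}$.

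For the Duhamel term, split the nonlinearity into high and low parts: $F(u)=F(P_{\geq i-4}u,\cdot,\cdot)+F$ with all factors at frequency $<2^{i-4}$. The all-low part is annihilated by $P_{\geq i}$, up to a Littlewood--Paley tail. Hence at least one factor is $h:=P_{\geq i-4}u$. Each remaining factor is decomposed into $\widetilde Q$ and $\widetilde\epsilon$, which gives three groups of terms.

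\textbf{(a)} Terms with at least two $\widetilde\epsilon$ factors, or with one $\widetilde\epsilon$ and a second high factor. These are estimated by Lemma \ref{property}(v), Hardy--Littlewood--Sobolev and the Lorentz Strichartz bound of Remark \ref{lorentzup}. Rescaling time by $\lambda^{-2}$ and using $\int\lambda^{-2}\|\epsilon\|_{L^2}^2\leq\eta_1$, they are bounded by $(\eta_1^{1/2-}+\eta_1^{1-})\|h\|_{U^2_\Delta}$, the $-$ coming from interpolating the endpoint exponents.

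\textbf{(b)} Terms with two $\widetilde Q$ factors and one $h$. These are $(K*\widetilde Q^2)h$ and $(K*(\widetilde Q\bar h))\widetilde Q$, and they are the main obstacle: a plain Hölder bound grows like $T$. Instead we use Lemma \ref{atominterpolation} with the given $\delta$, estimating $\sup_{\|v\|_{U^2}\leq1}|\iint F\bar v|$ and $\|F\|_{L^1_tL^2_x}^{\delta}$ separately.

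For the duality pairing we use the spatial weights. For the first term, $K*\widetilde Q^2\lesssim\lambda^{-2}\langle (x-x(t))/\lambda\rangle^{-2}$. For the second, $\widetilde Q$ localizes to $|x-x(t)|\lesssim\lambda$ with exponential decay. Combining these weights with the local smoothing estimate
\[
\|\langle x-x_0\rangle^{-1/2-}|\nabla|^{1/2}v\|_{L^2_{t,x}}\lesssim\|v\|_{U^2_\Delta},
\]
applied both to $h$ and to $P_{\gtrsim i}v$, gains a factor $2^{-i}$. The growth of the weights with the center, $|x(t)|\leq T^{1/25}$, costs $T^{1/25}$. Moving from $\lambda^{-2}dt$ to $dt$ is harmless since $\lambda\geq1$. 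This yields the $(2^{-i}T^{1/25})^{1-\delta}$ factor.

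The $L^1_tL^2_x$ norm is bounded crudely, by Hölder on $[a,b]$ in the rescaled time with $\|h\|_{L^\infty L^2}$ or $\|h\|_{L^2L^{2d/(d-2)}}$. This produces $T^{\frac{d}{d+2}}$ or $T^{\frac{d+4}{2(d+2)}}$, and raising to the power $\delta$ gives the prefactors. The nonradial center motion enters only through $x(t)$ and through the phase $\xi(t)$, which is controlled by $\eta_0$.

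\textbf{(c)} Terms with one $\widetilde Q$, one $\widetilde\epsilon$ and $h$. These are handled as in (a), using $\|\widetilde Q\|$ in Strichartz norms on unit rescaled time intervals together with $\int\lambda^{-2}\|\epsilon\|^2\leq\eta_1$; Cauchy--Schwarz then gives $\eta_1^{1/2}$.

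Summing (a)–(c) gives the bound for $\|P_{\geq i}u\|_{U^2_\Delta([a,b])}$ stated in the lemma.
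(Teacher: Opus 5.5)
Your proposal is essentially the paper's proof. The shared steps are: Duhamel from a time where $\|\epsilon(t)\|_{L^2_x}$ is nearly minimal; removal of the all-low part; smallness of every $\widetilde\epsilon$-term from $\int_a^b\lambda^{-2}\|\epsilon\|_{L^2_x}^2\,dt\leq\eta_1$ (your Cauchy--Schwarz over unit rescaled intervals in (c) is a harmless variant of the paper's use of \eqref{estepsilonnorm}); and Lemma~\ref{atominterpolation} for the two $\widetilde Q\widetilde Q h$ terms, combining a crude $L^1_tL^2_x$ bound raised to the power $\delta$ with local smoothing on both $h$ and $P_{\geq i}v$ against the decay of $Q$ and $|x|^{-2}*Q^2$. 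Two cosmetic points: in the crude bound take $h\in L^2_tL^{2d/(d-2)}_x$, which gives $T^{1/2}$ (below both stated powers), whereas $L^\infty_tL^2_x$ gives $T$; and use the ball form \eqref{f5.20} of local smoothing summed over dyadic annuli rather than the $\langle x\rangle^{-1/2-}$ weight, which would cost $T^{1/25+}$ instead of $T^{1/25}$.
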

The proof of the above lemma is comparatively complicated and technical. Therefore, let us first take it for granted and then proceed to prove the long-time Strichartz estimates.
\begin{proof}[Proof of Theorem \ref{longtime}]
    We observe that by the standard local theory,
    \begin{equation}\label{equ:uU2est}
        \|u\|_{U_{\Delta}^2([a,b])}\lesssim T^{\frac12}.
    \end{equation}
By \eqref{f5.7} and integral mean value theorem, we get
$$\inf_{t\in[a,b]}\|\epsilon(t)\|_{L_x^2(\R^d)}
\lesssim \Big(\frac{1}{T}\int_{a}^{b}\frac{1}{\lambda(t)^2}\|\epsilon(t)\|^2_{L_x^2}\, dt\Big)^{1/2}.$$
Hence, using  Lemma \ref{integ1}, we obtain
\begin{align*}
   \|P_{\geq[\frac{\log_2(T)}{4}]+5}u\|_{U_{\Delta}^2([a,b])}\lesssim_{\delta}& \Big(\frac{1}{T}\int_{a}^{b}\frac{1}{\lambda(t)^2}\|\epsilon(t)\|^2_{L_x^2}\, dt\Big)^{1/2}+\frac{1}{T^{20}}
   \\&+\big({\eta_1^{1-}}+\eta_1^{\frac{1}{2}-}+T^{\frac{d}{d+2}\delta-\frac{21(1-\delta)}{100}}+T^{\frac{d+4}{2(d+2)}\delta-\frac{21(1-\delta)}{100}}\big)T^{\frac12}.
   \end{align*}
Letting $A:=C_\delta({\eta_1^{1-}}+\eta_1^{\frac{1}{2}-}+T^{\frac{d}{d+2}\delta-\frac{21(1-\delta)}{100}}+T^{\frac{d+4}{2(d+2)}\delta-\frac{21(1-\delta)}{100}})$, with $C_\delta$ large enough to include the constant in Lemma~\ref{integ1}, we can iterate $j-1$ times to show that
\[
   \|P_{\geq[\frac{\log_2(T)}{4}]+1+4j}u\|_{U_{\Delta}^2([a,b])}\lesssim_{\delta} 
   A^{j}T^{\frac12}+
 \Big[\Big(\frac{1}{T}\int_{a}^{b}\frac{1}{\lambda(t)^2}\|\epsilon(t)\|^2_{L_x^2}\, dt\Big)^{1/2}
 +\frac1{T^{20}}\Big]  \sum_{k=0}^jA^k.
\]
Choose $\delta>0$ sufficiently small, then $\eta_1$ sufficiently
small and $T$ sufficiently large so that $A\ll1$. Taking
$j\sim\log_2(T)/48$ with $\lfloor\log_2(T)/4\rfloor+1+4j\leq k_0$,
we have $A^j\leq T^{-22}$, provided $A$ is sufficiently small.
Thus
\begin{align}\label{zzbvz}
\|P_{\geq k_0}u\|_{U_\Delta^2([a,b])}
\lesssim{}&
\left(\frac1T\int_a^b\lambda(t)^{-2}
\|\epsilon(t)\|_{L_x^2}^2\,dt\right)^{1/2}
+T^{-20}+A^jT^{1/2}.
\end{align}
Since $A^j\leq T^{-22}$, the last term is bounded by $T^{-20}$,
which together with \eqref{zzbvz} yields
\[
\|P_{\geq\frac{\log_2(T)}{3}}u\|_{U_{\Delta}^2([a,b])}\lesssim \Big(\frac{1}{T}\int_{a}^{b}\frac{1}{\lambda(t)^2}\|\epsilon(t)\|^2_{L_x^2}\, dt\Big)^{1/2}+\frac{1}{T^{10}},
\]
as desired.
\end{proof}
\begin{remark}\label{lorentzlongtime}
  From Remark \ref{lorentzup}, we know that Theorem \ref{longtime} also implies that
   \[
   \|P_{\geq k_0}u\|_{L_t^2L_x^{\frac{2d}{d-2},2}([a,b]\times\R^d)}\lesssim \Big(\frac{1}{T}\int_{a}^{b}\frac{1}{\lambda(t)^2}\|\epsilon(t)\|^2_{L_x^2}\, dt\Big)^{1/2}+\frac{1}{T^{10}}.
   \]
\end{remark}
The rest of this section is devoted to the proof of Lemma \ref{integ1}. The proof mainly relies on the rapid decay of $Q$ and the square decay of $|x|^{-2}\ast|Q|^2$, which allows us to use local smoothing estimates to complete the frequency iteration. We emphasize that this method heavily relies on modulation analysis. In contrast, \cite{MMRZ} requires establishing refined bilinear estimates to overcome the non-local effect.

\begin{proof}[Proof of Lemma \ref{integ1}]
For the sake of brevity, we will omit the integration domain in each formula from this point onward, except for the first line.
By Duhamel's formula and Strichartz estimate, we have for any $t_0\in [a,b]$
\[
\|P_{\geq i}u\|_{U^2_{\Delta}([a,b])}\lesssim \|P_{\geq i} u(t_0)\|_{L_x^2}+\Big\|\int_{t_0}^te^{i(t-s)\Delta} P_{\geq i}[(|x|^{-2}*|u|^2)u]\ ds\Big\|_{U^2_{\Delta}([a,b])}.
\]
From now on we stipulate that if $(p,q)\in\Lambda_0$, then the chosen $(p_{\pm}, q_{\mp})$ also belongs to $\Lambda_0$. In view of  \eqref{dupstrichartz} and \eqref{rrrt}, for any $t_0$, we obtain
\begin{align}\notag
&\|P_{\geq i}u\|_{U^2_{\Delta}([a,b])}\\\label{equ:lintermpu}
\lesssim& \|P_{\geq i} u(t_0)\|_{L_x^2}\\\label{equ:inhterm1}
&+\Big\|\int_{t_0}^te^{i(t-s)\Delta}P_{\geq i}\big[(|x|^{-2}*|u|^2)P_{\geq i-4}u\big]\ ds\Big\|_{U^2_{\Delta}([a,b])}\\\label{equ:inhterm2}
&+\Big\|\int_{t_0}^te^{i(t-s)\Delta}P_{\geq i}\Big(\big[|x|^{-2}*\text{\O}(\bar{u}P_{\geq i-4}u)\big]P_{\leq i-4}u\Big)\ ds\Big\|_{U^2_{\Delta}([a,b])}
\end{align}
where the expression $\text{\O}(X)$ denotes a finite linear combination of terms
 that resemble $X$ up to  complex conjugation.

{\bf The estimate of \eqref{equ:lintermpu}:} Using the fact that $\lambda(t)\geq 1$, Proposition \ref{grondstate2} and Bernstein's inequality, we see that
 \begin{align}\label{vvv1}
\|P_{\geq i} u(t_0)\|_{L_x^2}&\leq \|\epsilon(t_0)\|_{L_x^2}+\|P_{\geq i}\tilde{Q}\|_{L^2}\notag\\
     &\leq \|\epsilon(t_0)\|_{L_x^2}+\|P_{\geq i-1} Q\|_{L^2}\notag\\
     &\leq  \|\epsilon(t_0)\|_{L_x^2}+\frac{1}{2^{1000i}}\||\nabla|^{1000}Q\|_{L^{2}}\notag\\
&\leq\|\epsilon(t_0)\|_{L_x^2}+\frac{1}{T^{20}},
 \end{align}
 which is acceptable.

{\bf The estimate of \eqref{equ:inhterm1}:} By \eqref{decomp}, we have
\begin{align}\notag
    u(t,x)=&e^{-i\gamma(t)}e^{-i\frac{x-x(t)}{\lambda(t)}\cdot\xi(t)}\lambda(t)^{-\frac{d}{2}}
    \Big(\epsilon\big(t,\tfrac{x-x(t)}{\lambda(t)}\big)+Q\big( \tfrac{x-x(t)}{\lambda(t)}\big)\Big)\\\label{equ:uepsQ}
    \triangleq&\tilde{\epsilon}(t,x)+\tilde{Q}(t,x).
\end{align}
This together with Strichartz estimate and Lemma \ref{atominterpolation} implies
\begin{align}\notag
\eqref{equ:inhterm1}\lesssim&\Big\|\int_{t_0}^te^{i(t-s)\Delta}P_{\geq i}\big[(|x|^{-2}*|\tilde{\epsilon}|^2)P_{\geq i-4}u\big]\ ds\Big\|_{U^2_{\Delta}}\\\notag
&+\Big\|\int_{t_0}^te^{i(t-s)\Delta}P_{\geq i}\big[(|x|^{-2}*|\tilde{Q}|^2)P_{\geq i-4}u\big]\ ds\Big\|_{U^2_{\Delta}}\\\label{equ:x-2epsest}
\lesssim&\|(|x|^{-2}*|\tilde{\epsilon}|^2)P_{\geq i-4}u\|_{L_t^{1}L_x^{2}}\\\notag
&+\|(|x|^{-2}*|\tilde{Q}|^2)P_{\geq i-4}u\|^{\delta}_{L_t^{1}L_x^{2}}\notag\\
&\qquad\times\sup_{\|v\|_{U_{\Delta}^{2}}\leq 1}\|(|x|^{-2}*|\tilde{Q}|^2)P_{\geq i-4}uP_{\geq i}v\|^{1-\delta}_{L_{t,x}^{1}}.
\end{align}

Next, using \eqref{reduction:perturbation} and the assumption
\eqref{f5.7}, we prove that
\begin{equation}\label{equ:tildeespest}
    \|\tilde{\epsilon}\|_{L_t^2L_x^{\frac{2d}{d-2}}([a,b]\times\R^d)}\lesssim \Big(\int_{a}^b\frac{1}{\lambda(t)^2}\|\epsilon(t)\|_{L_x^2}^2\, dt\Big)^\frac12\lesssim \eta_1^\frac12.
\end{equation}
In fact, we can divide the interval $[a,b]$ into the subintervals $J_l$ satisfying
\[
\int_{J_l}\frac{1}{\lambda(t)^2}\,dt=1\qtq{for all} 0\leq l\leq 2^{3k_0}-1.
\]
By \eqref{f4.45} and \eqref{f4.56},
\[
\sup_{t\in J_i}\lambda(t)\sim\inf_{t\in J_i}\lambda(t),\qquad
\sup_{t\in J_i}\|\epsilon(t)\|_{L_x^2}
\sim\inf_{t\in J_i}\|\epsilon(t)\|_{L_x^2}.
\]
Moreover,
 $\tilde{Q}$ satisfies the equation 
\[
i\partial_t \tilde{Q}+\Delta\tilde{Q}=-(|x|^{-2}\ast|\tilde{Q}|^2)\tilde{Q}+\mathcal{G}
\]
where the term $\mathcal{G}$ is given by
\[
\begin{aligned}
\mathcal{G}={}&\lambda^{-d/2}e^{-i\gamma-i\varphi}\\
&\quad\times\Bigg\{\left(\gamma_t-\frac{x_t\cdot\xi}{\lambda}
+y\cdot\Big(\xi_t-\xi\frac{\lambda_t}{\lambda}\Big)
+\frac{1-|\xi|^2}{\lambda^2}\right)Q(y)\\
&\qquad+i\left(-\frac d2\frac{\lambda_t}{\lambda}Q(y)
-\frac{x_t}{\lambda}\cdot\nabla Q(y)
-\frac{\lambda_t}{\lambda}y\cdot\nabla Q(y)
-2\frac{\xi\cdot\nabla Q(y)}{\lambda^2}\right)\Bigg\}.
\end{aligned}
\]
with
\[
y = \frac{x - x(t)}{\lambda(t)}, \qquad \varphi = y\cdot\xi(t).
\]
Applying Lemma~\ref{P:SZ} to the equation for $\widetilde\epsilon$
and estimating the nonlinear terms as in \eqref{reduction:perturbation},
we use \eqref{f4.45}, \eqref{f4.53}--\eqref{f4.55} to obtain
   \[
   \Big\|\frac{1}{\lambda(t)^{d/2}}\epsilon \Big(t,\frac{x-x(t)}{\lambda(t)}\Big)\Big\|^2_{L_t^2L_x^{\frac{2d}{d-2},2}(J_i\times\R^d)}\lesssim \inf_{t\in J_i} \|\epsilon(t)\|_{L_x^{2}}^2\lesssim \int_{J_i}\frac{1}{\lambda(t)^2}\|\epsilon(t)\|^2_{L_x^2}\, dt.
   \]
Summing over $i$, we readily obtain
\begin{equation}\label{estepsilonnorm}
   \Big\|\frac{1}{\lambda(t)^{d/2}}\epsilon\Big(t,\frac{x-x(t)}{\lambda(t)}\Big)\Big\|^2_{L_t^2L_x^{\frac{2d}{d-2},2}([a,b]\times\R^d)}\lesssim  \int_{a}^b\frac{1}{\lambda(t)^2}\|\epsilon(t)\|^2_{L_x^2}\, dt\leq\eta_1,
\end{equation}
as desired. Hence
\begin{align*}
    \|(|x|^{-2}*|\tilde{\epsilon}|^2)P_{\geq i-4}u\|_{L_t^{1}L_x^{2}}\lesssim \|\tilde{\epsilon}\|^2_{L_t^{2+}L_x^{\frac{2d}{d-2}-}} \|P_{\geq i-4}u\|_{L_t^{\infty-}L_x^{2+}}
    \lesssim \eta_1^{1-}\|P_{\geq i-4}u\|_{U^2_\Delta}.
\end{align*}
Plugging it into \eqref{equ:x-2epsest}  yields
\begin{align}\notag
\eqref{equ:inhterm1}\lesssim&\eta_1^{1-}\|P_{\geq i-4}u\|_{U^2_\Delta}\\\notag
&+\|(|x|^{-2}*|\tilde{Q}|^2)P_{\geq i-4}u\|^{\delta}_{L_t^{1}L_x^{2}}\notag\\
&\qquad\times\sup_{\|v\|_{U_{\Delta}^{2}}\leq 1}\big\|(|x|^{-2}*|\tilde{Q}|^2)P_{\geq i-4}uP_{\geq i}v\big\|^{1-\delta}_{L_{t,x}^{1}}.
\end{align}

{\bf The estimate of \eqref{equ:inhterm2}:} By the decomposition \eqref{equ:uepsQ}, Strichartz estimate and Lemma \ref{atominterpolation}, we have
\begin{align}\notag
\eqref{equ:inhterm2}\lesssim&
\Big\|\int_{t_0}^te^{i(t-s)\Delta}P_{\geq i}\Big(\big[|x|^{-2}*\text{\O}(\tilde{\epsilon}\overline{P_{\geq i-4}u})\big]P_{\leq i-4}\tilde{\epsilon}\Big)\ ds\Big\|_{U^2_{\Delta}}\\\notag
&+\Big\|\int_{t_0}^te^{i(t-s)\Delta}P_{\geq i}\Big(\big[|x|^{-2}*\text{\O}(\tilde{Q}\overline{P_{\geq i-4}u})\big]P_{\leq i-4}\tilde{\epsilon}\Big)\ ds\Big\|_{U^2_{\Delta}}\\\notag
&+\Big\|\int_{t_0}^te^{i(t-s)\Delta}P_{\geq i}\Big(\big[|x|^{-2}*\text{\O}(\tilde{\epsilon}\overline{P_{\geq i-4}{u}})\big]P_{\leq i-4}\tilde{Q}\Big)\ ds\Big\|_{U^2_{\Delta}}\\\notag
&+\Big\|\int_{t_0}^te^{i(t-s)\Delta}P_{\geq i}\Big(\big[|x|^{-2}*\text{\O}(\tilde{Q}\overline{P_{\geq i-4}u})\big]P_{\leq i-4}\tilde{Q}\Big)\ ds\Big\|_{U^2_{\Delta}}\\\notag
\lesssim&\big\|(|x|^{-2}*|\tilde{\epsilon}P_{\geq i-4}u|)\tilde{\epsilon}\big\|_{L_t^{1}L_x^{2}}\notag\\
&+\big\|(|x|^{-2}*|\tilde{Q}P_{\geq i-4}u|)\tilde{\epsilon}\big\|_{L_t^{1}L_x^{2}}+\|(|x|^{-2}*|\tilde{\epsilon}P_{\geq i-4}u|)\tilde{Q}\|_{L_t^{1}L_x^{2}}\notag\\
&+\big\|(|x|^{-2}*|P_{\geq i-4}u\tilde Q|)\tilde Q\big\|^{\delta}_{L_t^{1}L_x^{2}}\notag\\
&\qquad\times\sup_{\|v\|_{U_{\Delta}^{2}}\leq 1}\big\|\big(|x|^{-2}*\text{\O}(\tilde{Q}\overline{P_{\geq i-4}u})\big)P_{\leq i-4}\tilde{Q}P_{\geq i}v\big\|^{1-\delta}_{L_{t,x}^1}\notag\\\notag
\lesssim&\big\|\tilde{\epsilon}(t)\big\|_{L_t^{2+}L_x^{\frac{2d}{d-2}-}}\|\tilde{\epsilon}(t)\|_{L_t^{\infty-}L_x^{2+}}\|P_{\geq i-4}u\|_{L_t^{2+}L_x^{\frac{2d}{d-2}-}}\\\notag
&+\|\tilde{\epsilon}\|_{L_t^{2}L_x^{\frac{2d}{d-2},2}}\|\tilde{Q}\|_{L_t^{\infty}L_x^{2}}\|P_{\geq i-4}u\|_{L_t^{2}L_x^{\frac{2d}{d-2},2}}\\\notag
&+\|(|x|^{-2}*|P_{\geq i-4}u\tilde{Q}|)\tilde{Q}\|^{\delta}_{L_t^{1}L_x^{2}}\notag\\
&\qquad\times\sup_{\|v\|_{U_{\Delta}^{2}}\leq 1}\big\|\big(|x|^{-2}*\text{\O}(\tilde{Q}\overline{P_{\geq i-4}u})\big)P_{\leq i-4}\tilde{Q}P_{\geq i}v\big\|^{1-\delta}_{L_{t,x}^1}\\\notag
\lesssim&\eta_1^{\frac12-}\|P_{\geq i-4}u\|_{U^2_\Delta}\\\notag
&+\|(|x|^{-2}*|P_{\geq i-4}u\tilde{Q}|)\tilde{Q}\|^{\delta}_{L_t^{1}L_x^{2}}\notag\\
&\qquad\times\sup_{\|v\|_{U_{\Delta}^{2}}\leq 1}\big\|\big(|x|^{-2}*\text{\O}(\tilde{Q}\overline{P_{\geq i-4}u})\big)P_{\leq i-4}\tilde{Q}P_{\geq i}v\big\|^{1-\delta}_{L_{t,x}^1}.
\end{align}

Plugging the above estimates into \eqref{equ:lintermpu}-\eqref{equ:inhterm2}, and
observing that the implicit constant is independent of $t_0\in[a,b]$, we obtain that
\begin{align}
    &\|P_{\geq i}u\|_{U^2_{\Delta}([a,b])}\lesssim \inf_{t\in [a,b]}\|\epsilon(t)\|_{L_x^2}+\frac{1}{T^{20}}+\big[\eta_1^{\frac{1}{2}-}+\eta_1^{1-}\big]\|P_{\geq i-4}u\|_{U^2_{\Delta}}\notag\\
&+\big\|(|x|^{-2}*|\tilde{Q}|^2)P_{\geq i-4}u\big\|^{\delta}_{L_t^{1}L_x^{2}}\notag\\
&\qquad\times\sup_{\|v\|_{U_{\Delta}^{2}}\leq 1}\big\|(|x|^{-2}*|\tilde{Q}|^2)P_{\geq i-4}uP_{\geq i}v\big\|^{1-\delta}_{L_{t,x}^{1}}\notag\\
&+\big\|(|x|^{-2}*|P_{\geq i-4}u\tilde{Q}|)\tilde{Q}\big\|^{\delta}_{L_t^{1}L_x^{2}}\notag\\
&\qquad\times\sup_{\|v\|_{U_{\Delta}^{2}}\leq 1}\big\|\big(|x|^{-2}*\text{\O}(\tilde{Q}\overline{P_{\geq i-4}u})\big)P_{\leq i-4}\tilde{Q}P_{\geq i}v\big\|^{1-\delta}_{L_{t,x}^1}.\label{cbz}
\end{align}
 
By H\"older's inequality and Hardy-Littlewood-Sobolev's inequality and the fact that $\lambda(t)\geq1$,
\begin{align}\label{mmty}
 &\|(|x|^{-2}*|\tilde{Q}|^2)P_{\geq i-4}u\|_{L_t^{1}L_x^{2}}+\|(|x|^{-2}*|P_{\geq i-4}u\tilde{Q}|)\tilde{Q}\|_{L_t^{1}L_x^{2}}\notag\\
\lesssim& \|\tilde{Q}\|^2_{L_{t,x}^{\frac{2(d+2)}{d}}}\|P_{\geq i-4}u\|_{L_t^{\frac{d+2}{2}}L_x^{\frac{2d(d+2)}{(d+4)(d-2)}}}\notag\\
&+\|\tilde{Q}\|_{L_{t,x}^{\frac{2(d+2)}{d}}}\|P_{\geq i-4}u\|_{L_{t,x}^{\frac{2(d+2)}{d}}}\|\tilde{Q}\|_{L_t^{\frac{d+2}{2}}L_x^{\frac{2d(d+2)}{(d+4)(d-2)}}}\notag\\
\lesssim& \Big(\int_{a}^b \frac{1}{\lambda(t)^2}\, dt\Big)^{\frac{d}{(d+2)}}\|P_{\geq i-4}u\|_{U_{\Delta}^2}+\Big(\int_{a}^b \frac{1}{\lambda(t)^2}\, dt\Big)^{\frac{d}{2(d+2)}+\frac{2}{d+2}}\|P_{\geq i-4}u\|_{U_{\Delta}^2}\notag\\
\lesssim& \big[T^{\frac{d}{(d+2)}}+T^{\frac{d+4}{2(d+2)}}\big]\|P_{\geq i-4}u\|_{U_{\Delta}^2}.
\end{align}

Thus, it remains to estimate 
\begin{equation}\label{equ:pi-4up-2v}
    \sup_{\|v\|_{U_{\Delta}^{2}}\leq 1}\big\|(|x|^{-2}*|\tilde{Q}|^2)P_{\geq i-4}uP_{\geq i}v\big\|_{L_{t,x}^{1}},
\end{equation}
 and
\begin{equation}\label{equ:pi-4Qpi-2v}
\sup_{\|v\|_{U_{\Delta}^{2}}\leq 1}\big\|\big(|x|^{-2}*\text{\O}(\tilde{Q}\overline{P_{\geq i-4}u})\big)P_{\leq i-4}\tilde{Q}P_{\geq i}v\big\|_{L_{t,x}^1}.
\end{equation}
For this, we will use \emph{local smoothing estimate}, which, however, relies on the smoothness of $Q$. We observe that by local smoothing estimate (see e.g. \cite{CS88}), for any $R>0$, $i\geq 1$ and $v_0\in L^2$,
\[
\|e^{it\Delta}P_{\geq i}v_0\|_{L_{t,x}^2(\R\times B_{R}(0))}\lesssim R^{1/2}2^{-i/2}\|v_0\|_{L_x^2}.
\]
Therefore, for any atomic decomposition $v=\sum_{\lambda\in\mathbb{Z}}c_\lambda  v^\lambda$, 
\begin{align}
\left\|P_{\geq i} v\right\|_{L_{t,x}^2 \left([a,b]\times B_{R}(0) \right)}
&\lesssim \sup_{\lambda\in\mathbb{Z}} \left\|P_{\geq i}v^{\lambda} \right\|_{L_{t,x}^2(\R\times B_{R}(0))}\cdot \sum_{\lambda\in\mathbb{Z}}|c_{\lambda}|\notag\\
&\lesssim R^{1/2}2^{-i/2}\sum_{\lambda\in\mathbb{Z}}|c_{\lambda}|,
\end{align}
and so
\begin{equation}\label{f5.20}
    \left\|P_{\geq i} v\right\|_{L_{t,x}^2 \left([a,b]\times B_{R}(0) \right)}\lesssim R^{1/2}2^{-i/2}\|P_{\geq i}v\|_{U_{\Delta}^2([a,b])}.
\end{equation}

{\bf The estimate of \eqref{equ:pi-4up-2v}:}
Now we set   $\Omega_j=B_{2^{j}}(0)\setminus B_{2^{j-1}}(0), \ j\in \mathbb{Z}$, then by H\"older's inequality, assumption \eqref{f5.7}, and the local smoothing estimate \eqref{f5.20}, 
we obtain
\begin{align}\label{f5.21}
&\sup_{\left\|v\right\|_{U_{\Delta}^{2}\left([a,b]\right)}\leq 1}
\left\|[(|x|^{-2}*|\tilde{Q}|^2)P_{\geq i-4}u]P_{\geq i}v \right\|_{L_{t,x}^1 \left([a,b]\times \R^d \right)}\notag\\
&\leq\sup_{\left\|v\right\|_{U_{\Delta}^{2}\left([a,b]\right)}\leq 1}
\sum_{j=-\infty}^{\infty}\left\|(|x|^{-2}*|\tilde{Q}|^2)P_{\geq i-4}uP_{\geq i}v \right\|_{L_{t,x}^1 \left([a,b]\times \Omega_j\right)}\notag\\
&\lesssim \sum_{j=-\infty}^\infty2^{j/2}2^{-i/2}\left\|(|x|^{-2}*|\tilde{Q}|^2)P_{\geq i-4}u\right\|_{L_{t,x}^2 \left([a,b]\times \Omega_j\right)}\notag\\
&\lesssim \sum_{j=-\infty}^\infty2^{j/2}2^{-i/2}\|(|x|^{-2}*|\tilde{Q}|^2)\|_{L_{t,x}^{\infty}\left([a,b]\times \Omega_j\right)}\|P_{\geq i-4}u\|_{L_{t,x}^{2}\left([a,b]\times \Omega_j\right)}\notag\\
&\lesssim \sum_{j=-\infty}^\infty2^{j}2^{-i}\|(|x|^{-2}*|\tilde{Q}|^2)\|_{L_{t,x}^{\infty}\left([a,b]\times \Omega_j\right)}\|P_{\geq i-4}u\|_{U_{\Delta}^2\left([a,b]\right)}\notag\\
&\lesssim 2^{-i}\|P_{\geq i-4}u\|_{U_{\Delta}^2\left([a,b]\right)}\sum_{j=-\infty}^\infty2^{j}\|(|x|^{-2}*|\tilde{Q}|^2)\|_{L_{t,x}^{\infty}\left([a,b]\times \Omega_j\right)}.
\end{align}

On the other hand, by  changing of variables, 
\[
|x|^{-2}*|\tilde{Q}|^2=\frac{1}{\lambda(t)^2}\phi_{Q^2}\Big(\frac{x}{\lambda(t)}-\frac{x(t)}{\lambda(t)}\Big).
\]
Let $x_\ast=\sup_{t\in[a,b]}|x(t)|$. By
Proposition~\ref{grondstate1} and $\lambda(t)\geq1$,
\[
\frac{1}{\lambda(t)^2}
\phi_{Q^2}\left(\frac{x-x(t)}{\lambda(t)}\right)
\lesssim\frac{1}{\lambda(t)^2+|x-x(t)|^2}.
\]
Hence
\begin{align}\label{th1}
&\sum_{j\in\mathbb Z}2^j
\big\||x|^{-2}*|\widetilde Q|^2\big\|_{L_{t,x}^\infty([a,b]\times\Omega_j)}
\notag\\
&\qquad\lesssim
\sum_{2^j\leq4(1+x_\ast)}2^j
+\sum_{2^j>4(1+x_\ast)}2^{-j}
\lesssim1+x_\ast\lesssim T^{1/25}.
\end{align}
Inserting the above estimate into \eqref{f5.21}, then
\begin{align} \label{final1}\eqref{equ:pi-4up-2v}
&\lesssim T^{1/25}2^{-i}\|P_{\geq i-4}u\|_{U_{\Delta}^2\left([a,b]\right)}.
\end{align}

{\bf The estimate of \eqref{equ:pi-4Qpi-2v}:}
Set
\[
W(t,x)=\lambda(t)^{-d/2}
\left(1+\left|\frac{x-x(t)}{\lambda(t)}\right|^2\right)^{-50d}.
\]
By the decay of $Q$, the kernel estimate for $P_{\leq i-4}$, and
$2^{i-4}\lambda(t)\geq1$,
\begin{equation}\label{local:projected-Q}
|\widetilde Q(t,x)|+|P_{\leq i-4}\widetilde Q(t,x)|\lesssim W(t,x).
\end{equation}
By H\"older's inequality, the Hardy--Littlewood--Sobolev inequality,
and \eqref{f5.20},
\begin{align}\notag
\eqref{equ:pi-4Qpi-2v}
&\lesssim 2^{-i/2}\sum_{j\in\mathbb Z}2^{j/2}
\|W\|_{L_t^\infty L_x^{\frac{2d}{d-2}}([a,b]\times\Omega_j)}
\big\||x|^{-2}*|\widetilde QP_{\geq i-4}u|\big\|_{L_t^2L_x^d}\notag\\
&\lesssim 2^{-i/2}\sum_{j\in\mathbb Z}2^{j/2}
\|W^{2/d}\|_{L_{t,x}^\infty([a,b]\times\Omega_j)}
\|WP_{\geq i-4}u\|_{L_t^2L_x^{\frac{d}{d-1}}}\notag\\
&\lesssim 2^{-i/2}\sum_{j\in\mathbb Z}2^{j/2}
\|W^{2/d}\|_{L_{t,x}^\infty([a,b]\times\Omega_j)}
\|W^{2/d}P_{\geq i-4}u\|_{L_{t,x}^2}.\label{cccv}
\end{align}
Here $\|W^{(d-2)/d}\|_{L_t^\infty L_x^{2d/(d-2)}}$
is bounded uniformly. Since
\[
W(t,x)^{2/d}\lesssim\frac{1}{\lambda(t)+|x-x(t)|},
\]
we have
\begin{align}\label{zznn1}
&\sum_{j\in\mathbb Z}2^{j/2}
\|W^{2/d}\|_{L_{t,x}^\infty([a,b]\times\Omega_j)}\notag\\
&\qquad\lesssim
\sum_{2^j\leq 4(1+x_\ast)}2^{j/2}
+\sum_{2^j>4(1+x_\ast)}2^{-j/2}
\lesssim(1+x_\ast)^{1/2}\lesssim T^{1/50}.
\end{align}
Applying local smoothing once more, we obtain
\begin{align}\notag
\|W^{2/d}P_{\geq i-4}u\|_{L_{t,x}^2}
&\leq\sum_{j\in\mathbb Z}
\|W^{2/d}\|_{L_{t,x}^\infty([a,b]\times\Omega_j)}
\|P_{\geq i-4}u\|_{L_{t,x}^2([a,b]\times\Omega_j)}\notag\\
&\lesssim 2^{-i/2}T^{1/50}
\|P_{\geq i-4}u\|_{U_\Delta^2([a,b])}.\label{znn2}
\end{align}
Inserting \eqref{zznn1} and \eqref{znn2} into \eqref{cccv} yields
\begin{align}\label{final2}
\eqref{equ:pi-4Qpi-2v}
\lesssim 2^{-i}T^{1/25}
\|P_{\geq i-4}u\|_{U_\Delta^2([a,b])}.
\end{align}
Now we collect estimates \eqref{vvv1}, \eqref{mmty}, \eqref{final1} and \eqref{final2}. In view of \eqref{cbz},  we can finally obtain 
\begin{align*}
&\|P_{\geq i}u\|_{U_{\Delta}^2([a,b])}\\
\lesssim_{\delta}& \inf_{t\in[a,b]}\|\epsilon(t)\|_{L_x^2(\R^d)}+\frac{1}{T^{20}}\\
&+\big({\eta_1^{\frac{1}{2}-}}+\eta_1^{1-}+(T^{\frac{d}{d+2}\delta}+T^{\frac{d+4}{2(d+2)}\delta})(2^{-i}T^{1/25})^{1-\delta}\big)\|P_{\geq i-4}u\|_{U_{\Delta}^2\left([a,b]\right)},
\end{align*}
for sufficiently large $T\gg1$, as desired.
\end{proof}

\section{Estimate of truncated energy \texorpdfstring{$E(P_{\leq k_0+4}u)$}{E(P low u)}}\label{sec-en}
This section is devoted to establishing a universal control of the remainder term $\epsilon$ in the $L^2_x$ norm. The proof mainly relies on examining the upper and lower bounds of the energy of the low-frequency truncated solution, combined with modulation analysis. In particular, we use long-time Strichartz estimates to safely control the error terms arising from the low-frequency truncation. We start with proving the lower bound on the truncated energy:
\begin{theorem}[Lower bound for $E(P_{\leq k_0+4}u)$]\label{t4.1}
Let $u$ be the  solution as in Theorem \ref{t2.3}. Suppose $0< \eta_0,\eta_1\ll1$ are two sufficiently small constants to be determined later,  and $J = [a,b]$ is an interval for which
\begin{equation}\label{assump1}
    \begin{cases}
1\leq \lambda(t) \leq  T^{1/100},
\; \forall \,  t \in J,\;    \int_a^b\frac{1}{\lambda(t)^2}\, dt=2^{3k_0}=T,\\
\sup\limits_{t \in J}|x(t)| \leq  T^{1/25},\;
\sup\limits_{t \in J}\frac{|\xi(t)|}{\lambda(t)}\leq \eta_0\ll\eta_{\ast},\\
\int_{a}^b\frac{1}{\lambda(t)^2}\|\epsilon(t)\|_{L_x^2}^2 \, dt\leq \eta_1.
    \end{cases}
\end{equation}
Then we have
\begin{align}\notag
 E \left(P_{\leq k _0+ 4} u(t) \right)
 \gtrsim&  \Big\|\nabla \Big( e^{-i\gamma(t)}e^{-i\frac{x-x(t)}{\lambda(t)}\cdot\xi(t)}P_{\leq k_0+4}\Big[\frac{1}{\lambda(t)^{d/2}}\epsilon\big(\frac{x-x(t)}{\lambda(t)}\big)\Big]\Big)\Big\|_{L_x^{2}}^{2}\\
 &+\frac{1}{2\lambda(t)^2}\|\epsilon\|^2_{L_x^2}+O(\frac{1}{T^{10}}).
\end{align}
uniformly for all $t\in J$.
\end{theorem}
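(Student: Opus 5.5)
We plan to follow Dodson's argument for the local NLS: expand the energy of $v=P_{\leq k_0+4}u$ around the modulated ground state, then invoke the coercivity of the linearized operators. By scaling, phase rotation and Galilean invariance, write $u=\widetilde Q+\widetilde\epsilon$ as in \eqref{equ:uepsQ}. Then $v=P_{\leq k_0+4}\widetilde Q+P_{\leq k_0+4}\widetilde\epsilon$. Since $\lambda(t)\geq1$ and $2^{k_0}=T^{1/3}\gg\lambda(t)T^{1/4}$ (because $\lambda\leq T^{1/100}$), Bernstein's inequality and Proposition~\ref{grondstate2} give $\|P_{\geq k_0+4}\widetilde Q\|_{H^1}\lesssim T^{-100}$. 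We may therefore replace $P_{\leq k_0+4}\widetilde Q$ by $\widetilde Q$ at a cost of $O(T^{-10})$.

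Next, undo the Galilean boost. The modulation $e^{-iy\cdot\xi(t)}$ produces an extra $\dot H^1$ contribution $|\xi|^2/\lambda^2\cdot M(u)$ together with a cross term $2\xi\cdot\Im\int\bar v\nabla v$. We keep the Galilean-normalized function $w=e^{i\gamma+i\frac{x-x(t)}{\lambda}\cdot\xi}v$, which is what appears on the right-hand side of the statement, and use $|\xi|/\lambda\leq\eta_0$ to absorb the momentum cross terms into the main quadratic term. Rescaling $w$ to unit scale then yields
\[
\lambda^2E(w)=E(Q+\epsilon_{\rm lo})+O(T^{-10}),
\]
where $\epsilon_{\rm lo}$ is the rescaled image of $P_{\leq k_0+4}\widetilde\epsilon$.

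We then expand the energy. Because $E(Q)=0$ (Pohozaev identity) and $\nabla E(Q)=-Q$ by \eqref{grounstate},
\[
E(Q+\epsilon_{\rm lo})=-(Q,\epsilon_{\rm lo})_{L^2}+\tfrac12\langle L_+\epsilon_1,\epsilon_1\rangle+\tfrac12\langle L_-\epsilon_2,\epsilon_2\rangle-\tfrac12\|\epsilon_{\rm lo}\|_{L^2}^2+O(\text{cubic, quartic}).
\]
Here $L_\pm$ are the linearized operators (including the identity), $\epsilon=\epsilon_1+i\epsilon_2$, and the cubic and quartic terms are bounded by Hardy--Littlewood--Sobolev, the sharp inequality \eqref{sharp-HGN-intro}, and $\|\epsilon\|_{L^2}\lesssim\eta_*$, giving $\lesssim\eta_*\|\nabla\epsilon_{\rm lo}\|_2^2+\eta_*\|\epsilon\|_2^2$.

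The mass constraint $M(u)=M(Q)$ gives the exact identity $2(Q,\epsilon)=-\|\epsilon\|_2^2$. The difference $(Q,\epsilon-\epsilon_{\rm lo})$ is $O(T^{-100}\|\epsilon\|_2)$, since $Q$ has negligible high frequencies at scale $2^{k_0}\lambda$. Therefore $-(Q,\epsilon_{\rm lo})=\frac12\|\epsilon\|_2^2+O(T^{-10})$. Combined with the $-\frac12\|\epsilon_{\rm lo}\|^2$ term, what remains is $\frac12\|\epsilon_{\rm hi}\|^2\geq0$ plus the quadratic form $\langle L\epsilon_{\rm lo},\epsilon_{\rm lo}\rangle$. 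We then apply the coercivity estimate from Appendix~\ref{sec-mod}, which states that under the orthogonality conditions defining the modulation, $\langle L\epsilon,\epsilon\rangle\gtrsim\|\epsilon\|_{H^1}^2$.

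The main obstacle is that the orthogonality conditions hold for $\epsilon$, not for $\epsilon_{\rm lo}$. We handle this by noting that the orthogonality directions are Schwartz-like functions (built from $Q$, $\nabla Q$, $x\cdot\nabla Q$, $|x|^2Q$, and so on), so their pairings with $\epsilon_{\rm hi}$ are $O(T^{-100}\|\epsilon\|_2)$. Projecting $\epsilon_{\rm lo}$ onto the orthogonal complement therefore costs only $O(T^{-10})$. A second, subtler difficulty is the nonlocal cross term $\iint\frac{Q^2(x)|\epsilon_{\rm hi}(y)|^2}{|x-y|^2}$ together with its companions mixing $\epsilon_{\rm lo}$ and $\epsilon_{\rm hi}$. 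We bound these by $\|\phi_{Q^2}\|_\infty\|\epsilon_{\rm hi}\|_2^2$, whose coefficient is $O(1)$ and not small. To absorb them, we plan to treat them with the positive $\frac12\|\epsilon_{\rm hi}\|_2^2$ together with the Galilean and scale gain $\lambda^{-2}$ versus $2^{2k_0}$; alternatively, we avoid them entirely by expanding only $E(v)$, so that $\epsilon_{\rm hi}$ enters solely through the mass identity. After multiplying back by $\lambda^{-2}$, this yields the stated lower bound.
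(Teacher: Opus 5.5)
Your outline is the paper's proof. It expands $E(P_{\leq k_0+4}u)$ around $P_{\leq k_0+4}\widetilde Q$ and discards $P_{\geq k_0+4}\widetilde Q$ by Bernstein. It turns the linear term into $\frac{1}{2\lambda^2}\|\epsilon\|_{L^2}^2$ via the ground-state equation and the mass identity. It projects the truncated remainder onto the orthogonal complement of $\{\chi,i\chi,\partial_jQ,i\partial_jQ\}$ at negligible cost, applies coercivity, and bounds the cubic and quartic terms by Hardy's inequality and $\eta_\ast$. The nonlocal terms with $\epsilon_{\mathrm{hi}}$ inside the potential never appear, because the functional only involves $P_{\leq k_0+4}u$. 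Your ``alternative'' is what the paper does, and your first option should be dropped.

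\textbf{The Galilean cross term.} This step does not work as you wrote it. After unboosting, the cross term is, up to sign, $\lambda^{-2}\xi\cdot\Im\int\bar g\nabla g$ with $g\approx Q+\epsilon_{\mathrm{lo}}$.
\begin{itemize}
\item Its part linear in $\epsilon$ is proportional to $\lambda^{-2}\xi\cdot(\epsilon_2,\nabla Q)_{L^2}$. No quadratic term can absorb it. It vanishes only because of the orthogonality $(\epsilon,i\partial_jQ)_{L^2}=0$, which you should invoke explicitly.
\item Its quadratic part is bounded by $\lambda^{-2}|\xi|\,\|\epsilon\|_{L^2}\|\nabla\epsilon_{\mathrm{lo}}\|_{L^2}$. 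Compared with the coercive term $\lambda^{-2}(\|\epsilon\|_{L^2}^2+\|\nabla\epsilon_{\mathrm{lo}}\|_{L^2}^2)$, the ratio is of order $|\xi|$, not $|\xi|/\lambda$.
\item The hypotheses only give $|\xi|\leq\eta_0\lambda\leq\eta_0T^{1/100}$, which need not be small. So the smallness of $\eta_0$ does not give the absorption you claim.
\end{itemize}
The fix, which is what the paper does, is Cauchy--Schwarz. You then absorb the resulting $\lambda^{-2}|\xi|^2\|\epsilon\|_{L^2}^2$ into the positive boost energy $\frac{|\xi|^2}{2\lambda^2}\|Q\|_{L^2}^2$ of \eqref{ccc0}, using $\|\epsilon\|_{L^2}\leq\eta_\ast$. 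You noticed this term but did not use it.

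You need the same positive term a second time, because your $w$ is not the object on the right-hand side of the statement. There the phase $e^{-i\frac{x-x(t)}{\lambda}\cdot\xi}$ sits outside $P_{\leq k_0+4}$ (a boosted function). Your unboosted gradient and the required one differ by $O\bigl(\frac{|\xi|}{\lambda}\|\epsilon\|_{L^2}\bigr)$ in $L^2$.

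\textbf{Coercivity.} What you need is not literally Proposition~\ref{positive1}. That result assumes $\|Q+\epsilon\|_{L^2}=\|Q\|_{L^2}$, and the projected truncated remainder does not satisfy this. You need coercivity from orthogonality alone.
\begin{itemize}
\item For $L_+$, this is Theorem~\ref{posit}(4).
\item For $L_-$, it follows as in the paper from $\ker L_-=\operatorname{span}\{Q\}$ together with $(Q,\chi)_{L^2}\neq0$. The latter is forced by $(L_+Q,Q)_{L^2}<0$, so $L_-$ is coercive on $\chi^\perp$.
\end{itemize}
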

\begin{proof}
Using the decomposition \eqref{equ:uepsQ}, we easily obtain
\begin{align}\notag
&E(P_{\leq k_0+4}u)\\\notag
=&E(P_{\leq k_0+4}\tilde{Q})\\\label{equ:tranener1}
&+( P_{\leq k_0+4}\tilde{Q},P_{\leq k_0+4}\tilde{\epsilon})_{\dot{H}_x^1}-\big( (|x|^{-2}*(|P_{\leq k_0+4}\tilde{Q}|^2))P_{\leq k_0+4}\tilde{Q},P_{\leq k_0+4}\tilde{\epsilon} \big)_{L_x^2}\\\label{equ:tranener2}
&+\frac12\|\nabla P_{\leq k_0+4}\tilde{\epsilon}\|_{L^2}^2-\frac{1}{2}\int_{\R^d}(|x|^{-2}\ast(|P_{\leq k_0+4}\tilde{Q}|^2))|P_{\leq k_0+4}\tilde{\epsilon}|^2\ dx\\\label{equ:tranener3}
     &-\int_{\R^d}(|x|^{-2}\ast(\Re[P_{\leq k_0+4}\tilde{Q}\overline{P_{\leq k_0+4}\tilde{\epsilon}}]))\Re[P_{\leq k_0+4}\tilde{Q}\overline{P_{\leq k_0+4}\tilde{\epsilon}}]\ dx\\\label{expan1}
     & +R(P_{\leq k_0+4}\tilde{\epsilon},P_{\leq k_0+4}\tilde{Q}),        
 \end{align}
where $R(\tilde{\epsilon},\tilde{Q})$ represents the terms that contain at least three $\tilde{\epsilon}$. Since $E(Q)=0$ and $Q$ has rapid decay, by the assumptions \eqref{assump1}, Hardy's inequality and Bernstein's inequality,
\begin{align}
  E(P_{\leq k_0+4}\tilde{Q})&=\frac{1}{2}\frac{|\xi(t)|^2}{\lambda(t)^2}\|Q\|^2_{L_x^2}+\frac{1}{2}\|P_{\geq k_0+4}\tilde{Q}\|_{\dot{H}_x^1}^2+O\big(\|P_{\geq k_0+4}\tilde{Q}\|_{\dot{H}^1}\|\tilde{Q}\|_{\dot{H}_x^1}\big) \notag\\
  &=\frac{1}{2}\frac{|\xi(t)|^2}{\lambda(t)^2}\|Q\|^2_{L_x^2}+O(2^{-100k_0}).\label{ccc0}
\end{align}

{\bf The estimate of the term \eqref{equ:tranener1}:}
Integrating by parts we obtain
\begin{align}
   &( P_{\leq k_0+4}\tilde{Q},P_{\leq k_0+4}\tilde{\epsilon})_{\dot{H}_x^1}-( (|x|^{-2}*(|P_{\leq k_0+4}\tilde{Q}|^2))P_{\leq k_0+4}\tilde{Q},P_{\leq k_0+4}\tilde{\epsilon} )_{L_x^2}\notag\\
   =&\big(-\Delta P_{\leq k_0+4}\tilde{Q}- (|x|^{-2}*(|P_{\leq k_0+4}\tilde{Q}|^2))P_{\leq k_0+4}\tilde{Q},P_{\leq k_0+4}\tilde{\epsilon} \big)_{L_x^2}.\label{kkj}
\end{align}
Recall that $\phi_{uv}=|x|^{-2}\ast(uv)$. Noticing that
\begin{align*}
&\Delta\tilde{Q}+\phi_{|\tilde Q|^2}\tilde Q\\
=&\lambda(t)^{-2}\tilde{Q}-|\xi(t)|^2\lambda(t)^{-2}\tilde{Q}\\
&-2i\lambda(t)^{-2}\xi(t)\cdot\lambda(t)^{-\frac{d}2}e^{-i\gamma(t)}e^{-i\xi(t)\cdot\frac{x-x(t)}{\lambda(t)}}(\nabla Q)\Big(\frac{x-x(t)}{\lambda(t)}\Big),
\end{align*}
 then by \eqref{assump1}, Proposition \ref{grondstate2} and Bernstein's inequality, 
\begin{equation}\label{large1}
  \|P_{\geq k_0+4}\Delta\tilde{Q}\|_{L_x^2}+\big\|P_{\geq k_0+4}[\phi_{|P_{\leq k_0+4 }\tilde{Q}|^2}P_{\leq k_0+4}\tilde{Q}]\big\|_{L_x^2}\lesssim 2^{-100k_0}.  
\end{equation}
In view of  \eqref{assump1}, \eqref{kkj}, we can  use the identity
\[
\|\tilde\epsilon+\tilde{Q}\|^2_{L^2}=\|\tilde Q\|^2_{L^2}\Rightarrow \|\tilde\epsilon\|_{L^2}^2=-2(\tilde\epsilon,\tilde Q)_{L^2},
\]
to further show that
\begin{align}
   &( P_{\leq k_0+4}\tilde{Q},P_{\leq k_0+4}\tilde{\epsilon})_{\dot{H}_x^1}-\big( (|x|^{-2}*(|P_{\leq k_0+4}\tilde{Q}|^2))P_{\leq k_0+4}\tilde{Q},P_{\leq k_0+4}\tilde{\epsilon} \big)_{L_x^2}\notag\\
   =&\frac{1}{2\lambda(t)^2}\|\tilde{\epsilon}\|^2_{L_x^2}-\frac{|\xi(t)|^2}{2\lambda(t)^2}\|\tilde\epsilon\|^2_{L_x^2}+O(2^{-100k_0}).\label{ccc1}
\end{align}

{\bf The estimate of the terms $\eqref{equ:tranener2}+\eqref{equ:tranener3}$:}
 We further decompose the term $P_{\leq k_0+4}\tilde{\epsilon}$ as
\begin{align}
    P_{\leq k_0+4}\tilde{\epsilon}&=P_{\leq k_0+4}e^{-i\gamma}e^{-i\frac{x-x(t)}{\lambda}\cdot\xi}\frac{1}{\lambda(t)^{d/2}}\epsilon\big(\frac{x-x(t)}{\lambda}\big)\notag\\
    &=e^{-i\gamma}e^{-i\frac{x-x(t)}{\lambda(t)}\cdot\xi}P_{\leq k_0+4}\Big[\frac{1}{\lambda(t)^{d/2}}\epsilon\big(\frac{x-x(t)}{\lambda}\big)\Big]+\mathcal{R}\notag\\
    &:=\tilde{\tilde{\epsilon}}+\mathcal{R}.
\end{align}
As observed in \cite{D2,D1}, the assumption \eqref{assump1} guarantees that
\begin{equation}\label{mmmb}
    \|\mathcal{R}\|_{L_x^2}\lesssim 2^{-k_0}\frac{|\xi(t)|}{\lambda(t)}\|\epsilon\|_{L_x^2}\qtq{and}\|\nabla \mathcal{R}\|_{L_x^2}\lesssim \frac{|\xi(t)|}{\lambda(t)}\|\epsilon\|_{L_x^2}
\end{equation}
Set $f_t(x)=P_{\leq k_0+4}
\big[\epsilon(t,(\cdot-x(t))/\lambda(t))\big](x)$.
By \eqref{mmmb} and \eqref{large1},
\begin{align}\label{midest1}
&\eqref{equ:tranener2}+\eqref{equ:tranener3}\\\notag
=&\frac{1}{2}\|\nabla \tilde{\tilde{\epsilon}}\|^2_{L^2}-\frac{1}{2}\int_{\R^d}(|x|^{-2}\ast(|\tilde{Q}|^2))|P_{\leq k_0+4}\tilde{\epsilon}|^2\ dx\\\notag
&-\int_{\R^d}(|x|^{-2}\ast(\Re[\tilde{Q}\overline{P_{\leq k_0+4}\tilde{\epsilon}}]))\Re[\tilde{Q}\overline{P_{\leq k_0+4}\tilde{\epsilon}}]\ dx\\\notag
&+O\big(\tfrac{|\xi(t)|^2}{\lambda(t)^2}\|\epsilon\|^2_{L^2}\big)+O\big(\tfrac{|\xi(t)|}{\lambda(t)}\|\epsilon\|_{L^2}\|\nabla \tilde{\tilde{\epsilon}}\|_{L^2}\big)+O(2^{-100k_0})\\\notag
=&\frac12\|\nabla\widetilde{\widetilde\epsilon}\|_{L_x^2}^2
-\frac{1}{2\lambda(t)^{2d}}\int\phi_{Q((\cdot-x(t))/\lambda(t))^2}|f_t|^2\,dx\notag\\
&-\frac{1}{\lambda(t)^{2d}}\int\phi_{Q((\cdot-x(t))/\lambda(t))\Re f_t}\,Q\left(\frac{x-x(t)}{\lambda(t)}\right)\Re f_t\,dx\notag\\
&+O\left(\frac{|\xi(t)|^2}{\lambda(t)^2}\|\epsilon\|_{L_x^2}^2\right)
+O\left(\frac{|\xi(t)|}{\lambda(t)}\|\epsilon\|_{L_x^2}
\|\nabla\widetilde{\widetilde\epsilon}\|_{L_x^2}\right)
+O(2^{-100k_0}).
\end{align}
By the product rule,
\begin{align}\label{midest0}
\frac12\|\nabla\widetilde{\widetilde\epsilon}\|_{L_x^2}^2
={}&\frac{|\xi(t)|^2}{2\lambda(t)^{d+2}}\|f_t\|_{L_x^2}^2
+\frac{\xi(t)}{\lambda(t)^{d+1}}\cdot(f_t,i\nabla_xf_t)_{L_x^2}
\notag\\
&+\frac{1}{2\lambda(t)^d}\|f_t\|_{\dot H_x^1}^2.
\end{align}
Let $2^{n(t)}=\lambda(t)$. Changing variables, we obtain
\begin{align}\label{4.27.0}
&\frac{\|f_t\|_{\dot H_x^1}^2}{2\lambda(t)^d}
-\frac{1}{2\lambda(t)^{2d}}\int\phi_{Q((\cdot-x(t))/\lambda(t))^2}|f_t|^2\,dx\notag\\
&-\frac{1}{\lambda(t)^{2d}}\int\phi_{Q((\cdot-x(t))/\lambda(t))\Re f_t}\,Q\left(\frac{x-x(t)}{\lambda(t)}\right)\Re f_t\,dx
\notag\\
=&\frac{\|P_{\leq k_0+4+n(t)}\epsilon\|_{\dot H_x^1}^2}{2\lambda(t)^2}
-\frac{1}{2\lambda(t)^2}\int\phi_{Q^2}|P_{\leq k_0+4+n(t)}\epsilon|^2\,dx\notag\\
&-\frac{1}{\lambda(t)^2}\int\phi_{Q\Re P_{\leq k_0+4+n(t)}\epsilon}\,Q\Re P_{\leq k_0+4+n(t)}\epsilon\,dx.
\end{align}
Since $\epsilon$ satisfies \eqref{orthod}, and $Q$, $\chi$, and
the derivatives $\partial_jQ$ decay rapidly, the coefficients
$p(f):=(P_{\leq k_0+4+n(t)}\epsilon,f)_{L_x^2}$ satisfy
\[
|p(f)|\lesssim2^{-100k_0},\qquad
f\in\{\chi,i\chi,\partial_jQ,i\partial_jQ:1\leq j\leq d\}.
\]
 Let
\begin{align*}
e(t)
= & P_{\leq k_0+4+n(t)} \epsilon-\frac{p \left(\chi \right)}{ \left\|\chi\right\|^2_{L_x^2}}\chi
-\frac{p \left(i\chi \right)}{ \left\|i\chi\right\|^2_{L_x^2}}i\chi\notag\\
&-\sum_{j=1}^d
\frac{p\left(\partial_j Q \right)}{ \left\|\partial_j Q\right\|^2_{L_x^2}}\partial_j Q-\sum_{j=1}^d
\frac{p\left(i\partial_j Q \right)}{ \left\|i\partial_j Q\right\|^2_{L_x^2}}i\partial_j Q
\end{align*}
then $e(t)$ satisfies the orthogonality conditions \eqref{orthod}.
The ground-state equation gives $\ker L_-=\operatorname{span}\{Q\}$.
Since $(Q,\partial_{x_j}Q)_{L_x^2}=0$ and $(L_+Q,Q)_{L_x^2}<0$,
Theorem \ref{posit} implies $(Q,\chi)_{L_x^2}\ne0$.
The decay of $\phi_{Q^2}$ and the usual compactness argument therefore
give coercivity of $L_-$ on the orthogonal complement of $\chi$.
Combining this with the fourth assertion of Theorem \ref{posit},
\eqref{assump1} and Bernstein's inequality, we obtain

\begin{align}
    & \frac{1}{2 \lambda(t)^{2}} \| e(t, x) \|_{\dot{H}^{1}}^{2} -\frac{1}{2\lambda(t)^{2}}\int_{\R^d}(|x|^{-2}\ast(|Q(x)|^2))|e(t,x)|^2\ dx\notag\\
     &-\frac{1}{\lambda(t)^{2}}\int_{\R^d}(|x|^{-2}\ast(\Re[Q(x)e(t,x)]))\Re[Q(x)e(t,x)]\ dx\notag\\
     \geq& \frac{\lambda_1}{\lambda(t)^2}\|e(t,x)\|^2_{H_x^1}-\frac{1}{2\lambda(t)^2}\|e(t,x)\|^2_{L_x^2}\notag\\
     \geq& \frac{\lambda_1}{\lambda(t)^2}\|P_{\leq k_0+4+n(t)}\epsilon(t,x)\|^2_{H_x^1}-\frac{1}{2\lambda(t)^2}\|P_{\leq k_0+4+n(t)}\epsilon(t,x)\|^2_{L_x^2}- O(2^{-50k_0})\label{midest2} 
\end{align}
for some $\lambda_1\ll1$. Inserting \eqref{midest2} into \eqref{4.27.0}, and then inserting \eqref{4.27.0} into \eqref{midest0} and \eqref{midest1} as well as using \eqref{mmmb}, we readily obtain that
\begin{align}\notag
&\eqref{equ:tranener2}+\eqref{equ:tranener3}\\
 \geq&\frac{|\xi(t)|^2}{2\lambda(t)^{d+2}}\|f_t\|_{L_x^2}^2
+\frac{\xi(t)}{\lambda(t)^{d+1}}\cdot(f_t,i\nabla_xf_t)_{L_x^2}\notag\\
&+ \frac{\lambda_1}{2\lambda(t)^2} \|P_{\leq k_0+4+n(t)}\epsilon(t,x)\|_{H_x^1}^{2}-\frac{1}{2\lambda(t)^2}\|P_{\leq k_0+4+n(t)}\epsilon\|^2_{L_x^2}\\\notag
&\notag\\
&+O\big(\frac{|\xi(t)|^2}{\lambda(t)^2}\|\epsilon\|^2_{L_x^2}\big)+O\big(\frac{|\xi(t)|}{\lambda(t)}\|\epsilon\|_{L_x^2}\|\nabla \tilde{\tilde{\epsilon}}\|_{L_x^2}\big)+O(2^{-50k_0})\\\notag
\geq& \frac{\lambda_1}{2\lambda(t)^2} \|P_{\leq k_0+4+n(t)}\epsilon(t,x)\|_{H_x^1}^{2}-\frac{1}{2\lambda(t)^2}\|P_{\leq k_0+4+n(t)}\epsilon\|^2_{L_x^2}\\\notag
&\notag\\
&+O\big(\frac{|\xi(t)|^2}{\lambda(t)^2}\|\epsilon\|^2_{L_x^2}\big)+O\big(\frac{|\xi(t)|}{\lambda(t)}\|\epsilon\|_{L_x^2}\|\nabla \tilde{\tilde{\epsilon}}\|_{L_x^2}\big)+O(2^{-50k_0})\notag\\
\geq&\lambda_2 \|\nabla \tilde{\tilde{\epsilon}}\|_{L_x^{2}}^{2}-\frac{1-\lambda_2}{2\lambda(t)^2}\|P_{\leq k_0+4+n(t)}\epsilon\|^2_{L_x^2}\notag\\
&+O\big(\frac{|\xi(t)|^2}{\lambda(t)^2}\|\epsilon\|^2_{L_x^2}\big)+O\big(\frac{|\xi(t)|}{\lambda(t)}\|\epsilon\|_{L_x^2}\|\nabla \tilde{\tilde{\epsilon}}\|_{L_x^2}\big)+O(2^{-50k_0})\notag\\
\geq &\lambda_3 \|\nabla \tilde{\tilde{\epsilon}}\|_{L_x^{2}}^{2}-\frac{1-\lambda_2}{2\lambda(t)^2}\|P_{\leq k_0+4+n(t)}\epsilon\|^2_{L_x^2}\notag\\
&+O\big(\frac{|\xi(t)|^2}{\lambda(t)^2}\|\epsilon\|^2_{L_x^2}\big)+O(2^{-50k_0})\label{cccc0}
\end{align}
for some constants $\lambda_2, \lambda_3\ll1$.

{\bf The estimate of the term \eqref{expan1}:}
We observe that any term in $R(\tilde{\epsilon},\tilde{Q})$ is of form
\[
c(|x|^{-2}\ast|P_{\leq k_0+4}\tilde{\epsilon}|^2)\overline{P_{\leq k_0+4}\tilde{\epsilon}}P_{\leq k_0+4}\tilde{Q},\quad c(|x|^{-2}\ast|P_{\leq k_0+4}\tilde{\epsilon}|^2)\overline{P_{\leq k_0+4}\tilde{Q}}P_{\leq k_0+4}\tilde{\epsilon}
\]
or
\[
 c(|x|^{-2}\ast|P_{\leq k_0+4}\tilde{\epsilon}|^2)|P_{\leq k_0+4}\tilde{\epsilon}|^2
\]
for some constant $c$. Therefore, by Hardy's inequality and H\"older's inequality,
\begin{equation*}
    |R(\tilde{\epsilon},\tilde{Q})|\lesssim \|P_{\leq k_0+4}\tilde{\epsilon}\|^2_{\dot H_x^1}\|\tilde{\epsilon}\|_{L_x^2}\|\tilde{Q}\|_{L_x^2}+\|P_{\leq k_0+4}\tilde{\epsilon}\|^2_{\dot H_x^1}\|\tilde{\epsilon}\|_{L_x^2}^2
\end{equation*}
which together with \eqref{mmmb} yields
\begin{equation}\label{cccc1}
    |R(\tilde{\epsilon},\tilde{Q})|\lesssim [\eta_{\ast}+\eta_{\ast}^2]\|\tilde{\tilde{\epsilon}}\|^2_{\dot H_x^1}+\eta_{\ast}^3\frac{|\xi(t)|^2}{\lambda(t)^2}
\end{equation}
Inserting estimates \eqref{ccc0}, \eqref{ccc1}, \eqref{cccc0} and \eqref{cccc1} into \eqref{equ:tranener1}-\eqref{expan1}, we obtain the lower bound in Theorem \ref{t4.1}
if $\eta_0\ll1$.
\end{proof}

We next prove the upper bound on the truncated energy:

    \begin{theorem}[Upper bound for $E(P_{\leq k_0+4}u)$]\label{t4.2}
Let $u$ be the  solution as in Theorem \ref{t2.3}. Suppose $0<\eta_0,\eta_1\ll1$ are two  sufficiently small constants to be determined later,  and $J = [a,b]$ is an interval for which the assumptions \eqref{assump1} hold.
Then we have
\begin{equation}\label{f6.1}
\sup_{t \in J} E \left(P_{\leq k _0+ 4} u(t) \right)
 \lesssim \frac{2^{2k_0}}{T} \int_{J}  \left\| \epsilon(t)  \right\|_{L^2_{x}}^{2} \lambda(t)^{-2}  \, {d} t
 + \sup_{t \in J} \frac{|\xi(t)|^{2}}{\lambda(t)^{2}} + 2^{2k_0} T^{-10}.
\end{equation}
\end{theorem}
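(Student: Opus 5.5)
The upper bound \eqref{f6.1} will come from an energy increment estimate along the frequency-truncated flow, combined with a good choice of initial time. Set $v=P_{\leq k_0+4}u$, so that
\[
i\partial_t v+\Delta v=-F(v)-\mathcal N,\qquad \mathcal N:=P_{\leq k_0+4}F(u)-F(v).
\]
For $t_0,t\in J$ the energy identity gives
\[
E(v(t))-E(v(t_0))=-\int_{t_0}^t\Re\int\overline{\partial_\tau v}\,\mathcal N\,dx\,d\tau
=\int_{t_0}^t\Im\int\overline{\bigl(\Delta v+F(v)\bigr)}\,\mathcal N\,dx\,d\tau,
\]
using $\Im\int\overline{\mathcal N}\mathcal N=0$. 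The first step is to choose $t_0\in J$ by the integral mean value theorem so that
\[
\lambda(t_0)^{-2}\|\epsilon(t_0)\|_{L^2}^2\le \frac1T\int_J\lambda^{-2}\|\epsilon\|_{L^2}^2\,dt.
\]
At this time, the expansion \eqref{equ:tranener1}--\eqref{expan1}, read as an upper bound, together with \eqref{ccc0}, \eqref{ccc1}, \eqref{mmmb} and Bernstein, should give
\[
E(v(t_0))\lesssim \frac{|\xi(t_0)|^2}{\lambda(t_0)^2}+\lambda(t_0)^{-2}\|\epsilon(t_0)\|_{L^2}^2+\|\nabla\tilde{\tilde\epsilon}(t_0)\|_{L^2}^2+2^{-100k_0}.
\]
Bernstein then bounds $\|\nabla\tilde{\tilde\epsilon}(t_0)\|_{L^2}^2$ by $2^{2k_0}\lambda(t_0)^{-d}\|\epsilon(t_0)\|^2\cdots$; more simply, $\lesssim 2^{2k_0}\|\epsilon(t_0)\|_{L^2}^2$. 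Since $\lambda\le T^{1/100}$, one loses only a harmless power of $\lambda(t_0)$, but this is delicate. To avoid the loss I would instead pick $t_0$ to minimize $\|\epsilon\|_{L^2}^2$ weighted appropriately, which yields the first term of \eqref{f6.1}.

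The main step is the increment. I will write $\mathcal N$ via a Littlewood--Paley commutator: $\mathcal N$ vanishes unless at least one factor of $F(u)$ carries frequency $\gtrsim 2^{k_0}$. Splitting $u=v+P_{>k_0+4}u$ shows that every term in $\mathcal N$ contains some $P_{\geq k_0}u$, and hence is controlled by Theorem~\ref{longtime} through Remark~\ref{lorentzlongtime}, in the Lorentz norm $L_t^2L_x^{2d/(d-2),2}$.

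Pairing with $\Delta v$, I would move one derivative onto each side: $\nabla v$ costs $2^{k_0}$ by Bernstein, and $\nabla\mathcal N$ is bounded by $2^{k_0}$ times the trilinear terms. The trilinear Hartree terms are estimated with weak Young (Lemma~\ref{lem:weakYoung}), $|x|^{-2}\in L^{d/2,\infty}$, and Lorentz H\"older. Two factors are placed in $L_t^2L_x^{2d/(d-2),2}$: the high-frequency factor, and one more factor, for which we use \eqref{estepsilonnorm} when the factor is $\tilde\epsilon$. When the factor is instead $\tilde Q$ or its low-frequency part, we use the decay and local-smoothing arguments of \eqref{final1}--\eqref{final2}; these give gains $2^{-k_0}T^{1/25}$, which absorb powers of $T$. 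The $F(v)$ pairing is handled similarly via Hardy's inequality, and terms with two high-frequency factors are quadratic in \eqref{f5.66}. In total the increment is
\[
\lesssim 2^{2k_0}\Bigl(\frac1T\int_J\lambda^{-2}\|\epsilon\|^2\,dt+T^{-20}\Bigr)+\text{lower order}.
\]

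The expected obstacle is the cross terms in which the high-frequency factor meets two copies of $\tilde Q$ inside $\Delta v\cdot\mathcal N$. There the long-time estimate enters only linearly, paired with $\nabla\tilde Q$-type low-frequency pieces that are not small. I would treat them by exploiting that $P_{\geq k_0}\tilde Q=O(2^{-100k_0})$, which means the high-frequency factor must be $\tilde\epsilon$-generated. Writing $P_{\geq k_0}u=P_{\geq k_0}\tilde\epsilon+O(T^{-30})$, I would then use Cauchy--Schwarz in time against $\bigl(\int\lambda^{-2}\|\epsilon\|^2\,dt\bigr)^{1/2}$ paired with \eqref{f5.66}. This yields the product of two square roots, which is the claimed $\frac{2^{2k_0}}{T}\int_J\lambda^{-2}\|\epsilon\|^2\,dt$ after using $\lambda\ge1$. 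Finally, the $\xi$ term of \eqref{f6.1} arises from $E(P_{\leq k_0+4}\tilde Q)$ in \eqref{ccc0}.
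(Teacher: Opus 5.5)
\textbf{The gap is in your increment estimate.} The terms in which only one factor is $P_{\ge k_0}u$ are never reduced to the quadratic size required by \eqref{f6.1}.

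When the second factor placed in $L_t^2L_x^{2d/(d-2),2}$ is $\tilde\epsilon$, \eqref{estepsilonnorm} and \eqref{f5.66} give
\[
2^{2k_0}\Bigl(\int_J\lambda^{-2}\|\epsilon\|_{L^2}^2\,dt\Bigr)^{1/2}\Bigl(\frac1T\int_J\lambda^{-2}\|\epsilon\|_{L^2}^2\,dt\Bigr)^{1/2}=\frac{2^{2k_0}}{T^{1/2}}\int_J\lambda^{-2}\|\epsilon\|_{L^2}^2\,dt .
\]
This is off by a factor $T^{1/2}$. Your closing claim that this product of square roots equals $\frac{2^{2k_0}}{T}\int_J\lambda^{-2}\|\epsilon\|_{L^2}^2\,dt$ is an arithmetic error, and $\lambda\ge1$ does not repair it.

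When the other factors are $\tilde Q$-type, the gain $2^{-k_0}T^{1/25}$ of \eqref{final1}--\eqref{final2} is not available. There it comes from applying \eqref{f5.20} to two high-frequency $U^2_\Delta$ functions. Your partner $\Delta P_{\le k_0+4}\tilde Q$ is low frequency, so local smoothing gives nothing for it, and its time integral over $J$ costs a positive power of $T$. More fundamentally, any such bound is linear in $\|P_{\ge k_0}u\|_{U^2_\Delta}$, i.e.\ of order $C\bigl(\frac1T\int_J\lambda^{-2}\|\epsilon\|_{L^2}^2dt\bigr)^{1/2}$ with a non-small $C$. A bound of that form cannot be dominated by the quadratic right-hand side of \eqref{f6.1}.

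\textbf{The missing idea:} these one-high-factor contributions vanish identically by Fourier support, so no $\tilde Q/\tilde\epsilon$ splitting is needed in this step.
\begin{enumerate}
\item Write $u=P_{\le k_0}u+P_{\ge k_0}u$. Convolution with $|x|^{-2}$ is a Fourier multiplier and preserves Fourier supports. Hence $P_{\le k_0+4}F(P_{\le k_0}u)=F(P_{\le k_0}u)$, which is supported in $\{|\xi|\le 6\cdot2^{k_0}\}$.
\item The part of $P_{\le k_0+4}F(u)-F(P_{\le k_0+4}u)$ linear in $P_{\ge k_0}u$ is a sum of two pieces: $(P_{\le k_0+4}-1)$ applied to the trilinear term, and the trilinear term with $(1-P_{\le k_0+4})$ acting on the high factor. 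Both are supported in $\{|\xi|\ge 12\cdot2^{k_0}\}$, so they are orthogonal to $\Delta P_{\le k_0}u$ and to $F(P_{\le k_0}u)$.
\item Therefore every nonvanishing pairing in $\frac{d}{dt}E(P_{\le k_0+4}u)$ contains two factors $P_{\ge k_0}u$. Bound the remaining factors by the mass, and use H\"older, the Lorentz-space Hardy--Littlewood--Sobolev inequality, Bernstein (for the $2^{2k_0}$), and Remark~\ref{lorentzlongtime} twice. This gives \eqref{bbv}--\eqref{bbv2} directly.
\end{enumerate}

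\textbf{Base point.} Choose $t_0$ by the mean value theorem for the measure $\lambda^{-2}dt$, which has total mass $T$, so that $\|\epsilon(t_0)\|_{L^2}^2\le\frac1T\int_J\lambda^{-2}\|\epsilon\|_{L^2}^2dt$. Your first choice controls only $\lambda(t_0)^{-2}\|\epsilon(t_0)\|_{L^2}^2$, and so loses $T^{1/50}$ against the $2^{2k_0}\|\epsilon(t_0)\|_{L^2}^2$ term.
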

\begin{proof}
Since $ \| \epsilon(t) \|_{L^2_{x}}$ is continuous as a function of $t$, so
the integral mean value theorem implies that  there exists $t_{0}\in [a, b]=J$ such that
\begin{equation*} 
\left\|{\epsilon}(t_{0}) \right\|_{L^2_{x}}^{2}
= \frac{1}{T} \int_{a}^{b}  \left\| {\epsilon}(t)  \right\|_{L^2_{x} }^{2} \lambda(t)^{-2}  \, {d} t.
\end{equation*}
Using the fact that $Q$ is real-valued, smooth, and has rapidly decaying derivatives and following the same argument as in the proof of Theorem \ref{t4.1}, we have for any $t\in J$
\begin{align}\notag
 E \left(P_{\leq k _0+ 4} u(t) \right)
  =& \frac{1}{2} \frac{|\xi(t)|^{2}}{\lambda(t)^{2}} \| Q \|_{L^2_{x}}^{2} + \frac{1}{2 \lambda(t)^{2}} \left\| {\epsilon}(t) \right\|_{L^2_{x} }^{2}
  - \frac{|\xi(t)|^{2}}{2 \lambda(t)^{2}} \left\| {\epsilon}(t)  \right\|_{L^2_{x} }^{2}
  \\\label{f6.3}
  &+ O \left(2^{2k_0}  \left\| {\epsilon}(t) \right\|_{L^2_{x}}^{2}  \right)
   + O \left(2^{2k_0} T^{-10} \right).
\end{align}
Thus \eqref{f6.1} holds at time $t_{0}$.

Since $E(P_{\leq k_0+4}u)$ is no longer conserved, we need to estimate the increment of this energy. By direct calculation,
\begin{align*} 
&\frac{d}{dt}E(P_{\leq k_0+4}u(t))\\
&\quad=\bigl(i\Delta P_{\leq k_0+4}u(t)+iP_{\leq k_0+4}F(u(t)),
\;P_{\leq k_0+4}F(u(t))-F(P_{\leq k_0+4}u(t))\bigr)_{L_x^2}.
\end{align*}
First, we  estimate
\begin{equation}\label{f6.5}
\int_{t_{0}}^{t'} \big( i \Delta P_{\leq k _0+ 4} u(t),
P_{\leq k_0+4}  {F} \big(u(t) \big)-F \big(P_{\leq k_0+4}u(t)\big) \big)_{L^2_x} \, {d} t.
\end{equation}
Since $P_{\leq k_0+4}F \left(P_{\leq k_0}u \right) - F \left(P_{\leq k_0}u\right)=0$, we have
\begin{align*}
&P_{\leq k_0+4}  {F}(u)-F \big(P_{\leq k_0+4}u\big)\\
=&P_{\leq k_0+4}\big(F(u)-F(P_{\leq k_0}u)\big)
+F\big(P_{\leq k_0}u\big)-F\big(P_{\leq k_0+4}u\big)\\
=&\sum_{(u_1,u_2,u_3)\in{\rm Sym}(A)}P_{\leq k_0+4}\big[(|x|^{-2}\ast(u_1\overline{u_2}))u_3\big]\\
&+\sum_{(u_1,u_2,u_3)\in{\rm Sym}(B)}P_{k_0+1\leq\cdot \leq k_0+4}\big[(|x|^{-2}\ast(u_1\overline{u_2}))u_3\big]\\
&+\sum_{(u_1,u_2,u_3)\in{\rm Sym}(C)}P_{\leq k_0+4}\big[|x|^{-2}\ast(u_1\overline{u_2})u_3\big]
\\
&+\sum_{(u_1,u_2,u_3)\in{\rm Sym}(D)}P_{\geq k_0+1}\big[|x|^{-2}\ast(u_1\overline{u_2})u_3\big]
,
\end{align*}
where ${\rm Sym}(\{a,b,c\})=\big\{(x,y,z):\;\{x,y,z\}=\{a,b,c\}\big\},$ and
\begin{align*}
    A=\{P_{\geq k_0}u,P_{\geq k_0-4}u,u\},\\ B=\{P_{\geq k_0}u,P_{\leq k_0-4}u,P_{\leq k_0-4}u\},\\
    C=\{P_{k_0\leq\cdot\leq k_0+4}u,P_{k_0-4\leq \cdot \leq k_0+4}u,P_{\leq k_0+4}u\},\\ D=\{P_{k_0\leq\cdot\leq k_0+4}u,P_{\leq k_0-4}u,P_{\leq k_0-4}u\}.
\end{align*}
Hence, we get
\begin{align*}
|\eqref{f6.5}|
\lesssim&\sum_{(u_1,u_2,u_3)\in{\rm Sym}(A)}\big\|\Delta P_{\leq k_0+4}u
\big[(|x|^{-2}\ast(u_1\overline{u_2}))u_3\big]\big\|_{L_{t,x}^1}\\
&+\sum_{(u_1,u_2,u_3)\in{\rm Sym}(B)}\big\|\Delta P_{k_0+1\leq\cdot \leq k_0+4}u \big[(|x|^{-2}\ast(u_1\overline{u_2}))u_3\big]\big\|_{L_{t,x}^1}\\
&+\sum_{(u_1,u_2,u_3)\in{\rm Sym}(C)}\big\|\Delta P_{\leq k_0+4}u\big[|x|^{-2}\ast(u_1\overline{u_2})u_3\big]\big\|_{L_{t,x}^1}
\\
&+\sum_{(u_1,u_2,u_3)\in{\rm Sym}(D)}\big\|\Delta P_{k_0+1\leq \cdot\leq k_0+4}u\big[|x|^{-2}\ast(u_1\overline{u_2})u_3\big]\big\|_{L_{t,x}^1}.
\end{align*}
Returning to the pairing in \eqref{f6.5}, expand using
$u=P_{\leq k_0}u+P_{\geq k_0}u$.
For a term with one high-frequency factor, write the commutator as
the outer projection error plus the term in which
$1-P_{\leq k_0+4}$ acts on that factor. Their output frequencies are
at least $16\cdot2^{k_0}$ and $12\cdot2^{k_0}$, respectively.
Both terms are therefore orthogonal to $P_{\leq k_0}u$ and
$F(P_{\leq k_0}u)$.
Here convolution with $|x|^{-2}$ preserves Fourier support.
Thus each nonzero pairing contains two high-frequency factors.
By H\"older's inequality, the Lorentz-space Hardy--Littlewood--Sobolev
inequality, Bernstein's inequality and Remark \ref{lorentzlongtime},
\begin{align}\label{bbv}
 |\eqref{f6.5}|
 &\lesssim 2^{2k_0}\|u\|_{L_t^\infty L_x^2([a,b]\times\R^d)}^2
 \|P_{\geq k_0}u\|_{L_t^2L_x^{\frac{2d}{d-2},2}([a,b]\times\R^d)}^2
 \notag\\
 &\lesssim \frac{2^{2k_0}}{T}\int_a^b
 \lambda(t)^{-2}\|\epsilon(t)\|_{L_x^2}^2\,dt
 +\frac{2^{2k_0}}{T^{10}}.
\end{align}

The term
\begin{equation*}
\int_{t_{0}}^{t'}  \left( i P_{\leq k _0+ 4} F(u(t)),
P_{\leq k_0+4}  {F} \left(u(t) \right)-F \left(P_{\leq k_0+4}u(t)\right) \right)_{L^2_x} \, {d} t
\end{equation*}
is estimated in the same way. The frequency support observation
above again supplies two high-frequency factors. Applying the
Lorentz-space Hardy--Littlewood--Sobolev inequality and Bernstein's
inequality, we obtain
\begin{align}
 &\left|\int_{t_0}^{t'}
 \big(iP_{\leq k_0+4}F(u(t)),
 P_{\leq k_0+4}F(u(t))-F(P_{\leq k_0+4}u(t))\big)_{L_x^2}\,dt\right|
 \notag\\
 &\qquad\lesssim 2^{2k_0}\|u\|_{L_t^\infty L_x^2([a,b]\times\R^d)}^4
 \|P_{\geq k_0}u\|_{U_\Delta^2([a,b])}^2
 \notag\\
 &\qquad\lesssim \frac{2^{2k_0}}{T}\int_a^b
 \lambda(t)^{-2}\|\epsilon(t)\|_{L_x^2}^2\,dt
 +\frac{2^{2k_0}}{T^{10}}.\label{bbv2}
\end{align}
Now \eqref{f6.3}, \eqref{bbv} and \eqref{bbv2} yield \eqref{f6.1} and we can conclude the proof of Theorem \ref{t4.2}.
\end{proof}
Combining Theorem \ref{t4.1} and Theorem \ref{t4.2}, we obtain that
\begin{corollary}\label{c4.3}
    Let $u$ be the  solution in Theorem \ref{t2.3}. Suppose $0<\eta_0,\eta_1\ll1$ are two sufficiently small constants to be determined later,  and $J = [a,b]$ is an interval for which
the assumptions \eqref{assump1} hold.
Then we have
\begin{align}\nonumber
    &\Big\|\nabla \Big( e^{-i\gamma(t)}e^{-i\frac{x-x(t)}{\lambda(t)}\cdot\xi(t)}P_{\leq k_0+4}\Big[\frac{1}{\lambda(t)^{d/2}}\epsilon\big(\frac{x-x(t)}{\lambda(t)}\big)\Big]\Big\|_{L_x^{2}}^{2}\\
 \lesssim& \frac{2^{2k_0}}{T} \int_{J}  \left\| \epsilon(t)  \right\|_{L^2_{x}}^{2} \lambda(t)^{-2}  \, {d} t
 + \sup_{t \in J} \frac{|\xi(t)|^{2}}{\lambda(t)^{2}} + 2^{2k_0} T^{-10}.
\end{align}
and
\begin{align}
    &\|\epsilon(t)\|_{L_x^{2}}^{2}
 \lesssim \frac{2^{2k_0}T^{\frac{1}{50}}}{T} \int_{J}  \left\| \epsilon(t)  \right\|_{L^2_{x}}^{2} \lambda(t)^{-2}  \, {d} t
 + \sup_{t \in J} T^{\frac{1}{50}}\frac{|\xi(t)|^{2}}{\lambda(t)^{2}} + 2^{2k_0}T^{\frac{1}{50}-10}.
\end{align}
\end{corollary}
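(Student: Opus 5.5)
The plan is to compare the lower bound of Theorem~\ref{t4.1} with the upper bound of Theorem~\ref{t4.2} at a fixed $t\in J$. Write
\[
G(t):=\Big\|\nabla \Big( e^{-i\gamma(t)}e^{-i\frac{x-x(t)}{\lambda(t)}\cdot\xi(t)}P_{\leq k_0+4}\Big[\lambda(t)^{-d/2}\epsilon\big(\tfrac{x-x(t)}{\lambda(t)}\big)\Big]\Big)\Big\|_{L_x^2}^2 .
\]
Theorem~\ref{t4.1} gives $G(t)+\frac{1}{2\lambda(t)^2}\|\epsilon(t)\|_{L_x^2}^2\lesssim E(P_{\leq k_0+4}u(t))+T^{-10}$. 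Both terms on the left are nonnegative, so each is controlled by the right-hand side of \eqref{f6.1}. The term $T^{-10}$ is absorbed into $2^{2k_0}T^{-10}$. This immediately yields the first inequality of the corollary, with no further work needed.

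For the second inequality, the same comparison gives
\[
\|\epsilon(t)\|_{L_x^2}^2\lesssim \lambda(t)^2\Big[\tfrac{2^{2k_0}}{T}\int_J\|\epsilon\|_{L_x^2}^2\lambda^{-2}\,d\tau+\sup_J\tfrac{|\xi|^2}{\lambda^2}+2^{2k_0}T^{-10}\Big].
\]
We then insert the standing hypothesis $\lambda(t)\le T^{1/100}$ from \eqref{assump1}, which gives $\lambda(t)^2\le T^{1/50}$. This produces exactly the stated bound with the factor $T^{1/50}$ multiplying each term.

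The only point requiring care is the lower bound itself, namely that the sign bookkeeping in Theorem~\ref{t4.1} leaves a genuinely positive multiple of $\lambda^{-2}\|\epsilon\|_{L^2}^2$. As stated, that theorem provides this, so the corollary follows from the two theorems and the scale bound. The implicit constants are uniform in $t\in J$ and independent of $T$, because those of Theorems~\ref{t4.1} and \ref{t4.2} are.
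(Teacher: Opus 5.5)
Your proposal is correct and follows the paper's own proof: the corollary comes from comparing the lower bound of Theorem~\ref{t4.1} with the upper bound of Theorem~\ref{t4.2}. The second estimate then follows by multiplying through by $\lambda(t)^2\leq T^{1/50}$, using the hypothesis $\lambda(t)\leq T^{1/100}$ in \eqref{assump1}.
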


\section{Modulated virial inequality}\label{Sec:viria}
In this section, we derive a  virial-type inequality, which plays a key role in the proof of our main Theorem \ref{t2.3}. Inspired by \cite{D2,D1}, we will prove this estimate by making use of the frequency-localized Morawetz estimate and the long-time Strichartz estimates 
 derived in the previous section. 
\begin{theorem}\label{t7.13}
Let $u$ be the  solution in Theorem \ref{t2.3}. Suppose $0<\eta_0,\eta_1\ll1$ are two sufficiently small constants to be determined later,  and $J = [a,b]$ is an interval for which
\begin{equation}\label{equ:assumpxixcond}
   \begin{cases}
\xi(b)=x(a)=0,\\
   1 \leq \lambda(t) \leq  T^{1/100},\;  \int_a^b\frac{1}{\lambda(t)^2}\, dt=T=2^{3k_0},\\
   \sup\limits_{t \in J}|x(t)|\leq R^{1/2}=T^{1/25},\;
   \sup\limits_{t \in J}\frac{|\xi(t)|}{\lambda(t)}\leq \eta_0\ll\eta_{\ast},\\
   \int_{a}^b\frac{1}{\lambda(t)^2}\|\epsilon(t)\|_{L_x^2}^2\, dt\leq \eta_1.
   \end{cases} 
\end{equation}

Let $\epsilon =\epsilon_1+ i\epsilon_2$, then for $T$ sufficiently large, there exists a universal constant $C_d$ such that
\begin{align}\label{f7.2}
 \int_{a}^{b} \| \epsilon(t) \|_{L^2_{x}}^{2} \lambda(t)^{-2} \, {d} t
\notag 
 \leq& C_d\big( \epsilon_2(a), x \cdot \nabla Q+\tfrac{d}{2}Q \big)_{L^2_{x}} - C_d\big( \epsilon_2(b), x \cdot \nabla Q+\tfrac{d}{2}Q \big)_{L^2_{x}}\\
&+ C_dT^{1/10}\sup_{t \in J} \frac{|\xi(t)|^{2}}{\lambda(t)^{2}} + O \left(T^{-8} \right).
\end{align}
\end{theorem}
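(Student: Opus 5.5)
We follow Dodson's modulated virial scheme \cite{D2,D1}, applied to the low-frequency truncation $v=P_{\leq k_0+4}u$. Fix a radial weight $\psi_R(x)=x\,\chi(x/R)$ with $R=T^{2/25}$, and set
\[
M(t)=\Im\int\psi_R(x)\cdot\nabla v(t,x)\,\overline{v(t,x)}\,dx .
\]
For $v$ close to $\widetilde Q$, the leading part of $M(t)$ is linear in $\epsilon$. Since $x(a)=\xi(b)=0$, this linear part is $-\big(\epsilon_2(t),x\cdot\nabla Q+\tfrac d2Q\big)_{L^2}$ at $t=a,b$, up to errors of size $O(|\xi|/\lambda)$ and $O(\|\epsilon\|^2)$. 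The $O(\|\epsilon\|^2)$ terms are harmless once integrated against the gain from the main estimate. This produces the boundary terms in \eqref{f7.2}.

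\textbf{Step 1: the exact identity.} Differentiate $M$ using $iv_t+\Delta v=-F(v)-\mathcal N$, where $\mathcal N=P_{\leq k_0+4}F(u)-F(v)$. The usual computation gives
\[
\frac{d}{dt}M=2\int\nabla\psi_R:\Re(\nabla v\otimes\nabla\bar v)-\tfrac12\int\Delta\,\mathrm{div}\,\psi_R|v|^2+(\text{Hartree term})+(\mathcal N\text{ term}).
\]
For the Hartree term we symmetrize in $x,y$, using that the kernel is even:
\[
-\iint\frac{(x-y)\cdot(\psi_R(x)-\psi_R(y))}{|x-y|^4}|v(x)|^2|v(y)|^2\,dx\,dy .
\]
When $\psi_R=x$ this equals the virial term $2\int(|x|^{-2}*|v|^2)|v|^2$. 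On $B_R$ the identity is therefore $8E(v)$ plus corrections. Here the $\mathcal N$ term is controlled by the long-time Strichartz estimate, Theorem~\ref{longtime} together with Remark~\ref{lorentzlongtime}, exactly as in \eqref{bbv}. The bound is $2^{2k_0}T^{-1}\int\lambda^{-2}\|\epsilon\|^2+2^{2k_0}T^{-10}$, which after the $R$ weight is still $\ll\int\lambda^{-2}\|\epsilon\|^2+T^{-8}$ for large $T$.

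\textbf{Step 2: the main term.} Substitute $v=\widetilde Q+\widetilde\epsilon$ (frequency-truncated) and rescale by $\lambda(t)$. The $\epsilon$-independent part vanishes by the Pohozaev identity and $E(Q)=0$, up to $|\xi|^2/\lambda^2$ and $2^{-100k_0}$ errors. The linear part gives $-\frac{d}{dt}\big(\epsilon_2,x\cdot\nabla Q+\frac d2Q\big)$ through the modulation equations. The quadratic part is $\lambda^{-2}$ times a quadratic form in $\epsilon$ which, by the coercivity in Appendix~\ref{sec-mod} (Theorem~\ref{posit}) and the orthogonality conditions, dominates $c\lambda^{-2}\|\epsilon\|_{L^2}^2$. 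To get the $L^2$ rather than the $\dot H^1$ norm, combine it with the energy lower and upper bounds of Theorems~\ref{t4.1} and~\ref{t4.2}, as in Corollary~\ref{c4.3}. The Galilean errors, amplified by the size $R^{1/2}\lesssim T^{1/20}$ of the weight, give the $T^{1/10}\sup|\xi|^2/\lambda^2$ term.

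\textbf{Step 3: the main obstacle, localization errors.} The region $|x|\sim R$ is where the weight deviates from $x$. Since $|x(t)|\leq T^{1/25}\ll R$ and $\lambda\leq T^{1/100}$, the factor $\widetilde Q$ is exponentially small there. For the local terms, the contributions quadratic in $\widetilde\epsilon$ on the annulus are $\lesssim R^{-1}$ times the truncated $\dot H^1$ norm, which Corollary~\ref{c4.3} controls with a small coefficient.

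The harder terms come from the nonlocal part of the symmetrized integrand. It vanishes unless $x$ or $y$ lies outside $B_R$, so the dangerous configuration is $x$ near $x(t)$, carrying $\widetilde Q^2$, with $y$ outside $B_R$, carrying $|\widetilde\epsilon|^2$. In that configuration $|x-y|\gtrsim R$. The difference quotient $\frac{(x-y)\cdot(\psi_R(x)-\psi_R(y))}{|x-y|^4}$ is bounded by $|x-y|^{-2}\lesssim R^{-2}$ whenever $|\psi_R(x)-\psi_R(y)|\lesssim|x-y|$. This gives a term of size $R^{-2}\lambda^{-2}\|\epsilon\|_{L^2}^2$, which is small and absorbable. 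Cross terms $\widetilde Q\widetilde\epsilon$ at $y$ with $\widetilde Q^2$ at $x$ are exponentially small. Once these are checked, integrating $\frac{d}{dt}M$ over $[a,b]$ and absorbing the small multiples of $\int\lambda^{-2}\|\epsilon\|^2$ yields \eqref{f7.2}.
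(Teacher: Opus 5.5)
Your overall architecture matches the paper: the truncated action for $v=P_{\leq k_0+4}u$, boundary terms read off from $M(a)$ and $M(b)$ using $x(a)=\xi(b)=0$, long-time Strichartz for $\mathcal N$, and the observation that the dangerous nonlocal configuration is $\widetilde Q^2$ near $x(t)$ against $|\widetilde\epsilon|^2$ outside $B_R$. However, the positive term $\int_a^b\lambda^{-2}\|\epsilon\|_{L^2}^2\,dt$ is not correctly produced. You assign the part of $\frac{d}{dt}M$ linear in $\epsilon$ to $-\frac{d}{dt}(\epsilon_2,x\cdot\nabla Q+\frac d2Q)_{L^2}$. If that were so, integrating it would reproduce the boundary terms you already extracted from $M(a),M(b)$. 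They would cancel in $M(b)-M(a)=\int_a^b\frac{d}{dt}M\,dt$, and everything would rest on the quadratic part. In the paper's normalization that quadratic part is
\[
\frac{2}{\lambda^2}\Big[(L_+\epsilon_1,\epsilon_1)_{L^2}+(L_-\epsilon_2,\epsilon_2)_{L^2}-\|\epsilon\|_{L^2}^2\Big].
\]
Its $L^2$ part cancels, so it controls only $\dot H^1$-type quantities. Corollary~\ref{c4.3} and Theorem~\ref{t4.2} cannot fix this: they are upper bounds, and you need a lower bound on $\frac{d}{dt}M$. Nor can Theorem~\ref{t4.1}, which bounds the global energy $E(v)$, while $\frac{d}{dt}M$ misses the exterior kinetic energy with an $O(1)$ coefficient. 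The missing idea concerns the linear term. Up to negligible tails it is $4(-\Delta\widetilde Q-\phi_{|\widetilde Q|^2}\widetilde Q,\widetilde\epsilon)_{L^2}=-\frac{4(1-|\xi|^2)}{\lambda^2}(\widetilde Q,\widetilde\epsilon)_{L^2}+\dots$ by the ground-state equation. The exact mass constraint $\|\epsilon\|_{L^2}^2=-2(\epsilon,Q)_{L^2}$ then turns it into $+\frac{2}{\lambda^2}\|\epsilon\|_{L^2}^2$ up to $|\xi|^2\lambda^{-2}$ errors (see \eqref{zzm0}). This cancels the $-\frac{2}{\lambda^2}\|\epsilon\|_{L^2}^2$ above. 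Coercivity for the localized remainder, whose orthogonality holds up to $T^{-20}$, then gives $\frac{2\lambda_1}{\lambda^2}\|\epsilon\|_{L^2}^2$ as in \eqref{zzm2}.

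Second, the weight $\psi_R=x\chi(x/R)$ does not work as claimed. For compactly supported radial $\chi$ its Jacobian is $\chi(|x|/R)\,\mathrm{Id}+\frac1R\chi'(|x|/R)\frac{x\otimes x}{|x|}$. The radial eigenvalue $\frac{d}{dr}[r\chi(r/R)]$ must be $\leq -c<0$ somewhere on $|x|\sim R$, since $r\chi(r/R)$ drops from about $R$ to $0$ over a length $O(R)$. The kinetic term therefore contributes $-C\int_{|x|\sim R}|\nabla v|^2$ with $C\sim1$, not an $R^{-1}$ error. The coercive term cannot absorb this, because its constant $\lambda_1$ is small. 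Corollary~\ref{c4.3} cannot either: that bound is pointwise in $t$ with factor $2^{2k_0}/T$, and $b-a\geq T$ because $\lambda\geq1$. After integrating in $t$ it gives at best $2^{2k_0}\int_a^b\lambda^{-2}\|\epsilon\|_{L^2}^2\,dt$, a large multiple. The paper instead uses $\nabla a=\phi(|x|)x/|x|$ with $\phi(r)=\int_0^r\vartheta^2(s/2R)\,ds$. Its Jacobian is $\vartheta^2(|x|/2R)\,\mathrm{Id}$ plus a positive semidefinite angular part, so \eqref{estq2}$\,\geq0$ and the kinetic term dominates $2\int\vartheta^2(|x|/R)|\nabla v|^2$. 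The nonlocal term is split into the localized potential energy of $\vartheta_Rv$ plus the kernel $B_R$. The only remaining $\dot H^1$ errors are $\delta_0$-small and are absorbed by the coercive $\dot H^1$ term. Smaller points:
\begin{enumerate}
\item The part of $\mathcal N$ with one high-frequency factor does not vanish against a nonconstant weight and needs the Bernstein argument \eqref{10.10}.
\item Terms of order at least three in $\epsilon$ need \eqref{estepsilonnorm}.
\item Your exterior cross term is $O(R^{-2}\|\epsilon\|_{L^2}^2)$ rather than $O(R^{-2}\lambda^{-2}\|\epsilon\|_{L^2}^2)$; this is harmless since $\lambda^2\leq R$.
\end{enumerate}
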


\begin{proof}

We start by recalling the standard Morawetz action. Let $a$ be a spatial function and $u$ satisfy
\begin{equation}\label{equ:usateq}
    i\partial_tu+\Delta u=\mathcal{N}_1,
\end{equation}
on $I\times\R^d.$ For $t\in I$, we define the Morawetz action
$$M_a(t):=\int_{\R^d}\nabla a(x)\cdot \Im \big(\bar{u}\nabla u\big)(t,x)\, {d}x.$$
A simple computation shows
\begin{lemma}[Morawetz identity]\label{lem:moriden}
We have
\begin{align*}
\frac{d}{dt}M_a(t)=\frac12\int_{\R^d}(-\Delta\Delta
a)|u|^2dx+2\int_{\R^d}a_{jk}\Re(\pa_j\bar{u}\pa_ku)dx+\int_{\R^d}a_j(x)\{\mathcal{N}_1,u\}_P^jdx
\end{align*}
where the momentum bracket  $\{f,g\}_P=\Re(f\nabla\bar{g}-g\nabla\bar{f}),$  and repeated indices are implicitly summed.
\end{lemma}

Let $\vartheta(r) \in C^{\infty}([0, \infty))$ be a nonnegative,
nonincreasing cutoff satisfying $\vartheta(r) = 1$ for $0 \leq r \leq 1$
and $\vartheta(r)=0$ for $r\geq2$. Then let
\begin{equation*} 
\phi(r) = \int_{0}^{r} \psi \left(\frac{s}{2R} \right)  \, {d} s, \quad\psi(r):=\vartheta^2(r),
\end{equation*}
and we define the Morawetz action 
\begin{equation}\label{f7.4}
M(t) = \int_{\R^d} \phi(|x|) \frac{x}{|x|} \cdot \Im \left[\overline{P_{\leq k_0+ 4} u} \nabla P_{\leq k_0+4} u \right](t,x) \,   {d} x.
\end{equation}
Letting $v=P_{\leq k_0+4} u$, it solves 
\begin{equation}\label{equ:vsolequ}
i \partial_{t} v + \Delta v +  {F}(v)
 =  F(v) - P_{\leq k_0+4}  {F} (u) := -{\mathcal{N}},
\end{equation}
with $F(v)=(|x|^{-2}\ast|v|^2)v.$

\begin{remark}\label{rem:Moraweig}
 Taking $\nabla a(x)=\phi(|x|) \frac{x}{|x|}$, we obtain
 \begin{align*}
  a_{jk}(x)=&
  \phi'(|x|)\tfrac{x_jx_k}{|x|^2}+\frac{\phi(|x|)}{|x|}\big(\delta_{jk}-\tfrac{x_jx_k}{|x|^2}\big)\\
  =&\vartheta^2\big(\tfrac{|x|}{2R}\big)\delta_{jk}
  +\Big[\frac{\phi(|x|)}{|x|}-\vartheta^2\big(\tfrac{|x|}{2R}\big)\Big]\big(\delta_{jk}-\tfrac{x_jx_k}{|x|^2}\big),\\
 \Delta a=&\phi'(|x|)+\frac{d-1}{|x|}\phi(|x|),\\
 \Delta\Delta a=&\phi^{(3)}+(d-1)\big(\tfrac{\phi''}{|x|}-\tfrac{2\phi'}{|x|^2}+\tfrac{2\phi}{|x|^3}\big)+\tfrac{d-1}{|x|}\Big[\phi''+(d-1)\big(\tfrac{\phi'}{|x|}-\tfrac{\phi}{|x|^2}\big)\Big]\\
 =&\psi''\big(\tfrac{r}{2R}\big)\tfrac{1}{4R^2}+O\big(\tfrac{1}{R^2}\mathbf{1}_{|x|\geq 2R}\big)=O\big(\tfrac{1}{R^2}\mathbf{1}_{|x|\geq 2R}\big),
 \end{align*}
 and due to the symmetry of convolution,
 \begin{align*}
&\int_{\R^d}\nabla a(x)\cdot \{F(v),v\}_P\; dx\\
=& -\frac{1}{2}\iint \big(\nabla a(x)-\nabla a(y)\big)\cdot \nabla_x \big(|x-y|^{-2}\big)|v(x)|^2|v(y)|^2\ dxdy.
 \end{align*}
\end{remark}

By  Lemma \ref{lem:moriden} and Remark \ref{rem:Moraweig}, we get
\begin{align}\label{estq3}
 \frac{d}{dt}M(t)=&O\Big(\tfrac{1}{R^2}\int_{|x|\geq 2R}|v|^2\;dx\Big)\\  \label{estq2}
 &+2\Re \int_{\R^d} \Big[\frac{\phi(|x|)}{|x|}-\vartheta^2\big(\tfrac{|x|}{2R}\big)\Big]\big(\delta_{jk}-\tfrac{x_jx_k}{|x|^2}\big)\big(\pa_j\bar{v}\pa_kv\big)\;dx\\\label{equ:Marterm3}
 &+2\int_{\R^d} \vartheta^2\big(\tfrac{|x|}{2R}\big)|\nabla v|^2\;dx
 \\\label{equ:Marterm4}
 &+\frac{1}{2}\iint \big(\nabla a(x)-\nabla a(y)\big)\cdot \nabla_x \big(|x-y|^{-2}\big)|v(x)|^2|v(y)|^2\ dxdy\\\label{estq4}
 &-\int_{\R^d} \phi(|x|)\frac{x}{|x|}\cdot \big\{\mathcal{N},v\big\}_P\;dx.
\end{align}

{\bf Step 1: Estimation of $M(b) - M(a)$.}
Recalling the decomposition
\begin{align}\notag
    u(t,x)=&e^{-i\gamma(t)}e^{-i\frac{x-x(t)}{\lambda(t)}\cdot\xi(t)}\lambda(t)^{-\frac{d}{2}}
    \Big(\epsilon\big(t,\tfrac{x-x(t)}{\lambda(t)}\big)+Q\big( \tfrac{x-x(t)}{\lambda(t)}\big)\Big)\\\label{equ:uepsQ1}
    \triangleq&\tilde{\epsilon}(t,x)+\tilde{Q}(t,x),
\end{align}
we have
\begin{align*}
 M(t)=&\int_{\R^d}\phi(|x|)\frac{x}{|x|}\cdot \Im\Big[\overline{P_{\leq k_0+4}(\tilde{Q}+\tilde{\epsilon})}\nabla P_{\leq k_0+4} (\tilde{Q}+\tilde{\epsilon})\Big]\;dx\\
 =&\int_{\R^d}\phi(|x|)\frac{x}{|x|}\cdot \Im\Big[\overline{P_{\leq k_0+4}\tilde{Q}}\nabla P_{\leq k_0+4} \tilde{Q}\Big]\;dx\\
 &+\int_{\R^d}\phi(|x|)\frac{x}{|x|}\cdot \Im\Big[\overline{P_{\leq k_0+4}\tilde{\epsilon}}\nabla P_{\leq k_0+4}\tilde{Q}\Big]\;dx\\
 &+\int_{\R^d}\phi(|x|)\frac{x}{|x|}\cdot \Im\Big[\overline{P_{\leq k_0+4}\tilde{Q}}\nabla P_{\leq k_0+4} \tilde{\epsilon}\Big]\;dx\\
 &+\int_{\R^d}\phi(|x|)\frac{x}{|x|}\cdot \Im\Big[\overline{P_{\leq k_0+4}\tilde{\epsilon}}\nabla P_{\leq k_0+4} \tilde{\epsilon}\Big]\;dx\\
 \triangleq&M_1(t)+M_2(t)+M_3(t)+M_4(t).
\end{align*}

{\bf The estimate of the term $M_1(b)-M_1(a)$:}
By \eqref{equ:uepsQ}, $Q$ is real-valued and
\[
\Im\bigl(\overline{\widetilde Q}\nabla\widetilde Q\bigr)
=-\frac{\xi(t)}{\lambda(t)}|\widetilde Q|^2.
\]
Using \eqref{large1}, $x(a)=\xi(b)=0$, and the radiality of $Q$,
we obtain
\begin{align}\label{qqw1}
M_1(b)-M_1(a)
={}&-\left.\frac{1}{\lambda(t)^d}
\int\phi(|x|)\frac{x}{|x|}\cdot\frac{\xi(t)}{\lambda(t)}
Q\left(\frac{x-x(t)}{\lambda(t)}\right)^2\,dx\right|_a^b
\notag\\
&+O(R2^{-100k_0})=O(T^{-10}).
\end{align}

{\bf The estimate of the term $M_2(b)-M_2(a)$:} 
Let $\theta(x)=\phi(|x|)x/|x|$. Since
$\|\partial^\alpha\theta\|_{L^\infty}\lesssim_\alpha R^{1-|\alpha|}$,
Proposition~\ref{grondstate2} and Bernstein's inequality give
\begin{align}\label{subqa}
&\left|\int\overline{P_{\leq k_0+4}\widetilde\epsilon},
\theta\cdot\nabla P_{\leq k_0+4}\widetilde Q\,dx
-\int\overline{\widetilde\epsilon},
\theta\cdot\nabla\widetilde Q\,dx\right|\notag\\
&\quad\leq\|\widetilde\epsilon\|_{L_x^2}
\left(\|P_{\geq k_0+4}(\theta\cdot\nabla P_{\leq k_0+4}\widetilde Q)\|_{L_x^2}
+\|\theta\cdot\nabla P_{\geq k_0+4}\widetilde Q\|_{L_x^2}\right)\notag\\
&\quad\lesssim R2^{-100k_0}\|\epsilon(t)\|_{L_x^2}
\lesssim T^{-10}.
\end{align}
Writing $\epsilon=\epsilon_1+i\epsilon_2$ and using \eqref{equ:uepsQ},
we obtain
\begin{align}\label{f7.15}
M_2(b)-M_2(a)
={}&-\left.\int\theta(x)\cdot\frac{\xi(t)}{\lambda(t)}
\frac{\epsilon_1(t,y)Q(y)}{\lambda(t)^d}\,dx\right|_a^b
\notag\\
&-\left.\frac{1}{\lambda(t)}\int\theta(x)\cdot
\frac{\epsilon_2(t,y)\nabla Q(y)}{\lambda(t)^d}\,dx\right|_a^b
+O(T^{-10}),
\end{align}
where $y=(x-x(t))/\lambda(t)$.
On the other hand, since $\phi(|x|)=|x|$ for $|x|\leq 2R$ and by conditions \eqref{equ:assumpxixcond}, we have
$$ \left\|\frac{1}{\lambda(t)^{d/2}}|x|Q \left(\frac{x-x(t)}{\lambda(t)} \right) \right\|_{L_x^2(|x|\geq 2R)} \lesssim T^{-100},$$ 
then by H\"older's inequality and $x(a)=\xi(b)=0$,
\begin{align}\notag
&\left.\int\theta(x)\cdot\frac{\xi(t)}{\lambda(t)}
\frac{\epsilon_1(t,y)Q(y)}{\lambda(t)^d}\,dx\right|_a^b\\
&\quad=\left.\xi(t)\cdot\int x\epsilon_1(t,x)Q(x)\,dx\right|_a^b\notag\\
&\qquad+\left.\frac{x(t)\cdot\xi(t)}{\lambda(t)}
\int\epsilon_1(t,x)Q(x)\,dx\right|_a^b+O(T^{-10})\notag\\
&\quad=-\xi(a)\cdot\int x\epsilon_1(a,x)Q(x)\,dx
+O(T^{-10}).\label{subqa1}
\end{align}
Recalling that $ \left({\epsilon},i\partial_{x_j}Q\right)_{L^2_x}= 0$, $j=1\cdots,d$,
we also have
\begin{align}\notag
	& \frac{1}{\lambda(t)}\int \phi(|x|) \frac{x}{|x|}
\cdot \left[ \frac{1}{\lambda(t)^{d/2}} \epsilon_2 \left(t,\frac{x - x(t)}{\lambda(t)} \right)  \frac{1}{\lambda(t)^{d/2}} \nabla Q \left(\frac{x - x(t)}{\lambda(t)} \right)\right] \,{d}x \Bigg|_{a}^b \\\notag
	=&\int \epsilon_2(t,x) x\cdot\nabla Q(x) \, {d} x \Big|_a^b+ \frac{1}{\lambda(t)}\int \epsilon_2(t,x)x(t)\cdot \nabla Q(x) \, {d} x \Big|_a^b 
+O \left(T^{-10} \right) \\\label{subqa2}
	=& \int \epsilon_2(t,x) x\cdot\nabla Q(x) \, {d} x \Big|_a^b+O \left(T^{-10} \right).
\end{align}
Substituting \eqref{subqa}, \eqref{subqa1} and \eqref{subqa2} into \eqref{f7.15} and then using H\"older's inequality along with 
 \eqref{equ:assumpxixcond}, we deduce that
\begin{align}\label{f7.19}
	&M_2(b)-M_2(a)\\\notag
	=&\xi(a)\cdot \int \epsilon_1(a,x) xQ(x)\, {d}x- \int \epsilon_2(t,x) x\cdot\nabla Q(x) \, {d} x \Bigg|_a^b+O \left(T^{-10} \right) \\\notag
	=& -\left( \epsilon_2(t,x),x\cdot\nabla Q  \right)_{L_x^2} \Big|_a^b+O \left( \|\epsilon\|^2_{L_x^2}
+|\xi(a)|^2 \right)+O(T^{-10}) \\\notag
	=&-\left( \epsilon_2(t,x),x\cdot\nabla Q(x)  \right)_{L_x^2} \Big|_a^b
\\\notag
& \quad +O \left(\frac{2^{2k_0} T^{1/50}}{ T} \int_{J}  \left\| \epsilon(t)  \right\|_{L^2_{x}}^{2} \lambda(t)^{-2} \, {d} t
 +T^{1/50}\sup_{t \in J} \frac{|\xi(t)|^{2}}{\lambda(t)^{2}} + 2^{2k_0} T^{1/50} T^{-10} \right).
\end{align}

{\bf The estimate of the term $M_3(b)-M_3(a)$:}
Similarly, one can verify that
\begin{align}\label{f7.20}
	&M_3(b)-M_3(a)\\\notag
	=&-\left( \epsilon_2(t,x),x\cdot\nabla Q(x)+dQ  \right)_{L_x^2} \Big|_a^b\\ \notag 
    & \quad 
+O \left(\frac{2^{2k_0} T^{1/50}}{ T} \int_{J}
\|\epsilon(t)\|_{L^2_{x}}^{2} \lambda(t)^{-2} \, {d} t + T^{1/50} \sup_{t \in J} \frac{|\xi(t)|^{2}}{\lambda(t)^{2}} + 2^{2k_0} T^{1/50}T^{-10} \right). 
\end{align}

{\bf The estimate of the term $M_4(b)-M_4(a)$:} by Corollary \ref{c4.3} and \eqref{equ:assumpxixcond}, 
\begin{align}\label{f7.21}
	&|M_4(b)-M_4(a)|\\ \notag 
	\lesssim& \|\phi\|_{L^{\infty}}\sup_{t\in J}
\|\epsilon(t)\|_{L^2_x} \cdot\sup_{t\in J}
\left\|\nabla P_{\leq k_0+4}
\left( e^{-i(x-x(t)) \cdot \frac{\xi(t)}{\lambda(t)}} \frac{1}{\lambda(t)^{d/2}} \epsilon \left(t,\frac{x - x(t)}{\lambda(t)} \right) \right) \right\|_{L_x^2}\\\notag
	\lesssim& R \left(\frac{2^{2k_0} T^{1/50}}{ T} \int_{J}
\|\epsilon(t)\|_{L^2_{x}}^{2} \lambda(t)^{-2}  \, {d} t + T^{1/50} \sup_{t \in J} \frac{|\xi(t)|^{2}}{\lambda(t)^{2}} + 2^{2k_0}T^{1/50} T^{-10} \right)\\\notag
	\lesssim& \frac{2^{2k_0} T^{1/10}}{T} \int_{J} \| \epsilon(t)  \|_{L^2_{x}}^{2} \lambda(t)^{-2} \, {d} t
+ T^{1/10} \sup_{t \in J} \frac{|\xi(t)|^{2}}{\lambda(t)^{2}} + 2^{2k_0} T^{1/10}T^{-10}. 
\end{align}

Thus, by \eqref{qqw1}, \eqref{f7.19}, \eqref{f7.20} and \eqref{f7.21}, we obtain
\begin{align}\label{f7.22}
&	M(b) - M(a) \\\nonumber
= &  2 \big( \epsilon_2(a), x \cdot \nabla Q+\tfrac{d}{2}Q\big)_{L^2_{x}}
- 2 \big( \epsilon_2(b),  x \cdot \nabla Q+\tfrac{d}{2}Q \big)_{L^2_{x}} \\\nonumber
& + O \left(\frac{2^{2k_0} T^{1/10}}{T} \int_{J} \|\epsilon(t) \|_{L^2_{x} }^{2} \lambda(t)^{-2}  \, {d} t
+ T^{1/10} \sup_{t \in J} \frac{|\xi(t)|^{2}}{\lambda(t)^{2}} + 2^{2k_0} T^{1/10} T^{-10} \right).
\end{align}

{\bf Step 2: Estimation of  $\frac{ {d} }{ {d} t}M(t)$ and conclusion of the proof.}
 We will estimate the terms \eqref{estq3}-\eqref{estq4} separately. 
 
{\bf Estimate of  \eqref{estq4}.}
Noticing that, we can always further decompose  nonlinear term $\mathcal{N}$ as $\mathcal{N}_1+\mathcal{N}_2$,  where $\mathcal{N}_1$ contains at least two high-frequency factors (i.e. Fourier supported on $|\xi|\gtrsim 2^{k_0}$) while $\mathcal{N}_1$ itself has Fourier support on $|\xi|\lesssim 2^{k_0}$, and $\mathcal{N}_2$ contains one high-frequency factor  while $\mathcal{N}_2$ itself has Fourier support on $|\xi|\sim 2^{k_0}$.

For the first case, by applying the Hölder inequality and the Hardy-Littlewood-Sobolev inequality, and utilizing the long-time Strichartz estimates for the two high-frequency factors (Theorem \ref{longtime}), then similar to \eqref{bbv}, one can easily obtain that
\begin{align}\nonumber
\int_a^b\Big| \int_{\R^d} \phi(|x|)\frac{x}{|x|}\cdot \big\{\mathcal{N}_1,v\big\}_P\;dx\Big|\,dt
 \lesssim&  2^{k_0} R  \|P_{\gtrsim k_0}u\|^2_{U_{\Delta}^{2}(J)} \\ \label{f7.24}
 \lesssim& 2^{k_0} R  \left(\frac{1}{T} \int_{a}^{b}  \| \epsilon(t) \|_{L^2_{x}}^{2} \lambda(t)^{-2}  \, {d} t +  T^{-10} \right).
\end{align}

For the second case, set $\theta(x)=\phi(|x|)x/|x|$, with
$\theta(0)=0$. This vector field is smooth and satisfies
$\|\partial^\alpha\theta\|_{L^\infty}\lesssim_\alpha
R^{1-|\alpha|}$ for every multi-index $\alpha$.
Applying Bernstein's inequality to $\theta$ and using
$\|u\|^q_{L_t^qL_x^r([a,b]\times\mathbb R^d)}\lesssim T$
for admissible $(q,r)$ with $q<\infty$, we obtain

\begin{align}\label{10.10}
 & \int_a^b\Big| \int_{\R^d} \phi(|x|)\frac{x}{|x|}\cdot \big\{\mathcal{N}_2,v\big\}_P\;dx\Big|\,dt
  \lesssim  2^{-99k_0} \left\| P_{\geq k_0 + 3} |\nabla|^{100}\theta(x)  \right\|_{L^{\infty}_x }T
  \lesssim  {T^{-20}}.
\end{align}
Combining \eqref{f7.24} and \eqref{10.10}, we obtain 
\begin{align}\label{estq41}
	\int_a^b|\eqref{estq4}| \, {d} t
\lesssim 2^{k_0} R \left(\frac{1}{T} \int_{J}  \| \epsilon(t)  \|_{L^2_{x} }^{2} \lambda(t)^{-2}  \, {d} t+T^{-10} \right).
\end{align}

{\bf Estimate of \eqref{estq2}}:
Since $\vartheta$ is nonincreasing, the matrix
\[
\left(\frac{\phi(r)}{r}-\vartheta^2\left(\frac{r}{2R}\right)\right)
\left(\delta_{jk}-\frac{x_jx_k}{|x|^2}\right),\qquad r=|x|,
\]
is positive semidefinite. Therefore,
\begin{align}\label{estq21}
	\eqref{estq2}\geq 0.
\end{align}

{\bf Estimate of \eqref{estq3}}:
Noticing that 
$ \left\|\chi_{|x|\geq 2R}\frac{1}{\lambda(t)^{d/2}}Q \left(\frac{x-x(t)}{\lambda(t)} \right) \right\|_{L_x^2}\lesssim T^{-100}$, by \eqref{equ:assumpxixcond}, we have 
\begin{align}\label{f7.32}
	& \frac{1}{R^{2}} \int_{|x|\geq 2R} 
\left|P_{\leq k_0+4}  \left(e^{-i \gamma(t)} e^{-i(x-x(t)) \cdot \frac{\xi(t)}{\lambda(t)}} \frac{1}{\lambda(t)^{d/2}}Q \left(\frac{x - x(t)}{\lambda(t)} \right) \right) \right|^{2} \, {d} x \notag \\
  \lesssim&	\frac{1}{R^2T^{100}}\lesssim\frac{1}{\lambda(t)^2T^{10}}.
\end{align}
On the other hand, by \eqref{equ:assumpxixcond}, for  sufficiently large $T$, we have
\begin{align}\label{f7.33}
	& \frac{1}{R^{2}}\int  \left|P_{\leq k_0+4} \left(e^{-i \gamma(t)} e^{-i(x-x(t)) \cdot \frac{\xi(t)}{\lambda(t)}}\frac{1}{\lambda(t)^{d/2}} \epsilon \left(t, \frac{x - x(t)}{\lambda(t)} \right) \right)  \right|^{2} \, {d} x \notag \\
 \lesssim&  \frac{1}{R^2} \| \epsilon(t) \|_{L^2_{x} }^{2}
\lesssim \frac{1}{T^{1/25}\lambda(t)^2}  \|\epsilon(t)\|_{L^2_{x}}^{2}.
\end{align}
Therefore,  \eqref{f7.32} and \eqref{f7.33} imply that
\begin{align}\label{estq31}
	|\eqref{estq3}|\lesssim \frac{1}{T^{1/25}\lambda(t)^2} \|\epsilon(t) \|^2_{L^2_x}+O(\frac{1}{\lambda(t)^2T^{10}}).
\end{align}

{\bf Estimate of $\eqref{equ:Marterm3}+\eqref{equ:Marterm4}$:}
We claim that
\begin{align}\label{finalz}
&\int_a^b\eqref{equ:Marterm3}+\eqref{equ:Marterm4}\ dt
 \gtrsim \int_a^b\frac{\|\epsilon(t)\|^2_{L_x^2}}{\lambda(t)^2}\,dt
 -O\Big(\int_a^b\Big|\frac{\xi(t)}{\lambda(t)}\Big|^2
       \|\epsilon(t)\|^2_{L_x^2}\,dt\Big)-O(T^{-8}).
\end{align}
Next, we prove Theorem \ref{t7.13} by assuming that the above claim holds.
Indeed, if \eqref{finalz} holds, then by estimates \eqref{estq41}, \eqref{estq21} and \eqref{estq31} and the fundamental theorem of calculus, we see that for all large $T$, then
\begin{align} \int_{a}^b\frac{\|\epsilon(t)\|^2_{L_x^2}}{\lambda(t)^{2}}\, {d}t  \leq&   2 \big( \epsilon_2(a), x \cdot \nabla Q+\tfrac{d}{2}Q\big)_{L^2_{x}}
- 2 \big( \epsilon_2(b),  x \cdot \nabla Q+\tfrac{d}{2}Q \big)_{L^2_{x}}\notag \\\label{equ:2k0RT}
&  +O\Big(\frac{2^{k_0} R}{T} \int_a^b  \frac{\| \epsilon(t)  \|_{L^2_{x} }^{2}}{\lambda(t)^2}  \, {d} t+\frac{2^{2k_0} T^{1/10}}{T} \int_a^b \frac{\|\epsilon(t) \|_{L^2_{x} }^{2}}{ \lambda(t)^2}  \, {d} t\Big)\\\notag
&+O\Big(\frac{2^{k_0} R}{T^{10}}+ T^{1/10} \sup_{t \in J} \frac{|\xi(t)|^{2}}{\lambda(t)^{2}} + 2^{2k_0} T^{1/10} T^{-10}+T^{-8} \Big).
\end{align}
Since $R=T^{2/25}=2^{6k_0/25}$, we have
\[
\frac{2^{k_0}R}{T}=T^{-44/75},\qquad
\frac{2^{2k_0}T^{1/10}}{T}=T^{-7/30}.
\]
Thus the term \eqref{equ:2k0RT} can be absorbed in the left-hand side
for $T$ sufficiently large, which implies \eqref{f7.2}. Therefore, we complete the proof of Theorem \ref{t7.13} provided that the claim \eqref{finalz} holds.

{\bf Step 3: proof of the claim \eqref{finalz}.}     
Set $\vartheta_R(x)=\vartheta(|x|/R)$ and define the symmetric remainder kernel
\[
 B_R(x,y)=-\frac12\bigl(\nabla a(x)-\nabla a(y)\bigr)
             \cdot\nabla_x|x-y|^{-2}
          -\vartheta_R(x)^2\vartheta_R(y)^2|x-y|^{-2}.
\]
Since $\nabla a(x)=x$ for $|x|\leq2R$ and $\nabla a$ is
uniformly Lipschitz, we have
\[
 |B_R(x,y)|\lesssim |x-y|^{-2}
      \mathbf{1}_{\{\max(|x|,|y|)\geq R\}}.
\]
The nonlinear virial contribution therefore admits the exact decomposition
\begin{align}
&\int_{\R^d}\nabla a(x)\cdot\{F(v),v\}_P\,dx\notag\\
=&-\frac12\iint_{\R^{2d}}
 \bigl(\nabla a(x)-\nabla a(y)\bigr)\cdot\nabla_x|x-y|^{-2}
 |v(x)|^2|v(y)|^2\,dxdy\notag\\
=&\iint_{\R^{2d}}|x-y|^{-2}
 |\vartheta_R(x)v(x)|^2|\vartheta_R(y)v(y)|^2\,dxdy\label{vvc1}\\
&+\iint_{\R^{2d}}B_R(x,y)|v(x)|^2|v(y)|^2\,dxdy.
 \label{vvc2}
\end{align}
We can further decompose \eqref{vvc2} as
\[
\eqref{vvc2}=\mathcal{A}_{0,1}+\mathcal{A}_{0,2}+\mathcal{A}_{0,3},
\]
where the term $\mathcal{A}_{0,1}$ contains the terms in which at least one ground-state factor is evaluated in $\{|x|\geq R\}$, $\mathcal{A}_{0,3}$ contains the terms that have at least three $\epsilon$ terms, and $\mathcal{A}_{0,2}$ contains the other terms in this decomposition. Noticing that by the Pigeonhole Principle, the term $\mathcal{A}_{0,2}$ must have the following form 
\[
c\iint_{|x|\leq R, |y|\geq R} B_R(x,y)|P_{\leq k_0+4}\tilde{Q}(x)|^2|P_{\leq k_0+4}\tilde{\epsilon}(y)|^2\,dxdy
\]
for some constant $c$. Using the bound on $B_R$ and Hardy's inequality, we obtain
\begin{align}
    |\mathcal{A}_{0,2}|\lesssim& \iint_{\frac{R}{4}\leq|x|\leq R, \ |y|\geq R} |x-y|^{-2}|P_{\leq k_0+4}\tilde{Q}(x)|^2|P_{\leq k_0+4}\tilde{\epsilon}(y)|^2\ dxdy\notag\\
    &+\iint_{|x|\leq \frac{R}{4}, \ |y|\geq R} |x-y|^{-2}|P_{\leq k_0+4}\tilde{Q}(x)|^2|P_{\leq k_0+4}\tilde{\epsilon}(y)|^2\ dxdy\notag\\
    \lesssim& \left(\frac{1}{\lambda(t)^2T^{10}}+\frac{1}{R^2}\right)
\|\epsilon(t)\|_{L_x^2}^2\notag\\
    \lesssim& \frac{\|\epsilon(t)\|^2_{L_x^2}}{\lambda(t)^2T^{10}}
+\frac{1}{\lambda(t)^2R}\|\epsilon(t)\|^2_{L_x^2}.
\end{align}
By \eqref{equ:assumpxixcond}, $R/\lambda(t)\geq T^{7/100}$
and $|x(t)|\leq R^{1/2}$. Proposition~\ref{grondstate2} and
Bernstein's inequality give
\begin{equation}\label{virial:Q-tail}
\begin{aligned}
&\|\nabla P_{\leq k_0+4}\widetilde Q\|_{L_x^2(|x|\geq R/8)}
+R^{-1}\|P_{\leq k_0+4}\widetilde Q\|_{L_x^2(|x|\geq R/8)}\\
&\qquad\lesssim\frac{1}{\lambda(t)T^{100}}.
\end{aligned}
\end{equation}
The preceding estimate follows by applying Hardy's inequality to
a smooth spatial cutoff and using \eqref{virial:Q-tail}.
Similarly, \eqref{virial:Q-tail} yields
\begin{align}
    |\mathcal{A}_{0,1}|
&\lesssim \Bigl(\|\nabla P_{\leq k_0+4}\widetilde Q\|_{L_x^2(|x|\geq R/2)}
+R^{-1}\|P_{\leq k_0+4}\widetilde Q\|_{L_x^2(|x|\geq R/2)}\Bigr)\notag\\
&\qquad\times\Bigl(\|\nabla P_{\leq k_0+4}\widetilde Q\|_{L_x^2}
+\|\nabla P_{\leq k_0+4}\widetilde\epsilon\|_{L_x^2}\Bigr)\notag\\
&\lesssim \frac{2^{k_0}}{\lambda(t)T^{100}}
\lesssim\frac{1}{\lambda(t)^2T^{20}}.
\end{align}
Finally, for the term $\mathcal{A}_{0,3}$, we just use Hardy-Littlewood-Sobolev's inequality to deduce that
\begin{align}\label{qrr}
   \int_a^b |\mathcal{A}_{0,3}|\, dt&\lesssim \int_a^b\Big(\Big\|\frac{1}{\lambda(t)^{d/2}}\epsilon\big(\frac{x-x(t)}{\lambda(t)}\big)\Big\|^2_{L_x^{\frac{2d}{d-2}}}\|\epsilon\|_{L_x^2}\big(\|\epsilon\|_{L_x^2}+\|Q\|_{L^2}\big)\Big)\, dt\notag\\
    &\lesssim \eta_{\ast}\int_a^b\frac{1}{\lambda(t)^2}\|\epsilon(t)\|^2_{L_x^2}\, dt,
\end{align}
where the last inequality comes from \eqref{estepsilonnorm}. In view of these estimates, we can use the assumptions \eqref{equ:assumpxixcond}(which means that $\lambda(t)^2\ll R$) to deduce that
\begin{align}\notag
\int_a^b\eqref{equ:Marterm3}+\eqref{equ:Marterm4}\, dt&=2\int_a^b\int_{\R^d} \vartheta^{2} \left(\frac{|x|}{2R} \right)
\left|\nabla v\right|^{2} \, dxdt\\\notag
&-\int_a^b\int_{\R^{2d}} |x-y|^{-2}\big|\vartheta(\frac{|x|}{R})v(x)\big|^2\big|\vartheta(\frac{|y|}{R})v(y)\big|^2 \ dxdydt\\
&+O\Big((\eta_{\ast}+\frac{1}{R})\int_a^b\frac{1}{\lambda(t)^2}\|\epsilon(t)\|^2_{L_x^2}\, dt\Big)+O(\frac{1}{T^{8}}).
\end{align}
Therefore, it only remains to estimate the localized energy part
\begin{align}\label{equ:locenerg}
    &2\int_a^b\int_{\R^d} \vartheta^{2} \big(\frac{|x|}{2R} \big)
|\nabla v|^{2} \, dxdt
-\int_a^b\int_{\R^{2d}} |x-y|^{-2}\big|\vartheta(\frac{|x|}{R})v(x)\big|^2\big|\vartheta(\frac{|y|}{R})v(y)\big|^2 \, dxdydt.
\end{align}
Obviously,
\begin{align}\notag
\eqref{equ:locenerg}
    \geq&2\int_a^b\int_{\R^d} \vartheta^{2} \left(\frac{|x|}{R} \right)
\left|\nabla v\right|^{2} \, dxdt\notag\\
&-\int_a^b\int_{\R^{2d}} |x-y|^{-2}\big|\vartheta(\frac{|x|}{R})v(x)\big|^2\big|\vartheta(\frac{|y|}{R})v(y)\big|^2 \ dxdydt\label{pppo}
\end{align}
As before, we can rewrite  \eqref{pppo} as a sum of terms involving $Q$ and $\epsilon$, and then estimate them separately.
For the terms that contain at least three $Q$ in the potential energy or at least one $Q$ in the kinetic energy, we denote them by $\mathcal{A}_1$. Then there must be one $Q$ outside the convolution structure. Hence, by using the smoothness of $Q$, one can easily check that
\begin{align}\label{zzm0}
    \mathcal{A}_1=  \int_a^b\Big(\frac{2|\xi(t)|^{2}}{\lambda(t)^{2}} \| Q \|_{L^2_{x}}^{2} + \frac{2}{ \lambda(t)^{2}}
\|\epsilon(t)\|_{L^2_{x}}^{2} - \frac{2|\xi(t)|^{2}}{\lambda(t)^{2}}  \| \epsilon(t)  \|_{L^2_{x}}^{2} + O \left(2^{2k_0} T^{-10} \right)\Big)\, dt. 
\end{align}
Next, we consider the terms involving at least three $\epsilon$'s, which are denoted as $\mathcal{A}_3$. In this case, similar to the proof of \eqref{qrr}, we can only use the Hardy-Littlewood-Sobolev inequality and H\"older's inequality to obtain 
\begin{align}
    |\mathcal{A}_3|&\lesssim \int_a^b\Big(\Big\|\frac{1}{\lambda(t)^{d/2}}\epsilon\big(\frac{x-x(t)}{\lambda(t)}\big)\Big\|^2_{L_x^{\frac{2d}{d-2}}}\|\epsilon\|_{L_x^2}\big(\|\epsilon\|_{L_x^2}+\|Q\|_{L^2}\big)\Big)\, dt\notag\\
    &\lesssim \eta_{\ast}\int_a^b\frac{1}{\lambda(t)^2}\|\epsilon(t)\|^2_{L_x^2}\, dt.
\end{align}

Finally, we consider the terms involving two $\epsilon$'s, which are denoted as $\mathcal{A}_2$. On one side, by the product rule and Young's inequality, 
we have
\begin{align*}
&\int\vartheta\left(\frac{|x|}{R}\right)^2
|\nabla P_{\leq k_0+4}\widetilde\epsilon|^2\,dx\\
&\quad\geq(1-\delta_0)
\left\|\vartheta\left(\frac{|x|}{R}\right)
P_{\leq k_0+4}\widetilde\epsilon\right\|_{\dot H_x^1}^2
-C\delta_0^{-1}R^{-2}\|\epsilon(t)\|_{L_x^2}^2.
\end{align*}
On the other hand, \eqref{equ:assumpxixcond} and the decay of the
orthogonality functions give
\[
\left|\left(\vartheta\left(\frac{|\lambda(t)x+x(t)|}{R}\right)
\epsilon(t),f\right)_{L_x^2}\right|\lesssim T^{-20},
\]
for every $f\in\{\chi,i\chi,\partial_jQ,i\partial_jQ:1\leq j\leq d\}$.
Applying the coercivity estimate \eqref{midest2} and the estimates leading to \eqref{cccc0} to the localized remainder, we obtain 
\begin{align}\notag
\mathcal{A}_2\geq&
\int_a^b\Big[\left(\frac{2\lambda_{1}}{\lambda(t)^{2}} -\frac{2}{\lambda(t)^{2}} \right)
\| \epsilon(t)\|_{L^2_{x}}^{2}\\
&+ \lambda_{1} \Big\|\vartheta(\frac{|x|}{R})\nabla P_{\leq k_0+4} \left( e^{-i \gamma(t)} e^{-i(x-x(t))\cdot \frac{\xi(t)}{\lambda(t)}} \frac{1}{\lambda(t)^{d/2}} \epsilon\left(t, \frac{x - x(t)}{\lambda(t)} \right) \right)\Big\|^2_{L_x^2}\notag \\\notag
&-\delta_0\left\| \vartheta \left(\frac{|x|}{R} \right) P_{\leq k_0+4} \left(e^{-i \gamma(t)} e^{-i(x-x(t))\cdot \frac{\xi(t)}{\lambda(t)}} \frac{1}{\lambda(t)^{d/2}} \epsilon
\left(t, \frac{x - x(t)}{\lambda(t)} \right) \right)  \right\|_{\dot{H}_x^{1}}^2\\\label{zzm2}
&-\delta_0^{-1}O \left(\frac{1}{R ^2}  \|\epsilon(t)\|_{L^2_{x}}^{2} \right) -O(|\frac{\xi(t)}{\lambda(t)}|^2\|\epsilon(t)\|^2_{L_x^2})\Big]\, dt- O \left(2^{2k_0} T^{-10} \right)
\end{align}
with a constant $0<\lambda_1\ll \frac{1}{2}$. 

Collecting estimates \eqref{zzm0}-\eqref{zzm2}, if we take $\delta_0$ and $\eta_{\ast}$ sufficiently small, then the claim \eqref{finalz} holds and we can conclude the proof of this theorem.
\end{proof}
Using H\"older's inequality and \eqref{f7.2}, we get
\begin{equation*}
\int_{a}^{b} \frac{\| \epsilon(t)\|_{L^2_{x}}^2}{ \lambda(t)^2} \, {d} t
		\lesssim \|\epsilon(a)\|_{L^2_x}+\|\epsilon(b)\|_{L^2_x}
		+ T^{1/10} \sup_{t \in J} \frac{|\xi(t)|^{2}}{\lambda(t)^{2}} + O \left(T^{-8} \right).
\end{equation*}
Notice that this inequality is scaling-invariant, using the change of variables \eqref{f4.23}  and then using the modulation theory, we have the following theorem:
\begin{theorem}\label{t7.14}
	Let $u$ be the solution as in Theorem \ref{t2.3}. Suppose $J = [a, b]$ is an interval on which
	\begin{equation}\label{equ:assumlamda}
		\frac{\sup_{t\in J}\lambda(t)}{\inf_{t\in J}\lambda(t)}\leq T^{1/100} \text{ and }  \int_{J} \lambda(t)^{-2}  \, {d} t = T.
	\end{equation}
	Also, suppose $\epsilon = \epsilon_1+ i\epsilon_2$, and let $[s_1,s_2]=s([a,b])$, with $s(t)$  defined in \eqref{f4.23}. Then there exist sufficiently small $\tilde{\eta}$ and large $\tilde{T}$, 
so that for any $\eta<\tilde{\eta}$ and $T\geq\tilde{T}$ satisfying 
\begin{equation}\label{equ:assepsl2nor}
    \sup_{t\in[a,b]} \|\epsilon(t) \|_{L^2_x}:=\eta\leq \tilde{\eta},\quad \eta T\ll T^{1/100}, \quad \eta^2 T\ll 1,
\end{equation} 
we have
\begin{equation}\label{equ:intepsilonest}
		\int_{s_1}^{s_2}  \| \epsilon(s)\|_{L^2_{x}}^{2}  \, {d} s
 \lesssim  \| \epsilon(s_1)  \|_{L^2_{x}} +  \|\epsilon(s_2) \|_{L^2_{x} }
 + \eta^{4} T^{2+1/10}  + O \left(
 {T^{-8}} \right).
	\end{equation}
\end{theorem}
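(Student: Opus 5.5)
We reduce Theorem~\ref{t7.14} to Theorem~\ref{t7.13} by normalizing the symmetries on $J$ and then controlling the error terms with the modulation equations.

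\textbf{Step 1 (normalization).} By scaling, rescale time so that $\inf_{J}\lambda(t)=1$. Then \eqref{equ:assumlamda} gives $1\leq\lambda(t)\leq T^{1/100}$. The quantity $\int_J\lambda^{-2}\,dt=T$ is preserved, and in the variable $s$ of \eqref{f4.23} the integral $\int_{s_1}^{s_2}\|\epsilon(s)\|_{L^2}^2\,ds$ equals $\int_a^b\lambda(t)^{-2}\|\epsilon(t)\|_{L^2}^2\,dt$. Next, translate in space so that $x(a)=0$. Apply a Galilean boost with the frequency $\xi(b)$ so that $\xi(b)=0$; this changes $\epsilon$ only by the induced rotation and leaves $\|\epsilon\|_{L^2}$ invariant.

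\textbf{Step 2 (checking \eqref{equ:assumpxixcond}).} Use the modulation equations from Appendix~\ref{sec-mod}, which should be \eqref{f4.45}, \eqref{f4.53}--\eqref{f4.55}. In the $s$-variable they give
\[
\Big|\frac{\xi_s}{\lambda}\Big|+\Big|\frac{x_s}{\lambda}\Big|-\text{(Galilean drift)}+\Big|\frac{\lambda_s}{\lambda}\Big|\lesssim\|\epsilon(s)\|_{L^2}^2 \quad\text{or}\quad \lesssim\|\epsilon(s)\|_{L^2}.
\]
The expected leading behavior is $|\frac{d}{ds}\xi|\lesssim\lambda\|\epsilon\|^2$, coming from the momentum being quadratic, since $\xi$ is fixed by conserved momentum up to $O(\|\epsilon\|^2)$. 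Integrating over $[s_1,s_2]$, which has length $T$, gives
\[
\sup_J\frac{|\xi(t)|}{\lambda(t)}\lesssim \eta^2 T\ll1,
\]
so $\eta_0$ is small by \eqref{equ:assepsl2nor}. For the spatial center, $|x_s|\lesssim\lambda(|\xi|/\lambda+\|\epsilon\|)$. This gives $\sup|x(t)|\lesssim T^{1/100}\cdot\eta T\cdot T^{1/100}$, and we want this to be $\leq T^{1/25}$; that is exactly what the hypothesis $\eta T\ll T^{1/100}$ is designed for. Finally, $\int\lambda^{-2}\|\epsilon\|^2\,dt\leq\eta^2T\ll1=\eta_1$. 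It remains to arrange $T=2^{3k_0}$, either by choosing $k_0$ appropriately or by shrinking $J$ slightly and absorbing the difference.

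\textbf{Step 3 (apply \eqref{f7.2}).} With the hypotheses verified, \eqref{f7.2} bounds the left side by the boundary terms plus $T^{1/10}\sup|\xi|^2/\lambda^2+O(T^{-8})$. By Cauchy--Schwarz and the decay of $Q$, the boundary pairings are bounded by $\|\epsilon(a)\|_{L^2}+\|\epsilon(b)\|_{L^2}$. Inserting $\sup|\xi|/\lambda\lesssim\eta^2T$ gives the error term $\eta^4T^{2}\cdot T^{1/10}$, matching \eqref{equ:intepsilonest}. Undoing the symmetries does not change any of the quantities involved.

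\textbf{Main obstacle.} The main difficulty is Step 2, specifically the precise form of the modulation ODE for $\xi$. The bound $|\xi_s|\lesssim\lambda\|\epsilon\|^2$ must be genuinely quadratic; a linear bound would give only $\eta T$, which is not small. I would first derive this from conservation of momentum together with the orthogonality conditions, showing that $\xi(t)\|Q\|^2/\lambda$ equals the momentum up to $O(\|\epsilon\|^2/\lambda)$. A second point is the bookkeeping relating $x(t)$ in the original and the boosted frames.
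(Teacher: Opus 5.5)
Your proposal is correct and follows the paper's proof. The paper likewise rescales so that $\min_J\lambda=1$, then uses a Galilean boost and a translation to get $\xi(b)=x(a)=0$. It reads off $\sup_J|\xi|/\lambda\lesssim\eta^2T$ and $\sup_J|x|\lesssim T^{1/50}\eta^2T^2+T^{1/100}\eta T\ll T^{1/25}$ from \eqref{f4.54}--\eqref{f4.55}, applies Theorem~\ref{t7.13}, and bounds the boundary pairings by Cauchy--Schwarz.

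One thing to tidy. In integrated form, \eqref{f4.54} already says that $\frac{d}{ds}(\xi/\lambda)$ is bounded by $\|\epsilon\|_{L^2}^2/\lambda$. So the quadratic control you flag as the main obstacle is available directly, and your heuristic $|\xi_s|\lesssim\lambda\|\epsilon\|^2$ is mis-scaled. The momentum-conservation derivation you suggest is neither needed nor available at $L^2$ regularity.
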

\begin{proof}
Let $\eta_0,\eta_1$ be the constants in Theorem~\ref{t7.13}, and set
\[
\widetilde\lambda=\min_{t\in J}\lambda(t),\qquad
\alpha=\widetilde\lambda^{-2}a,\qquad
\beta=\widetilde\lambda^{-2}b,\qquad
\lambda_1(t)=\widetilde\lambda^{-1}
                 \lambda(\widetilde\lambda^2t).
\]
The rescaled function
\[
v_{\widetilde\lambda}(t,x)
  =\widetilde\lambda^{d/2}u(\widetilde\lambda^2t,
                            \widetilde\lambda x)
\]
is a solution of \eqref{1.1}. Apply a Galilean transformation and a
spatial translation to this solution, and continue to denote the
result by $v_{\widetilde\lambda}$. We choose these transformations so
that its modulation parameters satisfy
$\xi_1(\beta)=0$ and $x_1(\alpha)=0$. The modulation decomposition is
\[
e^{i\gamma_1(t)}e^{ix\cdot\xi_1(t)}\lambda_1(t)^{d/2}
v_{\widetilde\lambda}\bigl(t,\lambda_1(t)x+x_1(t)\bigr)
 =Q(x)+\widetilde\epsilon(t,x),\qquad t\in[\alpha,\beta],
\]
where $\widetilde\epsilon(t,x)=\epsilon(\widetilde\lambda^2t,x)$.
In particular,
\[
\|\widetilde\epsilon(t)\|_{L_x^2}
 =\|\epsilon(\widetilde\lambda^2t)\|_{L_x^2},
\]
and $\widetilde\epsilon$ satisfies \eqref{orthod}. By
\eqref{equ:assepsl2nor}, we have
	\begin{align*} 
	\int_{\tilde{\lambda}^{-2}a}^{\tilde{\lambda}^{-2}b}  \left\| \tilde{\epsilon}(t)  \right\|_{L^2_{x}}^{2}\lambda_1(t)^{-2}\, {d} t\leq\eta^2T\ll1,\\
1\leq\lambda_1(t)\leq T^{1/100},\quad \forall  \, t\in \left[\tilde{\lambda}^{-2}a,\tilde{\lambda}^{-2}b \right].
	\end{align*}
 Let $\tilde{s}(t):=\int_0^t \frac{\tilde{\lambda}^2}{\lambda \left(\tilde{\lambda}^2\tau \right)^2} \, {d} \tau, \
 \left[s_1',s_2' \right]:=\tilde{s} \left( \left[\tilde{\lambda}^{-2}a,\tilde{\lambda}^{-2}b \right] \right)=s([a,b])$. 
 By \eqref{f4.54} and \eqref{f4.55}, we have 
	\begin{equation*}
		\sup_{t \in  \left[\tilde{\lambda}^{-2}a,\tilde{\lambda}^{-2}b \right]}  \left|\frac{\xi_1(t)}{\lambda_1(t)} \right|
=\sup_{\tilde{s} \in \left[s_1', s_2' \right]}
\left|\frac{\xi_1 \left(\tilde{s} \right)}{\lambda_1 \left(\tilde{s} \right)} \right|
\lesssim  \eta^2T  \ll \eta_{0},
	\end{equation*}
    and 
	\begin{equation*}
		\sup_{t \in \left[\tilde{\lambda}^{-2}a,\tilde{\lambda}^{-2}b \right]}|x_1(t)|
= \sup_{\tilde{s} \in  \left[s_1', s_2' \right]} \left|x_1 \left(\tilde{s} \right) \right|
\lesssim  T^{\frac{1}{50}} \eta^2T^2 + T^{\frac{1}{100}} \eta T \ll T^{\frac{1}{25}}.
	\end{equation*}
Therefore, Theorem \ref{t7.13} can be applied  on $ \left[\tilde{\lambda}^{-2}a,\tilde{\lambda}^{-2}b \right]$, yielding 
	\begin{equation}\label{11.12}
		\int_{\tilde{\lambda}^{-2}a}^{\tilde{\lambda}^{-2}b}  \left\| \tilde{\epsilon}(t)  \right\|_{L^2_{x}}^{2}
        {\lambda_1(t)^{-2}  } \, {d} t
\lesssim \| \tilde{\epsilon} (\tilde{\lambda}^{-2}a )  \|_{L^2_{x} }
+  \| \tilde{\epsilon} (\tilde{\lambda}^{-2}b )  \|_{L^2_{x} }+  \eta^{4} T^{2+1/10}+ O \left({T^{-8}} \right).
	\end{equation}
Changing variables, \eqref{11.12} implies 
	\begin{equation*}
		\int_{s_1}^{s_2}  \left\| \epsilon(s)  \right\|_{L^2_{x}}^{2}  \, {d} s
\lesssim  \| \epsilon(s_1)  \|_{L^2_{x}} +  \|\epsilon(s_2) \|_{L^2_{x} } + \eta^{4} T^{2+1/10}
 + O \left( {T^{-8}} \right).
	\end{equation*}
Therefore, we conclude the proof of Theorem \ref{t7.14}. 
\end{proof}

\section{Proof of Theorem \ref{t2.3}}\label{Sec:Thm2.3}

As stated in Section \ref{reductionsec}, we have reduced the proof of main Theorem \ref{main1} to a rigidity result, i.e.  Theorem  \ref{t2.3}, which will be shown in this section. We easily see that the scale parameter $\lambda(t)$ is constant for solitons and monotonically decreasing to zero for forward finite-time blowup solutions obtained by pseudo-conformal transformation of solitons. In the first subsection, we will show that this scale parameter is \emph{almost stable} around the orbit of the ground state $Q$, that is, under the assumptions of Theorem \ref{t2.3},  the parameter $\lambda(t)$ is \emph{almost monotonically decreasing}, i.e.  $\lambda(t)\sim \inf_{t'\in[0,t]}\lambda(t')$. In the second subsection, we will utilize this property to show Theorem \ref{t2.3}. For the infinite time blowup case, we use the almost monotonicity decreasing property of $\lambda(t)$ and the modulated virial inequality  derived in Section \ref{Sec:viria} to show that there exists a time sequence $t_n$ such that $\|\epsilon(t_n)\|_{L^2_x }$ decays rapidly in some sense. Combining this decay with the energy estimates in Section \ref{sec-en}, we show that the remainder $\epsilon$ vanishes. For the finite time blowup case, we apply the  pseudo-conformal transformation to $u$ to construct an infinite time blowup  solution $v$ and then prove that such $v$ must be the soliton.
\subsection{Almost monotonicity of the scaling parameter \texorpdfstring{$\lambda(t)$}{lambda(t)}}
First, we utilize Theorem \ref{t7.13} and  Theorem \ref{t7.14} to show a regularity property of the remainder term $\epsilon$. Especially, we prove that $ \left\| \epsilon(s) \right\|_{L^2_{x} }$ lies in $L_{s}^{p}$ for any $p>1$. We first show the case that $p=2$ as follows:
\begin{theorem}\label{t8.15}
Let $u$ be the solution to \eqref{1.1} satisfying $M(u_0)=M(Q)$, and suppose that
\begin{equation}\label{f8.1}
 \left\| \epsilon(0)  \right\|_{L^2_{x} } = \eta_{\ast},\quad \sup_{s\in [0,\infty)} \left\| \epsilon(s)  \right\|_{L^2_{x} }\leq \eta_{\ast}.
\end{equation}
Then for sufficiently small $\eta_{\ast}$,
\begin{equation}\label{f8.2}
\int_{0}^{\infty}  \left\| \epsilon(s)  \right\|_{L^2_{x} }^{2}  \, {d} s \lesssim \eta_{\ast},
\end{equation}
where the implicit constant is independent of $\eta_{\ast}$.

Moreover, for any $j \in \mathbb{N}$, we define
\begin{equation}\label{f8.3}
s_{j} = \inf \left\{ s \in [0, \infty) :  \left\| \epsilon(s)  \right\|_{L^2_{x} } = 2^{-j} \eta_{\ast}  \right\}.
\end{equation}
By definition, $s_{0} = 0$, and by Proposition \ref{sequ}, $s_{j}$ exists for any $j > 0$. Then for each $j \in \mathbb{N}$, we have
\begin{equation}\label{f8.4}
\int_{s_{j}}^{\infty}  \left\| \epsilon(s)  \right\|_{L^2_{x} }^{2}  \, {d} s \lesssim 2^{-j} \eta_{\ast},
\end{equation}
where the  implicit constant is independent of $j$ and $\eta_{\ast}$.
\end{theorem}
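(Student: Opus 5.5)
Following Dodson's scheme, the plan is to cover $[0,\infty)$ in the $s$ variable by intervals to which Theorem~\ref{t7.14} applies, and to sum the resulting estimates. Two facts from the modulation analysis in Appendix~\ref{sec-mod} are taken for granted: $|\lambda_s/\lambda|\lesssim\|\epsilon(s)\|_{L^2}$, and $\|\epsilon\|_{L^2}$ varies by at most a fixed factor on $s$-intervals of unit length. Here $ds=\lambda^{-2}dt$, so $\int_J\lambda^{-2}dt$ is exactly the $s$-length of $J$.

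\emph{Step 1 (local estimate).} Fix a large $T$ and choose $\eta_\ast$ with $\eta_\ast^2T\ll1$ and $\eta_\ast T\ll T^{1/100}$. On any $s$-interval $[s_1,s_1+T]$ we have
\[
\Bigl|\log\frac{\lambda(s)}{\lambda(s')}\Bigr|\lesssim\int_{s_1}^{s_1+T}\|\epsilon\|_{L^2}\,ds\lesssim\eta_\ast T\ll\log T^{1/100}.
\]
Hence \eqref{equ:assumlamda} holds, and Theorem~\ref{t7.14} gives
\[
\int_{s_1}^{s_1+T}\|\epsilon\|^2\,ds\lesssim\|\epsilon(s_1)\|+\|\epsilon(s_1+T)\|+\eta^4T^{2+1/10}+T^{-8},
\]
with $\eta=\sup_{[s_1,s_1+T]}\|\epsilon\|$.

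\emph{Step 2 (gluing).} The difficulty is that summing these bounds over consecutive intervals makes the boundary terms $\|\epsilon(s_k)\|$ add up linearly, which is not summable. I would instead use the sharper form \eqref{f7.2}, in which the boundary terms are the linear functionals $(\epsilon_2(s),x\cdot\nabla Q+\frac d2Q)$. After undoing the symmetries, these functionals evaluated at the common endpoints of adjacent intervals telescope up to errors, because the normalizations $\xi(b)=x(a)=0$ differ only by Galilean and translation changes. Those errors are controlled by $|\xi/\lambda|^2+\|\epsilon\|^2$, and the modulation equations give $|\xi/\lambda|\lesssim\int\|\epsilon\|^2ds$ over the interval. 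Summing then yields
\[
\int_0^{S}\|\epsilon\|^2\,ds\lesssim\eta_\ast+\bigl(T^{1/10}\eta_\ast^2+T^{-8}\bigr)\frac{S}{T}+\text{(absorbable terms)}.
\]
The factor $S/T$ is not uniform in $S$, so the gluing has to be done adaptively. I would take the interval lengths $T_k$ to vary, with $T_k^{-8}$ and $T_k^{1/10}\eta_k^4T_k^2$ both dominated by $\int_{I_k}\|\epsilon\|^2$. Equivalently, I would choose the intervals so that $\eta_k^2T_k\sim$ a small constant. The error terms are then a small fraction of the left side and can be absorbed. If needed, this is combined with a bootstrap on $\int_0^S\|\epsilon\|^2\le C\eta_\ast$, where continuity in $S$ closes the argument.

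\emph{Step 3 (dyadic statement \eqref{f8.4}).} Restart the argument at $s_j$. Since $\|\epsilon(s_j)\|=2^{-j}\eta_\ast$ and $\|\epsilon(s)\|\le\eta_\ast$ afterwards, running Step 2 on $[s_j,\infty)$ gives the bound $2^{-j}\eta_\ast$. The only new input needed is that the boundary terms on intervals starting in $[s_j,\infty)$ are $O(2^{-j}\eta_\ast)$. I would get this by invoking \eqref{f8.2} itself on $[s_j,\infty)$, after rescaling so that $s_j$ plays the role of $0$, possibly iterating the argument.

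The main obstacle I expect is Step 2: controlling the endpoint terms so that they telescope rather than accumulate, while keeping the hypotheses of Theorem~\ref{t7.13} (namely $|x(t)|\le T^{1/25}$ and $|\xi|/\lambda\le\eta_0$) valid on every interval. Both requirements depend on the modulation parameters staying controlled over long $s$-intervals.
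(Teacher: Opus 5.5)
Your route differs from the paper's and can be made to work. However, the interval selection you commit to in Step~2 fails, and the restart in Step~3 is not legitimate.

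\textbf{Step 2.} What is right: the functionals $\ell(s):=(\epsilon_2(s),x\cdot\nabla Q+\tfrac d2Q)_{L^2_x}$ in \eqref{f7.2} telescope \emph{exactly}, not up to errors. This is because $\epsilon$ is unchanged by the scaling, translation and Galilean renormalizations (cf.\ $\widetilde\epsilon(t)=\epsilon(\widetilde\lambda^2t)$ in the proof of Theorem~\ref{t7.14}), and the same constant $C_d$ multiplies both endpoint terms.

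What fails is the choice $\eta_k^2T_k\sim c$. It gives $\eta_kT_k=c/\eta_k$, which violates $\eta T\ll T^{1/100}$ in \eqref{equ:assepsl2nor}. Then \eqref{f4.45} only yields $\sup_{I_k}\lambda/\inf_{I_k}\lambda\le e^{Cc/\eta_k}\gg T_k^{1/100}$, and the bound $|x(t)|\le T_k^{1/25}$ is lost. So neither Theorem~\ref{t7.13} nor Theorem~\ref{t7.14} applies on $I_k$. The choice is also not ``equivalent'' to your domination requirement: $T_k^{1/10}\eta_k^4T_k^2=c^2T_k^{1/10}$, which exceeds $A_k:=\int_{I_k}\|\epsilon\|_{L^2_x}^2ds\le\eta_k^2T_k=c$.

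The right normalization is $\eta_kT_k=c_0$. Choose $a_{k+1}$ by continuity of $T\mapsto T\sup_{[a_k,a_k+T]}\|\epsilon\|_{L^2_x}$, which is strictly increasing and unbounded since $\|\epsilon\|_{L^2_x}$ never vanishes unless $u$ is a soliton. Then:
\begin{itemize}
\item $\int_{I_k}\|\epsilon\|_{L^2_x}ds\le c_0$ controls $\lambda$ and $x(t)$, and $T_k\ge c_0/\eta_\ast$ is large.
\item With $\xi(a_{k+1})=0$, \eqref{f4.54} gives $\sup_{I_k}|\xi/\lambda|\lesssim A_k$, so the $\xi$-error is $\lesssim T_k^{1/10}A_k^2\le T_k^{11/10}\eta_k^2A_k\lesssim\eta_k^{9/10}A_k$.
\item The step you leave implicit is absorbing the $O(T_k^{-8})$ term. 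Here $T_k^{-8}\sim\eta_k^8\ll\eta_k^2\lesssim A_k$ by \eqref{f4.56}, since $\|\epsilon\|_{L^2_x}$ attains $\eta_k$ on $I_k$ and is comparable on unit subintervals.
\end{itemize}
Summing then gives $\int_{a_0}^{a_N}\|\epsilon\|_{L^2_x}^2ds\lesssim|\ell(a_0)|+|\ell(a_N)|$ uniformly in $N$, with no bootstrap, and \eqref{f8.2} follows.

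\textbf{Step 3.} You cannot invoke \eqref{f8.2} with $s_j$ as the new origin. The time $s_j$ is only a first hitting time, and $\|\epsilon\|_{L^2_x}$ may return to size $\eta_\ast$ afterwards. So \eqref{f8.1}, even in the relaxed form \eqref{weakerc}, fails at level $2^{-j}\eta_\ast$, and the intermediate boundary terms are not $O(2^{-j}\eta_\ast)$. Exact telescoping makes this unnecessary. Start at $a_0=s_j$ and use $|\ell(s_j)|\lesssim2^{-j}\eta_\ast$. Then note $\ell(a_N)\to0$, because \eqref{f8.2} and \eqref{f4.56} force $\|\epsilon(s)\|_{L^2_x}\to0$.

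\textbf{Comparison with the paper.} The paper never sums across intervals and uses only the cruder bound of Theorem~\ref{t7.14}, with boundary terms $\|\epsilon(s_1)\|_{L^2_x}+\|\epsilon(s_2)\|_{L^2_x}$. It works on windows of length $\gamma JT_\ast$, with $T_\ast=\eta_\ast^{-1}$, $J\sim(\log\eta_\ast^{-1})^{1/4}$ and $1\le\gamma\le3$. There it uses the freedom in the endpoints and the mean value theorem to bound intermediate boundary terms by $(JT_\ast)^{-1/2}\bigl(\sup_a\int_{W_a}\|\epsilon\|_{L^2_x}^2ds\bigr)^{1/2}$ over consecutive windows $W_a$. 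This gives a self-improving bound anchored at $\|\epsilon(s_{j_\ast})\|_{L^2_x}$. It then inducts on scales $J^nT_\ast$, with the $L^1_s$ bound at one scale controlling $\lambda$ at the next, and finally sums over the dyadic levels $s_j$.

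That argument is insensitive to signs and constants in the boundary terms, and it produces \eqref{f8.17}--\eqref{f8.19}, which are reused in Theorem~\ref{t9.17}. Your repaired argument is shorter and yields \eqref{f8.2} and \eqref{f8.4} at once, but it relies on the exact, sign-sensitive form of \eqref{f7.2}.
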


\begin{proof}
Set $T_{\ast} = \frac{1}{\eta_{\ast}}$. By \eqref{f4.45} and \eqref{f8.1}, for any $s' \geq 0$,
\begin{equation}\label{f8.5}
 \bigg|\sup_{s \in  \left[s', s' + T_{\ast} \right]} \ln (\lambda(s)) - \inf_{s \in  \left[s', s' + T_{\ast} \right]} \ln(\lambda(s)) \bigg|
  \lesssim \int_{s'}^{s'+T_{\ast}}  \left|\frac{\lambda'(s)}{\lambda(s)} \right| \, {d} s \lesssim \eta_{\ast} T_{\ast}\lesssim 1,
\end{equation}
where the implicit constant is independent of $s' \geq 0$. Denote $J$ to be the largest dyadic integer satisfying 
\begin{equation}\label{f8.6}
J = 2^{j_{\ast}} \leq (-\ln (\eta_{\ast}))^{\frac14}.
\end{equation}
By \eqref{f8.5} and the triangle inequality,
\begin{equation}\label{f8.7}
\aligned
\bigg|\sup_{s \in \left[s', s' + J T_{\ast} \right]} \ln(\lambda(s)) - \inf_{s \in  \left[s', s' + J T_{\ast} \right]} \ln(\lambda(s)) \bigg| \lesssim J. 
\endaligned
\end{equation}
Hence,
\begin{equation}\label{f8.8}
\frac{\sup_{s \in  \left[s', s' + 3 J T_{\ast} \right]} \lambda(s)}{\inf_{s \in  \left[s', s' + 3JT_{\ast} \right]} \lambda(s)}
 \lesssim e^{CJ}\ll T_{\ast}^{\frac1{100}}.
\end{equation}
Since $\eta_{\ast}JT_{\ast}\lesssim J, $ we can choose $T_{\ast}$ sufficiently large   such that Theorem \ref{t7.14} holds. 
By Theorem \ref{t7.14}, we get for any $1\leq\gamma\leq3$, 
\begin{equation}\label{f8.9}
\begin{aligned}
\int_{s'}^{s'+\gamma JT_\ast}\|\epsilon(s)\|_{L_x^2}^2\,ds
\lesssim{}&\|\epsilon(s')\|_{L_x^2}
+\|\epsilon(s'+\gamma JT_\ast)\|_{L_x^2}\\
&+\eta_\ast^{2-1/10}J^{2+1/10}
+O(J^{-8}T_\ast^{-8}).
\end{aligned}
\end{equation}
which yields for any $s' > J T_{\ast}$, 
\begin{align*}
\int_{s'}^{s' + J T_{\ast}}  \left\| \epsilon(s)  \right\|_{L^2_{x} }^{2}  \, {d} s
\lesssim& \inf_{s \in  \left[s' - J T_{\ast}, s' \right]}  \left\| \epsilon(s)  \right\|_{L^2_{x} } + \inf_{s \in  \left[s' + J T_{\ast}, s' + 2J T_{\ast} \right]}
\left\| \epsilon(s)  \right\|_{L^2_{x} }\\
&+  \eta_{\ast}^{2-1/10}J^{2+1/10}  + O \left( {J^{-8} T_{\ast}^{-8}} \right).
\end{align*}
Thus, by the integral mean value theorem, $\forall \, s' \geq 0$,
\begin{align*}
& 	\int_{s'}^{s' + J T_{\ast}}  \left\| \epsilon(s) \right\|_{L^2_{x} }^{2} \,{d}s
\\
 \lesssim&  \left\| \epsilon  \left(s' \right) \right \|_{L^2_{x}}
 + \frac{1}{J^{1/2} T_{\ast}^{1/2}} \left(\sup_{a\in\mathbb{N}} \int_{s' + a J T_{\ast}}^{s' + (a + 1) J T_{\ast}} \left\| \epsilon(s) \right\|_{L^2_{x} }^{2}
  \, {d} s \right)^{1/2}\\
  & +  \eta_{\ast}^{2-1/10}J^{2+1/10}  + O \left( {J^{-8} T_{\ast}^{-8}} \right),
\end{align*}
and 
\begin{align*}
& \sup_{a\in\mathbb{N}_+} \int_{s' + a J T_{\ast}}^{s' + (a + 1) J T_{\ast}} \left\| \epsilon(s) \right\|_{L^2_{x} }^{2} \,{d}s
 \\
  \lesssim &\frac{1}{J^{1/2} T_{\ast}^{1/2}} \left(\sup_{a \in \mathbb{N}} \int_{s' + a J T_{\ast}}^{s' + (a + 1) J T_{\ast}}
 \left\| \epsilon(s) \right\|_{L^2_{x} }^{2} \, {d} s \right)^{1/2} +   \eta_{\ast}^{2-1/10}J^{2+1/10}  + O \left( {J^{-8} T_{\ast}^{-8}} \right),
\end{align*}
which implies
\begin{align}\label{qmzy}
	& \sup_{a\in\mathbb{N}} \int_{s' + a J T_{\ast}}^{s' + (a + 1) J T_{\ast}} \left\| \epsilon(s) \right\|_{L^2_{x} }^{2} \,{d}s
\\\nonumber
 \lesssim& \left\|\epsilon \left(s' \right) \right\|_{L^2_x }+\frac{1}{J^{1/2} T_{\ast}^{1/2}}
\left(\sup_{a \in \mathbb{N}} \int_{s' + a J T_{\ast}}^{s' + (a + 1) J T_{\ast}}  \left\| \epsilon(s) \right\|_{L^2_{x} }^{2}  \, {d} s \right)^{1/2}\\\notag
&+  \eta_{\ast}^{2-1/10}J^{2+1/10}  + O \left( {J^{-8} T_{\ast}^{-8}} \right).
\end{align}
Taking $s' = s_{j_{\ast}}$ and applying the Cauchy-Schwarz inequality, for sufficiently small $\eta_{\ast}$, we obtain
\begin{equation}\label{f8.14}
\sup_{a \in \mathbb{N}} \int_{s_{j_{\ast}} + a J T_{\ast}}^{s_{j_{\ast}} + (a + 1) J T_{\ast}}  \left\| \epsilon(s) \right\|_{L^2_{x} }^{2} \, {d} s
 \lesssim 2^{-j_{\ast}} \eta_{\ast} +  \eta_{\ast}^{2-1/10}J^{2+1/10}  + O \left( {J^{-8} T_{\ast}^{-8}} \right)\lesssim 2^{-j_{\ast}}\eta_{\ast}.
\end{equation}
This together with the triangle inequality implies
\begin{equation*}
\sup_{s' \geq s_{j_{\ast}}} \int_{s'}^{s' + J T_{\ast}}  \left\| \epsilon(s) \right\|_{L^2_{x} }^{2}  \, { d} s \lesssim 2^{-j_{\ast}} \eta_{\ast},
\end{equation*}
and by H{\"o}lder's inequality,
\begin{equation*}
\sup_{s' \geq s_{j_{\ast}}} \int_{s'}^{s' + J T_{\ast}}  \left\| \epsilon(s) \right\|_{L^2_{x} } \, {d} s \lesssim 1.
\end{equation*}
Repeating this argument, Theorem $\ref{t8.15}$ can be proved by induction. Indeed, fix a constant $C < \infty$ and suppose that there exists a positive integer $n_{0}$ such that for all integers $0 \leq n \leq n_{0}$,
\begin{equation}\label{f8.17}
\sup_{s' \geq s_{nj_{\ast}}} \int_{s'}^{s' + J^{n} T_{\ast}}
\left\| \epsilon(s) \right\|_{L^2_{x} } \, {d} s \leq C, \quad \sup_{s' \geq s_{nj_{\ast}}}
\int_{s'}^{s' + J^{n} T_{\ast}}  \left\| \epsilon(s) \right\|_{L^2_{x} }^{2} \, {d} s \leq C J^{-n} \eta_{\ast}.
\end{equation}
Then, by \eqref{f8.5}, for $s' \geq s_{n j_{\ast}}$,
\begin{equation}\label{f8.18}
\frac{\sup_{s \in  \left[s', s' + 3 J^{n + 1} T_{\ast} \right]} \lambda(s)}{\inf_{s \in  \left[s', s' + 3 J^{n + 1} T_{\ast} \right]} \lambda(s)} \ll T_{\ast}^{\frac{1}{100}}.
\end{equation}
\begin{remark}
The $C$ in \eqref{f8.17} will ultimately be given by the implicit constants in Theorems \ref{t7.13} and  Theorem \ref{t7.14}, so for $T_{\ast}$ sufficiently large, \eqref{f8.18} will hold.
\end{remark}
Using \eqref{f8.17} and arguing as in the proof of  Theorem \ref{t7.14}, it is easy to see that for any $1\leq \gamma\leq 3$ and $s'\geq s_{nj_{\ast}}$,
\begin{align}\label{f8.19}
& 	\int_{s'}^{s'+\gamma J^{n+1}T_{\ast}}
\left\| \epsilon(s) \right\|_{L^2_{x} }^{2} \, {d} s \\\notag
 \lesssim&  \left\| \epsilon \left(s' \right)  \right\|_{L^2_{x} }
+  \left\| \epsilon \left(s'+\gamma J^{n+1}T_{\ast} \right)  \right\|_{L^2_{x} }+C J^{\frac{n+1}{10}}T_{\ast}^{\frac{1}{10}}J^{1-n}\eta_{\ast}\int_{s'}^{s'+\gamma J^{n+1}T_{\ast}}  \left\| \epsilon(s)  \right\|_{L^2_{x} }^{2} \, {d} s\\
&+ O \left({J^{-8(n+1)} T_{\ast}^{-8}} \right). \notag
\end{align}
Replacing \eqref{f8.9} with \eqref{f8.19} and repeating the proof of \eqref{qmzy}, one can prove that
\begin{align*}
	& \sup_{a\in\mathbb{N}} \int_{s' + a J^{n+1} T_{\ast}}^{s' + (a + 1) J^{n+1} T_{\ast}}
 \left\| \epsilon(s) \right\|_{L^2_{x} }^{2}  \,{d}s \\
   \lesssim & \left\|\epsilon \left(s' \right) \right\|_{L_x^2}+\frac{1}{J^{\frac{n+1}{2}} T_{\ast}^{1/2}} \left(\sup_{a \in \mathbb{N}} \int_{s' + a J^{n+1} T_{\ast}}^{s' + (a + 1) J^{n+1} T_{\ast}}  \left\| \epsilon(s) \right\|_{L^2_{x} }^{2} \, { d} s \right)^{1/2} \\
&     +   C J^{(n+1)/10}T_{\ast}^{1/10}J^{1-n}\eta_{\ast}\int_{s'}^{s'+\gamma J^{n+1}T_{\ast}}  \left\| \epsilon(s)  \right\|_{L^2_{x} }^{2} \, { d} s + O \left({J^{-8(n+1)} T_{\ast}^{-8}} \right), 
\end{align*}
which has the same implicit constant as \eqref{f8.9}.
Since $J\gg 1$ and $j_{\ast}\geq 0$, under the conditions required by \eqref{f8.14} ($\eta_{\ast}$ is sufficiently small), we get
\begin{equation*}
	\sup_{s' \geq s_{(n + 1) j_{\ast}}} \int_{s'}^{s' + J^{n + 1} T_{\ast}} \left\| \epsilon(s) \right\|_{L^2_{x} }^{2} \, {d} s \leq C J^{-(n + 1)} T_{\ast}^{-1},
\end{equation*}
and the implicit constant can be considered to be the same as in \eqref{f8.14}.
Then by H{\"o}lder's inequality,
\begin{equation*}
\sup_{s' \geq s_{(n + 1) j_{\ast}}} \int_{s'}^{s' + J^{n + 1} T_{\ast}}  \left\| \epsilon(s) \right\|_{L^2_{x} }  \, {d} s \leq C.
\end{equation*}
Therefore, \eqref{f8.17} holds for any integer $n > 0$.

Now, take any $j \in \mathbb{N}_+$ and suppose $n j_{\ast} < j \leq (n + 1) j_{\ast}$. Then \begin{equation}\label{f8.23}
	\sup_{s' \geq s_{j}} \int_{s'}^{s' + J^{n+1} T_{\ast}}  \left\| \epsilon(s) \right\|_{L^2_{x} }  \, {d} s \leq CJ, \quad \sup_{s' \geq s_{j}}
 \int_{s'}^{s' + J^{n+1} T_{\ast}}  \left\| \epsilon(s) \right\|_{L^2_{x} }^{2}  \, {d} s \leq C J^{1-n} \eta_{\ast}.
\end{equation}
Once again, we can prove that \eqref{f8.23} implies a sharper bound:
\begin{equation}\label{f8.24}
\sup_{a \geq 0} \int_{s_{j} + a J^{n + 1} T_{\ast}}^{s_{j} + (a + 1) J^{n + 1} T_{\ast}}
\left\| \epsilon(s) \right\|_{L^2_{x} }^{2} \, {d} s \lesssim 2^{-j} \eta_{\ast},
\end{equation}
and by H\"older's inequality,
\begin{equation}\label{f8.25}
\sup_{s' \geq s_{j}} \int_{s'}^{s' + 2^{j} T_{\ast}}  \left\| \epsilon(s) \right\|_{L^2_{x} } \, {d} s \lesssim 1,
\end{equation}
with the bound independent of $j$. Finally, \eqref{f8.24} and \eqref{f8.25} also yield a sharper bound:
\begin{equation}\label{f8.26}
\int_{s_{j}}^{s_{j} + 2^{j} J T_{\ast}}  \left\| \epsilon(s) \right\|_{L^2_{x} }^{2}  \, {d} s \lesssim 2^{-j} \eta_{\ast},
\end{equation}
and therefore, by the integral mean value theorem,
\begin{equation*}
\inf_{s \in  \left[s_{j}, s_{j} + 2^{j} J T_{\ast} \right]}  \left\| \epsilon(s) \right\|_{L^2_{x} } \lesssim 2^{-j} \eta_{\ast} J^{-1/2},
\end{equation*}
which implies
\begin{equation*} 
s_{j + 1} \in  \left[s_{j}, s_{j} + 2^{j} J T_{\ast} \right].
\end{equation*}
Thus, by \eqref{f8.26} and H{\"o}lder's inequality,
\begin{equation*}
\int_{s_{j}}^{s_{j + 1}}  \left\| \epsilon(s) \right\|_{L^2_{x} }^{2} \, {d} s
 \lesssim 2^{-j} \eta_{\ast}, \quad \text{and} \quad \int_{s_{j}}^{s_{j + 1}} \left\| \epsilon(s) \right\|_{L^2_{x} } \, {d} s \lesssim 1,
\end{equation*}
with constants independent of $j$. Summing over $j$ yields \eqref{f8.2} and \eqref{f8.4}.
\end{proof}

As a consequence of Theorem \ref{t8.15} and \eqref{f4.56}, we get the $L_s^p$-bound.
\begin{corollary}\label{cor:epsLp}
	Let $u$ be the solution to \eqref{1.1} satisfying $M(u_0)=M(Q)$, and suppose that
	\begin{equation*}
	\left\| \epsilon(0)  \right\|_{L^2_{x} } = \eta_{\ast},\quad \sup_{s\in [0,\infty)} \left\| \epsilon(s)  \right\|_{L^2_{x} }\leq \eta_{\ast}.
	\end{equation*}
Then, for any $p>1$,
	\begin{equation}\label{equ:epsiLpbound}
		\int_{0}^{\infty}  \left\| \epsilon(s)  \right\|_{L^2_{x} }^{p} \, { d} s \lesssim \eta_{\ast}^{p-1},
	\end{equation}
where the implicit constant depends only on $p$ when $\eta_{\ast} \ll 1$ is sufficiently small. 
\end{corollary}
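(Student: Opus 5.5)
The plan is to interpolate between the uniform bound $\|\epsilon(s)\|_{L^2_x}\leq\eta_\ast$ and the dyadic $L^2_s$ bounds \eqref{f8.4} of Theorem~\ref{t8.15}. For $p\geq2$ this is immediate:
\[
\int_0^\infty\|\epsilon(s)\|_{L_x^2}^p\,ds\leq\eta_\ast^{p-2}\int_0^\infty\|\epsilon(s)\|_{L_x^2}^2\,ds\lesssim\eta_\ast^{p-1},
\]
using \eqref{f8.2}. So the real content is the range $1<p<2$, where the $L^2_s$ bound alone does not suffice and the geometric decay encoded in the stopping times $s_j$ of \eqref{f8.3} has to be used.

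For $1<p<2$ I split $[0,\infty)=\bigcup_{j\geq0}[s_j,s_{j+1})$. On each piece I need a pointwise bound of the form $\|\epsilon(s)\|_{L^2_x}\lesssim2^{-j}\eta_\ast$ for $s\geq s_j$. The definition of $s_j$ as a first hitting time does not by itself prevent $\|\epsilon\|$ from returning to larger values later. Here I would invoke \eqref{f4.56}, the modulation estimate saying $\|\epsilon(s)\|_{L_x^2}$ varies by at most a constant factor on unit $s$-intervals, together with \eqref{f8.4}. If $\|\epsilon(s)\|_{L^2_x}\geq C2^{-j}\eta_\ast$ at some $s\geq s_j$, then it stays $\gtrsim C2^{-j}\eta_\ast$ on a unit interval around $s$. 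That interval then contributes $\gtrsim C^22^{-2j}\eta_\ast^2$ to $\int_{s_j}^\infty\|\epsilon\|^2\,ds\lesssim2^{-j}\eta_\ast$. This alone yields only $\sup_{s\geq s_j}\|\epsilon(s)\|\lesssim(2^{-j}\eta_\ast)^{1/2}$, which is too weak. So instead I work directly with the integrals, avoiding pointwise bounds.

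The cleaner route is Hölder on each piece, using the two bounds obtained at the end of the proof of Theorem~\ref{t8.15}:
\[
\int_{s_j}^{s_{j+1}}\|\epsilon\|_{L^2_x}^2\,ds\lesssim2^{-j}\eta_\ast,\qquad \int_{s_j}^{s_{j+1}}\|\epsilon\|_{L^2_x}\,ds\lesssim1 .
\]
Interpolating, with $p=(2-p)\cdot1+(p-1)\cdot2$, Hölder gives
\[
\int_{s_j}^{s_{j+1}}\|\epsilon\|^p\,ds\leq\Big(\int\|\epsilon\|\Big)^{2-p}\Big(\int\|\epsilon\|^2\Big)^{p-1}\lesssim(2^{-j}\eta_\ast)^{p-1}.
\]
Summing the geometric series in $j$ gives $\lesssim_p\eta_\ast^{p-1}$, with the constant blowing up like $(1-2^{1-p})^{-1}$ as $p\downarrow1$. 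That dependence is consistent with the statement that the constant depends only on $p$.

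The main obstacle is making sure the $L^1_s$ bound $\int_{s_j}^{s_{j+1}}\|\epsilon\|\,ds\lesssim1$ is genuinely uniform in $j$ and $\eta_\ast$. It comes from \eqref{f8.25}--\eqref{f8.26} and the localization $s_{j+1}\in[s_j,s_j+2^jJT_\ast]$. I would double-check that step, using \eqref{f4.56} if needed to compare values near the endpoints $s_j$. If uniformity failed, I would fall back on the cruder Hölder bound using the interval length $2^jJT_\ast$ and $T_\ast=\eta_\ast^{-1}$. That bound gives $(2^jJ\eta_\ast^{-1})^{1-p/2}(2^{-j}\eta_\ast)^{p/2}$, which still sums for $p>1$ but loses a factor $J$. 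So the $L^1$ control is the point to verify carefully.
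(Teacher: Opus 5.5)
Your main argument is correct and is essentially the paper's: split $[0,\infty)$ at the stopping times $s_j$, apply H\"older on each block $[s_j,s_{j+1}]$, and sum the geometric series in $j$. The only difference is the H\"older pairing. The paper uses the block length,
$\int_{s_j}^{s_{j+1}}\|\epsilon\|_{L_x^2}^p\,ds\le(s_{j+1}-s_j)^{1-p/2}\bigl(\int_{s_j}^{s_{j+1}}\|\epsilon\|_{L_x^2}^2\,ds\bigr)^{p/2}$,
whereas you interpolate between $L^1_s$ and $L^2_s$. Your $L^1_s$ bound is itself obtained from the length bound by Cauchy--Schwarz, and inserting it into your interpolation reproduces the paper's estimate.

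The uniformity you flagged comes from the first-hitting-time definition, used in the opposite direction to the one you considered. Since $\|\epsilon(0)\|_{L_x^2}=\eta_\ast$ and $s\mapsto\|\epsilon(s)\|_{L_x^2}$ is continuous, one has $\|\epsilon(s)\|_{L_x^2}>2^{-j-1}\eta_\ast$ for all $s\in[s_j,s_{j+1})$. Combined with \eqref{f8.4}, this gives $(s_{j+1}-s_j)\,4^{-j-1}\eta_\ast^2\lesssim2^{-j}\eta_\ast$, i.e.\ $s_{j+1}-s_j\lesssim2^jT_\ast$ with no factor of $J$. Consequently $\int_{s_j}^{s_{j+1}}\|\epsilon\|_{L_x^2}\,ds\lesssim1$ uniformly in $j$ and $\eta_\ast$, and \eqref{f4.56} is not needed.

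By contrast, routing the argument through $s_{j+1}\in[s_j,s_j+2^jJT_\ast]$, as in your fallback or via \eqref{f8.25}--\eqref{f8.26}, leaves a power of $J$: $J^{1-p/2}$ in the final bound, or $J^{1/2}$ to $J$ in the $L^1_s$ bound. Since $J\sim(-\ln\eta_\ast)^{1/4}$, that route would not give an implicit constant depending only on $p$.
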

\begin{proof}
	By \eqref{f4.56} and \eqref{f8.2}, we have 
	\begin{equation}\label{f8.32}
		\lim_{s \rightarrow \infty} \left\| \epsilon(s) \right\|_{L^2_{x} } = 0.
	\end{equation}
For $p\geq2$, \eqref{f8.1} and \eqref{f8.2} give the assertion.
For $1<p<2$, the definition of $s_j$ and \eqref{f8.4} imply
$s_{j+1}-s_j\lesssim2^jT_\ast$, where $T_\ast=\eta_\ast^{-1}$.
Thus, by H\"older's inequality,
\begin{align*}
 \int_{s_j}^{s_{j+1}}\|\epsilon(s)\|_{L_x^2}^p\,ds
 &\leq (s_{j+1}-s_j)^{1-p/2}
 \left(\int_{s_j}^{s_{j+1}}\|\epsilon(s)\|_{L_x^2}^2\,ds\right)^{p/2}\\
 &\lesssim \eta_\ast^{p-1}2^{-j(p-1)}.
\end{align*}
Summing over $j$ proves \eqref{equ:epsiLpbound}.
\end{proof}

\begin{remark}\label{remarkz}
Comparing with Theorem \ref{t2.3}, these theorems require an additional condition:
\begin{align*}
\left\|\epsilon(0) \right\|_{L^2_x }=\sup_{s\in[0,\infty)} \left\|\epsilon(s) \right\|_{L^2_x }=\eta_{\ast},
\end{align*}
which, one may check directly (using \eqref{f4.4}), that can be relaxed to
\begin{align}\label{weakerc}
\left\|\epsilon(0) \right\|_{L^2_x }\geq\frac{99}{100}\sup_{s\in[0,\infty)} \left\|\epsilon (s)  \right\|_{L^2_x }>0.
\end{align}
\eqref{weakerc} can be satisfied due to the time translation and rescaling if $u$ is not a soliton. Similarly, Theorem \ref{t9.16} also holds under the weaker condition \eqref{weakerc}.
\end{remark}

Now, we utilize Corollary \ref{cor:epsLp} to show that $\lambda(s)$ (or $\lambda(t)$) is an approximately monotone decreasing function.
\begin{theorem}\label{t9.16}
Let $u$ be the solution in Theorem \ref{t2.3}, and suppose that
\begin{equation*}
	\left\| \epsilon(0)  \right\|_{L^2_{x} } = \eta_{\ast},\qquad \sup_{t\in I} \left\| \epsilon(t)  \right\|_{L^2_{x} }\leq \eta_{\ast}.
\end{equation*}
 For any $s \geq 0$, let
\begin{equation*} 
\tilde{\lambda}(s) = \inf_{\tau \in [0, s]} \lambda(\tau).
\end{equation*}
Then, for any $s \geq 0$,
\begin{equation*} 
1 \leq \frac{\lambda(s)}{\tilde{\lambda}(s)} <e.
\end{equation*}

\end{theorem}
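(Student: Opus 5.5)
The proof follows Dodson's argument in \cite{D2,D1} and is organized as a contradiction argument built on the modulation equations. The lower bound $\lambda(s)/\tilde\lambda(s)\geq1$ is immediate from the definition of $\tilde\lambda$. For the upper bound, suppose $\lambda(s_+)\geq e\,\tilde\lambda(s_+)$ for some $s_+>0$. Choose $s_-\in[0,s_+]$ with $\lambda(s_-)$ close to $\tilde\lambda(s_+)$, so that $\ln\lambda(s_+)-\ln\lambda(s_-)\geq 1-o(1)$. The aim is to show that $\ln\lambda$ can increase on $[s_-,s_+]$ only by a quantity controlled by $\eta_\ast$, which is strictly below $1$ once $\eta_\ast$ is small.

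The first step uses the modulation equations (the estimates \eqref{f4.45} and \eqref{f4.53}--\eqref{f4.55} from Appendix~\ref{sec-mod}). In the $s$ variable they give
\[
\frac{\lambda_s}{\lambda}=\frac{d}{ds}\bigl(\epsilon_2(s),\,x\cdot\nabla Q+\tfrac d2 Q\bigr)_{L^2_x}\cdot c+O\bigl(\|\epsilon(s)\|_{L_x^2}^2\bigr)+O\Bigl(\frac{|\xi|^2}{\lambda^2}\Bigr),
\]
up to a harmless normalization constant $c$. More robustly, the estimate $|\lambda_s/\lambda|\lesssim\|\epsilon(s)\|_{L^2_x}$ is already used in \eqref{f8.5}. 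Since the linear part of $\lambda_s/\lambda$ is a total derivative (the generator of scaling pairs against $\epsilon_2$), integrating from $s_-$ to $s_+$ gives
\[
\ln\frac{\lambda(s_+)}{\lambda(s_-)}\lesssim \|\epsilon(s_-)\|_{L^2_x}+\|\epsilon(s_+)\|_{L^2_x}+\int_{s_-}^{s_+}\|\epsilon(s)\|_{L^2_x}^2\,ds+\int_{s_-}^{s_+}\frac{|\xi(s)|^2}{\lambda(s)^2}\,ds .
\]
The second step bounds the right-hand side. The first two terms are $\lesssim\eta_\ast$ by hypothesis. The third is $\lesssim\eta_\ast$ by Theorem~\ref{t8.15} (equivalently, Corollary~\ref{cor:epsLp} with $p=2$). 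For the fourth, the modulation equation for $\xi$ gives $|(\xi/\lambda)_s|\lesssim\|\epsilon\|_{L^2_x}^2$. Combined with $\xi(s)/\lambda(s)\to0$ along the sequence where $\epsilon\to0$ (after a Galilean normalization, which is allowed by the symmetry reductions used in Theorem~\ref{t7.14}), this yields $|\xi(s)/\lambda(s)|\lesssim\int_s^\infty\|\epsilon\|_{L_x^2}^2$. Hence $\int|\xi/\lambda|^2\,ds\lesssim\int(\int_s^\infty\|\epsilon\|^2)^2\,ds$, which is controlled through the dyadic decomposition $s_j$ of \eqref{f8.4}: on $[s_j,s_{j+1}]$ the integrand is $\lesssim 4^{-j}\eta_\ast^2$ and the length is $\lesssim 2^jT_\ast=2^j\eta_\ast^{-1}$, so summing gives $\lesssim\eta_\ast$. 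Altogether $\ln(\lambda(s_+)/\lambda(s_-))\lesssim\eta_\ast<1-o(1)$, a contradiction.

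The main obstacle is the total-derivative structure in the first step. The crude bound $|\lambda_s/\lambda|\lesssim\|\epsilon\|_{L^2_x}$ only involves $\int\|\epsilon\|$, that is, $p=1$, which Corollary~\ref{cor:epsLp} does not control. One must therefore extract the exact linear term $\frac{d}{ds}(\epsilon_2,\Lambda Q)$ from the modulation equation so that the remainder is quadratic. If the $L^2$-level modulation only gives this identity up to a frequency-truncation error, I would instead run the argument with the virial identity of Theorem~\ref{t7.14}. The boundary terms $(\epsilon_2,x\cdot\nabla Q+\frac d2Q)$ appearing there are exactly the quantities that control $\ln\lambda$, and \eqref{equ:intepsilonest} on the interval $[s_-,s_+]$ (split into subintervals satisfying \eqref{equ:assumlamda}) again yields a bound of order $\eta_\ast$.
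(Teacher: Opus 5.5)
Your first step fails, and the gap is structural rather than technical. There is no identity of the form $\lambda_s/\lambda=c\,\frac{d}{ds}(\epsilon_2,\frac d2Q+x\cdot\nabla Q)_{L^2_x}+O(\|\epsilon\|_{L^2_x}^2)+O(|\xi|^2/\lambda^2)$.

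\begin{itemize}
\item The derivative $\frac{d}{ds}(\epsilon_2,\frac d2Q+x\cdot\nabla Q)_{L^2_x}$ is quadratic in $\epsilon$. Its only linear piece is $-(L_+\epsilon_1,\frac d2Q+x\cdot\nabla Q)=2(\epsilon_1,Q)_{L^2_x}$, using $L_+(\frac d2Q+x\cdot\nabla Q)=-2Q$. The mass constraint turns this into $-\|\epsilon\|_{L^2_x}^2$; this is the virial quantity behind Theorem~\ref{t7.13}.
\item By contrast, $\lambda_s/\lambda$ has a genuine linear part.
\end{itemize}

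What the modulation equations actually give comes from pairing the $\epsilon$-equation with $|x|^2Q$. Using $L_-(|x|^2Q)=-2dQ-4x\cdot\nabla Q$ and $(\frac d2Q+x\cdot\nabla Q,|x|^2Q)_{L^2_x}=-\||x|Q\|_{L^2_x}^2$, one gets
\[
\||x|Q\|_{L^2_x}^2\,\ln\frac{\lambda(s_+)}{\lambda(s_-)}=-4\int_{s_-}^{s_+}\Bigl(\epsilon_2(s),\tfrac d2Q+x\cdot\nabla Q\Bigr)_{L^2_x}\,ds+O(\eta_\ast).
\]
The integral on the right is linear in $\epsilon$ and not a total derivative. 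Bounding it needs $\int\|\epsilon\|_{L^2_x}\,ds$, which is exactly the $p=1$ bound you correctly say is unavailable.

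No argument that ignores the direction in which $\lambda$ moves can succeed. Take the pseudo-conformal ground state on a half-line ending at its blowup time. It satisfies every hypothesis and has $\xi\equiv0$. It also has $\|\epsilon(s)\|_{L^2_x}\sim(s_0+s)^{-1}$, so $\int_0^\infty\|\epsilon\|_{L^2_x}^2\,ds\lesssim\eta_\ast$ and the endpoint values are small. Yet $\lambda(s)\sim(s_0+s)^{-1}$, so $\ln\lambda$ drifts without bound. Your integrated identity would give $|\ln(\lambda(s_+)/\lambda(s_-))|\lesssim\eta_\ast$, which is false for this solution.

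Your fallback has the same defect. Theorem~\ref{t7.14} only bounds $\int\|\epsilon\|_{L^2_x}^2\,ds$, which is already $O(\eta_\ast)$ by Theorem~\ref{t8.15}. And $\ln\lambda$ is governed by the time integral of $(\epsilon_2,\frac d2Q+x\cdot\nabla Q)_{L^2_x}$, not by its boundary values.

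The missing idea is to use the sign in this identity and then invoke rigidity instead of smallness.

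\begin{enumerate}
\item If $\lambda(s_+)/\lambda(s_-)=e$, the identity forces $4\int_{s_-}^{s_+}(\epsilon_2,\frac d2Q+x\cdot\nabla Q)_{L^2_x}\,ds=-\||x|Q\|_{L^2_x}^2+O(\eta_\ast)<0$. Hence some $s'\in[s_-,s_+]$ has $(\epsilon_2(s'),\frac d2Q+x\cdot\nabla Q)_{L^2_x}<0$.
\item Start the virial inequality of Theorem~\ref{t7.13} at $s'$. The first boundary term now has a favorable sign, so $\int\|\epsilon\|_{L^2_x}^2$ over $[s',s'+T]$ is controlled by $\|\epsilon(s'+T)\|_{L^2_x}$ plus lower-order terms.
\item Choose the right endpoints using the dyadic times $s_j$ of Theorem~\ref{t8.15}, and induct over intervals of length $\sim2^{j+k}T_\ast$. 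This gives $\int_{s'}^{s'+T_k}\|\epsilon\|_{L^2_x}^2\,ds\lesssim2^{-j-k}\eta_\ast$ for every $k$.
\item Hence $\epsilon\equiv0$ on $[s',\infty)$, so $u$ is a soliton. This contradicts $\|\epsilon(0)\|_{L^2_x}=\eta_\ast>0$, and equally the assumed change in $\lambda$.
\end{enumerate}

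It is this one-sided use of the virial inequality, not any integrability of $\|\epsilon\|_{L^2_x}$, that separates growth of $\lambda$, which is excluded, from decay, which is allowed.
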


\begin{proof}
By contradiction, we suppose that there exist $0 \leq s_{-} \leq s_{+} < \infty$ satisfying
\begin{equation*} 
\frac{\lambda(s_{+})}{\lambda(s_{-})} = e.
\end{equation*}
Then we will show that $u(t)$ is a soliton solution to \eqref{1.1}, which is a contradiction since $\lambda(s)$ is constant in that case.
A simple computation shows
\begin{align*}
\left(\nabla Q, |x|^2 Q \right)_{L^2_{x}} =&  \left( iQ, |x|^2 Q \right)_{L^2_{x}} =  \left( i \nabla Q, |x|^2 Q \right)_{L_x^2} = 0,\\
  \left( \frac{d}{2}Q + x \cdot \nabla Q, |x|^{2} Q \right)_{L^2_{x}} =& -\| |x|Q \|_{L^2_{x}}^{2},\\
 L_{-} \left(|x|^2 Q \right) =& -2dQ - 4 x \cdot \nabla Q.
\end{align*}
Using these identities and Theorem \ref{t8.15}, and by performing modulation calculations similar to those in \cite{D2, D1}, we have
\begin{equation*} 
\begin{aligned}
&(\epsilon(s_+),|x|^2Q)_{L_x^2}
-(\epsilon(s_-),|x|^2Q)_{L_x^2}+\||x|Q\|_{L_x^2}^2\\
&\quad+4\int_{s_-}^{s_+}
\left(\epsilon_2(s),\frac d2Q+x\cdot\nabla Q\right)_{L_x^2}\,ds
=O(\eta_\ast).
\end{aligned}
\end{equation*}
This together with
\begin{equation*}
	\left( \epsilon(s_+), |x|^2Q \right)_{L^2_x } ,  \left( \epsilon(s_-), |x|^2Q \right)_{L^2_x }
\lesssim \sup_{s\in[s_-,s_+]} \left\|\epsilon(s) \right\|_{L^2_x } \left\| |x|^2Q \right\|_{L_x^2}=O(\eta_{\ast})
\end{equation*}
yields
\begin{equation}\label{f9.8}
	\big\| |x| Q  \big\|_{L^2_{x}}^{2} + 4 \int_{s_{-}}^{s_{+}}  \left( \epsilon_2(s), \frac{d}{2}Q + x\cdot\nabla Q \right)_{L^2_{x}} \,{d}s
= O (\eta_{\ast}).
\end{equation}
Therefore, there exists $s' \in [s_{-}, s_{+}]$ such that
\begin{equation}\label{f9.9}
\left( \epsilon_2(s'),  \frac{d}{2}Q + x \cdot \nabla Q \right)_{L^2_{x} } < 0.
\end{equation}
Let $T_{\ast}=\frac{1}{\eta_{\ast}}$, and let $s_j$ have the same meaning as in Theorem \ref{t8.15}. 
Since $s' \geq 0$, there exists a unique $j \geq 0$ such that $s_{j} \leq s'< s_{j + 1}$. Observing that $|s_{j+1}-s_j|\leq C 2^{j}T_{\ast}$, we have  $s_{j+1}\in \left[s',s'+C2^jT_{\ast} \right]$.
By \eqref{f8.4} and the integral mean value theorem, there exists $\tilde{s}_1\in  \left[0,2^jT_{\ast} \right]$ satisfying
\begin{align}\label{f9.10}
	\|\epsilon \left(s'+C2^jT_{\ast}+\tilde{s}_1 \right)\|_{L^2}\lesssim& 2^{-j}\eta_{\ast},\\\label{f9.11}
\int_{s'}^{s'+ C2^{j}T_{\ast}+\tilde{s}_1}  \left\|\epsilon(s) \right\|^2_{L^2_x } \, {d} s
\lesssim 2^{-j}\eta_{\ast},&\quad\int_{s'}^{s'+ C2^{j}T_{\ast}+\tilde{s}_1}  \left\|\epsilon(s) \right\|_{L^2_x }  \, {d} s \lesssim 1,
\end{align}
with implicit constants in \eqref{f9.10} and \eqref{f9.11}  independent of $\eta_{\ast}$. 
Similarly, for any $k\in \mathbb{N}_+$, there exists $\tilde{s}_k\in  \left[0,2^{j+k}T_{\ast} \right]$ such that
\begin{equation*}
s_{j+k+1}\in \Big[s'+C\sum_{i=j}^{j+k-1}2^iT_{\ast}+\sum_{m=1}^{k-1}\tilde{s}_m,\  s'+C\sum_{i=j}^{j+k-1}2^iT_{\ast}+\sum_{m=1}^{k-1}\tilde{s}_m+C2^{k+j}T_{\ast}+\tilde{s}_k\Big],
\end{equation*}
 \begin{equation}\label{f9.13}
	\bigg\|\epsilon \Big(s'+C\sum_{i=j}^{j+k-1}2^iT_{\ast}+\sum_{m=1}^{k-1}\tilde{s}_m+C2^{k+j}T_{\ast}+\tilde{s}_k \Big)\bigg\|_{L_x^2}
\lesssim 2^{-j-k}\eta_{\ast},
\end{equation}
\begin{equation}\label{f9.14}
\begin{aligned}
	\int_{s'+C\sum\limits_{i=j}^{j+k-1}2^iT_{\ast}+\sum\limits_{m=1}^{k-1}\tilde{s}_m}^{s'+C\sum\limits_{i=j}^{j+k}2^iT_{\ast}+\sum\limits_{m=1}^{k}\tilde{s}_m}
\left\|\epsilon(s) \right\|^2_{L^2_x }\, { d} s
&\lesssim 2^{-j-k}\eta_{\ast},\\\int_{s'+C\sum\limits_{i=j}^{j+k-1}2^iT_{\ast}+\sum\limits_{m=1}^{k-1}\tilde{s}_m}^{s'+C\sum\limits_{i=j}^{j+k}2^iT_{\ast}+\sum\limits_{m=1}^{k}\tilde{s}_m} \left\|\epsilon(s) \right\|_{L^2_x } \, {d} s &\lesssim 1,
\end{aligned}
\end{equation}
with implicit constants in \eqref{f9.13} and \eqref{f9.14} independent of $\eta_{\ast}$ and $k$. 
Let $T_k=C\sum_{i=j}^{j+k}2^iT_{\ast}+\sum_{m=1}^{k}\tilde{s}_m$, and  suppose that for any $1\leq k \leq n$, there exists constant $C_0>0$ so that
\begin{equation}\label{qmm}
	\int_{s'}^{s'+T_k}  \left\|\epsilon(s) \right\|^2_{L^2_x } \, {d} s
 \leq C_02^{-j-k}\eta_{\ast},\quad\int_{s'}^{s'+T_k}  \left\|\epsilon(s) \right\|_{L^2_x } \, {d} s \leq C_0,
\end{equation}
then we claim that for $C_{0}$ sufficiently large,
\begin{equation}\label{induction10B}
	\int_{s'}^{s'+T_{k+1}}  \left\|\epsilon(s) \right\|^2_{L^2_x } \, {d} s \leq C_02^{-j-k-1}\eta_{\ast}.
\end{equation}
In fact, by  \eqref{f8.4}, \eqref{f9.14} and \eqref{qmm}, we have
\begin{equation*} 
\int_{s'}^{s'+T_{k+1}}
\left\|\epsilon(s) \right\|^2_{L^2_x } \, {d} s
\leq C_02^{-j-k}\eta_{\ast}+C_12^{-j-k}\eta_{\ast},\quad\int_{s'}^{s'+T_{k+1}}  \left\|\epsilon(s) \right\|_{L^2_x } \, {d} s \leq C_0+C_1
\end{equation*}
for some constant $C_1$, which is also independent of $\eta$ and $k$. As in the proof of Theorem \ref{t7.14}, we can set
\begin{equation*}
	1\leq \lambda(s)\leq T_{k+1}^{1/100},
\quad   s\in \left[s',s'+T_{k+1} \right],\quad \xi \left(s'+T_{k+1} \right)=x \left(s' \right)=0,
\end{equation*}
and by \eqref{f4.54} and \eqref{f4.55},
\begin{equation*}
	\sup_{s\in \left[s',s'+T_{k+1} \right]}\frac{|\xi(s)|}{\lambda(s)}
\lesssim \left(\int_{s'}^{s'+T_{k+1}}  \left\| \epsilon(s) \right\|_{L^2_{x} }^{2} \, { d} s \right)\ll\eta_0.
\end{equation*}
\begin{equation*}
\sup_{s\in[s',s'+T_{k+1}]}|x(s)|\ll T_{k+1}^{1/25}.
\end{equation*}
Noticing that $C2^{j+k+1}T_{\ast}\leq T_{k+1}\leq 4C2^{j+k+1}T_{\ast}$, by Theorem \ref{t7.13} and \eqref{f9.9},
\begin{align}\label{8.45}
& \int_{s'}^{s'+T_{k+1}}  \left\| \epsilon(s) \right\|_{L^2_{x} }^{2} \, {d} s\\\notag
\lesssim&   
\left( \epsilon_2\left(s' \right),  \frac{d}{2}Q + x \cdot \nabla Q \right)_{L^2_{x} } 
+ \left\|\epsilon \left(s'+T_{k+1} \right) \right\|_{L^2_x }\\\notag
&+T_{k+1}^{1/10} \left(\int_{s'}^{s'+T_{k+1}}  \left\| \epsilon(s) \right\|_{L^2_{x} }^{2}  \, {d} s \right)^2+O \left( 
{T_{k+1}^{-8}} \right) \\\notag
\lesssim& 2^{-j-k-1}\eta_{\ast}+T_{k+1}^{1/10} \left(\int_{s'}^{s'+T_{k+1}}  \left\| \epsilon(s) \right\|_{L^2_{x} }^{2}  \, {d} s \right)^2+O \left( { T_{k+1}^{-8}} \right) \\\notag
\lesssim& 2^{-j-k-1}\eta_{\ast}+ \left(4C2^{j+k+1}T_{\ast} \right)^{1/10}\eta_{\ast}^2(C_0+C_1)^22^{-2j-2k}+\frac{1}{C^82^{8j+8k+8}T^8_{\ast}}.
\end{align}
 Moreover, the implicit constant in \eqref{8.45} only relies on that in  \eqref{f9.13} and Theorem \ref{t7.14}, which are both independent of $\eta_{\ast}$ and $k$. 
 Thus, by choosing $\eta_{\ast}$ sufficiently small and $C_0$ sufficiently large, \eqref{induction10B} holds.

Now we let $k \nearrow \infty$, 
 then \eqref{induction10B} implies 
\begin{equation*} 
\int_{s'}^{\infty}  \left\| \epsilon(s) \right\|_{L^2_{x} }^{2} \, {d} s = 0,
\end{equation*}
which directly yields that $ \epsilon (s) = 0$ for all $s \geq s'$. This is a contradiction since $u$ is not a soliton.
\end{proof}

\subsection{Proof of Theorem \ref{t2.3} }
Finally, we turn to prove Theorem \ref{t2.3}. We first consider the infinite time blowup case:
\subsubsection{Part A: infinite time blowup}
\begin{theorem}\label{t9.17}
If $u$ is the solution in Theorem \ref{t2.3} and
\begin{equation*} 
\sup(I) = \infty,
\end{equation*}
then $u$ is a soliton solution of the form \eqref{soliton}.
\end{theorem}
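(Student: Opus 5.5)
We argue by contradiction, following the scheme of Dodson \cite{D2,D1}. Suppose $\sup I=\infty$ but $u$ is not a soliton. After a time translation and rescaling, Remark~\ref{remarkz} lets us assume the weaker normalization \eqref{weakerc}. Then Theorem~\ref{t8.15}, Corollary~\ref{cor:epsLp} and Theorem~\ref{t9.16} are all available in the rescaled time $s$. They give $\|\epsilon(s)\|_{L^2}\in L^p_s$ for every $p>1$, the limit $\|\epsilon(s)\|_{L^2}\to0$, and the almost monotonicity $\lambda(s)\sim\tilde\lambda(s)=\inf_{[0,s]}\lambda$.

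The first step is to rule out $\lambda(s)\to0$ along the flow in the global case. Since $\sup I=\infty$, we have $\int_0^\infty\lambda(t)^{-2}\,dt=\infty$ in $s$-time. I plan to use $\int_0^\infty \frac{|\lambda_s|}{\lambda}\,ds\lesssim\int\|\epsilon\|_{L^2}^2\,ds$ (or $\int\|\epsilon\|^p$), as in \eqref{f4.45}. Combined with Theorem~\ref{t9.16}, this should show that $\lambda(s)$ converges to a limit $\lambda_\infty\in(0,\infty)$ after normalization. Finite-time blowup corresponds to $\lambda\to\infty$ in the paper's convention ($\lambda$ is a length scale), and $\lambda\to 0$ would contradict that the original time interval is infinite while $\int\lambda^{-2}\,dt$ equals the $s$-length. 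I expect to need only boundedness above and below of $\lambda$ on $[0,\infty)$, up to a constant factor $e$. The modulation equations \eqref{f4.54}--\eqref{f4.55} then show similarly that $\xi(s)/\lambda(s)$ and the increments of $x(s)$ are controlled by $\int\|\epsilon\|^2$ on any interval.

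Next, using Theorem~\ref{t8.15} and the dyadic times $s_j$, I will pick a sequence of intervals $[a_n,b_n]$ of $s$-length $T_n=2^{3k_n}$ with the following properties: $\lambda\sim1$ on each interval, the hypotheses \eqref{equ:assumpxixcond} hold after a Galilean transformation and translation, and $\int_{a_n}^{b_n}\|\epsilon\|^2\,ds\lesssim 2^{-j_n}\eta_\ast$ is tiny compared to $T_n^{-1}$ powers. The key point is to exploit that $\|\epsilon\|\in L^p$ for $p$ close to $1$. This gives $\inf_{[s_j,s_j+2^jT_\ast]}\|\epsilon\|\lesssim2^{-j}\eta_\ast$, while $\sup|\xi/\lambda|^2\lesssim(\int\|\epsilon\|^2)^2$ decays like $2^{-2j}$. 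We then apply the energy bounds: Theorem~\ref{t4.1} and Theorem~\ref{t4.2}, via Corollary~\ref{c4.3}. These give
\[
\frac{1}{\lambda^2}\|\epsilon(t)\|_{L^2}^2\lesssim \frac{2^{2k_0}}{T}\int_J\frac{\|\epsilon\|^2}{\lambda^2}\,dt+\sup_J\frac{|\xi|^2}{\lambda^2}+2^{2k_0}T^{-10},
\]
uniformly on $J$. Feeding this back into the virial estimate of Theorem~\ref{t7.13}, $\int_J\|\epsilon\|^2\,ds$ is bounded by boundary terms $\|\epsilon(a)\|+\|\epsilon(b)\|$ together with the errors. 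We choose $a_n,b_n$ to be times where $\|\epsilon\|$ is smallest, so the boundary terms are themselves controlled by the right-hand side above. This yields a self-improving inequality $X\lesssim T^{-c}X+T^{-8}$, hence $\sup_{J_n}\|\epsilon\|\lesssim T_n^{-4}$.

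The last step is to pass to the limit. The intervals $J_n$ exhaust $[s_0,\infty)$ with $T_n\to\infty$, and the quantities \emph{fixed on a common time} must vanish. Fixing $s$ and letting $n\to\infty$ with $s\in J_n$ gives $\epsilon(s)=0$. Hence $u$ equals $\tilde Q$ with parameters obeying the modulation ODE with $\epsilon=0$, which forces the explicit form \eqref{soliton}. This contradicts the standing assumption that $u$ is not a soliton (equivalently, $\epsilon(0)\neq0$ under \eqref{weakerc}). The main obstacle is the middle step: arranging nested intervals on which the hypotheses of Theorem~\ref{t7.13} hold uniformly (the constraints $|x(t)|\le T^{1/25}$ and $\xi(b)=x(a)=0$), while keeping the boundary terms $\|\epsilon(a)\|,\|\epsilon(b)\|$ small enough to make the bootstrap close. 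I would first try choosing $a_n,b_n$ via the mean-value argument of Theorem~\ref{t8.15} at scale $2^{j}T_\ast$, as in the proof of Theorem~\ref{t9.16}.
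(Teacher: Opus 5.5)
Your argument has a genuine gap in its first step, and that step is exactly the harder half of the paper's proof (its Case~1, $\tilde\lambda(s)\searrow0$). Nothing you cite rules out $\lambda(s)\to0$ when $\sup I=\infty$:
\begin{itemize}
\item The bound \eqref{f4.45} controls $\int_a^{a+1}|\lambda_s/\lambda|\,ds$ by $\int_a^{a+1}\|\epsilon(s)\|_{L^2}\,ds$ linearly, not by $\int\|\epsilon\|_{L^2}^2$.
\item Corollary~\ref{cor:epsLp} gives $\|\epsilon(s)\|_{L^2}\in L^p_s$ only for $p>1$. The proof of Theorem~\ref{t8.15} yields just $\int_{s_j}^{s_{j+1}}\|\epsilon\|_{L^2}\,ds\lesssim1$ per dyadic step, so $\ln\lambda$ may drift to $-\infty$.
\item Theorem~\ref{t9.16} only gives $\lambda\le e\tilde\lambda$. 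This bounds $\lambda$ above, not below.
\end{itemize}
Your heuristic is also reversed. Since $\lambda$ is a length scale, finite-time blowup corresponds to $\lambda\to0$. Moreover, $s$-time is infinite in every blowup direction, so the relevant content of $\sup I=\infty$ is $\int_0^\infty\lambda(s)^2\,ds=\infty$. That is compatible with slow decay such as $\lambda(s)\sim s^{-1/2}$.

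The paper handles this case as follows:
\begin{enumerate}
\item It splits $s$-time into the bands $I(k)$ of \eqref{f10.3}.
\item It uses $\sum_k|I(k)|2^{-2k}=\infty$ to choose $k_n$ with $|I(k_n)|\ge2^{2k_n}/k_n^2$ and $|I(k_n)|\ge|I(k)|$ for all $k\le k_n$.
\item Via \eqref{f9.8} and Theorem~\ref{t7.13}, it finds $s_n$ in the middle of $I(k_n)$ with $\|\epsilon(s_n)\|_{L^2}\lesssim|I(k_n)|^{-1}$.
\item It rescales by $2^{-k_n}$ so that $1\le\tilde\lambda\le16\cdot2^{k_n}$ on all of $[0,t_n]$.
\item It runs the long-time Strichartz estimate over that whole interval, on which $\lambda$ varies enormously; only $\lambda\ge1$ is needed there.
\end{enumerate}
Your intervals with $\lambda\sim1$ never reach this regime.

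Your concluding mechanism is also circular, even in the bounded case. By \eqref{f4.56}, an interval containing a fixed time $s$ has $\int\|\epsilon\|_{L^2}^2\gtrsim\|\epsilon(s)\|_{L^2}^2$. So intervals $J_n\ni s$ with $\int_{J_n}\|\epsilon\|_{L^2}^2\to0$ exist only if $\epsilon(s)=0$ already. The intervals where Theorem~\ref{t8.15} makes $\int\|\epsilon\|^2$ tiny start at $s_{j_n}\to\infty$.

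Corollary~\ref{c4.3} does not rescue this, because of its term $T^{1/50}\sup_J|\xi/\lambda|^2$. With $\xi(b)=0$, \eqref{f4.54} bounds $|\xi/\lambda|$ at $s=0$ only by $\int_0^b\|\epsilon\|_{L^2}^2\,ds\sim\eta_\ast$. Inside your bootstrap, with $X=\int_J\|\epsilon\|_{L^2}^2\,ds$, this term feeds $T^{1/100}X$ linearly back into $\|\epsilon(a)\|_{L^2}$, so $X\lesssim T^{-c}X+T^{-8}$ does not follow.

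What is needed, and what the paper does in both cases, is energy transport:
\begin{enumerate}
\item Galilean-normalize so that $\xi=0$ at a far time $\tilde t_n$ where $\|\epsilon(\tilde t_n)\|_{L^2}$ is tiny (from \eqref{f8.4}, or \eqref{f10.5} in Case~1). Then $E(P_{\le k}u(\tilde t_n))$ is tiny.
\item Bound its drift over all of $[0,\tilde t_n]$ as in \eqref{bbv}--\eqref{bbv2}, using the long-time Strichartz estimate on that interval.
\item Apply the coercive lower bound at $t=0$. Its expansion \eqref{ccc0} contains the positive term $\frac12|\xi|^2\lambda^{-2}\|Q\|_{L^2}^2$, which absorbs all $\xi$-errors, so $\|\epsilon(0)\|_{L^2}$ is forced to be small.
\end{enumerate}
The virial information enters only through Theorem~\ref{t8.15} and the choice of $s_n$; no further virial bootstrap on the intervals is needed.
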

\begin{proof}
If $u$ is not a soliton of the form \eqref{soliton}, then $\sup\limits_{s\in[0,\infty)} \left\|\epsilon(s) \right\|_{L^2_x }>0$. By Remark~\ref{remarkz}, we may first translate time so that
\eqref{weakerc} holds. Then \eqref{f8.32} implies that the remainder
tends to zero. Its positive maximum on the remaining half-line is
therefore attained. Translating once more and relabeling this
maximum as $\eta_\ast$, we obtain 
\begin{align}\label{contra}
\left\|\epsilon(0) \right\|_{L^2_x }=\sup_{s\in[0,\infty)} \left\|\epsilon(s) \right\|_{L^2_x }=\eta_{\ast}.
\end{align}
Additionally, by rescaling, we can set $\tilde{\lambda}(s)\leq  1$ for all $s\geq0$.

\textbf{Case 1.} $\tilde{\lambda}(s)\searrow 0$. In this case, for any integer $k \geq 0$, let
\begin{equation}\label{f10.3}
I(k) =  \left\{ s \geq 0 : 2^{-k + 2} \leq \tilde{\lambda}(s) \leq 2^{-k + 3}  \right\}.
\end{equation}
Since $\sup(I)=\infty$, there exists a sequence $k_{n} \nearrow \infty$ such that
\begin{equation*} 
|I(k_{n})| 2^{-2k_{n}} \geq \frac{1}{k_{n}^{2}},
\end{equation*}
and that $|I(k_{n})| \geq |I(k)|$ for all $k \leq k_{n}$.

Let $I(k_n)=[a_n,b_n]$. We then claim that for any sufficiently large $n$, there exists $s_{n} \in [\frac{3 a_{n} + b_{n}}{4} ,\frac{a_{n} + 3 b_{n}}{4}]\subset I(k_n)$ such that
\begin{equation}\label{f10.5}
\left\| \epsilon(s_{n})  \right\|_{L^2_{x} } \lesssim \frac{1}{|I(k_n)|}.
\end{equation}
Indeed, by Theorem \ref{t8.15},
\begin{equation*}
\int_{I(k_{n})}  \left\| \epsilon(s)  \right\|_{L^2_{x} }^{2}  \, {d} s \lesssim \eta_{\ast},
\end{equation*}
then by the virial identity in \eqref{f9.8}, we have
\begin{equation}\label{f10.7}
\int_{a_{n}}^{\frac{3 a_{n} + b_{n}}{4}}  \left( \epsilon_2, \frac{d}{2}Q + x \cdot \nabla Q \right)_{L^2_{x} } \, {d} s
= O \left(\eta_{\ast} \right) + O(1).
\end{equation}
Therefore, by the integral mean value theorem, there exists $s_{n}^{-} \in  \left[a_{n}, \frac{3 a_{n} + b_{n}}{4} \right]$ such that
\begin{equation}\label{f10.8}
 \left| \left( \epsilon_2 \left(s_{n}^{-} \right), \frac{d}{2}Q + x \cdot \nabla Q \right)_{L^2_{x} } \right| \lesssim \frac{1}{|I(k_{n})|}.
\end{equation}
Similarly, there exists $s_{n}^{+} \in  \left[\frac{a_{n} + 3 b_{n}}{4}, b_{n} \right]$ such that
\begin{equation}\label{f10.9}
 \left| \left( \epsilon_2 \left(s_{n}^{+} \right),\frac{d}{2}Q + x \cdot \nabla Q \right)_{L^2_{x} } \right|
  \lesssim \frac{1}{|I(k_{n})|}.
\end{equation}
Splitting the interval $ \left[s_n^-,s_n^+ \right]$ into $z_n$ small intervals of length $T_{\ast}=\frac{1}{\eta_{\ast}}$, we proceed as follows. 
		
\textbf{Scenario I.}
If 
 $z_n \leq J=2^{j_{\ast}}$, then, as in \eqref{f8.5}-\eqref{f8.8}, we can similarly prove that
\begin{equation*}
\frac{\sup\limits_{s \in  \left[s_n^-, s_n^+ \right]} \lambda(s)}{\inf\limits_{s \in  \left[s_n^-, s_n^+ \right]} \lambda(s)}
\lesssim e^{\ln (T_{\ast})^{1/4}} \lesssim e^{\ln \left(T_{\ast}^{1/1000} \right)} \ll T_{\ast}^{1/100}.
\end{equation*}
Therefore, after normalizing as in the proof of Theorem~\ref{t7.14}, Theorem~\ref{t7.13} together with \eqref{f10.8} and \eqref{f10.9} implies
\begin{equation*} 
\int_{s_{n}^{-}}^{s_{n}^{+}}  \left\| \epsilon(s)  \right\|_{L^2_{x} }^{2} \, {d} s \lesssim \frac{1}{|I(k_{n})|}.
\end{equation*}

\textbf{Scenario II.}
If  $z_n > J=2^{j_{\ast}}$, then there exists $\ell_n\in\mathbb{N}$ such that $J^{\ell_n}<z_n\leq J^{\ell_n+1}$. Repeating the induction in the proof of Theorem~\ref{t8.15}, using \eqref{f8.17} and \eqref{f8.19}, we obtain
\begin{align*}
	\int_{s_{n}^{-}}^{s_{n}^{+}} \left\| \epsilon(s)  \right\|_{L^2_{x} } \, {d} s\lesssim 1
\end{align*}
and
\begin{align*}
	\int_{s_{n}^{-}}^{s_{n}^{+}} \left\| \epsilon(s) \right\|_{L^2_{x} }^{2} \, {d} s\lesssim J^{1-\ell_n}\eta_{\ast} .
\end{align*}
Following the proof of Theorem \ref{t7.14}, we see that Theorem \ref{t7.13} can be applied on the interval $ \left[s_n^-,s_n^+ \right]$.
Thus \eqref{f10.8} and \eqref{f10.9} imply  
\begin{equation*}
\int_{s_{n}^{-}}^{s_{n}^{+}} \left\| \epsilon(s)  \right\|_{L^2_{x} }^{2}  \, {d} s
\lesssim \frac{1}{|I(k_{n})|} + J^{(\ell_n+1)/10} T_{\ast}^{1/10}J^{1-\ell_n}\eta_{\ast}
 \left(\int_{s_{n}^{-}}^{s_{n}^{+}} \left\| \epsilon(s)  \right\|_{L^2_{x} }^{2}  \, {d} s \right),
\end{equation*}
which further implies
\begin{align*}
\int_{s_{n}^{-}}^{s_{n}^{+}}  \left\| \epsilon(s)  \right\|_{L^2_{x} }^{2} \, {d} s	\lesssim \frac{1}{|I(k_{n})|}, 
\end{align*}
provided $k_n$ is sufficiently large. Thus \eqref{f10.5} holds.

Now, let $m$ be the smallest integer such that
 \begin{equation}\label{f10.12}
 	\frac{2^{2k_{n}}}{k_{n}^{2}} 2^{m} \geq |I(k_{n})|.
 \end{equation}
 Since $|I(k)| \leq |I(k_{n})|$ for all $0 \leq k \leq k_{n}$, \eqref{f10.12} implies that
 \begin{equation}\label{f10.13}
 	|s_{n}| \leq 2^{2k_{n} + m + 1}.
 \end{equation}
 Moreover, as $s_{n} \in [\frac{3 a_{n} + b_{n}}{4} ,\frac{a_{n} + 3 b_{n}}{4}]\subset I(k_n)$, we also have
 \begin{equation}\label{f10.13b}
      |s_{n}| \geq \frac{2^{2k_{n}+m}}{8k_n^2}.
 \end{equation}
 
 Let $r_{n}$ be the smallest integer satisfying 
 \begin{equation*} 
 	2^{\frac{2k_{n} + m + 1}{3}} \leq 2^{r_{n}}.
 \end{equation*}
Let $t_n=s^{-1}(s_n), \lambda_n=\frac{1}{4}2^{-k_n}$, and set $v_n(t,x)=\lambda_n^{\frac{d}{2}}u \left(\lambda_n^2t,\lambda_nx \right)$. 
After a spatial translation and a Galilean transformation, we continue to denote the resulting solution by $v_n$. Its modulation parameters $\tilde{\xi}(t)$, $\tilde{x}(t)$, and $\tilde{\gamma}(t)$ satisfy
	\begin{align} \label{yyz}
	&e^{i\tilde{\gamma}(t)}e^{ix\cdot\tilde{\xi}(t)}\tilde{\lambda}(t)^{d/2}v_n \left(t,\tilde{\lambda}(t)x+\tilde{x}(t) \right)
=\tilde{\epsilon}(t)+ Q, \\\notag 
	&\tilde{\lambda}(t)=\lambda_n^{-1}{\lambda \left(\lambda_n^2t \right)},\   t\in \left[0,\lambda_n^{-2}t_n \right],   \\\notag
	&\tilde{x}\left(\lambda_n^{-2}t_n\right)=\tilde{\xi} \left(\lambda_n^{-2}t_n \right)=0,\  \frac{ \left|\tilde{\xi}(t) \right|}{\tilde{\lambda}(t)}
 \lesssim \eta_{\ast},\    \left\|\tilde{\epsilon}(t) \right\|_{L^2_x }
 = \left\|\epsilon \left(\lambda_n^2t \right) \right\|_{L^2_x }, \\\notag
 &1\leq \tilde{\lambda}(t)\leq 16\cdot 2^{k_n},
 \  t\in  \left[0,\lambda_n^{-2}t_n \right],\\
	&\int_{0}^{\lambda_n^{-2}t_n} \frac{1}{\tilde{\lambda}(t)^2} \,{d}t =s_n  \ \mbox{and}\quad\tilde{\epsilon}(t) \mbox{ satisfies }\eqref{orthod}. \notag
\end{align}
The bound for $\widetilde\xi/\widetilde\lambda$ in \eqref{yyz}
follows from \eqref{f4.54} and \eqref{f8.2}.
Moreover, integrating \eqref{f4.54} and \eqref{f4.55} backwards from
$\lambda_n^{-2}t_n$, and using \eqref{f8.4} and Theorem~\ref{t9.16},
we obtain\footnote{Although this spatial-center bound is weaker than the
one assumed in Theorem~\ref{longtime}, the bound involving
$\tilde{\lambda}(t)^2$ gives only logarithmic factors in the spatial sums
\eqref{th1} and \eqref{zznn1}, which can be absorbed into $s_n^{1/25}$.
Thus the proof of Lemma~\ref{integ1} still yields the recursive estimate below.}
\begin{equation*}
 |\tilde{x}(t)|\lesssim\tilde{\lambda}(t)^2[1+\log(2+s_n)]
 \lesssim 2^{2k_n}[1+\log(2+s_n)],
 \quad t\in[0,\lambda_n^{-2}t_n].
\end{equation*}
Arguing as in the proof of Lemma~\ref{integ1}, with $u=v_n$
and $t_0=\lambda_n^{-2}t_n$ in \eqref{vvv1}, we obtain, for
$i\geq\lfloor\log_2(s_n)/4\rfloor+5$,
\begin{align*}
   & \|P_{\geq i}v_n\|_{U_{\Delta}^2\left([0,\lambda_{n}^{-2}t_n]\right)}\lesssim_{\delta} \left\|\tilde{\epsilon} \left(\lambda_n^{-2}t_n \right) \right\|_{L_x^2 } +O \left(s_n^{-10} \right)\\
    &+({\eta_{\ast}^{1-}}+\eta_{\ast}^{\frac{1}{2}-}+[s_n^{\frac{d}{d+2}\delta}+s_n^{\frac{d+4}{2(d+2)}\delta}](2^{-i}s_n^{1/25})^{1-\delta})\|P_{\geq i-4}v_n\|_{U_{\Delta}^2\left([0,\lambda_{n}^{-2}t_n]\right)}.
\end{align*}
Using \eqref{f10.5}, \eqref{f10.12}, \eqref{f10.13}, \eqref{f10.13b} and then making an induction on $i$, we obtain 
	\begin{equation}\label{f10.18}
	\left\| P_{\geq r_{n} + \frac{k_{n}}{4} + \frac{m}{4}} v_n  \right\|_{U^2_{\Delta} \left( \left[0,\lambda_{n}^{-2}t_n \right] \right)}
 \lesssim k_{n}^{2} 2^{-2k_{n}} 2^{-m},\, \mbox{ if } k_n \mbox{ is  sufficiently large. }
\end{equation}
Since $\tilde\xi(\lambda_n^{-2}t_n)=0$, the computations in
\eqref{f6.3}, together with \eqref{f10.5}, imply that
\begin{equation*}
 E\big(P_{\leq r_n+\frac{k_n}{4}+\frac{m}{4}+4}
 v_n(\lambda_n^{-2}t_n)\big)
 \lesssim
 \big(k_n^2 2^{-2k_n-m}
 2^{r_n+\frac{k_n}{4}+\frac{m}{4}}\big)^2.
\end{equation*}
Following the computations in \eqref{bbv} and \eqref{bbv2}, and using
\eqref{f10.18} to estimate the two high-frequency factors, we obtain
\begin{align*}
 &\sup_{t\in[0,\lambda_n^{-2}t_n]}
 \Big|E\big(P_{\leq r_n+\frac{k_n}{4}+\frac{m}{4}+4}v_n(t)\big)
 -E\big(P_{\leq r_n+\frac{k_n}{4}+\frac{m}{4}+4}
 v_n(\lambda_n^{-2}t_n)\big)\Big|\\
 &\qquad\lesssim
 2^{2r_n+\frac{k_n}{2}+\frac{m}{2}}
 \big\|P_{\geq r_n+\frac{k_n}{4}+\frac{m}{4}}v_n
 \big\|_{U_\Delta^2([0,\lambda_n^{-2}t_n])}^2
 \lesssim
 \big(k_n^2 2^{-2k_n-m}
 2^{r_n+\frac{k_n}{4}+\frac{m}{4}}\big)^2.
\end{align*}
Consequently,
\begin{equation}\label{f10.19}
 \sup_{t\in[0,\lambda_n^{-2}t_n]}
 E\big(P_{\leq r_n+\frac{k_n}{4}+\frac{m}{4}+4}v_n(t)\big)
 \lesssim k_n^4 2^{-\frac{13}{6}k_n-\frac{5}{6}m}
 \ll 2^{-\frac{21}{10}k_n}.
\end{equation}
Finally, the frequency-tail estimates in \eqref{ccc0}--\eqref{cccc0}
are uniform for $\tilde\lambda\geq1$, and the terms containing
$|\tilde\xi|^2/\tilde\lambda^2$ are absorbed by the positive term
in \eqref{ccc0}. Keeping the factor $\tilde\lambda^{-2}$ in
\eqref{ccc1}, the computations in \eqref{ccc0}, \eqref{ccc1},
\eqref{cccc0} and \eqref{cccc1}, together with \eqref{yyz} and
\eqref{f10.19}, give
\begin{equation}\label{f10.20}
 \|\epsilon(0)\|_{L_x^2}^2
 =\|\tilde\epsilon(0)\|_{L_x^2}^2
 \lesssim k_n^4 2^{-\frac16k_n-\frac56m}
 \ll 2^{-\frac1{10}k_n}.
\end{equation}
Letting $k_n\nearrow\infty$, we obtain $\|\epsilon(0)\|_{L_x^2}=0$,
which contradicts \eqref{contra}.

\textbf{Case 2.} There exists $c>0$ such that $\tilde{\lambda}(s)\geq c$.
In this case, Theorem \ref{t9.16} implies that $\lambda(s)>\tilde{c}$ for some constant $\tilde{c}>0$.
By rescaling, we can set
\begin{align}\label{qdz}
1\leq \lambda(s)\leq \frac{1}{\tilde{c}},\quad \forall\,  s>0.
\end{align}
Let $s'_n=\eta_{\ast}^{-1}2^{n}$, and let $\{s_j\}$ be the sequence defined in \eqref{f8.3}.
By \eqref{f8.4}, $s_j\lesssim2^jT_\ast$, and the integral mean value theorem,
there exist $0<\theta<1$ and a sequence $ \left\{\tilde{s}_n \right\}$ so that
$$ \left\|\epsilon \left(\tilde{s}_n \right) \right\|_{L^2_x }\lesssim \eta_{\ast} 2^{-n},\quad\tilde{s}_n\in  \left[\theta s'_n,s'_n \right]. $$
Setting $\tilde{t}_n=s^{-1} \left(\tilde{s}_n \right)$, we can apply a Galilean transform so that
$\xi \left(\tilde{t}_n \right)=0$. Arguing as in \eqref{f10.18} and \eqref{f10.19}, we have
	\begin{equation*}
	\left\| P_{\geq \tilde{r}_{n} + \frac{n}{8}} u(t)  \right\|_{U^2_{\Delta} \left( \left[0,\tilde{t}_n \right] \right)}
 \lesssim_{\eta_{\ast}} 2^{-n},\quad \mbox{ if } n \mbox{ sufficiently large. }
\end{equation*}
\begin{equation}\label{f10.23}
	\sup_{t\in \left[0,\tilde{t}_n \right]}E \left(P_{\leq \tilde{r}_{n} + \frac{n}{4}}  u(t) \right) \lesssim_{\eta_{\ast}} 2^{-\frac{5}{12}n},
\end{equation}
where $\tilde{r}_n=\left\lfloor\frac{1}{3}\log_2\tilde{s}_n\right\rfloor$.
Then as in \eqref{f10.20}, inequalities \eqref{qdz} and \eqref{f10.23} imply  
\begin{equation*} 
	 \left\|\epsilon(0) \right\|^2_{L^2_x }
\leq \sup_{t\in \left[0,\tilde{t}_n \right]} \left\|\epsilon(t) \right\|^2_{L^2_x }
\lesssim_{\tilde{c}} 2^{-\frac{5}{12}n}.
\end{equation*}
Letting $n\nearrow \infty$, we find that  $ \left\|\epsilon(0) \right\|_{L^2_x }=0$, which contradicts \eqref{contra}.
Thus, $u$ must be a soliton of the form \eqref{soliton}. \end{proof}
\subsubsection{Part B: finite time blowup}
Next, we address the finite time blowup case. 
Heuristically, we can use the pseudo-conformal transformation 
\[
u\rightarrow\frac{1}{|t|^{\frac{d}{2}}}\overline{u(\frac{1}{t},\frac{x}{t})}e^{\frac{i|x|^2}{4t}}
\]
to reduce the problem to the infinite time blowup case.

	\begin{theorem}\label{t9.18}
		If $u$ is the solution in Theorem \ref{t2.3} and
		\begin{equation*}
			\sup(I) <\infty,
		\end{equation*}
		then $u$ is a pseudo-conformal transformation of the soliton solution (of the form \eqref{pseudosoliton}).
	\end{theorem}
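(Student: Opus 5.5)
We plan to reduce Theorem~\ref{t9.18} to Theorem~\ref{t9.17} with the pseudo-conformal transformation $\mathcal C$. After a time translation, assume $\sup I=T_0>0$ and, by time reversal symmetry and translation, work with $w(t,x):=u(T_0-t,x)$ or, more conveniently, translate so that the blowup time is $0$ from the left: $u$ is defined on $(-t_1,0)$ and blows up as $t\nearrow0$. Then set
\[
v(\tau,x):=(\mathcal Cu)(\tau,x)=|\tau|^{-d/2}e^{i|x|^2/(4\tau)}\overline{u\left(\tfrac1\tau,\tfrac x\tau\right)},
\]
which is a strong $L^2$ solution of \eqref{1.1} on $(-\infty,-1/t_1)$, because the inverse-square Hartree nonlinearity is invariant under $\mathcal C$ (the kernel $|x|^{-2}$ has exactly the homogeneity making the transformation exact, as is used for the pseudo-conformal soliton). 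Mass is preserved, $M(v)=M(Q)$, and the $L^{2(d+2)/d}_{t,x}$ norm is invariant under $\mathcal C$, so $v$ blows up as $\tau\to-\infty$, i.e. backward in infinite time. After time reversal $\tau\mapsto-\tau$ (with complex conjugation), we obtain a solution $\tilde v$ that blows up forward with $\sup(\tilde I)=\infty$.

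The key step is to verify that $\tilde v$ satisfies the hypothesis \eqref{f2.32} of Theorem~\ref{t2.3}, namely $\sup_{\tau}\delta(\tilde v(\tau))\le\eta_\ast$ on the relevant half-line. Here one must check how $\mathcal C$ acts on the ground-state orbit at a fixed time: for fixed $t$, $(\mathcal Cu)(1/t)$ equals $u(t)$ composed with a dilation by $t$, conjugation, and multiplication by the chirp $e^{i|x|^2/(4\tau)}$. The first three are symmetries in \eqref{ppp}, but the chirp is not. The plan is to use the modulation decomposition of $u(t)$: since $u$ blows up in finite time near $Q$, the almost monotonicity (Theorem~\ref{t9.16}) and Lemma~\ref{alcompact} force $\lambda(t)\to0$, and the chirp acting on $\lambda(t)^{-d/2}Q((x-x(t))/\lambda(t))$ at scale $\lambda(t)/|t|$ after rescaling becomes $e^{i\lambda(t)^2|y|^2/(4t)}$ on the profile variable. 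So the distance changes by $O(\|(e^{i\lambda^2|y|^2/(4t)}-1)Q\|_{L^2})\lesssim\lambda(t)^2/|t|$. We therefore need $\lambda(t)^2\ll|t|$ as $t\to0$; this should follow from the standard fact that a finite-time blowup solution with compactness modulo symmetries has $N(t)=\lambda(t)^{-1}\gtrsim|t|^{-1/2}$ (local theory: the time of existence from data concentrated at scale $\lambda$ is $\gtrsim\lambda^2$), combined with the compactness near $Q$ (indeed $\lambda(t)^2/|t|\to0$, otherwise a rescaled limit would be a solution at mass $M(Q)$ close to $Q$ blowing up in bounded rescaled time, contradicting stability near the soliton). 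Then restricting to $\tau$ large (i.e. $t$ close to $0$) gives $\delta(\tilde v(\tau))\le 2\eta_\ast$, which after shrinking $\eta_\ast$ in the application is enough; one may also need the mildly weaker threshold, handled by running Theorem~\ref{t9.17} with $2\eta_\ast$.

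Applying Theorem~\ref{t9.17} to $\tilde v$ gives that $\tilde v$ is a soliton \eqref{soliton}. Inverting time reversal and $\mathcal C$ (which is an involution up to conjugation), and undoing the time translation to $T$, a direct computation converts \eqref{soliton} into the form \eqref{pseudosoliton}, with the Galilean parameter $\xi$ becoming the spatial blowup point and the spatial shift $x_0$ becoming the $(T-t)x_0$ drift. The main obstacle is the chirp estimate $\lambda(t)^2/|t|\to0$; if the soft compactness argument is insufficient, I would instead derive it from the modulation equations \eqref{f4.45} integrated in $s$ with Corollary~\ref{cor:epsLp}, which control $\lambda_s/\lambda$ and give $\lambda(t)^2\lesssim |t|\cdot o(1)$ once $s(t)\to\infty$ as $t\to0$.
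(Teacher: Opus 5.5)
This is essentially the paper's argument. You transform by $\mathcal C$, bound the chirp contribution to $\delta(v(t))$ by $\lambda^2/|t|$ (the paper's \eqref{finite:orbit-distance}), prove $\lambda(t)^2/|t|\to0$, and apply Theorem~\ref{t9.17} after time reversal.

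The one point to correct is your first justification of $\lambda(t)^2/|t|\to0$. The bound $N(t)\gtrsim|t|^{-1/2}$ only gives $\lambda(t)^2\lesssim|t|$. Moreover, a rescaled limit that is merely $\eta_\ast$-close to the orbit can blow up in bounded rescaled time: the pseudo-conformal soliton at orbit distance $\sim\eta_\ast$ does so after rescaled time $\sim\eta_\ast^{-1}$. So there is no contradiction with stability unless you already know $\|\epsilon(s)\|_{L^2_x}\to0$.

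Your fallback is the right argument and is exactly what the paper does. By \eqref{f8.32} and \eqref{f4.45}, $\lambda(\sigma)\sim\lambda(s)$ on $s$-windows whose length tends to infinity, hence $|t(s)|=\int_s^\infty\lambda(\sigma)^2\,d\sigma\gg\lambda(s)^2$. Since \eqref{f8.32} also forces $\|\epsilon(1/t)\|_{L^2_x}\to0$, you get $\delta(v(t))\to0$ as $t\to-\infty$, so the $2\eta_\ast$ threshold bookkeeping is unnecessary.
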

    
	\begin{proof}
	Without loss of generality, we assume $-1\in I$, $\sup(I) = 0$, and
		\begin{align*}
			\sup_{-1<t<0} \|\epsilon (t)\|_{L^2_x }\leq \eta_{\ast}.
		\end{align*}
		Applying the pseudo-conformal transformation, we obtain
\[
v(t,x)=|t|^{-d/2}
\overline{u\left(\frac1t,\frac{x}{t}\right)}
e^{i|x|^2/(4t)},\qquad t\leq-1.
\]
This is a solution of \eqref{1.1} which blows up backward in infinite
time. By \eqref{decomp}, with
$y=(x-tx(1/t))/(t\lambda(1/t))$, we have
\[
v(t,x)=\frac{e^{i\gamma(1/t)}e^{iy\cdot\xi(1/t)}e^{i|x|^2/(4t)}}
{(|t|\lambda(1/t))^{d/2}}
\left[Q(y)+\overline{\epsilon(1/t,y)}\right].
\]
Since
\[
\frac{|x|^2}{4t}
=\frac{|x-tx(1/t)|^2}{4t}
+\frac{x\cdot x(1/t)}2-\frac{t|x(1/t)|^2}4,
\]
removing the first term on the right from the phase of the $Q$ term
gives a function in the ground-state orbit. Thus, by \eqref{ppp}
and the radiality of $Q$,
\begin{align}\label{finite:orbit-distance}
\delta(v(t))
&\leq\|\epsilon(1/t)\|_{L_x^2}
+\left\|Q(y)\left(e^{it\lambda(1/t)^2|y|^2/4}-1\right)\right\|_{L_y^2}
\notag\\
&\leq\|\epsilon(1/t)\|_{L_x^2}
+\frac{|t|\lambda(1/t)^2}{4}\big\||y|^2Q\big\|_{L_y^2}.
\end{align}
Translate the origin of renormalized time to the physical time $-1$
and rescale so that $\lambda(s)\leq1$ for $s\geq0$.
Define $I(k)$ as in \eqref{f10.3}. By \eqref{f4.45} and
\eqref{f8.32}, there is a sequence $c_k\nearrow\infty$ such that
\[
|I(k)|\geq c_k,\qquad \lambda(s)\sim2^{-k}\quad(s\in I(k)),
\]
for all sufficiently large $k$.
For $s\in I(k)$, \eqref{f4.23} gives
\[
|t(s)|=\int_s^\infty\lambda(\sigma)^2\,d\sigma
\geq\int_{I(k+1)}\lambda(\sigma)^2\,d\sigma
\gtrsim c_{k+1}2^{-2k}.
\]
Consequently,
\[
\frac{\lambda(t)^2}{|t|}\longrightarrow0\quad(t\nearrow0),
\qquad |t|\lambda(1/t)^2\longrightarrow0\quad(t\to-\infty).
\]
Together with \eqref{f8.32} and \eqref{finite:orbit-distance},
this implies $\delta(v(t))\to0$ as $t\to-\infty$.
For some sufficiently negative $t_0$, the solution $v$ therefore
satisfies the backward-in-time version of the hypotheses of
Theorem~\ref{t2.3} on $(-\infty,t_0]$.
By time reversal and Theorem~\ref{t9.17}, $v$ is a solitary wave.
Hence $u$ is a pseudo-conformal transform of a solitary wave of the
form \eqref{pseudosoliton}.
\end{proof}
Now, Theorems \ref{t9.17} and \ref{t9.18} imply Theorem \ref{t2.3}, and by the reduction in Section \ref{reductionsec},   Theorem \ref{main1} holds. 

\appendix

\section{Modulation analysis}\label{sec-mod}
In this appendix, we collect the modulation estimates used in the proof of
Theorem~\ref{t2.3}. We first recall the spectral properties of the linearized
operators and then state the modulation decomposition and the coercivity estimate.

We need to linearize \eqref{1.1} at $Q$. For $u(t) = e^{it}(Q + \varepsilon(t))$  in \eqref{1.1} with $ \varepsilon = \varepsilon_1 + i \varepsilon_2$, we have
\begin{align} \label{lin-flow}
 \partial_t \begin{pmatrix}
    \varepsilon_1\\
    \varepsilon_2
\end{pmatrix} + \begin{pmatrix}
    0 &  -L_-\\
    L_+ & 0
\end{pmatrix} \begin{pmatrix}
    \varepsilon_1\\
    \varepsilon_2
\end{pmatrix} = \begin{pmatrix}
     \text{Im}(\mathcal{N}(Q, \varepsilon))\\
     -\text{Re}(\mathcal{N}(Q, \varepsilon))
\end{pmatrix},
\end{align}
where $\mathcal{N}(Q, \varepsilon)$ is nonlinear in $\varepsilon$ and the linearized operators $L_{\pm}$ are given by
\begin{align}\label{lin-op}
L_{+} &= -\Delta + 1 - \phi_{Q^2} - 2 Q \phi_{(\cdot\;  Q)},\\ \label{lin-op2}
L_{-} &= -\Delta + 1 - \phi_{Q^2},
\end{align}
with
\[ 
\phi_{Q^2} = |x|^{-2}\ast Q^2,\;\;\phi_{(\cdot \; Q)} =|x|^{-2} \ast (\;\cdot \; Q).
\]
Thus $L_{\pm}$ are posed on  $L^2(\R^d)$ with domain $ D = H^2(\R^d)\subset L^2(\R^d)$ and hence the operator $L_+$ is non-local.

It has been shown that the ground state $Q$ is non-degenerate in $d=4$, i.e.
\[
\ker L_{+}=\operatorname{span} \{\partial_{x_1}Q,\cdots,\partial_{x_d}Q\}
\]
We refer the readers to \cite{Chen,Li}. However, for other dimensions, the uniqueness and non-degeneracy of the ground state remain unknown, but they are widely believed to hold. Since Proposition~\ref{positive1} relies on non-degeneracy, we must make the uniqueness and non-degeneracy  assumptions in other dimensions, as in \cite{LZ28,Wang-Gao}.

In \cite{CMP1}, the authors established some fundamental properties of the linearized operators $L_{+}$ and $L_{-}$ for the case $d=4$. One can easily verify, however, that these properties remain valid in other dimensions under the assumption that the ground state $Q$ is unique and non-degenerate. We summarize these results in the following theorem:
\begin{theorem}[\cite{CMP1}]\label{posit}
The following four properties hold:
	\begin{enumerate}
		\item ${L}_{+}$ is a self-adjoint operator and $\sigma_{ess}({L}_{+}) = [1, \infty)$.

		\item $L_{+}$ has a unique negative eigenvalue $\lambda$ associated with an eigenfunction $\chi\in H^1$. 

        \item $( {L}_{-} u,u )_{L^2}\geq 0$ for any real-valued $u\in H^1$.
		
		\item There exists a constant $c_d>0$ such that if $u\in H^1(\mathbb{R}^d)$ is real-valued and satisfies $u\perp_{L^2}\{\chi,\partial_{x_1}Q,\cdots,\partial_{x_d}Q\}$, then $( {L}_{+} u,u )_{L^2}\geq c_d\|u\|^2_{H^1}$.
	\end{enumerate}

\end{theorem}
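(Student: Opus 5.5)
This is Theorem~\ref{posit}; the approach is to transfer the $d=4$ arguments of \cite{CMP1} to general $d\geq3$, using only uniqueness and nondegeneracy of $Q$, the decay bounds of Proposition~\ref{grondstate1}, and the variational characterization of $Q$.

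\textbf{Property (1).} Write $L_+=-\Delta+1+V$ with
\[
V:=-\phi_{Q^2}-2Q\phi_{(\cdot\,Q)}.
\]
First check that $V$ is a relatively compact perturbation of $-\Delta+1$. The multiplication part $\phi_{Q^2}$ is bounded and decays like $|x|^{-2}$ at infinity. The nonlocal part $f\mapsto Q\,(|x|^{-2}*(Qf))$ maps $H^1$ into $L^2$ compactly: by Hardy--Littlewood--Sobolev it is bounded from $L^2$ into weighted spaces, and the exponential decay of $Q$ gives compactness through a cutoff-at-infinity argument. Both parts are symmetric, so Kato--Rellich gives self-adjointness on $H^2$, and Weyl's theorem gives $\sigma_{ess}(L_+)=[1,\infty)$.

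\textbf{Property (3).} Since $L_-Q=0$ by \eqref{grounstate} and $Q>0$, the bottom of the spectrum of the Schr\"odinger operator $L_-=-\Delta+1-\phi_{Q^2}$ is $0$, with ground state $Q$ (Perron--Frobenius, or the ground-state substitution $u=Qw$). This gives
\[
(L_-u,u)=\int Q^2|\nabla w|^2\,dx\ge0 .
\]

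\textbf{Property (2).} For existence of a negative direction, compute $(L_+Q,Q)=-2\int\phi_{Q^2}Q^2<0$. For uniqueness, use the variational characterization of $Q$: it minimizes the Weinstein functional $J$, and the second variation of $J$ at $Q$ is nonnegative on the codimension-one subspace $\{u:(u,Q)=0\}$ (after the scaling normalization). This says $L_+$ is nonnegative on a codimension-one subspace, so it has at most one negative eigenvalue. Call that eigenvalue $\lambda$ and its eigenfunction $\chi\in H^1$; by property (1), $\chi$ decays.

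\textbf{Property (4).} Let $u$ be real-valued with $u\perp\chi$ and $u\perp\partial_{x_j}Q$ for all $j$. The spectral theorem, property (1), and nondegeneracy give
\[
(L_+u,u)\geq\mu\|u\|_{L^2}^2,\qquad \mu=\min\bigl(1,\ \text{smallest positive eigenvalue}\bigr)>0 .
\]
Here $\mu>0$ because $\ker L_+=\operatorname{span}\{\partial_{x_j}Q\}$ and the eigenvalues below $1$ are isolated. To upgrade to $H^1$, interpolate
\[
(L_+u,u)\geq\|\nabla u\|_{L^2}^2+\|u\|_{L^2}^2-C\|u\|_{L^2}^2 ,
\]
which uses $|(Vu,u)|\lesssim\|u\|_{L^2}^2$. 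That bound follows from $\phi_{Q^2}\in L^\infty$ and from Hardy--Littlewood--Sobolev together with the decay of $Q$ for the nonlocal term. Taking a convex combination of the two lower bounds gives $(L_+u,u)\ge c_d\|u\|_{H^1}^2$.

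\textbf{Main obstacle.} The delicate step is (2). Because $L_+$ is nonlocal, Sturm--Liouville and nodal arguments are unavailable, so the count of negative eigenvalues has to come entirely from the second variation of $J$ at the minimizer. That step requires checking carefully that the constraint directions are generated by $Q$ and by $x\cdot\nabla Q$.
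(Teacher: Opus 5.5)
Your argument is correct, but it is not the paper's route. The paper gives no proof of Theorem~\ref{posit}: it cites \cite{CMP1} for $d=4$ and asserts that the four properties persist in other dimensions under uniqueness and nondegeneracy of $Q$. Your proposal actually carries out that verification, and it shows where each hypothesis is used.

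Properties (1)--(3) need only that $Q>0$ solves \eqref{grounstate} and minimizes $J$. For (1), the nonlocal part $f\mapsto Q\phi_{Qf}$ is bounded and even compact on $L^2$ itself: split $|x|^{-2}$ into an $L^1$ piece and an $L^\infty$ piece. Kato--Rellich and Weyl are then immediate. Nondegeneracy enters only in (4), where it supplies the spectral gap above $\ker L_+$. The variational count in (2) correctly replaces the ODE/Sturm arguments, which are unavailable for the nonlocal $L_+$.

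That count is also easier than your ``main obstacle'' suggests. By \eqref{grounstate} and the scaling identity, $\|Q\|_{L^2}^2=\|\nabla Q\|_{L^2}^2=\frac12\int\phi_{Q^2}Q^2=:m$. A direct expansion, using $-\Delta Q=-Q+\phi_{Q^2}Q$, gives for every real $h\in H^1$
\[
\frac{d^2}{d\varepsilon^2}\Big|_{\varepsilon=0}\log J(Q+\varepsilon h)
=\frac{2}{m}(L_+h,h)_{L^2}+\frac{8}{m^2}(Q,h)_{L^2}(\nabla Q,\nabla h)_{L^2}\geq0 .
\]
The inequality holds for all $h$, not only on a subspace, because $Q$ is a global minimizer. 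Hence $(L_+h,h)_{L^2}\geq0$ whenever $h\perp Q$, and no constraint involving $x\cdot\nabla Q$ is needed. The scaling direction $\frac d2Q+x\cdot\nabla Q$ is automatically orthogonal to $Q$ by mass-criticality and is a null direction of the form.

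One small correction: decay of $\chi$ does not follow from property (1). The paper proves it separately, by the weighted estimate in the eigenfunction regularity lemma after the theorem. It is not part of the statement, so nothing in your proof depends on it.
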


We also need the following regularity estimate of the eigenfunction $\chi$:
\begin{lemma}[Regularity of the eigenfunction]
For any $p\in (1,\infty)$ and integer $k\geq0$, we have
\begin{equation}\label{equ:nablakchi}
    \||\nabla|^k \chi\|_{L^p}\lesssim_{p,k}1.
\end{equation}
Moreover, there exists $\delta_0>0$ such that
\begin{equation}\label{equ:chidecay}
    |\chi(x)|\lesssim e^{-\delta_0|x|}.
\end{equation}
\end{lemma}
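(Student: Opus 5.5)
The plan is to mimic the proof of Proposition~\ref{grondstate2}: start from the eigenvalue equation and bootstrap elliptic regularity. Since $\chi$ is the eigenfunction for the negative eigenvalue $\lambda<0$ of $L_+$, we have
\[
-\Delta\chi+(1-\lambda)\chi=\phi_{Q^2}\chi+2Q\,\phi_{Q\chi},
\]
with $1-\lambda>1$. The argument has three steps: integrability, derivatives, and pointwise decay.

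\textbf{Step 1: integrability of $\chi$.} From $\chi\in H^1$ and Sobolev embedding we get $\chi\in L^2\cap L^{2d/(d-2)}$. Proposition~\ref{grondstate1} gives $\phi_{Q^2}\in L^q$ for $q>d/2$ and $\phi_{Q^2}\in L^\infty$. By Hardy--Littlewood--Sobolev and the rapid decay of $Q$, we get $Q\phi_{Q\chi}\in L^p$ for a range of $p$. Writing $\chi=(-\Delta+1-\lambda)^{-1}[\cdots]$ and using that the Bessel potential maps $L^p\to W^{2,p}$, we bootstrap the integrability of $\chi$ upward (via Sobolev embedding at each stage) until $\chi\in L^p$ for all $p\in(1,\infty]$. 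Downward integrability toward $p$ near $1$ follows from the right-hand side lying in $L^p$ for $p$ close to $1$: the term $Q\phi_{Q\chi}$ is fine thanks to the decay of $Q$, and $\phi_{Q^2}\chi\in L^p$ for $p$ close to $1$ requires $\chi$ already in $L^{r}$ for small $r$. If that is circular, I would instead get the low-$p$ bound from Step 3's exponential decay, so Step 3 should be done before finishing Step 1.

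\textbf{Step 2: higher derivatives.} Calder\'on--Zygmund gives $\chi\in W^{2,p}$. Then iterate exactly as for $Q$: if $\chi,Q\in W^{2k,p}$ for all $p$, the Leibniz rule and HLS put $\Delta^k$ of the right-hand side in $L^p$, so $\chi\in W^{2k+2,p}$. This yields \eqref{equ:nablakchi}.

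\textbf{Step 3: exponential decay \eqref{equ:chidecay}.} This is the main step. Write
\[
(-\Delta+1-\lambda)\chi=V\chi+2Q\phi_{Q\chi}, \qquad V=\phi_{Q^2}.
\]
The potential $V$ decays only like $|x|^{-2}$, but it tends to $0$. The source $2Q\phi_{Q\chi}$ is bounded by $e^{-\delta|x|}\|\chi\|_{L^2\cap L^\infty}$, which decays exponentially. Choose $R$ with $|V|\le \tfrac12(1-\lambda)$ on $|x|>R$. Then on $\{|x|>R\}$, $|\chi|$ is a subsolution:
\[
-\Delta|\chi|+\tfrac12(1-\lambda)|\chi|\le Ce^{-\delta|x|}
\]
in the distributional sense, by Kato's inequality. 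Compare with the barrier $Me^{-\delta_0|x|}$, where $\delta_0<\min(\delta,\sqrt{(1-\lambda)/2})$ and $M$ is large enough to dominate $\|\chi\|_{L^\infty}$ on $|x|=R$. Since $\chi\to0$ at infinity (it lies in $W^{2,p}$ for large $p$, hence is uniformly continuous and in $L^2$), the maximum principle gives $|\chi|\le Me^{-\delta_0|x|}$.

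The obstacle is the nonlocal term $\phi_{Q\chi}$, which decays only like $|x|^{-2}$ and therefore cannot be absorbed into the potential. It must be kept as a source term, and it is harmless precisely because it is multiplied by $Q$.
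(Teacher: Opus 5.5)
Your proposal is correct. The integrability bootstrap and the Calder\'on--Zygmund iteration for derivatives are the same as in the paper. The paper also gets $\chi\in L^p$ for $p$ near $1$ exactly via your fallback, from the exponential decay. In fact the downward bootstrap would not have been circular either: $\phi_{Q^2}\in L^q$ for every $q>d/2$ and the Bessel kernel is integrable, so each pass improves $1/p$ by almost $2/d$.

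The genuine difference is the decay step. The paper uses an Agmon-type weighted energy estimate instead of a barrier:
\begin{itemize}
\item It multiplies \eqref{egen} by $\theta_\epsilon\bar\chi$, where $\theta_\epsilon=e^{\frac{\delta}{2}\frac{|x|}{1+\epsilon|x|}}$.
\item It uses $|\nabla\theta_\epsilon|\le\theta_\epsilon$ and $1-\lambda>1$ to handle the gradient cross term.
\item It absorbs $\phi_{Q^2}$ outside a ball where it is at most $1/4$.
\item It bounds the nonlocal term by $\|\phi_{\chi Q}\|_{L^\infty}\|e^{\frac{\delta}{2}|x|}Q\|_{L^2}\|\chi\|_{L^2}$, and lets $\epsilon\to0$ to get $\int e^{\frac{\delta}{2}|x|}|\chi|^2\,dx<\infty$.
\item It turns this weighted bound into a pointwise one by averaging over a ball of radius $\sim|\chi(x)|$, using the Lipschitz continuity of $\chi$. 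This gives $\delta_0=\delta/(2(d+2))$.
\end{itemize}

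Your Kato-inequality-plus-comparison argument on $\{|x|>R\}$ is equally valid. The barrier computation $-\Delta w+cw\ge(c-\delta_0^2)w$ is correct. The exterior maximum principle applies to the continuous subsolution $|\chi|-w$, which tends to $0$ at infinity. Your route yields a much better rate, any $\delta_0<\min(\delta,\sqrt{(1-\lambda)/2})$. The paper's route is purely integral and avoids Kato's inequality and the maximum principle, at the price of a lossy $L^2$-to-pointwise conversion.

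Both arguments treat $2Q\phi_{Q\chi}$ the same way, as an exponentially decaying source. Since the decay is only used qualitatively later in the paper, either rate suffices.
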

\begin{proof}
    Clearly, $\chi$ satisfies the elliptic equation
    \begin{equation}\label{egen}
        -\Delta \chi+(1-\lambda)\chi=\phi_{Q^2} \chi+2Q\phi_{\chi Q}.
    \end{equation}
By Proposition~\ref{grondstate1}, $\phi_{Q^2}\in L^\infty$ and there
exists $\delta>0$ such that
\[
 |Q(x)|+|\nabla Q(x)|\lesssim e^{-\delta|x|}.
\]
We first bootstrap the integrability of $\chi$. Since $d\geq3$,
the restrictions of $K(x)=|x|^{-2}$ to $\{|x|\leq1\}$ and
$\{|x|>1\}$ belong to $L^1$ and $L^\infty$, respectively.
Using $Q\in L^q$ for every $1\leq q\leq\infty$, Young's and
H\"older's inequalities therefore give
\[
 \|\phi_{Q^2}f+2Q\phi_{Qf}\|_{L^p}
 \lesssim_p \|f\|_{L^p},\qquad 1<p<\infty.
\]
Starting with $\chi\in L^2$, global elliptic regularity applied to
\eqref{egen}, followed by Sobolev embedding, yields by iteration
$\chi\in W^{2,p}$ for every $2\leq p<\infty$.
Taking $p>d$, we obtain
$\chi\in C^{1,\mu}(\mathbb{R}^d)$ with $\mu=1-d/p\in(0,1)$.
In particular, $\chi$ is uniformly Lipschitz continuous.

Next we prove the exponential decay of $\chi$. To this end, we multiply both sides of \eqref{egen} by the function $\theta_{\epsilon}\bar{\chi}:=e^{\frac{\delta}{2}\frac{|x|}{1+\epsilon|x|}}\bar{\chi}$ and then integrate by parts, then we have
\begin{equation}\label{regu1}
\begin{aligned}
&\Re\int\nabla\chi\cdot\nabla(\theta_\epsilon\bar\chi)\,dx
+(1-\lambda)\int\theta_\epsilon|\chi|^2\,dx\\
&\quad=\int\theta_\epsilon\phi_{Q^2}|\chi|^2\,dx
+2\Re\int\theta_\epsilon\phi_{\chi Q}Q\bar\chi\,dx.
\end{aligned}
\end{equation}

Noticing that $\nabla (\theta_{\epsilon} \bar{\chi}) = \bar{\chi} \nabla \theta_{\epsilon} + \theta_{\epsilon} \nabla \bar{\chi}$ and $|\nabla \theta_{\epsilon}|\leq\theta_{\epsilon}$ for all small $\delta,\epsilon>0$, using the Cauchy--Schwarz inequality, it is easy to see that
\[
\Re \big( \nabla \chi \cdot \nabla (\theta_{\epsilon} \bar{\chi}) \big) \geq \theta_{\epsilon} |\nabla \chi|^2 - \theta_{\epsilon} |\chi| |\nabla \chi| \geq \theta_{\epsilon} |\nabla \chi|^2 - \left[ \frac{1}{2} \theta_{\epsilon} |\chi|^2 + \frac{1}{2} \theta_{\epsilon} |\nabla \chi|^2 \right].
\]
Plugging it into \eqref{regu1}, we obtain
\begin{equation}\label{regu2}
  \int_{\mathbb{R}^d} \theta_{\epsilon} |\chi|^2 \, dx \leq 2\int_{\mathbb{R}^d} \theta_{\epsilon} \phi_{Q^2}|\chi|^{2} \, dx+4\Re\int_{\mathbb{R}^d} \theta_{\epsilon} \phi_{\chi Q}Q\bar{\chi} \, dx.  
\end{equation}
By Proposition \ref{grondstate1}, we know that $\phi_{Q^2}\lesssim \frac{1}{|x|^2}$ as $|x|\to\infty$. And hence there exists $R>0$ such that
\[
|\phi_{Q^2}(x)| \leq \frac{1}{4}, \quad |x| \geq R.
\]
Using this fact together with $|\theta_{\epsilon}(x)| \leq e^{\frac{\delta}{2}|x|}$ we obtain
\begin{equation}\label{regu3}
   2 \int_{\mathbb{R}^d} \theta_{\epsilon} \phi_{Q^2}|\chi|^{2} \, dx \leq 2 \int_{|x| \leq R} e^{\frac{\delta}{2}|x|} \phi_{Q^2}|\chi|^{2} \, dx + \frac{1}{2} \int_{\mathbb{R}^d} \theta_{\epsilon} |\chi|^2 \, dx. 
\end{equation}
Substituting this inequality into \eqref{regu2} gives
\begin{align*}
\int_{\mathbb{R}^d}\theta_{\epsilon}|\chi|^2\,dx
&\leq 4\int_{|x|\leq R}
 e^{\frac{\delta}{2}|x|}\phi_{Q^2}|\chi|^2\,dx
 +8\left|\int_{\mathbb{R}^d}
 \theta_{\epsilon}\phi_{\chi Q}Q\bar{\chi}\,dx\right|\\
&\leq C(\delta)<\infty,
\end{align*}
where $C(\delta)$ is independent of $\epsilon$.
Indeed, the kernel decomposition above and
$\chi Q\in L^1\cap L^\infty$ imply
$\phi_{\chi Q}\in L^\infty$, and
\[
\left|\int_{\mathbb{R}^d}
\theta_{\epsilon}\phi_{\chi Q}Q\bar{\chi}\,dx\right|
\leq
\|\phi_{\chi Q}\|_{L^\infty}
\|e^{\frac{\delta}{2}|x|}Q\|_{L^2}
\|\chi\|_{L^2}.
\]
Letting $\epsilon\to0$, we get
\[\int_{\mathbb{R}^d} e^{\frac{\delta}{2}|x|} |\chi|^2 \, dx < \infty.\]
The finiteness of $\int_{\mathbb{R}^d} e^{\frac{\delta}{2}|x|} |\chi|^2 \, dx$ is sufficient for us to obtain the desired decay property. In fact,
as $\chi(x)$ is uniformly Lipschitz continuous on $\mathbb{R}^d$, there exists $L > 0$ such that
\[
|\chi(x) - \chi(y)| \leq L|x - y| \implies |\chi(x)| \leq |\chi(y)| + L|x - y|, \quad \forall y \in \mathbb{R}^d.
\]
Naturally we also have
\[
|\chi(x)|^2 \leq 2|\chi(y)|^2 + 2L^2|x - y|^2, \quad \forall y \in \mathbb{R}^d.
\]
Consequently,
\[
|\chi(x)|^2 \leq 4|\chi(y)|^2, \quad \forall y \in B_{0}(x) := \{y : |x - y| \leq \tfrac{1}{2L}|\chi(x)|\}.
\]
Integrating both sides over $y \in B_0(x)$ yields
\begin{equation}\label{equ:chixcondchiy}
    \omega_d \left( \frac{1}{2L}|\chi(x)| \right)^d |\chi(x)|^2 \leq 4 \int_{B_0(x)} |\chi(y)|^2 \, dy.
\end{equation}
Since $\chi\in L^2(\mathbb{R}^d)$ is uniformly continuous,
$\chi(x)\to0$ as $|x|\to\infty$. Thus there exists $R_0\gg1$
such that $|\chi(x)|\leq1$ whenever $|x|\geq R_0$.
For such $x$, we have
\[
|x - y| \leq \frac{1}{2L} |\chi(x)| \leq \frac{1}{2L} \implies |y| - |x| + \frac{1}{2L} \geq 0, \quad  y \in B_0(x),
\]
which implies
\[
e^{|x|} \lesssim e^{|y|} \implies e^{|y| - |x|} \gtrsim 1.
\]
This together with \eqref{equ:chixcondchiy} yields
\[
|\chi(x)|^{d+2} e^{\frac{\delta}{2}|x|} \lesssim \int_{B_0(x)} e^{\frac{\delta}{2}|y|} |\chi(y)|^2 \, dy \leq \int_{\mathbb{R}^d} e^{\frac{\delta}{2}|y|} |\chi(y)|^2 \, dy < \infty, \quad |x| \geq R_0,
\]
which implies $|\chi|\lesssim e^{-\delta_0|x|}$ with $\delta_0=\frac{\delta}{2(d+2)}$. And so \eqref{equ:chidecay} follows.

It remains to prove $\chi\in W^{k,p}$ for any $k\geq0$ and $p>1$. To this end, we observe that as $|\chi|\lesssim e^{-\delta_0|x|}$, $\chi\in L^p$ for any $p>1$ and hence by the Hardy-Littlewood-Sobolev inequality and H\"older's inequality, implies $\phi_{Q^2} \chi+2Q\phi_{\chi Q}\in L^p$ for any $p>1$, which again by the Calderón-Zygmund estimate implies that 
    $\chi\in W^{2,p}$. Now using chain rule and the fact that $Q\in W^{k_0,p}$ for all $k_0\geq0$ and $p>1$,  the desired estimate comes from Calderón-Zygmund estimate by iteration.
\end{proof}

Our next goal is  to present a decomposition of the solution $u(t)$ and its initial properties. This decomposition is pivotal in deriving the uniform control of the remainder in the modulation analysis. Recalling Lemma \ref{att}, if $u(t)$ is the blowup solution in Theorem \ref{t2.3}, it admits the decomposition:
\begin{align*}
	& u(t)=e^{i\gamma_{0} (t)}e^{ix\cdot\xi_0(t)}\lambda_{0}^{d/2}(t)\left[Q(\lambda_{0}(t)( x + x_0(t)) )+\epsilon_{0}(t,\lambda_{0}(t)( x + x_0(t)) )\right], 
      \\
    & \mbox{ with }  \|\epsilon_0(t)\|_{L^2_x }=\delta\big(u(t)\big)\leq \eta_{\ast}.
\end{align*}
We then claim that we can refine this decomposition via  using the implicit function theorem, which can be stated as the following theorem:
\begin{theorem}\label{modulation}
 Let $u\in L^2(\R^d)$. There exists $\alpha > 0$ sufficiently small such that if there exist $\lambda_{0} > 0$, $\gamma_0\in [0,2\pi]$, $x_{0} \in \mathbb{R}^d$, and $\xi_{0} \in \mathbb{R}^d$ satisfying 
\begin{equation*} 
\| e^{i \gamma_{0}} e^{ix \cdot \xi_{0}} \lambda_{0}^{\frac{d}{2}} u (\lambda_{0} x + x_{0}) -
Q(x) \|_{L^{2}} \leq \alpha,
\end{equation*}
then there exist unique parameters $\lambda>0$, $\gamma\in\R$,
$\tilde x,\xi\in\R^d$ in a sufficiently small neighborhood of
$(\lambda_0,\gamma_0,x_0,\xi_0)$, measured by the left-hand side of
\eqref{f4.4}, such that if 
\begin{equation*} 
\epsilon(x):= e^{i \gamma} e^{ix \cdot \xi} \lambda^{d/2} u \left(\lambda x + \tilde{x} \right) - Q(x),
\end{equation*}
then the following orthogonality conditions hold: 
\begin{align}\label{f4.3}
( \epsilon,\chi  )_{L^2}
&= ( \epsilon,i\chi  )_{L^2}
=( \epsilon,\partial_{x_j}Q  )_{L^2}=( \epsilon,i\partial_{x_j}Q  )_{L^2}=0, \quad j=1,\cdots,d,
\end{align}
and, with the phase chosen near
$\gamma_0+\lambda_0^{-1}\xi_0\cdot(\tilde x-x_0)$, we have
\begin{align}\label{f4.4}
\aligned
& \|\epsilon\|_{L^2} +   \left|\frac{\lambda}{\lambda_{0}} - 1 \right|
+  \left|\gamma - \gamma_{0} - \frac{\xi_{0}}{\lambda_{0}} \cdot \left(\tilde{x} - x_{0} \right) \right|
 +  \left|\xi - \frac{\lambda}{\lambda_{0}} \xi_{0} \right| +  \left|\frac{\tilde{x} - x_{0}}{\lambda_{0}} \right|  \\
\lesssim& \| e^{i \gamma_{0}} e^{ix \cdot \xi_{0}} \lambda_{0}^{d/2} u \left(\lambda_{0} x + x_{0} \right) - Q(x) \|_{L^{2}}.
\endaligned
\end{align}
\end{theorem}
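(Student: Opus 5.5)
The plan is to apply the implicit function theorem to the map that sends modulation parameters to the $2d+2$ inner products in \eqref{f4.3}. By the symmetries of \eqref{1.1} and of $L^2$, I first normalize: replace $u$ by $w:=e^{i\gamma_0}e^{ix\cdot\xi_0}\lambda_0^{d/2}u(\lambda_0x+x_0)$, so that $\|w-Q\|_{L^2}\le\alpha$, and look for parameters near the identity. For $p=(\mu,\theta,y,\zeta)\in(0,\infty)\times\R\times\R^d\times\R^d$ near $(1,0,0,0)$, set
\[
\epsilon_p:=e^{i\theta}e^{ix\cdot\zeta}\mu^{d/2}w(\mu x+y)-Q,
\]
and define $\Phi(p,w)\in\R^{2d+2}$ to be the vector with components $(\epsilon_p,\chi)$, $(\epsilon_p,i\chi)$, $(\epsilon_p,\partial_jQ)$, $(\epsilon_p,i\partial_jQ)$. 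The symmetry group acts continuously on $L^2$, but it is not differentiable there. I would therefore move the group action onto the test functions: $(\epsilon_p,f)_{L^2}=(w,g_p^{-1}f)_{L^2}-(Q,f)_{L^2}$, and $g_p^{-1}f$ is a $C^1$ map of $p$ into $L^2$ because $f\in\{\chi,\partial_jQ\}$ is smooth and decays exponentially (Proposition~\ref{grondstate1} and the regularity lemma for $\chi$). This makes $\Phi$ a $C^1$ map on a neighborhood of $((1,0,0,0),Q)$ in $\R^{2d+2}\times L^2$, with $\Phi((1,0,0,0),Q)=0$.

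The key step is to check that $D_p\Phi$ at $((1,0,0,0),Q)$ is invertible. The derivatives of $e^{i\theta}e^{ix\cdot\zeta}\mu^{d/2}Q(\mu x+y)$ at the identity are $\Lambda Q:=\tfrac d2Q+x\cdot\nabla Q$ in $\mu$, $iQ$ in $\theta$, $\partial_jQ$ in $y_j$, and $ix_jQ$ in $\zeta_j$. The Jacobian is the matrix of real inner products of these four families with $\chi,i\chi,\partial_kQ,i\partial_kQ$. Because $Q$ and $\chi$ are real, the real and imaginary parts decouple. By radiality and parity, $(\Lambda Q,\partial_kQ)=0$ and $(\partial_jQ,\chi)=0$; this uses that $\chi$ is radial, which I take from uniqueness of the negative eigenvalue. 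Also $(\partial_jQ,\partial_kQ)=\delta_{jk}\|\partial_1Q\|^2$ and $(x_jQ,\partial_kQ)=-\tfrac12\delta_{jk}\|Q\|^2$. The remaining entries are $(\Lambda Q,\chi)$ and $(Q,\chi)$. The matrix is then block triangular, and its determinant is, up to nonzero factors, $(\Lambda Q,\chi)(Q,\chi)$.

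This nondegeneracy is where I expect the main obstacle. The paper already notes that $(Q,\chi)\ne0$ follows from Theorem~\ref{posit}, because $(L_+Q,Q)<0$ and $Q\perp\partial_jQ$. For $(\Lambda Q,\chi)$ I would use $L_+\Lambda Q=-2Q$, which follows by differentiating the scaling family of \eqref{grounstate}. Then $(\Lambda Q,\chi)=\lambda^{-1}(\Lambda Q,L_+\chi)=-2\lambda^{-1}(Q,\chi)\ne0$. If any entry in the mixed blocks fails to vanish, I would still aim to show only that the matrix is block triangular with invertible diagonal blocks.

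With invertibility established, the implicit function theorem gives $\alpha>0$ and a unique $C^1$ solution $p(w)$ near the identity with $\Phi(p(w),w)=0$, and $|p(w)-(1,0,0,0)|\lesssim\|w-Q\|_{L^2}$. The bound on $\|\epsilon\|_{L^2}$ then follows from the triangle inequality, since the action is an isometry and is continuous in $p$ near $Q$. Finally, I would undo the normalization by composing the two group elements. This produces $\lambda=\lambda_0\mu$, $\tilde x=x_0+\lambda_0 y$, $\xi=\zeta+\mu\xi_0$ up to the usual conventions, and a phase shifted by $\lambda_0^{-1}\xi_0\cdot(\tilde x-x_0)$, which is exactly the combination appearing in \eqref{f4.4}.
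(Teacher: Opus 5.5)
Your proposal is correct and is the implicit-function-theorem argument the paper invokes; the paper omits the details and defers to Dodson. The normalization, moving the group action onto the test functions to get a $C^1$ map, and composing the group elements are what produce the parameter combinations in \eqref{f4.4}. Your Jacobian computation supplies the nondegeneracy the omitted proof needs. The matrix is block triangular with diagonal determinants proportional to $(\tfrac d2Q+x\cdot\nabla Q,\chi)$ and $(Q,\chi)$, even without using radiality of $\chi$, since the vanishing entries $(\tfrac d2Q+x\cdot\nabla Q,\partial_kQ)$ and $(Q,\partial_kQ)$ need only radiality of $Q$. The identity $L_+(\tfrac d2Q+x\cdot\nabla Q)=-2Q$ then gives $(\tfrac d2Q+x\cdot\nabla Q,\chi)=-2\lambda^{-1}(Q,\chi)\neq0$.
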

The proof of this theorem is in the same spirit as that of \cite{D2,D1}, we therefore omit the proof. Based on this theorem,  if $u(t)$ is the solution as in Theorem \ref{t2.3}, then there exist functions $\lambda(t)>0$, $\gamma(t)\in\mathbb R$, $x(t)\in\R^d$ and $\xi(t)\in \R^d$ on $[0,\sup I)$ such that if
\begin{equation}\label{decomp}
\epsilon(t,x) = e^{i \gamma(t)} e^{ix \cdot \xi(t)} \lambda(t)^{d/2} u\big(t,\lambda(t) x + x(t)\big) - Q(x),
\end{equation}
then the following orthogonality conditions hold: 
\begin{align}\label{orthod}
\aligned
( \epsilon,\chi )_{L^2_x }
&= ( \epsilon,i\chi )_{L^2_x}
=( \epsilon,\partial_{x_j}Q  )_{L^2_x}=( \epsilon,i\partial_{x_j}Q  )_{L^2_x}=0, \quad j=1,\cdots,d.
\endaligned
\end{align}
Then, by the standard modulation argument (cf. \cite{D2,D1}),
$\lambda(t)$, $\gamma(t)$, $x(t)$ and $\xi(t)$ are locally absolutely
continuous on $[0,\sup I)$. Moreover,
if we define the monotone function $s : [0, \sup I ) \rightarrow \mathbb{R}$ as
\begin{equation}\label{f4.23}
	s(t) = \int_{0}^{t} \lambda(\tau)^{-2} \, { d} \tau,
\end{equation}
and let
\begin{equation}\label{f4.24}
	\aligned
	\lambda(s) = \lambda(t(s)), \qquad \gamma(s) = \gamma(t(s)), \qquad \\
	x(s) = x(t(s)),  \qquad \xi(s) = \xi(t(s))
	\endaligned
\end{equation}
and 
\begin{equation}\label{f4.25}
	\epsilon (s, x) = e^{i \gamma(s)} e^{ix \cdot\xi(s)} \lambda^{d/2}(s) u(t(s), \lambda(s) x + x(s)) - Q(x),
\end{equation}
then we have the following control of these parameters:
\begin{equation}\label{f4.45}
	\int_{a}^{a + 1}  \left|\frac{\lambda_{s}}{\lambda} \right| \, {d} s
\lesssim \int_{a}^{a + 1}  \| \epsilon(s)  \|_{L^2_{x}} \,  {d} s.
\end{equation}
\begin{equation}\label{f4.53}
	\int_{a}^{a + 1}  \left|(\gamma)_s + 1 - \frac{x_{s}}{\lambda} \cdot\xi(s) - |\xi(s)|^{2}  \right|  \, {d} s
 \lesssim \int_{a}^{a + 1}  \|\epsilon(s) \|^2_{L^2_{x}} \, {d} s,
\end{equation}
\begin{equation}\label{f4.54}
	\int_{a}^{a + 1}  \left|\xi_{s} - \frac{\lambda_{s}}{\lambda} \xi(s) \right| \, {d} s
\lesssim \int_{a}^{a + 1}  \| \epsilon(s) \|_{L^2_{x}}^{2} \, {d} s,
\end{equation}
and
\begin{equation}\label{f4.55}
	\int_{a}^{a + 1}  \left|\frac{x_{s}}{\lambda} + 2 \xi (s) \right| \, {d} s
 \lesssim \int_{a}^{a + 1}  \| \epsilon(s)\|_{L^2_{x}} \, {d} s.
\end{equation}
\begin{equation}\label{f4.56}
		\sup_{s\in [a,a+1]} \|\epsilon(s) \|_{L^2_x }
\sim\inf_{s\in [a,a+1]} \|\epsilon(s) \|_{L^2_x }.
	\end{equation}
Finally, we record the following positivity result, which is established in the radial setting(cf. \cite{Wang-Gao}). Their proof only uses the positivity of the operators  $L_{+}$  and $L_{-}$(cf. Theorem \ref{posit}), and is therefore also valid in the non-radial case. We therefore only state the theorem and omit the proof.
\begin{proposition}\label{positive1}
    Suppose that $\epsilon=\epsilon_1+i\epsilon_2\in H^1$  satisfies  $\|\epsilon\|_{L^2}\ll1$, $\|Q+\epsilon\|_{L^2}=\|Q\|_{L^2}$ and
    \[
    \epsilon\perp_{L^2}\{\chi,i\chi,\partial_{x_1}Q,\cdots,\partial_{x_d}Q,i\partial_{x_1}Q,\cdots,i\partial_{x_d}Q\},
    \]
    then there exists $c_d>0$ such that
    \[
    (L_{+}\epsilon_1,\epsilon_1)+(L_-\epsilon_2,\epsilon_2)\geq c_d\|\epsilon\|^2_{H^1}.
    \]
\end{proposition}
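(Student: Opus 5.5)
The plan is to reduce the coercivity to the two spectral facts in Theorem~\ref{posit}. The orthogonality to $\{\partial_{x_j}Q, i\partial_{x_j}Q\}$ handles the kernels, and the mass constraint $\|Q+\epsilon\|_{L^2}=\|Q\|_{L^2}$ handles the one remaining bad direction of $L_-$, namely $Q$ itself. Write $\epsilon=\epsilon_1+i\epsilon_2$. The conditions $\epsilon\perp\chi,\partial_{x_j}Q$ in the real inner product $(f,g)_{L^2}=\Re\int f\bar g$ mean $\epsilon_1\perp\{\chi,\partial_{x_j}Q\}$. The conditions $\epsilon\perp i\chi, i\partial_{x_j}Q$ mean $\epsilon_2\perp\{\chi,\partial_{x_j}Q\}$.

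\textbf{Real part.} Part (4) of Theorem~\ref{posit} applies directly to $\epsilon_1$ and gives
\[
(L_+\epsilon_1,\epsilon_1)\geq c_d\|\epsilon_1\|_{H^1}^2 .
\]
No smallness is needed here.

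\textbf{Imaginary part.} Here I would use that $L_-Q=0$ by \eqref{grounstate}, that $L_-\geq0$ by (3), and that $\ker L_-=\operatorname{span}\{Q\}$. The kernel statement is the usual Perron--Frobenius/ground-state argument, since $Q>0$ is the lowest eigenfunction of the Schr\"odinger operator $-\Delta+1-\phi_{Q^2}$. Since $\phi_{Q^2}\to0$ at infinity (Proposition~\ref{grondstate1}), $\sigma_{ess}(L_-)=[1,\infty)$. Hence $L_-$ has a spectral gap above $0$, and for any subspace complementary to $Q$ we get the standard compactness-by-contradiction coercivity: if $v\perp\chi$, then $(L_-v,v)\geq c\|v\|_{H^1}^2$. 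The reason is as follows. A minimizing sequence with $\|v_n\|_{L^2}=1$ and $(L_-v_n,v_n)\to0$ converges weakly in $H^1$, and strongly on compacts because the potential decays. Its limit $v$ is nonzero, since otherwise $(L_-v_n,v_n)\to\|v_n\|_{H^1}^2\not\to0$. The limit minimizes and so lies in $\ker L_-$, i.e.\ $v=cQ$. But $v\perp\chi$ and $(Q,\chi)\neq0$ force $c=0$, a contradiction. Here $(Q,\chi)\neq0$ because if $Q\perp\chi$ then (4) would give $(L_+Q,Q)>0$, whereas $L_+Q=-2Q\phi_{Q^2}$ yields $(L_+Q,Q)<0$; the paper already uses this argument after \eqref{4.27.0}. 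Applying this to $v=\epsilon_2$ gives $(L_-\epsilon_2,\epsilon_2)\geq c\|\epsilon_2\|_{H^1}^2$.

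\textbf{Role of the mass constraint.} With $\chi$-orthogonality imposed on both components, the mass identity $2(\epsilon_1,Q)=-\|\epsilon\|_{L^2}^2$ is not actually needed for the $\epsilon_2$ part. If one instead only had $\epsilon_2\perp\partial_jQ$, one would decompose $\epsilon_2=aQ+w$ with $w\perp Q$ and control $a$ via $(\epsilon_2,\chi)=0$, exactly as above. The hypotheses $\|\epsilon\|_{L^2}\ll1$ and the mass constraint are thus harmless and are kept only for consistency with \cite{Wang-Gao}.

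\textbf{Conclusion and main obstacle.} Summing the two bounds gives the claim with $c_d$ equal to the minimum of the two constants. The main obstacle is the nonradial compactness step for $L_-$: one must justify strong $L^2$ convergence of $\phi_{Q^2}|v_n|^2$, which uses only the inverse-square decay of $\phi_{Q^2}$ together with Rellich on balls. One must also justify $\ker L_-=\operatorname{span}\{Q\}$, which I would take from positivity of $Q$ and the standard uniqueness of positive ground states of Schr\"odinger operators.
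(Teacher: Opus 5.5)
Your argument is correct and is essentially the one the paper itself sketches after \eqref{4.27.0}. The paper only cites \cite{Wang-Gao} for the proposition, but there it gets coercivity of $L_+$ directly from Theorem~\ref{posit}(4), shows $(Q,\chi)_{L^2}\neq0$ from $(L_+Q,Q)_{L^2}<0$, and then gets coercivity of $L_-$ on $\chi^\perp$ from $\ker L_-=\operatorname{span}\{Q\}$ and a compactness argument using the decay of $\phi_{Q^2}$. Your observation that the mass constraint and the smallness assumption are superfluous under these orthogonality conditions is also right: the paper's derivation of \eqref{midest2} applies the coercivity to the projected remainder $e(t)$, which satisfies only the orthogonality conditions.
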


\end{document}